\documentclass{amsart}

\usepackage{amsmath, amsfonts, amssymb} 
\usepackage{enumerate}
\usepackage{enumitem}
\usepackage{hyperref}
\hypersetup{
  colorlinks   = true, 
  urlcolor     = blue, 
  linkcolor    = blue, 
  citecolor   = red 
}
\usepackage{graphicx}
\usepackage[all]{xy}
\usepackage{float}
\usepackage{wrapfig}
\usepackage{fancyvrb}
\usepackage[mathscr]{euscript}
\usepackage[T1]{fontenc}
\usepackage{listings}
\usepackage{tikz}
\usepackage{amsthm}
\usepackage{dsfont}
\usepackage{tikz-cd}
\usepackage{adjustbox}
\usepackage{centernot}
\usepackage{mathtools}
\usepackage[noadjust]{cite}
\usepackage{bbm}
\usepackage{stmaryrd}

\usepackage[margin=3cm]{geometry}

\let\oldtocsection=\tocsection

\let\oldtocsubsection=\tocsubsection

\let\oldtocsubsubsection=\tocsubsubsection

\renewcommand{\tocsection}[2]{\hspace{0em}\oldtocsection{#1}{#2}}
\renewcommand{\tocsubsection}[2]{\hspace{1em}\oldtocsubsection{#1}{#2}}
\renewcommand{\tocsubsubsection}[2]{\hspace{2em}\oldtocsubsubsection{#1}{#2}}

\title{Calculating double-leaf coefficients in the Hecke category}
\author{Colton Sandvik}
\date{}
\address{Department of Mathematics, Johns Hopkins University, Baltimore, MD, USA}
\email{csandvi1@jhu.edu}

\newcommand{\Q}{\mathbb{Q}}
\newcommand{\Z}{\mathbb{Z}}
\newcommand{\F}{\mathbb{F}}

\renewcommand{\k}{\mathbbm{k}}
\newcommand{\R}{\mathbb{R}}

\newcommand{\scrF}{\mathcal{F}}

\newcommand{\uw}{\underline{w}}
\newcommand{\ux}{\underline{x}}
\newcommand{\uy}{\underline{y}}

\DeclareMathOperator{\Hom}{Hom}
\newcommand{\grmod}[1]{#1\textnormal{-mod}^{\Z}}

\newcommand{\fr}[1]{\mathfrak{#1}}
\DeclareMathOperator{\op}{op}

\DeclareMathOperator{\id}{id}

\newcommand{\abs}[1]{\lvert #1 \rvert}
\renewcommand{\emptyset}{\varnothing}

\newcommand{\DD}{\mathbb{D}}

\DeclareMathOperator{\Sym}{Sym}

\DeclareMathOperator{\Exp}{Exp}
\DeclareMathOperator{\Mat}{Mat}
\DeclareMathOperator{\Sub}{Sub}
\DeclareMathOperator{\dec}{dec}
\DeclareMathOperator{\rex}{rex}
\DeclareMathOperator{\Fun}{Fun}
\DeclareMathOperator{\rk}{rk}
\DeclareMathOperator{\LL}{LL}
\newcommand{\oLL}{\overline{\LL}}

\DeclareMathOperator{\ev}{ev}
\newcommand{\dLL}{\mathbb{L}\mathbb{L}}
\newcommand{\ue}{\mathbf{e}}
\newcommand{\uf}{\mathbf{f}}
\newcommand{\uc}{\mathbf{c}}
\newcommand{\ur}{\mathbf{r}}
\newcommand{\uh}{\mathbf{h}}
\newcommand{\mcR}{\mathcal{R}}
\newcommand{\qbinom}{\genfrac{[}{]}{0pt}{}}
\newcommand{\BS}{\mathsf{ABS}}
\newcommand{\gBS}{g\mathsf{ABS}}
\newcommand{\EW}{\mathsf{EW}}
\newcommand{\sfC}{\mathsf{C}}
\DeclareMathOperator{\diag}{diag}

\DeclareMathOperator{\Frac}{Frac}
\DeclareMathOperator{\minpoly}{minpoly}
\DeclareMathOperator{\Aut}{Aut}

\newtheorem{theorem}{Theorem}[section]
\newtheorem{corollary}[theorem]{Corollary}
\newtheorem{lemma}[theorem]{Lemma}
\newtheorem{proposition}[theorem]{Proposition}

\theoremstyle{definition}
\newtheorem{definition}[theorem]{Definition}
\newtheorem{example}[theorem]{Example}
\newtheorem{remark}[theorem]{Remark}
\newtheorem{goal}[theorem]{Goal}
\newtheorem{algorithm}[theorem]{Algorithm}
\newtheorem{question}[theorem]{Question}
\newtheorem{assumption}[theorem]{Assumption}

\tikzstyle{row}=[-, draw=black, thick]
\tikzstyle{dashed_row}=[-, draw=black, dashed]

\usepackage{tikzit}

\tikzstyle{EW_s}=[fill=red, draw=red, shape=circle, inner sep=0pt, minimum size=3pt]
\tikzstyle{box}=[fill=white, draw=black, shape=rectangle]
\tikzstyle{circle_region}=[fill={rgb,255: red,255; green,205; blue,155}, draw=black, shape=circle, dashed, minimum width=6em]
\tikzstyle{JW}=[fill=white, draw=black, shape=rectangle, minimum width=1.75cm, minimum height=0.5cm]
\tikzstyle{EW_t}=[fill=blue, draw=blue, shape=circle, inner sep=0pt, minimum size=3pt]
\tikzstyle{black_dot}=[fill=black, draw=black, shape=circle, inner sep=0pt, minimum size=3pt]
\tikzstyle{small_none}=[fill=none, draw=none, shape=circle, scale=0.5]
\tikzstyle{gnode}=[fill=white, draw=black, shape=circle]
\tikzstyle{g2node}=[fill=cyan, draw=black, shape=circle]
\tikzstyle{small_dot_s}=[fill=red, draw=red, shape=circle, scale=0.5]
\tikzstyle{small_dot_t}=[fill=blue, draw=blue, shape=circle, scale=0.5]
\tikzstyle{EW_u}=[fill={rgb,255: red,128; green,0; blue,128}, draw={rgb,255: red,128; green,0; blue,128}, shape=circle, inner sep=0pt, minimum size=3pt]
\tikzstyle{mor_box}=[fill=white, draw=black, shape=rectangle, minimum width=1cm, minimum height=0.3cm]
\tikzstyle{small_mor_box}=[fill=white, draw=black, shape=rectangle, minimum width=0.6cm, minimum height=0.3cm]
\tikzstyle{med_mor_box}=[fill=white, draw=black, shape=rectangle, minimum width=1.5cm, minimum height=0.3cm]
\tikzstyle{large_mor_box}=[fill=white, draw=black, shape=rectangle, minimum width=2.3cm, minimum height=0.3cm]
\tikzstyle{small_white_node}=[fill=white, draw=black, shape=circle, scale=0.3]
\tikzstyle{3_strand_box}=[fill=white, draw=black, shape=rectangle, minimum height=0.3cm, minimum width=0.75cm]
\tikzstyle{4_strand_box}=[fill=white, draw=black, shape=rectangle, minimum height=0.3cm, minimum width=1cm]
\tikzstyle{5_strand_box}=[fill=white, draw=black, shape=rectangle, minimum height=0.3cm, minimum width=1.25cm]
\tikzstyle{6_strand_box}=[fill=white, draw=black, shape=rectangle, minimum height=0.3cm, minimum width=1.5cm]
\tikzstyle{7_strand_box}=[fill=white, draw=black, shape=rectangle, minimum height=0.3cm, minimum width=1.75cm]
\tikzstyle{8_strand_box}=[fill=white, draw=black, shape=rectangle, minimum height=0.3cm, minimum width=2cm]
\tikzstyle{9_strand_box}=[fill=white, draw=black, shape=rectangle, minimum height=0.3cm, minimum width=2.25cm]
\tikzstyle{small_mon_label}=[fill=none, draw=none, shape=circle, scale=0.5]
\tikzstyle{dot_green}=[fill=green, draw=green, shape=circle, inner sep=0pt, minimum size=3pt]
\tikzstyle{big_bracket_6}=[fill=none, draw=none, shape=circle, scale=2.5]
\tikzstyle{thick_dot_ochre}=[fill=teal, draw=teal, shape=circle, inner sep=0pt, minimum size=5pt]
\tikzstyle{thick_violet_dot}=[fill=violet, draw=violet, shape=circle, inner sep=0pt, minimum size=5pt]

\tikzstyle{row}=[-, draw=black, thick]
\tikzstyle{dashed_row}=[-, draw=black, dashed]
\tikzstyle{red_edge}=[-, draw=red, thick]
\tikzstyle{blue edge}=[-, draw=blue, thick]
\tikzstyle{purple_edge}=[-, draw={rgb,255: red,128; green,0; blue,128}, thick]
\tikzstyle{green_edge}=[-, draw=green, thick]
\tikzstyle{fill_orange}=[-, fill={rgb,255: red,255; green,205; blue,155}, draw=none]
\tikzstyle{fill_purple}=[-, fill={rgb,255: red,225; green,137; blue,225}, draw=none]
\tikzstyle{fill_green}=[-, fill={rgb,255: red,155; green,255; blue,155}, draw=none]
\tikzstyle{fill_yellow}=[-, fill={rgb,255: red,255; green,255; blue,155}, draw=none]
\tikzstyle{fill_blue}=[-, fill={rgb,255: red,155; green,255; blue,255}, draw=none]
\tikzstyle{fill_coral}=[-, fill={rgb,255: red,255; green,205; blue,205}, draw=none]
\tikzstyle{fill_brown}=[-, fill={rgb,255: red,120; green,90; blue,20}, draw=none]
\tikzstyle{fill_cyan}=[-, fill={rgb,255: red,0; green,255; blue,255}, draw=none]
\tikzstyle{fill_magenta}=[-, fill={rgb,255: red,255; green,0; blue,255}, draw=none]
\tikzstyle{fill_darkgreen}=[-, fill={rgb,255: red,0; green,165; blue,55}, draw=none]
\tikzstyle{fill_lavender}=[-, fill={rgb,255: red,230; green,170; blue,255}, draw=none]
\tikzstyle{fill_gold}=[-, fill={rgb,255: red,255; green,180; blue,0}, draw=none]
\tikzstyle{fill_dark_blue}=[-, fill=blue, draw=none]
\tikzstyle{fill_red}=[-, fill=red, draw=none]
\tikzstyle{fill_white}=[-, fill=white, draw=none]
\tikzstyle{red_dashed_edge}=[-, fill=none, draw=red, dashed, thick]
\tikzstyle{blue_dashed_edge}=[-, draw=blue, dashed, thick]
\tikzstyle{purple_dashed_edge}=[-, draw={rgb,255: red,128; green,0; blue,128}, thick, dashed]
\tikzstyle{fill_light_gray}=[-, fill={rgb,255: red,198; green,198; blue,198}, draw=none]
\tikzstyle{fill_dark_gray}=[-, fill={rgb,255: red,109; green,109; blue,109}, draw=none]
\tikzstyle{fill_sky}=[-, fill={rgb,255: red,230; green,234; blue,255}, draw=none]
\tikzstyle{fill_bright_green}=[-, fill={rgb,255: red,66; green,255; blue,79}, draw=none]
\tikzstyle{red_tl_edge}=[-, draw={rgb,255: red,255; green,128; blue,128}]
\tikzstyle{blue_tl_edge}=[-, draw={rgb,255: red,128; green,128; blue,255}]
\tikzstyle{red_tl_region}=[-, fill={rgb,255: red,255; green,128; blue,128}, draw=none]
\tikzstyle{blue_tl_region}=[-, draw=none, fill={rgb,255: red,128; green,128; blue,255}]
\tikzstyle{purple_tl_region}=[-, draw=none, fill={rgb,255: red,220; green,135; blue,220}]
\tikzstyle{orange_line}=[-, draw={rgb,255: red,255; green,205; blue,155}]
\tikzstyle{purple_line}=[-, draw={rgb,255: red,225; green,137; blue,225}]
\tikzstyle{implies_arrow}=[double, double equal sign distance, -implies, ->]
\tikzstyle{densely_dashed}=[-, densely dashed]
\tikzstyle{mapsto}=[{|->}]
\tikzstyle{arrow}=[->]
\tikzstyle{thick_purple}=[-, draw=violet, line width=3pt]
\tikzstyle{thick_teal}=[-, draw=teal, line width=3pt]

\begin{document}

        \begin{abstract}
        Given a morphism of Abe--Bott--Samelson bimodules, we construct an algorithm for computing its double-leaves coefficients.
        The key ingredient is a modification of Bruhat upper-triangularity which allows us to use forward substitution to solve for the double-leaf coefficients.
        We provide a complete implementation of the algorithm in the Sage computer algebra system.
        We use this to compute the Zamolodchikov $3$-color relation in type $H_3$ for balanced realizations. 
        We further simplify the $3$-color relation using thick Soergel calculus and express it as a difference of two thick Elias--Williamson diagrams.
        \end{abstract}

        \maketitle

        \section{Introduction}

        Let $(W,S)$ be a Coxeter system.
        A \emph{Hecke category} is a graded additive monoidal category which categorifies the Hecke algebra of $(W,S)$.
        Hecke categories are ubiquitous throughout representation theory. 
        They can be used to show positivity of Kazhdan--Lusztig polynomials \cite{So07, EW14}, discover counterexamples to Lusztig's conjecture on modular representations of reductive groups \cite{torsion}, and 
         develop character formulas for tilting modules in the modular representation theory of reductive groups \cite{AMRW}.
        
        There are now numerous incarnations of the Hecke category. The first construction for general Coxeter systems was \emph{Soergel bimodules} \cite{So07}.
        Starting with a realization $\fr{h}$ of $W$ and $R = \Sym (\fr{h}^*)$, one forms the bimodules $B_s = R \otimes_{R^s} R(1)$.
        The full subcategory of graded $R$-bimodules containing 
        \[B_{(s_1, \ldots, s_k)} \coloneq B_{s_1} \otimes_R \ldots \otimes_R B_{s_k} (n)\]
        for all sequences $(s_1, \ldots, s_k)$ of simple reflections and $n \in \Z$ forms the category of Bott--Samelson bimodules. 
        By taking the idempotent completion of the additive hull of Bott--Samelson bimodules, one forms the category of Soergel bimodules. 
        However, if $\fr{h}$ is not a sufficiently nice (i.e., reflection faithful) realization the category of Soergel bimodules need not categorify the Hecke algebra.
        
        Much more recently, there has been a modification of Soergel bimodules due to Abe that incorporates certain localization data.
        We will review their construction in \S\ref{ss:BS}. 
        The resulting category is a graded monoidal category $\BS (\fr{h}, W)$ which is well-behaved provided $\fr{h}$ satisfies some weak technical constraints.
        Every Bott--Samelson bimodule lifts to an Abe--Bott--Samelson bimodule.

        There is another notable categorification due to Elias and Williamson which we call the \emph{diagrammatic Hecke category} \cite{EW}.
        A general technique in categorification is to try and present your preferred monoidal category by means of generators and relations.
        This often has two key benefits. First, generators and relations give a set of rules for manipulating morphisms similar to a table of derivative rules
        in single-variable calculus. Second, defining monoidal functors from a monoidal category presented by generators and relations to another monoidal category 
        is often quite easy. One only needs to define the images of the generators and then check that the relations are satisfied in the target category.
        The diagrammatic Hecke category is the application of this philosophy to the category of Abe--Bott--Samelson bimodules.\footnote{  We have presented a historically anachronistic perspective for the sake of our narrative, but it is worth noting that the diagrammatic Hecke category actually 
        predates Abe--Bott--Samelson bimodules. In fact, one of Elias and Williamson's principal motivations for introducing the diagrammatic Hecke category was
        to develop a well-behaved Hecke category for a broader class of realizations than the original bimodule construction.}
        The generating diagrams are relatively simple to describe. They are given by a small collection of colored planar graphs which we will not review here.
        However, the relations take considerable effort. Nonetheless, Elias and Williamson managed to compute these relations in all but one case--- there is 
        a missing relation when $W$ has a parabolic subgroup of type $H_3$ called the \emph{Zamolodchikov $3$-color relation}. 
        For some history on this problem and its difficulty to compute see \cite[\S 1.4.3]{EW} and \cite[\S 3.6]{EWloc}.
        
        This project started with a desire to find this relation. 
        The general technique for finding this relation is not new. Let $\varphi : B_{\ux} \to B_{\uy}$ be a homomorphism of Abe--Bott--Samelson bimodules where $\ux$ and $\uy$ are sequences of simple reflections.
        The morphism space 
        \[\Hom_{\BS (\fr{h}, W)} (B_{\ux},B_{\uy})\]
        is known to be a graded free $R$-module. A basis is provided by the collection of \emph{double leaves}. 
        For simplicity we will enumerate these double leaves by
        $\dLL_1, \ldots, \dLL_{N}$. This basis was first introduced for Bott--Samelson bimodules by Libedinsky \cite{Lib}, and has since been extended to
        the diagrammatic category \cite{EW} and Abe--Bott--Samelson bimodules \cite{Abe19}.
        Finding the Zamolodchikov $3$-color relation is a special example of computing the coefficients $c_i \in R$ such that
        \[\varphi = \sum_{i=1}^{N} c_i \dLL_i.\]
        This is by no means an easy process. The existing methods in the literature amounted to constructing a matrix with entries in $\Frac (R)$, denoted $\Lambda (\varphi)$, called 
        the \emph{localization matrix} \cite{EWloc}. The columns of $\Lambda (\varphi)$ are indexed by subexpressions of $\ux$ and the rows by subexpressions of $\uy$.
        To better emphasize the computational difficulties of working with these matrices, for the remainder of the introduction, we will identify the set of subexpressions of $\ux$ (resp. $\uy$) with the integers $1,\ldots, 2^{\ell (\ux)}$ (resp. $1, \ldots, 2^{\ell (\uy)}$).
        In particular, under this convention, we can regard $\Lambda (\varphi)$ as a $2^{\ell (\uy)} \times 2^{\ell (\ux)}$-matrix.
        The localization matrix $\Lambda (\varphi)$ determines $\varphi$, so it suffices to solve the (enormous) system of $Q$-linear equations
        \begin{equation}\label{eq:loc_mat_eqn}
                \Lambda (\varphi) = \sum_{i=1}^{N} c_i \Lambda (\dLL_i).
        \end{equation}

        There are two key computational difficulties associated with solving \eqref{eq:loc_mat_eqn}.
        \begin{enumerate}
                \item The localization matrices themselves are not easy to compute.
                        They often have very few nonzero entries; however, proceeding naively they involve multiplying many large sparse matrices.
                        For example, in the $H_3$ relation, one has that $\ell (\ux) = \ell (\uy) = 15$, and so the localization matrices are $32768 \times 32768$. 
                \item Even after computing the localization matrices appearing in \eqref{eq:loc_mat_eqn}, one still has to use Gaussian elimination to solve this system of equations.
                        Despite the matrices being sparse, the entries being in $\Frac (R)$ makes this difficult for modern computers.
        \end{enumerate}

        To address these difficulties, we perform a study on localization matrices for the double leaves. 
         The main result in this direction is a formula for computing the localization matrix from the localization matrices of the constituent light leaves.

         \begin{proposition}[Proposition \ref{prop:loc_of_dl}]\label{prop:intro}
                For each $i =1,\ldots, N$, we write
                \[\dLL_i = \overline{\LL}_{\uy, \uf^i} \circ \LL_{\ux, \ue^i}\]
                where $\LL_{\uy, \uf^i}$ and $\LL_{\ux, \ue^i}$ are the constituent light leaves of the double leaf $\dLL_i$. Then
                \[\Lambda (\dLL_i)_{r, c} = \frac{1}{\zeta (r)} \sum_{h} \zeta (h) (\Lambda (\LL_{\uy, \uf^i})_{h,r} ) \Lambda (\LL_{\ux, \ue^i})_{h, c},\]
                where $\zeta (r)$ and $\zeta (h)$ are certain explicit products of roots in $R$ that depend on the subexpressions corresponding to $r$ and $h$ and their ambient expressions. 
         \end{proposition}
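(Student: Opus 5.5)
Since the localization functor is monoidal and composition corresponds to matrix multiplication, it suffices to write $\Lambda(\dLL_i) = \Lambda(\overline{\LL}_{\uy,\uf^i})\,\Lambda(\LL_{\ux,\ue^i})$. The work is then to express the localization matrix of the flipped light leaf $\overline{\LL}_{\uy,\uf^i}$ in terms of that of $\LL_{\uy,\uf^i}$.

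Over $Q=\Frac(R)$, each $B_{\ux}\otimes_R Q$ splits as a direct sum of standard objects $Q_{\ue}$ indexed by subexpressions $\ue$ of $\ux$. The entry $\Lambda(\varphi)_{r,c}$ is the component of $\varphi$ from the summand $c$ to the summand $r$. The light leaf $\LL_{\ux,\ue}$ maps $B_{\ux}$ to $B_{\uw}$ for a reduced expression $\uw$ of the endpoint. Its localization matrix therefore has rows indexed by subexpressions $h$ of $\uw$ and columns by subexpressions $c$ of $\ux$.

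First, I will use the duality (vertical flip) anti-involution $\varphi\mapsto\overline{\varphi}$ together with the standard nondegenerate pairing on localized Bott--Samelson objects. Under the localization, the flip is represented by transposition twisted by a diagonal matrix. Concretely, the canonical isomorphism of each localized $B_{\ux}$ with its dual identifies the summand $Q_{\ue}$ with its dual up to the scalar $\zeta(\ue)$. This scalar is the product, over the positions of $\ux$, of the roots $\pm x_1\cdots x_{k-1}(\alpha_{s_k})$ contributed by that position according to the decoration of $\ue$; it is the local intersection form, equivalently the image of the barbell/idempotent normalization on each $B_s\otimes Q = Q_e\oplus Q_s$.

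Second, I will prove the key identity $\Lambda(\overline{\varphi}) = D_{\ux}^{-1}\,\Lambda(\varphi)^{T}\,D_{\uy}$ for $\varphi: B_{\ux}\to B_{\uy}$, where $D=\diag(\zeta)$. Both sides are compatible with composition and with tensor product: $\zeta$ is multiplicative under concatenation, with the second factor twisted by the Weyl group element reached. It therefore suffices to check the identity on the generators (dots, trivalent vertices, $2m_{st}$-valent vertices, polynomials), whose localization matrices the paper records explicitly. For the dot and the trivalent vertex this is a direct $2\times 1$ or $2\times 4$ computation. For the $2m$-valent vertex it follows from the fact that this vertex is, up to $\zeta$-rescaling, orthogonal with respect to the pairing (its flip is its rotation). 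Applying the identity to $\varphi=\LL_{\uy,\uf^i}$ gives $\Lambda(\overline{\LL}_{\uy,\uf^i})_{r,h} = \zeta(r)^{-1}\,\Lambda(\LL_{\uy,\uf^i})_{h,r}\,\zeta(h)$.

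Multiplying by $\Lambda(\LL_{\ux,\ue^i})$ and summing over $h$, the subexpressions of the common reduced expression in the middle, yields the stated formula. I expect the main obstacle to be the sign and twisting conventions in $\zeta$: one must check that the flip identity for the $2m$-valent vertex holds with exactly the same normalization $\zeta$ as for dots and trivalent vertices, and that $\zeta$ is multiplicative under the twisted concatenation. I would verify the $2m$-valent case first through its decomposition over $Q$, where it is a permutation-type matrix between subexpressions of the two reduced expressions with root-product entries.
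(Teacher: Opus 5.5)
Your proposal is correct and follows the paper's route: the paper proves $\Lambda(\DD(\varphi))=\Lambda(\varphi)^{\dagger}=G_{\ux}^{-1}\Lambda(\varphi)^{\top}G_{\uy}$ (Lemma~\ref{lem:loc_mat_of_dual}) by compatibility of $\dagger$ with composition \eqref{eq:dagger_and_compositions} and with $\star$ (Lemma~\ref{lem:dagger_and_product}, using $\zeta_{\ux\ux'}(\ue\ue')=\zeta_{\ux}(\ue)\,\ux^{\ue}(\zeta_{\ux'}(\ue'))$) plus a check on generators, and Proposition~\ref{prop:loc_of_dl} is then read off from $\Lambda(\dLL^w_{\ue,\uf})=\Lambda(\DD(\LL_{\uy,\uf}))\,\Lambda(\LL_{\ux,\ue})$, exactly as you do. Two small corrections to your normalization remarks: the weight is $\zeta_{\uw}(\ue)=\prod_i\uw^{\ue}_{i-1}(\alpha_{s_i})$ with no additional decoration-dependent sign (the U/D information is already encoded in whether each factor is a positive or negative root, and an extra sign on D-steps would break the split/merge check, e.g.\ at the $(0,11)$ entry of $\Lambda(\mu^s)$), and $G_{s,t}$ is not permutation-type but dense on each block of equal evaluation, so its check is just $\frac{\zeta_{{}_t\underline{m}}(\uf)}{\zeta_{{}_s\underline{m}}(\ue)}\cdot\frac{\pi_{s,t}}{\zeta_{{}_t\underline{m}}(\uf)}=\frac{\pi_{s,t}}{\zeta_{{}_s\underline{m}}(\ue)}=(G_{t,s})_{\ue,\uf}$, matching $\DD(\beta_{st})=\beta_{ts}$.
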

        
         One can cache the individual localization matrices of the constituent light leaves which greatly speeds up the computation of the localization matrices for the double leaves.
         Moreover, by computing each column of $\Lambda (\dLL_i)$ at a time, we can avoid multiplying any large sparse matrices.

         To address our second computational difficulty, we first define a total order on the double leaves.
         This total order is not defined uniquely, but satisfies some desirable properties. 
         We then define an injective linear map
         \[\lambda : \Hom (B_{\ux}, B_{\uy}) \hookrightarrow \Frac (R)^{N}.\]
         The map $\lambda$ selects $N$ entries of the localization matrix $\Lambda (\varphi)$. These entries determines the morphism.
         More precisely,
         \[\lambda (\varphi) = \begin{bmatrix} \vdots & & \vdots \\  \lambda (\dLL_1) & \ldots & \lambda (\dLL_{N}) \\ \vdots & & \vdots \end{bmatrix} \begin{bmatrix} c_1 \\ \vdots \\ c_{N}\end{bmatrix}.\]
         We are now ready to state the main theorem of the paper.

         \begin{theorem}[Theorem \ref{thm:lower_triang_of_coord_functionals}]\label{thm:intro}
                Under our fixed total order on the double leaves, the $N \times N$-matrix
                \[\begin{bmatrix} \vdots & & \vdots \\  \lambda (\dLL_1) & \ldots & \lambda (\dLL_{N}) \\ \vdots & & \vdots \end{bmatrix}\]
                is lower triangular and all the diagonal entries are nonzero.
         \end{theorem}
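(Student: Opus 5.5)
The plan is to choose $\lambda$ so that each double leaf $\dLL_i = \oLL_{\uy,\uf^i}\circ\LL_{\ux,\ue^i}$ comes with a distinguished entry $(r_i,c_i)$ of the localization matrix. Here $c_i$ is the subexpression $\ue^i$ of $\ux$ and $r_i$ is the subexpression $\uf^i$ of $\uy$, both expressing the same element $w_i\in W$. The coordinate $\lambda_j$ is then the $(r_j,c_j)$ entry, and the matrix in the theorem is $\bigl(\Lambda(\dLL_i)_{r_j,c_j}\bigr)_{j,i}$. The whole argument rests on the formula of Proposition~\ref{prop:loc_of_dl}:
\[\Lambda(\dLL_i)_{r,c} = \frac{1}{\zeta(r)}\sum_h \zeta(h)\,\Lambda(\LL_{\uy,\uf^i})_{h,r}\,\Lambda(\LL_{\ux,\ue^i})_{h,c}.\]
This reduces the problem to support and triangularity statements for single light leaves.

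The key input is a light-leaf triangularity lemma, which I would prove by induction on the length of $\ux$ along the recursive construction of light leaves. Write $\LL_{\ux,\ue}: B_{\ux}\to B_{\uw}$, where $\uw$ is a fixed reduced expression for the endpoint $w$ of $\ue$, and let $h$ range over subexpressions of $\uw$. The lemma has two parts.
\begin{enumerate}
\item The entry $\Lambda(\LL_{\ux,\ue})_{h,c}$ vanishes unless the endpoint of $c$ is $\le w$ in the Bruhat order, and it is also constrained by the endpoint of $h$.
\item If $c$ has endpoint $w$, the entry is nonzero only when $c=\ue$ and $h$ is the unique subexpression of $\uw$ with all $1$'s. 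In that case the entry is an explicit nonzero product of roots.
\end{enumerate}
At each inductive step, the localized generators (dots, trivalent vertices and $2m$-valent vertices) either preserve the position, or are "upper triangular" for the Bruhat order on the partially built expression. The delicate part is the $2m$-valent braid moves. For these I would use that their localizations are diagonal on the top element $w$ and otherwise only connect subexpressions of strictly smaller endpoint, as in \cite{EWloc}.

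Feeding the lemma into Proposition~\ref{prop:loc_of_dl}, the entry $\Lambda(\dLL_i)_{r_j,c_j}$ vanishes unless the endpoint $w_j$ of $c_j$ and $r_j$ satisfies $w_j\le w_i$. When $w_j=w_i$, it vanishes unless $\ue^j=\ue^i$ and $\uf^j=\uf^i$. The total order on double leaves is chosen compatibly with this: first refine Bruhat order on the middle element $w_i$ to a total order, so that larger $w$ come earlier, and then order arbitrarily among the leaves with the same $w$. With this order every nonzero entry with $j\ne i$ has $j$ after $i$, which gives lower triangularity.

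On the diagonal, only $h$ equal to the all-ones subexpression of $\uw_i$ contributes. The entry is then $\zeta(h)/\zeta(r_i)$ times two nonzero root products from the lemma, so it is nonzero in $\Frac(R)$.

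The main obstacle is the second part of the lemma in the presence of braid moves. I need to show that if $c\neq\ue$ but $c$ has the same endpoint $w$, the localized light leaf still kills the entry. For this I would track the "defect" and "stroll" decorations step by step: a subexpression reaching $w$ differently must, at the first step where it deviates from $\ue$, be sent by the localized dot or trivalent map to an endpoint strictly below the one $\ue$ attains. After that, Bruhat upper-triangularity of the later steps prevents it from recovering. If this fails in full generality, the fallback is to weaken the ordering within a fixed $w$ to the lexicographic order on $\ue$. Then I would only prove vanishing for $c_j$ lexicographically earlier than $\ue^i$, which is the "modified Bruhat upper-triangularity" the abstract alludes to.
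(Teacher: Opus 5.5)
\textbf{There is a genuine gap: part (2) of your light-leaf lemma is false, and so is your claim that the fixed-$w$ blocks are diagonal.}

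Take $\ux=(s,s)$ and $\ue=(1,0)$. The decorations are $U1$, $D0$, so $\LL_{\ux,\ue}=\mu^s$ for every $\LL$-datum. The subexpression $\uc=(0,1)$ also has endpoint $s$, yet the merge table gives $\Lambda(\mu^s)_{(1),(0,1)}=1\neq 0$.

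At the double-leaf level, take $\ux=\uy=(s,s)$. The entry of $\dLL^s_{(1,0),(1,0)}=\nu^s\circ\mu^s$ at the index of $\dLL^s_{(0,1),(0,1)}$ is
$\Lambda(\dLL^s_{(1,0),(1,0)})_{(0,1),(0,1)}=\zeta_{(s)}(\mathbf{1})/\zeta_{(s,s)}((0,1))=1/\alpha_s\neq 0$.
Your rule ``order arbitrarily within fixed $w$'' allows $\dLL^s_{(0,1),(0,1)}$ to come first, and then the matrix is not lower triangular. Your proposed mechanism also fails here. The stroll of $\uc=(0,1)$ does drop strictly below that of $\ue$ at the first step ($e<s$), but it recovers at the second step, and Bruhat order does nothing to prevent this.

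\textbf{What is true, and what the paper uses, is the weaker path-dominance statement} (Lemma~\ref{lem:column_of_loc_matrix_for_LL}): a nonzero column $\uc$ of $\Lambda(\LL_{\ux,\ue})$ satisfies $\ux_k^{\uc}\le\ux_k^{\ue}$ for every $k$.

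This needs no analysis of braid localizations. Truncate after step $k$, writing $\LL_{\ux,\ue}=G_k\circ(F_k\otimes\id)$, where $F_k$ lands in $B_{\underline{r}_k}$ for a rex $\underline{r}_k$ of $\ux_k^{\ue}$. A nonzero entry in column $\uc$ forces $\ux_k^{\uc}=\underline{r}_k^{\uh}$ for some subexpression $\uh$, hence $\ux_k^{\uc}\le\ux_k^{\ue}$.

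Within a fixed $w$, you then need a total order that is a linear extension of path dominance in both $\ue$ and $\uf$. One choice is decreasing $h_{\ux}(\ue)+h_{\uy}(\uf)$, where $h_{\ux}(\ue)=\sum_k\ell(\ux_k^{\ue})$; this quantity strictly increases along strict path dominance.

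\textbf{Your lexicographic fallback on $0/1$ strings does not work in either direction.} At the first index where two strings differ, path dominance favours $0$ at a $U$-step but $1$ at a $D$-step. The example above forces $(1,0)$ before $(0,1)$. For $\ux=(s,s,s)$ and $\ue=(1,0,0)$ one has $\LL_{\ux,\ue}=\mu^s\circ(\mu^s\otimes\id)$ and $\Lambda(\LL_{\ux,\ue})_{(1),(1,1,1)}=1$. This forces $(1,0,0)$ before $(1,1,1)$, the opposite lexicographic direction.

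\textbf{Two parts of your proposal are fine.} The cross-$w$ vanishing ($w_i\le w_j$) is correct. So is the diagonal computation: $\uh=\mathbf{1}$ is forced, and the entry is $\zeta_{\rex(w_i)}(\mathbf{1})\gamma_{\ux}(\ue^i)\gamma_{\uy}(\uf^i)/\zeta_{\uy}(\uf^i)$ by \cite[Proposition 6.6]{EW}.
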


        As a result, we can use forward substitution to solve for the coefficients $c_i$. The resulting algorithm is detailed in Algorithm~\ref{algo:coeff_algo}.

        Computer algebra systems are not very efficient at computing in the polynomial ring $R$ or its fraction field.
        To speed up Algorithm~\ref{algo:coeff_algo} we also present a variation, Algorithm~\ref{algo:coeff_algo_interpolate}, which makes use of polynomial interpolation (see Proposition~\ref{prop:interpolation}) to 
        replace symbolic calculations by exact arithmetic in $\k$.

         We use Algorithm~\ref{algo:coeff_algo_interpolate} to find the double leaf expansion of the missing $H_3$ relation.
         By choosing our starting reduced expression of the longest element of $H_3$ carefully, we will see that only 125 double leaves contribute to this expansion.
         Furthermore, by using the thick Soergel calculus of \cite{Elib,EliThick,AbeThick1,AbeThick2}, we can further simplify the $H_3$ relation to be a difference of two thick Elias--Williamson graphs.

         \begin{theorem}\label{thm:thick_h3}
                For any balanced realization of $H_3$, the following relation holds between Abe--Bott--Samelson bimodules,
        \begin{equation*}
                \resizebox{0.95\hsize}{!}{$

  \tikzsetnextfilename{h3_left}
  \tikzstyle{every picture}=[tikzfig]
  \begin{tikzpicture}
	\begin{pgfonlayer}{nodelayer}
		\node [style=none] (0) at (2.25, -3.25) {};
		\node [style=none] (1) at (1.75, -3.25) {};
		\node [style=none] (2) at (1.25, -3.25) {};
		\node [style=none] (3) at (0.75, -3.25) {};
		\node [style=none] (4) at (0.25, -3.25) {};
		\node [style=none] (5) at (-0.25, -3.25) {};
		\node [style=none] (6) at (-0.75, -3.25) {};
		\node [style=none] (7) at (-1.25, -3.25) {};
		\node [style=none] (8) at (-1.75, -3.25) {};
		\node [style=none] (9) at (2.75, -3.25) {};
		\node [style=none] (10) at (3.25, -3.25) {};
		\node [style=none] (11) at (-2.25, -3.25) {};
		\node [style=none] (12) at (-2.75, -3.25) {};
		\node [style=none] (13) at (-3.25, -3.25) {};
		\node [style=none] (14) at (-3.75, -3.25) {};
		\node [style=none] (15) at (2.25, 3.25) {};
		\node [style=none] (16) at (1.75, 3.25) {};
		\node [style=none] (17) at (1.25, 3.25) {};
		\node [style=none] (18) at (0.75, 3.25) {};
		\node [style=none] (19) at (0.25, 3.25) {};
		\node [style=none] (20) at (-0.25, 3.25) {};
		\node [style=none] (21) at (-0.75, 3.25) {};
		\node [style=none] (22) at (-1.25, 3.25) {};
		\node [style=none] (23) at (-1.75, 3.25) {};
		\node [style=none] (24) at (2.75, 3.25) {};
		\node [style=none] (25) at (3.25, 3.25) {};
		\node [style=none] (26) at (-2.25, 3.25) {};
		\node [style=none] (27) at (-2.75, 3.25) {};
		\node [style=none] (28) at (-3.25, 3.25) {};
		\node [style=none] (29) at (-3.75, 3.25) {};
		\node [style=none] (30) at (2.75, -2.75) {};
		\node [style=none] (31) at (1.25, -2.25) {};
		\node [style=none] (32) at (-0.25, -2.75) {};
		\node [style=none] (33) at (-1.25, -1.75) {};
		\node [style=none] (34) at (0.25, -1.25) {};
		\node [style=none] (35) at (-2.25, -1.25) {};
		\node [style=none] (36) at (1.25, -0.75) {};
		\node [style=none] (37) at (-1.25, -0.75) {};
		\node [style=none] (38) at (0.25, -0.25) {};
		\node [style=none] (39) at (-2.25, 0.25) {};
		\node [style=none] (40) at (-0.75, 0.75) {};
		\node [style=none] (41) at (1, 1) {};
		\node [style=none] (42) at (2.75, 0.5) {};
		\node [style=none] (43) at (-0.25, 2.25) {};
		\node [style=none] (44) at (1.25, 2.75) {};
		\node [style=none] (45) at (-1.5, 2.5) {};
	\end{pgfonlayer}
	\begin{pgfonlayer}{edgelayer}
		\draw [style={red_edge}] (31.center) to (3.center);
		\draw [style={red_edge}] (31.center) to (1.center);
		\draw [style=blue edge] (31.center) to (2.center);
		\draw [style=blue edge] (31.center) to (32.center);
		\draw [style=blue edge] (32.center) to (5.center);
		\draw [style=blue edge] (31.center) to (30.center);
		\draw [style=blue edge] (30.center) to (9.center);
		\draw [style={green_edge}] (32.center) to (4.center);
		\draw [style={green_edge}] (32.center) to (6.center);
		\draw [style={green_edge}] (30.center) to (0.center);
		\draw [style={green_edge}] (30.center) to (10.center);
		\draw [style={red_edge}] (33.center) to (31.center);
		\draw [style={red_edge}] (33.center) to (7.center);
		\draw [style=blue edge] (33.center) to (32.center);
		\draw [style=blue edge] (33.center) to (8.center);
		\draw [style=blue edge] (33.center) to (34.center);
		\draw [style=blue edge] (34.center) to (31.center);
		\draw [style={green_edge}] (34.center) to (32.center);
		\draw [style={green_edge}] (35.center) to (11.center);
		\draw [style={red_edge}] (33.center) to (12.center);
		\draw [style=blue edge] (35.center) to (33.center);
		\draw [style=blue edge] (13.center) to (35.center);
		\draw [style={green_edge}] (34.center) to (36.center);
		\draw [style=blue edge] (31.center) to (36.center);
		\draw [style={green_edge}] (36.center) to (30.center);
		\draw [style={green_edge}] (37.center) to (35.center);
		\draw [style={green_edge}] (37.center) to (34.center);
		\draw [style=blue edge] (37.center) to (33.center);
		\draw [style=blue edge] (37.center) to (38.center);
		\draw [style=blue edge] (38.center) to (34.center);
		\draw [style=blue edge] (38.center) to (36.center);
		\draw [style={red_edge}] (38.center) to (31.center);
		\draw [style={red_edge}] (38.center) to (33.center);
		\draw [style={red_edge}] (39.center) to (38.center);
		\draw [style={red_edge}] (39.center) to (14.center);
		\draw [style={green_edge}] (35.center) to (29.center);
		\draw [style=blue edge] (39.center) to (28.center);
		\draw [style=blue edge] (39.center) to (35.center);
		\draw [style=blue edge] (39.center) to (37.center);
		\draw [style={red_edge}] (39.center) to (33.center);
		\draw [style=blue edge] (39.center) to (40.center);
		\draw [style=blue edge] (41.center) to (42.center);
		\draw [style={green_edge}] (40.center) to (41.center);
		\draw [style={red_edge}] (43.center) to (42.center);
		\draw [style=blue edge] (43.center) to (44.center);
		\draw [style=blue edge] (43.center) to (45.center);
		\draw [style={green_edge}] (40.center) to (37.center);
		\draw [style={green_edge}] (45.center) to (40.center);
		\draw [style={green_edge}] (45.center) to (27.center);
		\draw [style={green_edge}] (45.center) to (21.center);
		\draw [style=blue edge] (23.center) to (45.center);
		\draw [style=blue edge] (45.center) to (39.center);
		\draw [style={red_edge}] (26.center) to (39.center);
		\draw [style={red_edge}] (43.center) to (22.center);
		\draw [style=blue edge] (44.center) to (42.center);
		\draw [style=blue edge] (43.center) to (20.center);
		\draw [style={green_edge}] (44.center) to (16.center);
		\draw [style={green_edge}] (44.center) to (19.center);
		\draw [style=blue edge] (17.center) to (44.center);
		\draw [style={red_edge}] (43.center) to (18.center);
		\draw [style=blue edge] (43.center) to (41.center);
		\draw [style=blue edge] (41.center) to (38.center);
		\draw [style={green_edge}] (41.center) to (44.center);
		\draw [style={green_edge}] (41.center) to (36.center);
		\draw [style={red_edge}] (39.center) to (43.center);
		\draw [style={red_edge}] (42.center) to (15.center);
		\draw [style=blue edge] (42.center) to (24.center);
		\draw [style={red_edge}] (42.center) to (25.center);
		\draw [style={red_edge}] (42.center) to (38.center);
		\draw [style=blue edge] (42.center) to (36.center);
		\draw [style=blue edge] (42.center) to (30.center);
		\draw [style={red_edge}] (42.center) to (31.center);
		\draw [style=blue edge] (40.center) to (38.center);
		\draw [style=blue edge] (43.center) to (40.center);
		\draw [style={red_edge}] (43.center) to (38.center);
	\end{pgfonlayer}
\end{tikzpicture}
 - 
  \tikzsetnextfilename{h3_right}
  \tikzstyle{every picture}=[tikzfig]
  \begin{tikzpicture}
	\begin{pgfonlayer}{nodelayer}
		\node [style=none] (0) at (-2, 3.25) {};
		\node [style=none] (1) at (-1.5, 3.25) {};
		\node [style=none] (2) at (-1, 3.25) {};
		\node [style=none] (3) at (-0.5, 3.25) {};
		\node [style=none] (4) at (0, 3.25) {};
		\node [style=none] (5) at (0.5, 3.25) {};
		\node [style=none] (6) at (1, 3.25) {};
		\node [style=none] (7) at (1.5, 3.25) {};
		\node [style=none] (8) at (2, 3.25) {};
		\node [style=none] (9) at (-2.5, 3.25) {};
		\node [style=none] (10) at (-3, 3.25) {};
		\node [style=none] (11) at (2.5, 3.25) {};
		\node [style=none] (12) at (3, 3.25) {};
		\node [style=none] (13) at (3.5, 3.25) {};
		\node [style=none] (14) at (4, 3.25) {};
		\node [style=none] (15) at (-2, -3.25) {};
		\node [style=none] (16) at (-1.5, -3.25) {};
		\node [style=none] (17) at (-1, -3.25) {};
		\node [style=none] (18) at (-0.5, -3.25) {};
		\node [style=none] (19) at (0, -3.25) {};
		\node [style=none] (20) at (0.5, -3.25) {};
		\node [style=none] (21) at (1, -3.25) {};
		\node [style=none] (22) at (1.5, -3.25) {};
		\node [style=none] (23) at (2, -3.25) {};
		\node [style=none] (24) at (-2.5, -3.25) {};
		\node [style=none] (25) at (-3, -3.25) {};
		\node [style=none] (26) at (2.5, -3.25) {};
		\node [style=none] (27) at (3, -3.25) {};
		\node [style=none] (28) at (3.5, -3.25) {};
		\node [style=none] (29) at (4, -3.25) {};
		\node [style=none] (30) at (-2.5, 2.75) {};
		\node [style=none] (31) at (-1, 2.25) {};
		\node [style=none] (32) at (0.5, 2.75) {};
		\node [style=none] (33) at (1.5, 1.75) {};
		\node [style=none] (34) at (0, 1.25) {};
		\node [style=none] (35) at (2.5, 1.25) {};
		\node [style=none] (36) at (-1, 0.75) {};
		\node [style=none] (37) at (1.5, 0.75) {};
		\node [style=none] (38) at (0, 0.25) {};
		\node [style=none] (39) at (2.5, -0.25) {};
		\node [style=none] (40) at (1, -0.75) {};
		\node [style=none] (41) at (-0.75, -1) {};
		\node [style=none] (42) at (-2.5, -0.5) {};
		\node [style=none] (43) at (0.5, -2.25) {};
		\node [style=none] (44) at (-1, -2.75) {};
		\node [style=none] (45) at (1.75, -2.5) {};
	\end{pgfonlayer}
	\begin{pgfonlayer}{edgelayer}
		\draw [style={red_edge}] (31.center) to (3.center);
		\draw [style={red_edge}] (31.center) to (1.center);
		\draw [style=blue edge] (31.center) to (2.center);
		\draw [style=blue edge] (31.center) to (32.center);
		\draw [style=blue edge] (32.center) to (5.center);
		\draw [style=blue edge] (31.center) to (30.center);
		\draw [style=blue edge] (30.center) to (9.center);
		\draw [style={green_edge}] (32.center) to (4.center);
		\draw [style={green_edge}] (32.center) to (6.center);
		\draw [style={green_edge}] (30.center) to (0.center);
		\draw [style={green_edge}] (30.center) to (10.center);
		\draw [style={red_edge}] (33.center) to (31.center);
		\draw [style={red_edge}] (33.center) to (7.center);
		\draw [style=blue edge] (33.center) to (32.center);
		\draw [style=blue edge] (33.center) to (8.center);
		\draw [style=blue edge] (33.center) to (34.center);
		\draw [style=blue edge] (34.center) to (31.center);
		\draw [style={green_edge}] (34.center) to (32.center);
		\draw [style={green_edge}] (35.center) to (11.center);
		\draw [style={red_edge}] (33.center) to (12.center);
		\draw [style=blue edge] (35.center) to (33.center);
		\draw [style=blue edge] (13.center) to (35.center);
		\draw [style={green_edge}] (34.center) to (36.center);
		\draw [style=blue edge] (31.center) to (36.center);
		\draw [style={green_edge}] (36.center) to (30.center);
		\draw [style={green_edge}] (37.center) to (35.center);
		\draw [style={green_edge}] (37.center) to (34.center);
		\draw [style=blue edge] (37.center) to (33.center);
		\draw [style=blue edge] (37.center) to (38.center);
		\draw [style=blue edge] (38.center) to (34.center);
		\draw [style=blue edge] (38.center) to (36.center);
		\draw [style={red_edge}] (38.center) to (31.center);
		\draw [style={red_edge}] (38.center) to (33.center);
		\draw [style={red_edge}] (39.center) to (38.center);
		\draw [style={red_edge}] (39.center) to (14.center);
		\draw [style={green_edge}] (35.center) to (29.center);
		\draw [style=blue edge] (39.center) to (28.center);
		\draw [style=blue edge] (39.center) to (35.center);
		\draw [style=blue edge] (39.center) to (37.center);
		\draw [style={red_edge}] (39.center) to (33.center);
		\draw [style=blue edge] (39.center) to (40.center);
		\draw [style=blue edge] (41.center) to (42.center);
		\draw [style={green_edge}] (40.center) to (41.center);
		\draw [style={red_edge}] (43.center) to (42.center);
		\draw [style=blue edge] (43.center) to (44.center);
		\draw [style=blue edge] (43.center) to (45.center);
		\draw [style={green_edge}] (40.center) to (37.center);
		\draw [style={green_edge}] (45.center) to (40.center);
		\draw [style={green_edge}] (45.center) to (27.center);
		\draw [style={green_edge}] (45.center) to (21.center);
		\draw [style=blue edge] (23.center) to (45.center);
		\draw [style=blue edge] (45.center) to (39.center);
		\draw [style={red_edge}] (26.center) to (39.center);
		\draw [style={red_edge}] (43.center) to (22.center);
		\draw [style=blue edge] (44.center) to (42.center);
		\draw [style=blue edge] (43.center) to (20.center);
		\draw [style={green_edge}] (44.center) to (16.center);
		\draw [style={green_edge}] (44.center) to (19.center);
		\draw [style=blue edge] (17.center) to (44.center);
		\draw [style={red_edge}] (43.center) to (18.center);
		\draw [style=blue edge] (43.center) to (41.center);
		\draw [style=blue edge] (41.center) to (38.center);
		\draw [style={green_edge}] (41.center) to (44.center);
		\draw [style={green_edge}] (41.center) to (36.center);
		\draw [style={red_edge}] (39.center) to (43.center);
		\draw [style={red_edge}] (42.center) to (15.center);
		\draw [style=blue edge] (42.center) to (24.center);
		\draw [style={red_edge}] (42.center) to (25.center);
		\draw [style={red_edge}] (42.center) to (38.center);
		\draw [style=blue edge] (42.center) to (36.center);
		\draw [style=blue edge] (42.center) to (30.center);
		\draw [style={red_edge}] (42.center) to (31.center);
		\draw [style=blue edge] (40.center) to (38.center);
		\draw [style=blue edge] (43.center) to (40.center);
		\draw [style={red_edge}] (43.center) to (38.center);
	\end{pgfonlayer}
\end{tikzpicture}
 = 2 \left( 
  \tikzsetnextfilename{K_rot_G}
  \tikzstyle{every picture}=[tikzfig]
  \begin{tikzpicture}
	\begin{pgfonlayer}{nodelayer}
		\node [style=none] (0) at (-3.25, -3.25) {};
		\node [style=none] (1) at (-2.75, -3.25) {};
		\node [style=none] (2) at (-2.25, -3.25) {};
		\node [style=none] (3) at (-1.75, -3.25) {};
		\node [style=none] (4) at (-1.25, -3.25) {};
		\node [style=none] (5) at (-0.75, -3.25) {};
		\node [style=none] (6) at (-0.25, -3.25) {};
		\node [style=none] (7) at (0.25, -3.25) {};
		\node [style=none] (8) at (0.75, -3.25) {};
		\node [style=none] (9) at (1.25, -3.25) {};
		\node [style=none] (10) at (1.75, -3.25) {};
		\node [style=none] (11) at (2.25, -3.25) {};
		\node [style=none] (12) at (2.75, -3.25) {};
		\node [style=none] (13) at (3.25, -3.25) {};
		\node [style=none] (14) at (3.75, -3.25) {};
		\node [style=none] (47) at (1.75, -1.5) {};
		\node [style={thick_dot_ochre}] (50) at (3.25, -2.75) {};
		\node [style={thick_dot_ochre}] (52) at (0.25, -2.75) {};
		\node [style=none] (53) at (-2, -1.5) {};
		\node [style=none] (57) at (3.25, 3.25) {};
		\node [style=none] (58) at (2.75, 3.25) {};
		\node [style=none] (59) at (2.25, 3.25) {};
		\node [style=none] (60) at (1.75, 3.25) {};
		\node [style=none] (61) at (1.25, 3.25) {};
		\node [style=none] (62) at (0.75, 3.25) {};
		\node [style=none] (63) at (0.25, 3.25) {};
		\node [style=none] (64) at (-0.25, 3.25) {};
		\node [style=none] (65) at (-0.75, 3.25) {};
		\node [style=none] (66) at (-1.25, 3.25) {};
		\node [style=none] (67) at (-1.75, 3.25) {};
		\node [style=none] (68) at (-2.25, 3.25) {};
		\node [style=none] (69) at (-2.75, 3.25) {};
		\node [style=none] (70) at (-3.25, 3.25) {};
		\node [style=none] (71) at (-3.75, 3.25) {};
		\node [style={thick_dot_ochre}] (78) at (-1.75, 2.5) {};
		\node [style=none] (79) at (-1.75, 1.25) {};
		\node [style={thick_dot_ochre}] (81) at (1.25, 2.5) {};
		\node [style=none] (82) at (1.75, 1.25) {};
		\node [style=none] (83) at (0, 0.25) {};
		\node [style=none] (84) at (0, -0.5) {};
		\node [style={thick_dot_ochre}] (85) at (-2.5, -2.25) {};
	\end{pgfonlayer}
	\begin{pgfonlayer}{edgelayer}
		\draw [style=blue edge] (7.center) to (52);
		\draw [style=blue edge, in=-30, out=105, looseness=0.75] (50) to (47.center);
		\draw [style=blue edge] (47.center) to (10.center);
		\draw [style={green_edge}] (8.center) to (52);
		\draw [style={green_edge}] (52) to (6.center);
		\draw [style={red_edge}, in=-45, out=90] (11.center) to (47.center);
		\draw [style={red_edge}, in=90, out=-135] (47.center) to (9.center);
		\draw [style={red_edge}, in=-45, out=90] (5.center) to (53.center);
		\draw [style={red_edge}] (2.center) to (53.center);
		\draw [style=blue edge, in=-75, out=90] (4.center) to (53.center);
		\draw [style={red_edge}, in=90, out=-165] (53.center) to (0.center);
		\draw [style=blue edge] (78) to (79.center);
		\draw [style={red_edge}, in=45, out=-90, looseness=0.75] (66.center) to (79.center);
		\draw [style={red_edge}, in=150, out=-90, looseness=0.75] (68.center) to (79.center);
		\draw [style=blue edge, in=15, out=-90, looseness=0.75] (64.center) to (79.center);
		\draw [style=blue edge, in=165, out=-90] (70.center) to (79.center);
		\draw [style={red_edge}, in=15, out=-90] (57.center) to (82.center);
		\draw [style={red_edge}, in=60, out=-90, looseness=0.75] (59.center) to (82.center);
		\draw [style=blue edge, in=45, out=-90] (58.center) to (82.center);
		\draw [style=blue edge, in=270, out=120] (82.center) to (81);
		\draw [style={red_edge}, in=165, out=-90, looseness=1.25] (62.center) to (82.center);
		\draw [style={thick_purple}, in=45, out=-90, looseness=0.75] (82.center) to (83.center);
		\draw [style={thick_purple}, in=-90, out=120, looseness=0.75] (83.center) to (79.center);
		\draw [style={thick_purple}] (84.center) to (83.center);
		\draw [style={green_edge}, in=90, out=-45] (85) to (3.center);
		\draw [style={green_edge}, in=-90, out=135, looseness=0.50] (85) to (71.center);
		\draw [style=blue edge, in=90, out=-120, looseness=1.25] (85) to (1.center);
		\draw [style=blue edge] (85) to (53.center);
		\draw [style=blue edge] (13.center) to (50);
		\draw [style=blue edge, in=60, out=-150, looseness=0.75] (47.center) to (52);
		\draw [style={green_edge}] (14.center) to (50);
		\draw [style={green_edge}] (50) to (12.center);
		\draw [style={thick_purple}, in=-90, out=90, looseness=0.75] (47.center) to (84.center);
		\draw [style={thick_purple}] (84.center) to (83.center);
		\draw [style={thick_purple}, in=90, out=-105, looseness=0.75] (84.center) to (53.center);
		\draw [style=blue edge] (78) to (67.center);
		\draw [style={green_edge}] (65.center) to (78);
		\draw [style={green_edge}] (78) to (69.center);
		\draw [style={green_edge}] (81) to (63.center);
		\draw [style={green_edge}] (81) to (60.center);
		\draw [style=blue edge] (81) to (61.center);
	\end{pgfonlayer}
\end{tikzpicture}
 - 
  \tikzsetnextfilename{K_H}
  \tikzstyle{every picture}=[tikzfig]
  \begin{tikzpicture}
	\begin{pgfonlayer}{nodelayer}
		\node [style=none] (0) at (3.25, 3.25) {};
		\node [style=none] (1) at (2.75, 3.25) {};
		\node [style=none] (2) at (2.25, 3.25) {};
		\node [style=none] (3) at (1.75, 3.25) {};
		\node [style=none] (4) at (1.25, 3.25) {};
		\node [style=none] (5) at (0.75, 3.25) {};
		\node [style=none] (6) at (0.25, 3.25) {};
		\node [style=none] (7) at (-0.25, 3.25) {};
		\node [style=none] (8) at (-0.75, 3.25) {};
		\node [style=none] (9) at (-1.25, 3.25) {};
		\node [style=none] (10) at (-1.75, 3.25) {};
		\node [style=none] (11) at (-2.25, 3.25) {};
		\node [style=none] (12) at (-2.75, 3.25) {};
		\node [style=none] (13) at (-3.25, 3.25) {};
		\node [style=none] (14) at (-3.75, 3.25) {};
		\node [style=none] (47) at (-1.75, 1.5) {};
		\node [style={thick_dot_ochre}] (50) at (-3.25, 2.75) {};
		\node [style={thick_dot_ochre}] (52) at (-0.25, 2.75) {};
		\node [style=none] (53) at (2, 1.5) {};
		\node [style=none] (57) at (-3.25, -3.25) {};
		\node [style=none] (58) at (-2.75, -3.25) {};
		\node [style=none] (59) at (-2.25, -3.25) {};
		\node [style=none] (60) at (-1.75, -3.25) {};
		\node [style=none] (61) at (-1.25, -3.25) {};
		\node [style=none] (62) at (-0.75, -3.25) {};
		\node [style=none] (63) at (-0.25, -3.25) {};
		\node [style=none] (64) at (0.25, -3.25) {};
		\node [style=none] (65) at (0.75, -3.25) {};
		\node [style=none] (66) at (1.25, -3.25) {};
		\node [style=none] (67) at (1.75, -3.25) {};
		\node [style=none] (68) at (2.25, -3.25) {};
		\node [style=none] (69) at (2.75, -3.25) {};
		\node [style=none] (70) at (3.25, -3.25) {};
		\node [style=none] (71) at (3.75, -3.25) {};
		\node [style={thick_dot_ochre}] (78) at (1.75, -2.5) {};
		\node [style=none] (79) at (1.75, -1.25) {};
		\node [style={thick_dot_ochre}] (81) at (-1.25, -2.5) {};
		\node [style=none] (82) at (-1.75, -1.25) {};
		\node [style=none] (83) at (0, -0.25) {};
		\node [style=none] (84) at (0, 0.5) {};
		\node [style={thick_dot_ochre}] (85) at (2.5, 2.25) {};
	\end{pgfonlayer}
	\begin{pgfonlayer}{edgelayer}
		\draw [style=blue edge] (7.center) to (52);
		\draw [style=blue edge, in=120, out=-75, looseness=0.75] (50) to (47.center);
		\draw [style=blue edge] (47.center) to (10.center);
		\draw [style={green_edge}] (8.center) to (52);
		\draw [style={green_edge}] (52) to (6.center);
		\draw [style={red_edge}, in=135, out=-90] (11.center) to (47.center);
		\draw [style={red_edge}, in=-90, out=45] (47.center) to (9.center);
		\draw [style={red_edge}, in=135, out=-90] (5.center) to (53.center);
		\draw [style={red_edge}] (2.center) to (53.center);
		\draw [style=blue edge, in=105, out=-90] (4.center) to (53.center);
		\draw [style={red_edge}, in=-90, out=15] (53.center) to (0.center);
		\draw [style=blue edge] (78) to (79.center);
		\draw [style={red_edge}, in=-135, out=90, looseness=0.75] (66.center) to (79.center);
		\draw [style={red_edge}, in=-30, out=90, looseness=0.75] (68.center) to (79.center);
		\draw [style=blue edge, in=-165, out=90, looseness=0.75] (64.center) to (79.center);
		\draw [style=blue edge, in=-15, out=90] (70.center) to (79.center);
		\draw [style={red_edge}, in=-165, out=90] (57.center) to (82.center);
		\draw [style={red_edge}, in=-120, out=90, looseness=0.75] (59.center) to (82.center);
		\draw [style=blue edge, in=-135, out=90] (58.center) to (82.center);
		\draw [style=blue edge] (82.center) to (81);
		\draw [style={red_edge}, in=-15, out=90, looseness=1.25] (62.center) to (82.center);
		\draw [style={thick_purple}, in=-135, out=90, looseness=0.75] (82.center) to (83.center);
		\draw [style={thick_purple}, in=90, out=-60, looseness=0.75] (83.center) to (79.center);
		\draw [style={thick_purple}] (84.center) to (83.center);
		\draw [style={green_edge}, in=-90, out=135] (85) to (3.center);
		\draw [style={green_edge}, in=90, out=-45, looseness=0.50] (85) to (71.center);
		\draw [style=blue edge, in=-90, out=60, looseness=1.25] (85) to (1.center);
		\draw [style=blue edge] (85) to (53.center);
		\draw [style=blue edge] (13.center) to (50);
		\draw [style=blue edge, in=-120, out=45, looseness=0.75] (47.center) to (52);
		\draw [style={green_edge}] (14.center) to (50);
		\draw [style={green_edge}] (50) to (12.center);
		\draw [style={thick_purple}, in=90, out=-90, looseness=0.75] (47.center) to (84.center);
		\draw [style={thick_purple}] (84.center) to (83.center);
		\draw [style={thick_purple}, in=-90, out=75, looseness=0.75] (84.center) to (53.center);
		\draw [style=blue edge] (78) to (67.center);
		\draw [style={green_edge}] (65.center) to (78);
		\draw [style={green_edge}] (78) to (69.center);
		\draw [style={green_edge}] (81) to (63.center);
		\draw [style={green_edge}] (81) to (60.center);
		\draw [style=blue edge] (81) to (61.center);
	\end{pgfonlayer}
\end{tikzpicture}
 \right).
                $}
        \end{equation*}
                where the purple thick strand corresponds to the subset $\textcolor{violet}{I} = \{\textcolor{red}{s},\textcolor{blue}{t}\}$, the teal thick strand corresponds to the subset $\textcolor{teal}{J} = \{\textcolor{blue}{t}, \textcolor{green}{u}\}$, and the morphism involving a thick teal dot is given by
                \[
  \tikzsetnextfilename{half_JW}
  \tikzstyle{every picture}=[tikzfig]
  \begin{tikzpicture}
	\begin{pgfonlayer}{nodelayer}
		\node [style=none] (36) at (-3.75, -1) {};
		\node [style=none] (38) at (-2.75, -1) {};
		\node [style=none] (39) at (-3.75, 1) {};
		\node [style=none] (40) at (-2.75, 1) {};
		\node [style={thick_dot_ochre}] (41) at (-3.25, 0) {};
		\node [style=none] (42) at (-2.25, 0) {$\coloneq$};
		\node [style=none] (43) at (-1.75, -1) {};
		\node [style=none] (44) at (-1.25, -1) {};
		\node [style={EW_t}] (45) at (-0.75, -0.75) {};
		\node [style=none] (46) at (-1.25, -0.5) {};
		\node [style=none] (47) at (-1.75, 1) {};
		\node [style=none] (48) at (-1.25, 1) {};
		\node [style={dot_green}] (49) at (-0.75, 0.75) {};
		\node [style=none] (50) at (-1.25, 0.5) {};
		\node [style=none] (53) at (0, 0) {$=$};
		\node [style=none] (54) at (0.75, -1) {};
		\node [style=none] (55) at (0.75, 1) {};
		\node [style=none] (56) at (1.75, -1) {};
		\node [style=none] (57) at (1.75, 1) {};
		\node [style=none] (58) at (2.25, 0) {$+$};
		\node [style=none] (59) at (2.75, -1) {};
		\node [style=none] (60) at (2.75, 1) {};
		\node [style=none] (61) at (3.75, 1) {};
		\node [style=none] (62) at (3.75, -1) {};
		\node [style={dot_green}] (63) at (1.75, -0.25) {};
		\node [style={dot_green}] (64) at (0.75, 0.25) {};
		\node [style={EW_t}] (65) at (3.75, 0.25) {};
		\node [style={EW_t}] (66) at (2.75, -0.25) {};
	\end{pgfonlayer}
	\begin{pgfonlayer}{edgelayer}
		\draw [style=blue edge] (36.center) to (41);
		\draw [style=blue edge] (41) to (40.center);
		\draw [style={green_edge}] (38.center) to (41);
		\draw [style={green_edge}] (41) to (39.center);
		\draw [style=blue edge] (43.center) to (46.center);
		\draw [style=blue edge] (46.center) to (45);
		\draw [style={green_edge}] (50.center) to (49);
		\draw [style={green_edge}] (50.center) to (47.center);
		\draw [style=blue edge] (50.center) to (48.center);
		\draw [style={green_edge}] (46.center) to (44.center);
		\draw [style={thick_teal}] (50.center) to (46.center);
		\draw [style=blue edge] (54.center) to (57.center);
		\draw [style=blue edge] (66) to (59.center);
		\draw [style=blue edge] (65) to (61.center);
		\draw [style={green_edge}] (60.center) to (62.center);
		\draw [style={green_edge}] (55.center) to (64);
		\draw [style={green_edge}] (63) to (56.center);
	\end{pgfonlayer}
\end{tikzpicture}
\]
         \end{theorem}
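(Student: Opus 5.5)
The relation of Theorem~\ref{thm:thick_h3} is an equality of two morphisms $B_{\uw}\to B_{\uw'}$ of Abe--Bott--Samelson bimodules, where $\uw,\uw'$ are the two reduced expressions of the longest element $w_0\in H_3$ along the bottom and top boundaries. My plan is to check it through localization. The localization matrix $\Lambda(\varphi)$ determines $\varphi$, and the map $\lambda$ is injective. So it is enough to show that the two sides have the same double-leaf coefficients, or equivalently that $\lambda$ agrees on both sides.

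\textbf{Step 1: expand the left-hand side.} I would choose the starting reduced expression of $w_0$ as in the introduction, so that only $125$ double leaves contribute. I would then run Algorithm~\ref{algo:coeff_algo_interpolate} on the difference of the two $15$-valent Zamolodchikov-type diagrams. This is valid because of Theorem~\ref{thm:lower_triang_of_coord_functionals}: the matrix $[\lambda(\dLL_i)]$ is lower triangular with nonzero diagonal, so forward substitution recovers the coefficients $c_i$ exactly.

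The inputs are the localization matrices of the two sides. I would compute these as composites of the localized generators, namely the dots, trivalent vertices and $2m_{st}$-valent vertices. For balanced realizations these generators have explicit localizations in $\Frac(R)$. The columns of $\Lambda(\dLL_i)$ come from Proposition~\ref{prop:loc_of_dl}, using cached light-leaf localizations.

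\textbf{Step 2: expand the right-hand side.} I would unfold the thick diagrams into ordinary Elias--Williamson diagrams.
\begin{itemize}
\item Each thick teal dot is expanded via the displayed definition as a sum of two dot diagrams.
\item Each thick purple strand for $I=\{s,t\}$ is written, by thick calculus \cite{Elib,EliThick}, as the composite of the $2m_{st}$-valent-vertex morphisms through $B_{w_I}$.
\end{itemize}
I would compute the localization of the resulting sums in the same way, apply $\lambda$, and obtain its coefficient vector by forward substitution. I would then compare it with twice the vector from Step 1. A cleaner alternative is to compare $\lambda$-vectors directly, since $\lambda$ is injective; this avoids even solving the triangular system for the right-hand side.

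\textbf{Uniformity over balanced realizations.} All entries are rational functions in the simple roots, with coefficients in $\Z[\phi]$, where $\phi$ is the golden ratio entering the Cartan matrix. The computation should therefore be carried out symbolically over that ring. Alternatively, one can use the interpolation argument of Proposition~\ref{prop:interpolation}, evaluating at enough points to pin down the polynomial coefficients, whose degrees are bounded by the grading.

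\textbf{Main obstacle.} I expect the hardest step to be Step 2, more than the computation itself. Identifying the $125$-term double-leaf expansion with the compact thick expression requires guessing the right thick diagrams. I would first group the double leaves by their middle element and recognize sums of them as idempotent-like pieces supported on the parabolic $\{s,t\}$ and $\{t,u\}$. Those groupings are what suggest the thick purple and teal strands; once the candidate is guessed, verifying it is mechanical by the above localization comparison.
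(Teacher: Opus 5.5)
Comparing $\lambda$-vectors of the two sides is a legitimate way to certify the identity over the golden realization. The paper instead identifies the $ststsu$- and $ststs$-blocks with $-2K$ and $2(G-H)$ by hand, and uses the computer only for $\varphi$ and $\rho(K)$. The genuine gap is your treatment of \emph{arbitrary} balanced realizations. You assume the Cartan matrix involves the golden ratio, and that a symbolic computation over $\Z[\phi]$, or interpolation, covers every case. But a balanced realization of $H_3$ can live over any noetherian domain, including positive characteristic, where interpolation does not apply. Its $\fr{h}$ can have any rank, its simple roots need not form a basis, and its Cartan entry is only some $\theta$ with $\theta^2-\theta-1=0$. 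A computation in one realization proves nothing about another until you supply a transport mechanism, and that mechanism is the missing idea. It has three parts:
\begin{enumerate}
\item Every balanced realization has Cartan matrix of the form \eqref{eq:universal_H3_realization}, so the rank-three realization $\fr{h}_{\mathcal{Z}}$ over $\mathcal{Z}=\Z[\tau]/(\tau^2-\tau-1)$ maps to it (Lemma~\ref{lem:universal_H3_realization}).
\item The base-change functor $\BS_f$ sends EW generators, hence diagrams and double leaves, to their counterparts (Proposition~\ref{prop:behavior_of_BS_on_generators}).
\item $\BS_g$ for $\mathcal{Z}\hookrightarrow\Q(\phi)$ is faithful (Corollary~\ref{cor:BS_inj_or_surj}). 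So vanishing checked over the golden realization gives vanishing over $\mathcal{Z}$, and then, via $\BS_f$, in every balanced realization (Proposition~\ref{prop:rational_reduction}).
\end{enumerate}
The faithfulness step needs the difference to be a morphism defined over $\mathcal{Z}$. This is automatic if you compare both sides as $\Z$-combinations of diagrams. If you argue through coefficient vectors, you must also check that the computed $c_i$ lie in $R_{\fr{h}_{\mathcal{Z}}}$, as the paper does.

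Step~2 is also incomplete, and the thick side has to make sense in every realization. The purple pieces are not just fan-in/fan-out pairs, which Lemma~\ref{lem:thick_jw} would turn into $2m$-valent vertices. They contain the thick merge and split $\mu^I$ and $\nu^I$. You must eliminate these with the absorption identities of Lemma~\ref{lem:thick_relns}, or else localize $\mu^I,\nu^I$ directly. For instance, $\mu^I(\id\otimes p_{\uw})$ is an iterated $\mu^{I,s}$, and $\mu^{I,s}=p_{\uw}(\id\otimes\mu^s)(\iota_{\uw}\otimes\id)$. Several things rest on Assumption~\ref{ass:gen_bs} for $I=\{s,t\}$ and $J=\{t,u\}$: these identities, the second equality in the definition of the teal dot, and the existence of $B_I$ and $B_J$ with their localization data. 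You must verify that assumption for an arbitrary balanced realization (Lemma~\ref{lem:gen_bs_ass_satisfied_for_H3}). Base change is only available on $\BS$, not on $\gBS$. So it is exactly this realization-by-realization unfolding into EW diagrams that lets the transport argument reach the thick statement. Finally, watch the factor of $2$: the bracketed expression should have half the left-hand coefficients, not twice.
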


         \begin{remark}
                In the final stages of preparing this paper, the author was made aware of related work by Nechitailo \cite{Nechitailo} that also found a $3$-color relation in type $H_3$.
                The approaches between the two papers differ substantially with the main similarity being the use of polynomial interpolation.
                In particular, \emph{loc. cit.} solves for the coefficients by computing inverses of blocks of localization matrices evaluating at a fixed point whereas our approach uses forward-substitution using the $\lambda$-functionals
                developed in this paper.

                Nechitailo's relation involved 43,424 nonzero coefficients, and is not further simplified. The better efficiency of the algorithm in the present work made it quite feasible
                to check all possible $3$-color relations for a fixed choice of light-leaf datum. This allowed us to find a minimal relation with 125 nonzero coefficients, and then further simplify it using thick Soergel calculus.
                
                Finally, the code accompanying \emph{loc. cit.} is specialized to $H_3$ and cannot be used to find double-leaf expansions for more general morphisms or Coxeter groups.
                The code accompanying this paper was designed to be a general-purpose Soergel calculus utility.
                However, our code is implemented in Python which tends to be much slower than a modern performant language like Rust used in Nechitailo's implementation.
                A port of the codebase for this paper to a language like Rust would likely yield even higher efficiency gains.   
         \end{remark}

        \subsection{Implementation}

        The algorithms of the paper are implemented using the Sage computer algebra system. The source code containing both the symbolic and interpolation algorithms is available online at \url{https://github.com/colton5007/DLCalc}.
        
        The code can be used to compute localization data and double-leaf expansions for diagramatically-defined morphisms between Bott--Samelson bimodules. Abstract Coxeter groups and realizations described by Cartan matrices are also allowed with some restrictions on the ground field.
         A thorough README file accompanies the repository and contains all the necessary instructions and examples to both verify the work in this paper and to become more comfortable using the codebase.

        \subsection{Statement on AI Usage}

        The author acknowledges the use of generative AI during the preparation of this manuscript.
        The vast majority of the paper, including the formula for computing localization matrices of double leaves, Theorem \ref{thm:intro}, Algorithm \ref{algo:coeff_algo}, and Theorem \ref{thm:thick_h3} was developed
        solely by the author without AI assistance. 

        The first implementation of Algorithm \ref{algo:coeff_algo} was done using GPT-5.5 Sol and OpenAI Codex.
        Instead of implementing Algorithm \ref{algo:coeff_algo} as prompted, ChatGPT actually implemented a polynomial interpolation variant.
        This was done without any specific instructions from the author to find a more efficient algorithm.
        ChatGPT's implementation was essentially Algorithm \ref{algo:coeff_algo_interpolate} in the special case of $W = H_3$. 
        The author subsequently mathematically formulated this algorithm and extended it to handle general Coxeter groups.
        This formulation and extended implementation did not use further AI assistance other than the author's inspection of ChatGPT's $H_3$ implementation. 
        No other math in this paper was created with the assistance of AI.

        The initial implementation of the algorithms in this paper was AI generated; however, the codebase constructed was not to the authors liking. 
        In particular, it was overly specialized to the $H_3$ example, many functions were written solely with optimization in mind rather than human readability/understanding, and the code had essentially no documentation.
        The author subsequently heavily modified the code provided by the LLM to create a utility that mathematicians can utilize for performing general computations around finding double-leaf expansions of Bott--Samelson bimodule morphisms. 
        During these coding revisions, generative AI was used to assist in debugging and construct unit tests.
        The majority of the present codebase is human written, and all of it has been human verified. More detailed comments on the use of AI in the development of the code can be found in the README.md file accompanying it.

        None of the text in this paper is AI generated. In the preparation of the manuscript, LLMs were only used to find typos and review mathematical accuracy.
        The author takes full responsibility for the content of the paper and the implementations of Algorithms \ref{algo:coeff_algo} and \ref{algo:coeff_algo_interpolate}.
         
        \subsection{Acknowledgements}

        The author would like to thank Ben Elias for his interest, helpful comments, and the suggestion to use thick Soergel calculus to simplify the $3$-color relation. The author was supported by Simons Collaboration Award 994328 ``New Structures in Low-Dimensional Topology.''

       \section{Soergel Bimodules and Diagrammatics}

       \subsection{Coxeter systems}

       Let $(W,S)$ be a Coxeter system and let $e \in W$ denote the identity. Throughout this work, we assume that $S$ is finite.
        To each pair of distinct simple reflections $s,t \in S$, we associate a number $m_{st} \in \{2,3,\ldots\} \cup \{\infty\}$ such that 
        \[\underbrace{sts\ldots}_{m_{st}} = \underbrace{tst\ldots}_{m_{st}} \]
        provided $m_{st} < \infty$. The Coxeter structure of $W$ is determined by the $m_{st}$'s.
        The group $W$ is equipped with the Bruhat order $\leq$ and the length function $\ell : W \to \Z_{\geq 0}$.
        The rank of $W$ is the cardinality of $S$. A reflection in $W$ is an element of the form $wsw^{-1}$ for $w \in W$ and $s \in S$.

        We write $\Exp (W)$ for the set of expressions in $W$. These consist of sequences $\uw = (s_1, \ldots, s_k)$ of simple reflections.
        The length of $\uw$ is $\ell (\uw) \coloneq k$. Note that $\ell (\uw ) \geq \ell (w)$ where $w = s_1 \ldots s_k$, and equality holds if  and only if $\uw$ is a reduced expression.
        We adopt the convention of \cite{EW} and shorten ``reduced expression'' to \emph{rex} with the plural form being \emph{rexes}.

        For a fixed $w \in W$, we can consider the rex graph $\Gamma_w$ of $w$.
        This is a directed graph whose vertices are reduced expressions of $w$. There is an edge between two reduced expressions if they differ by a single application of a braid relation.
        By Matsumoto's theorem \cite{Matsumoto}, 
        the rex graph is connected. Moreover, it is crucial to note that $\Gamma_w$ often has cycles. These cycles can be decomposed into elementary cycles of two types (see \cite[\S 4.2]{EW}).
        First, there are the disjoint cycles which come from performing two braid relations on disjoint parts of an expression in either order. 
        The second are the Zamolodchikov cycles coming from rank 3 finite parabolic subgroups.

        A subexpression $\ue$ of an expression $\uw = (s_1, \ldots, s_k)$ is a tuple $(e_1, \ldots, e_k)$ where $e_i \in \{0,1\}$ for all $i=1,\ldots, k$.
        Given such a subexpression, we can consider the evaluation of $\uw$ at $\ue$ which is the element
        \[\uw^{\ue} \coloneq s_1^{e_1} \ldots s_k^{e_k} \in W.\]
        The \emph{Bruhat stroll} associated to $\ue$ is the sequence $\uw_0^{\ue}, \ldots, \uw_k^{\ue}$ in $W$ defined by
        \[\uw_i^{\ue} = s_1^{e_1} \ldots s_i^{e_i} \in W.\]
        Note our convention is that $\uw_0^{\ue} = e$.
        Let $\Sub (\uw)$ denote the set of all subexpressions of $\uw$. We can equip $\Sub (\uw)$ with a partial order $\leq$ called the \emph{path dominance order}.
        This partial order is defined by
        \[\ue \leq \uf \iff \uw_i^{\ue} \leq \uw_i^{\uf} \text{ for all } i=1,\ldots, k.\]
        For some $x \in W$, we can also consider the subset of subexpressions which evaluate to $x$,
        \[\Sub^x (\uw) \coloneq \{ \ue \in \Sub (\uw) \mid \uw^{\ue} = x\}.\]
        There is a distinguished element in $\Sub (\uw)$ called the \emph{all ones subexpression}, $\mathbf{1} = (1,1,\ldots,1)$.

        Let $\uw = (s_1, \ldots, s_k) \in \Exp (W)$ and $\ue \in \Sub (\uw)$.
       For each $i=1,\ldots, k$, we write $\uw_{\leq i}$ (resp. $\ue_{\leq i}$) for the truncation of $\uw$ (resp. $\ue$) to the first $i$ terms.
       Note $\ue_{\leq i}$ is a subexpression of $\uw_{\leq i}$. Let $w_i = \uw_{\leq i}^{\ue_{\leq i}}$.
       We can define the $i$-th \emph{decoration} of $\ue$ by
       \[\dec_i (\uw, \ue) \coloneq \begin{cases} U0 & \text{if } w_{i-1} s_i > w_{i-1} \text{ and } e_i = 0, \\ U1 & \text{if } w_{i-1} s_i > w_{i-1} \text{ and } e_i = 1, \\ D0 & \text{if } w_{i-1} s_i < w_{i-1} \text{ and } e_i = 0, \\ D1 & \text{if } w_{i-1} s_i < w_{i-1} \text{ and } e_i = 1. \end{cases}\]
       The \emph{defect} of $\ue$ is the integer defined by
       \[d (\uw, \ue) \coloneq \# \{i \mid \dec_i (\uw, \ue) = U0\} - \# \{ i \mid \dec_i (\uw, \ue) = D0\}.\]

        \subsection{Realizations}

        Let $\k$ be a noetherian domain.
        Let $A = \Z [x,y]$ be the integral polynomial ring in two variables. Given $a,b \in \k$ and $p(x,y) \in A$, we write $p(a,b)$ for the element in $\k$ obtained by substituting $x=a$ and $y=b$.
        We define a collection of elements in $A$ called the \emph{two-colored quantum numbers}  as follows.
        First, set $[0]_x = 0$, $[0]_y = 0$, $[1]_x = 1$, $[1]_y = 1$, $[2]_x = x$, and $[2]_y = y$.
        Define the remaining quantum numbers recursively by the formulas
        \[ [n+1]_x = [2]_x [n]_y - [n-1]_x \qquad\text{and}\qquad [n+1]_y = [2]_y [n]_x - [n-1]_y.\]
        
        Let $n \in \Z_{\geq 0}$ and $0 \leq k \leq n$. Define the \emph{two-colored quantum binomial coefficients} as
        \[\qbinom{n}{k}_x \coloneq \frac{[n]_x [n-1]_x \ldots [n-k+1]_x}{[1]_x [2]_x \ldots [k]_x} \in A,\]
        and likewise $\qbinom{n}{k}_y$ can be defined by interchanging $x$ with $y$ in the above definition.

        \begin{definition}
                A \emph{pre-realization} of a Coxeter group $(W,S)$ over $\k$ consists of a free, finite rank $\k$-module $\fr{h}$ along with subsets
                \[\{\alpha_s^{\vee} : s \in S\} \subset \fr{h} \qquad\text{and}\qquad \{\alpha_s : s\in S\} \subset \fr{h}^*,\]
                called the simple coroots and simple roots respectively, such that
                \begin{enumerate}
                        \item $\langle \alpha_s^{\vee}, \alpha_s \rangle = 2$ for all $s \in S$;
                        \item the assignment
                                \[s (v) = v - \langle v , \alpha_s \rangle \alpha_s^{\vee}\]
                                for all $s \in S$ and $v \in \fr{h}$ defines a representation of $W$ on $\fr{h}$;
                        \item $\alpha_s : \fr{h} \to \k$ and $\alpha_s^{\vee}: \fr{h}^* \to \k$ are surjective for all $s \in S$.
                \end{enumerate}

                To every pre-realization $\fr{h}$, there is a Cartan matrix $[a_{st}]_{s,t \in S}$ where $a_{st} \coloneq \langle \alpha_s^{\vee}, \alpha_t \rangle$.
                We will further call $\fr{h}$ a \emph{realization} if for all distinct $s,t \in S$ such that $st$ has order $m_{st} < \infty$, we have
                \begin{equation}\label{eq:qbinom_vanishing}
                         \qbinom{m_{st}}{k}_x (-a_{st}, -a_{ts})  = \qbinom{m_{st}}{k}_y  (-a_{st}, -a_{ts})  = 0
                \end{equation}
                for all integers $1 \leq k \leq m_{st} - 1$.
        \end{definition}

        \begin{definition}
                Let $(W,S)$ be a Coxeter group with realization $\fr{h}$ over $\k$. We say that $\fr{h}$ is \emph{balanced} if 
                \begin{equation}\label{eq:balanced}
                        [m_{st} - 1]_x (-a_{st}, -a_{ts}) = 1 = [m_{st} - 1]_y (-a_{st}, -a_{ts})
                \end{equation}
                for all distinct $s,t \in S$ with $m_{st} < \infty$.
        \end{definition}

        Throughout this paper, we will assume that all of our realizations are balanced.
        This assumption is not strictly necessary (cf. \cite{EW, EWloc}); however, it greatly simplifies the quantum combinatorics and diagrammatics.

        It will be useful to reinterpret \eqref{eq:qbinom_vanishing} and \eqref{eq:balanced} in terms of the vanishing of certain minimal polynomials.
        For $m > 2$, we set
        \begin{equation}\label{eq:min_polys}
                \Psi_m (z) \coloneq \minpoly_{\Q} \left(4\cos^2 \frac{\pi}{m}\right) \qquad\text{and}\qquad \Psi_m' (z) \coloneq \minpoly_{\Q} \left( 2 \cos \frac{\pi}{m}\right).
        \end{equation}

        \begin{theorem}[{\cite{Hazi1, Hazi2}}]\label{thm:abe_conditions}
                Let $s,t \in S$ be distinct simple reflections with $m = m_{st} < \infty$ and $m > 2$.
                \begin{enumerate}
                        \item If $m$ is odd, then \eqref{eq:qbinom_vanishing} and \eqref{eq:balanced} hold if and only if $a_{st} = a_{ts}$ and $\Psi_m' (-a_{st}) = 0$.
                        \item If $m$ is even, then \eqref{eq:qbinom_vanishing} and \eqref{eq:balanced} hold if and only if $\Psi_m (a_{st} a_{ts}) = 0$.
                \end{enumerate}
        \end{theorem}
        \begin{proof}
                By \cite[Theorem 3.8]{Hazi1}, one has that \eqref{eq:qbinom_vanishing} holds if and only if $\Psi_m (a_{st} a_{ts}) = 0$.
                This is also stated explicitly in \cite[(15)]{Hazi1}. 
                If $m$ is even, the balancedness equation \eqref{eq:balanced} is automatic under \eqref{eq:qbinom_vanishing} as observed in \cite[Lemma 6.25]{EWloc}.
                As a result, we only need to consider the case of $m$ being odd.

                Assume that $m$ is odd, and that both \eqref{eq:qbinom_vanishing} and \eqref{eq:balanced} hold. 
                By \cite[Theorem 2.3]{Hazi2}, the element $a_{st} a_{ts}$ has a square root $a$ in $\k^{\times}$.
                Moreover, this element satisfies $\Psi_m' (a) = 0$ and 
                \[[m-1]_x (-a_{st}, -a_{ts}) = \frac{-a_{st}}{a} \qquad\text{and}\qquad [m-1]_y (-a_{st}, -a_{ts}) = \frac{-a_{ts}}{a}.\]
                Balancedness \eqref{eq:balanced} then forces $a_{st} = a_{ts} = -a$, and therefore $\Psi_m' (-a_{st}) = 0$.

                Conversely, suppose that $a_{st} = a_{ts}$ and $\Psi_m' (-a_{st}) = 0$. 
                Since $2 \cos (\pi/m)$ is a root of $\Psi_m (z^2)$, its monic minimal polynomial $\Psi_m' (z)$ divides $\Psi_m (z^2)$ in $\Z [z]$.
                As a result, $\Psi_m (a_{st} a_{ts}) = \Psi_m (a_{st}^2) = 0$. By\cite[Theorem 3.8]{Hazi1}, the quantum-binomial vanishing condition holds. 
                Now apply \cite[Corollary 2.5]{Hazi2} to conclude balancedness.
        \end{proof}

        \begin{example}\label{ex:golden_realization}
                The most prominent realization we will encounter in this paper is the \emph{golden realization}.
                Let $(W,S)$ be a Coxeter system for which $m_{st} \in \{2,3,5,\infty\}$. Set $\k = \Q (\phi)$ where $\phi = (1 + \sqrt{5})/2$ denotes the golden ratio.
                Note that $\k$ is isomorphic to the field of fractions for $\Z [\tau]/(\tau^2 - \tau - 1)$ via $\tau \mapsto \phi$.
                We set $\fr{h} = \bigoplus_{s \in S} \k \alpha_s^{\vee}$ and define $\alpha_s \in \fr{h}^*$ via
                \[\langle \alpha_s^{\vee} , \alpha_t \rangle = \begin{cases} 2 & \text{if }s=t, \\ 0 & \text{if }m_{st} = 2, \\ -1 & \text{if }m_{st} = 3, \\ -\phi & \text{if } m_{st} = 5, \\ -2 & \text{if } m_{st} = \infty.\end{cases}\] 
                It is easy to check from its definition that $\fr{h}$ is always a balanced realization.
        \end{example}

        \begin{lemma}[{\cite[Lemma 1.2]{Pat25}}]\label{lem:root_about_refln}
                Let $(W,S)$ be a Coxeter group with a balanced realization $\fr{h}$. Let $r$ be a reflection in $W$.
                Assume that $r = xsx^{-1} = yty^{-1}$ for some $x,y \in W$ and $s,t \in S$ such that $xs > x$ and $yt > y$. Then $x \alpha_s = y \alpha_t$.
        \end{lemma}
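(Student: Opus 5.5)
The plan is to reduce to the case where one of $x,y$ is the identity, and then prove a rank-two statement about balanced realizations by induction on length. First I would observe that $r = xsx^{-1} = yty^{-1}$ gives $s = (x^{-1}y)\,t\,(x^{-1}y)^{-1}$. Setting $z = x^{-1}y$, it suffices to show $z\alpha_t = \alpha_s$ whenever $z t z^{-1} = s$ and the lengths behave compatibly. Applying $x$ to this identity then gives $y\alpha_t = x z\alpha_t = x\alpha_s$. The positivity hypotheses $xs>x$ and $yt>y$ have to be carried through this reduction. The cleaner route is to avoid reducing to $x = e$ directly and instead work with a \emph{canonical} representative.

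For the canonical representative, I would define $\beta_r \coloneq x\alpha_s$ for the minimal-length $x$ with $xs>x$ and $r = xsx^{-1}$. The claim is then that every admissible pair $(x,s)$ gives the same vector. I would prove this by induction on $\ell(x)$. Given an admissible pair $(x,s)$ with $x \neq e$, choose $u \in S$ with $ux < x$.

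There are two cases. If $u x s \neq x s$ in the sense that $(ux)s > ux$ still holds, then $(ux,s)$ is admissible for the reflection $uru$. Induction then identifies $ux\alpha_s$ with the root of $uru$, and applying $u$ yields $x\alpha_s$. The remaining point is that $u$ maps the canonical root of $uru$ to that of $r$, which is compatible whenever $r \neq u$. If instead $(ux)s < ux$, then by the exchange/deletion condition $x s = u x$, so $r = u$. The statement then becomes that $x\alpha_s = \alpha_u$ whenever $xsx^{-1} = u$, $xs > x$, and $ux<x$.

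This last case is the core rank-two input. Using Deodhar/Matsumoto-style arguments, such $x$ factors so that $s$ and $u$ are conjugate through a chain of dihedral parabolic subgroups, with each step being the longest-element-type conjugation in $\langle s',t'\rangle$ with $m_{s't'}$ odd. So it suffices to verify, for $m = m_{st}$ odd, that $w\alpha_t = \alpha_s$, where $w = sts\cdots$ has length $m-1$ and satisfies $wtw^{-1} = s$. This dihedral computation is the main obstacle. Here I would write $w\alpha_t$ explicitly as $[m-1]\alpha_s + [m-2]\alpha_t$ in two-colored quantum numbers evaluated at $(-a_{st},-a_{ts})$. I would then use Theorem~\ref{thm:abe_conditions}: for $m$ odd we have $a_{st} = a_{ts}$, so the two colors coincide, and balancedness \eqref{eq:balanced} gives $[m-1]=1$. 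The vanishing $[m]=0$ from \eqref{eq:qbinom_vanishing} with $k=1$, together with the recursion, forces $[m-2] = 0$ via the symmetry $[m-1-j] = [j]\cdot[m-1]$, a standard consequence of $[m]=0$. Hence $w\alpha_t = \alpha_s$. Chaining these dihedral steps, with positivity preserved along the chain, finishes the induction.
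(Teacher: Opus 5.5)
\paragraph{The gap.} Your induction has no content where it matters.

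\emph{The case split collapses.} If $ux<x$ and $xs>x$, then $\ell(uxs)=\ell(xs)\pm1=\ell(ux)\pm1$, where $\ell(xs)=\ell(x)+1$ and $\ell(ux)=\ell(x)-1$. This forces $\ell(uxs)=\ell(x)$, so $(ux)s>ux$ always holds. Also $r=u$ would mean $xs=ux$, which is impossible for length reasons. So your second case never occurs, and the dihedral computation never enters the induction.

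\emph{The step you treat as routine is the lemma itself.} Everything rests on the assertion that $u$ carries the ``canonical'' root of $uru$ to that of $r$. For a minimal pair $(x_1,s_1)$ of $uru$, this says $ux_1\alpha_{s_1}=x\alpha_s$. Since $\ell(ux_1)$ can equal $\ell(x)$, the induction hypothesis does not give it. The assertion also cannot be formal. Your argument uses balancedness only in the vacuous case, yet the statement fails without it. For $m_{st}=3$, $a_{st}=-2$, $a_{ts}=-\tfrac12$ over $\Q$, one has $sts=s\,t\,s^{-1}=t\,s\,t^{-1}$ with both pairs admissible, but
\[
s\alpha_t=\alpha_t+2\alpha_s\neq\alpha_s+\tfrac12\alpha_t=t\alpha_s .
\]
This also shows that a root defined via ``the'' minimal-length $x$ is not well defined before the lemma is proved.

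\emph{Odd dihedral steps are not enough.} Conjugators need not be built from odd dihedral steps alone. With $m_{su}=2$, $x=u$, $y=e$, $t=s$, the lemma demands $u\alpha_s=\alpha_s$, i.e.\ $a_{us}=0$. For $m_{su}=4$ it demands $usu\,\alpha_s=\alpha_s$. You never check these even-$m$ inputs.

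\emph{The odd computation is off.}
\begin{enumerate}
\item The conjugating word is $ts\cdots ts$, not $sts\cdots$: for $m=3$, $st$ conjugates $t$ to $sts$, not to $s$.
\item The correct formula is $w\alpha_t=[m]\alpha_t+[m-1]\alpha_s$.
\item The claim $[m-2]=0$ is false: for $m=3$ it is $[1]=1$. Under $[m]=0$ the correct identity is $[m-j]=[j][m-1]$.
\end{enumerate}

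\paragraph{A route that works.} The paper only cites this lemma, so here is an argument; it keeps the reduction you abandoned.

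Put $z=y^{-1}x$, so $zsz^{-1}=t$. In the geometric representation, $x\alpha_s$ and $y\alpha_t$ are both the positive root of $r$. Hence $z$ sends $\alpha_s$ to $\alpha_t$ there, so $zs>z$. It then suffices to show $z\alpha_s=\alpha_t$ in $\fr{h}^*$.

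Induct on $\ell(z)$. Choose $u$ with $zu<z$ (so $u\neq s$), and write $z=z'w$ with $z'$ minimal in $zW_{\{s,u\}}$ and $w\in W_{\{s,u\}}$.

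Geometrically, $z'$ maps positive roots of the $\{s,u\}$-subsystem to positive roots, while $z'(w\alpha_s)=\alpha_t$ is simple. This forces $w\alpha_s\in\{\alpha_s,\alpha_u\}$. Since $ws>w$ and $wu<w$, it follows that $m_{su}<\infty$ and that $w$ is the element of length $m_{su}-1$ ending in $u$. This $w$ conjugates $s$ to $u$ if $m_{su}$ is odd, and centralizes $s$ if $m_{su}$ is even.

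In $\fr{h}^*$, verify $w\alpha_s=\alpha_u$ (odd case), resp.\ $w\alpha_s=\alpha_s$ (even case). Use $[m]=0$ and $[m-1]=1$ for the two-colored quantum numbers at $(-a_{su},-a_{us})$; this is exactly where balancedness enters. Finally, apply the induction hypothesis to $z'$, which satisfies $z's>z'$ and $z'u>z'$.
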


        In light of Lemma~\ref{lem:root_about_refln}, we can associate a distinguished root to each reflection $r = wsw^{-1}$ in $W$ where $ws > w$,
        \[\alpha_r = w\alpha_s. \]
        We define the set of roots by
        \[\Phi \coloneq \{ w \alpha_s : w \in W, s \in S\}.\]

       \subsection{The category \texorpdfstring{$\mathsf{C}$}{C}}

       For the remainder of the section, we will review the enhanced theory of Bott--Samelson bimodules due to Abe \cite{Abe19, Abe21}.
       
       Set $R = \Sym (\fr{h}^*)$, assigning degree 2 to each element in $\fr{h}^*$. For an even integer $d$, we write $R^d$ for the collection of homogeneous polynomials in $R$ of degree $d/2$.
       This is the degree $d$ component of $R$.  Consider the localized ring
       \[Q \coloneq R \left[ \frac{1}{\alpha} : \alpha \in \Phi \right].\]
       The roots $\alpha$ are well-defined by Lemma~\ref{lem:root_about_refln}.
       The ring $Q$ naturally inherits a $\Z$-grading from $R$. Moreover, the $W$-action on $\fr{h}^*$ induces actions on $R$ and $Q$ by graded algebra automorphisms.

        Let $\ue$ be a subexpression of $\uw = (s_1, \ldots, s_k) \in \Exp (W)$. 
        For each $i=1,\ldots, k$, define
        \[\beta_i (\uw, \ue) = \uw_{i-1}^{\ue} (\alpha_{s_i}) \in \fr{h}^*, \]
        where we set $\uw_{0}^{\ue} = e$ by convention. Now, we can define
        \[\zeta_{\uw} (\ue) = \prod_{i=1}^k \beta_i (\uw, \ue) \in R,\]
        \[\gamma_{\uw} (\ue) = \left( \prod_{\substack{i \\ \dec_i (\uw, \ue) = U0}} \beta_i (\uw, \ue) \right)  \left( \prod_{\substack{i \\ \dec_i (\uw, \ue) = D1}} (-\beta_i (\uw, \ue)) \right) \in R.\]
        We can now consider a special case of $\zeta$. First, let $s,t$ be distinct simple reflections such that $m = m_{st} < \infty$.
 Define length $m$ alternating expressions in $s$ and $t$,
       \[ {}_s \underline{m} = \underbrace{(s,t,s,\ldots)}_{m \text{ terms}} \qquad\text{and}\qquad {}_t \underline{m} = \underbrace{(t,s,t,\ldots)}_{m \text{ terms}}.\]
        We define
        \begin{equation}\label{eq:pi_st}
                \pi_{s,t} = \zeta_{{}_s \underline{m}} (\mathbf{1}) = \zeta_{{}_t \underline{m}} (\mathbf{1}).
        \end{equation}

       We define a category $\sfC (\fr{h}, W)$ whose objects are triples
       \[(M, (M_Q^w)_{w \in W}, \xi_M)\]
       such that 
       \begin{itemize}
        \item $M$ is a graded $R$-bimodule;
        \item each $M_Q^w$ is a graded $(R,Q)$-bimodule such that $m \cdot r = w (r) \cdot m$ for all $m \in M_Q^w$ and $r \in R$;
        \item $M_Q^w$ is nonzero for finitely many $w \in W$;
        \item $\xi_M : M_Q \coloneq M \otimes_R Q \to \bigoplus_{w \in W} M_Q^w$ is an isomorphism of graded $(R,Q)$-bimodules.
       \end{itemize}
       A morphism in $\sfC (\fr{h}, W)$ from $(M, (M_Q^w)_{w \in W}, \xi_M)$ to $(N, (N_Q^w)_{w \in W}, \xi_N)$ is a morphism of $R$-bimodules\footnote{We allow for morphisms which are not homogeneous of degree $0$.} $\varphi : M \to N$ such that
       \[\xi_N \circ (\varphi \otimes_R Q) \circ \xi_M^{-1} (M_Q^w) \subset N_Q^w\]
       for all $w \in W$. This category has a natural monoidal structure induced by $\otimes_R$ with unit $R$.
       It also admits a shift-of-grading functor $(1)$ defined by $(M(1))^n = M^{n+1}$ and $(M_Q^w (1))^n = (M_Q^w)^{n+1}$.
       To simplify notation, we will often just write $M$ to refer to the whole triple $(M, (M_Q^w)_{w \in W}, \xi_M)$ and $\varphi_Q \coloneq \xi_N \circ (\varphi \otimes_R Q) \circ \xi_M^{-1}$.

       Given $M, N \in \sfC (\fr{h}, W)$ and $d \in \Z$, we will write
       \[\Hom_{\sfC (\fr{h}, W)}^{d} (M,N)\]
       for the $\k$-module of homogeneous morphisms of degree $d$. 

       Let $M \in \sfC (\fr{h}, W)$ such that $M$ is finitely generated as an $R$-bimodule and flat as a right $R$-module.
       Consider the natural map $\pi_M^w : M \to M \otimes_R Q \twoheadrightarrow M_Q^w$. We define $M^w = \pi_M^w (M)$.
       By definition, if $m \in M^w$, then $m \cdot r = w(r) \cdot m$ for all $r \in R$.
       We then have that 
       \begin{equation}\label{eq:Mw_and_embedding}
                M \hookrightarrow \bigoplus_{w \in W} M^w.
       \end{equation} 

       \begin{lemma}\label{lem:detecting_bimodularity}
                Let $M, N \in \sfC (\fr{h}, W)$ such that $M$ and $N$ are finitely generated as $R$-bimodules and flat as right $R$-modules.
                Let $\varphi : M \to N$ be a left $R$-module map. Assume that there are left $R$-module maps $\varphi^w : M^w \to N^w$ satisfying
                \[\pi_N^w \circ \varphi = \varphi^w \circ \pi_M^w\]
                for all $w \in W$.
                Then $\varphi$ is a morphism in $\sfC (\fr{h}, W)$.
       \end{lemma}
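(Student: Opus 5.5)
The lemma asks for two things: that $\varphi$ is right $R$-linear, and that $\varphi_Q$ preserves the $w$-components. For both, the idea is to detect equalities in $N$ through the embedding \eqref{eq:Mw_and_embedding} for $N$, namely $N \hookrightarrow \bigoplus_{w} N^w$ via $(\pi_N^w)_w$.

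\emph{Right $R$-linearity.} First I record that each $\pi_M^w$ is a map of $R$-bimodules. The natural map $M \to M\otimes_R Q$ is bimodule-linear, $\xi_M$ is an $(R,Q)$-bimodule isomorphism, and projecting onto the summand $M_Q^w$ is $(R,Q)$-linear. Hence $\pi_M^w(mr) = \pi_M^w(m)r = w(r)\pi_M^w(m)$.

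Now fix $m \in M$ and $r \in R$, and compute the $w$-components of both sides. Using the hypothesis $\pi_N^w\circ\varphi = \varphi^w\circ\pi_M^w$ and left $R$-linearity of $\varphi^w$,
\[\pi_N^w(\varphi(mr)) = \varphi^w\bigl(w(r)\pi_M^w(m)\bigr) = w(r)\,\varphi^w(\pi_M^w(m)).\]
On the other side,
\[\pi_N^w(\varphi(m)r) = w(r)\,\pi_N^w(\varphi(m)) = w(r)\,\varphi^w(\pi_M^w(m)).\]
These agree for every $w$, so injectivity of $N \hookrightarrow \bigoplus_w N^w$ gives $\varphi(mr) = \varphi(m)r$. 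Thus $\varphi$ is an $R$-bimodule map, and $\varphi_Q = \xi_N\circ(\varphi\otimes_R Q)\circ\xi_M^{-1}$ is a well-defined $(R,Q)$-bimodule map $\bigoplus_v M_Q^v \to \bigoplus_u N_Q^u$.

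\emph{Preservation of components.} Since $\xi_M$ is an isomorphism, $M\otimes_R Q$ is spanned by the elements $m\otimes q$. Their images are $\xi_M(m\otimes q) = \sum_v \pi_M^v(m)q$, and projecting shows $M_Q^w = M^w\cdot Q$. It therefore suffices to show that $\varphi_Q(\pi_M^w(m))$ lies in $N_Q^w$.

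What the hypothesis gives directly is the summed identity
\[\varphi_Q\Bigl(\sum_v \pi_M^v(m)\Bigr) = \sum_v \varphi^v(\pi_M^v(m)).\]
To split this by component, I would show that any $(R,Q)$-bimodule map $M_Q^v \to N_Q^u$ with $u \neq v$ vanishes. Let $y$ be the $u$-component of the image of some $x \in M_Q^v$. Computing $y\cdot r$ in two ways (using the twisted right actions on the source and on the target) gives $(u(r)-v(r))\,y = 0$ for all $r \in R$. Equivalently, $y\cdot(r - u^{-1}v(r)) = 0$.

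\emph{Main obstacle.} The hard step is making $r - u^{-1}v(r)$ act invertibly on $y$ for a suitable $r \in \fr{h}^*$.
\begin{itemize}
\item When $u^{-1}v$ is a reflection $t$, this element is a nonzero multiple of $\alpha_t$, which is invertible in $Q$; hence $y = 0$.
\item For general $u \neq v$, I would first try to use faithfulness of the realization to choose $r$ with $r - u^{-1}v(r)$ a product of roots. Failing that, I would reduce to the reflection case by the standard argument of Abe \cite{Abe19}, which shows morphisms in $\sfC(\fr{h},W)$ between such objects are diagonal.
\end{itemize}
Once off-diagonal components vanish, $\varphi_Q$ is diagonal. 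Comparing components in the summed identity then gives $\varphi_Q(\pi_M^w(m)) = \varphi^w(\pi_M^w(m)) \in N_Q^w$. Extending $Q$-linearly on the right yields $\varphi_Q(M_Q^w) \subset N_Q^w$, so $\varphi$ is a morphism in $\sfC(\fr{h},W)$.
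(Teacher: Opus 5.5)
Your proof of right $R$-linearity is the same as the paper's. The gap is in the component-preservation step.

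You claim that every $(R,Q)$-bimodule map $M_Q^v\to N_Q^u$ with $u\neq v$ vanishes. This is false in the generality of the lemma, because nothing forces $W$ to act faithfully on $\fr{h}^*$. The paper has to verify injectivity of $W_I\to\Aut(\fr{h}^*)$ by hand in Lemma~\ref{lem:gen_bs_ass_satisfied_for_H3}, and realizations of infinite Coxeter groups over a finite field can never be faithful. If $u^{-1}v\neq e$ acts trivially on $\fr{h}^*$, then $r-u^{-1}v(r)=0$ for every $r$. In fact $Q_u=Q_v$ as $(R,Q)$-bimodules, so nonzero off-diagonal maps exist. This is exactly why preserving the $w$-components is a genuine condition in the definition of morphisms of $\sfC(\fr{h},W)$, not an automatic one.

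If you did assume faithfulness, your idea could be rescued without any invertibility. $N_Q^u$ is a summand of the flat $Q$-module $N\otimes_R Q$, hence torsion-free, so $y\cdot(r-u^{-1}v(r))=0$ with $r-u^{-1}v(r)\neq0$ already forces $y=0$. Your fallback, by contrast, is circular: diagonality of $\varphi_Q$ is precisely the defining property of a morphism of $\sfC(\fr{h},W)$, which is what you are trying to prove for $\varphi$.

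The missing idea is that you discarded information by passing to the summed identity. The hypothesis holds for each $w$ separately, and it localizes. Your own twisted-action computation gives $\varphi^w(yr)=\varphi^w(w(r)y)=w(r)\varphi^w(y)=\varphi^w(y)r$. So $\varphi^w$ is right $R$-linear, and $\varphi^w\otimes_R Q$ is defined on $M^w\otimes_R Q=M_Q^w$ (\cite[Lemma 2.4]{Abe19}; injectivity of $M^w\otimes_R Q\to M_Q^w$ is just flatness of $Q$ over $R$).

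Tensoring $\pi_N^w\circ\varphi=\varphi^w\circ\pi_M^w$ with $Q$ then yields $\operatorname{pr}_w\circ\varphi_Q=(\varphi^w\otimes_R Q)\circ\operatorname{pr}_w$ on all of $\bigoplus_v M_Q^v$, not just on the elements $\sum_v\pi_M^v(m)$. So for $x\in M_Q^v$ and $w\neq v$ you get $\operatorname{pr}_w(\varphi_Q(x))=(\varphi^w\otimes_R Q)(0)=0$, that is, $\varphi_Q(M_Q^v)\subseteq N_Q^v$. This is the paper's argument, and it needs no off-diagonal vanishing at all.
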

       \begin{proof}
                We can first show that $\varphi$ is a morphism of $R$-bimodules.
                Let $m \in M$ and $r \in R$. For all $w \in W$, one has that
                \[\pi_N^w (\varphi (mr) - \varphi (m) r) = \varphi^w (w(r) \pi_M^w (m)) - w(r) \varphi^w (\pi_M^w (m)) = 0.\]
                By \eqref{eq:Mw_and_embedding}, we deduce that $\varphi (mr) = \varphi (m) r$, and so $\varphi$ is a morphism of $R$-bimodules.

                It remains to check that $\varphi$ preserves the localization data. By \cite[Lemma 2.4]{Abe19}, $M^w \otimes_R Q = M_Q^w$ and $N^w \otimes_R Q = N_Q^w$.
                The twisted right actions show that each $\varphi^w$ is also right $R$-linear. Localizing the identities $\pi_N^w \varphi = \varphi^w \pi_M^w$ then shows that $\varphi_Q$ acts componentwise by $\varphi^w \otimes_R Q$.
                Therefore, $\varphi_Q (M_Q^w) \subseteq N_Q^w$ as desired.
       \end{proof}

       \subsection{Abe--Bott--Samelson bimodules}\label{ss:BS}

       Let $s \in S$, and write $R^s$ for the subalgebra of $s$-invariant elements in $R$.
       By Demazure surjectivity, we can choose some $\delta_s \in \fr{h}^*$ such that $\langle \delta_s, \alpha_s^{\vee} \rangle = 1$.
       The graded $R$-bimodule $B_s \coloneq R \otimes_{R^s} R(1)$ can be upgraded to an object in $\sfC (\fr{h}, W)$. In particular, if $w \notin \{e ,s\}$, we set $(B_s)_Q^w = 0$, and otherwise,
       \begin{equation}\label{eq:loc_of_B_s}
        (B_s)_Q^e = Q (\delta_s \otimes 1 - 1 \otimes s(\delta_s)) \qquad\text{and}\qquad (B_s)_Q^s = Q (\delta_s \otimes 1 - 1 \otimes \delta_s).
       \end{equation}

       \begin{definition}
                The category of \emph{Abe--Bott--Samelson bimodules} is the full subcategory $\BS (\fr{h}, W)$ of $\sfC (\fr{h}, W)$ whose objects are the bimodules $B_{\uw} (n)$ for expressions $\uw$ and integers $n$.
                Equivalently, it is generated by $R$ and the $B_s$ for $s \in S$ under $\otimes_R$ and grading shifts. 
       \end{definition}

       Given an expression $\uw = (s_1, \ldots, s_k)$, we can define a graded $R$-bimodule
       \[B_{\uw} \coloneq B_{s_1} \otimes_R B_{s_2} \otimes_R \ldots \otimes_R B_{s_k},\]
       which we view as an object of $\BS (\fr{h}, W)$.

       Let $M \in \sfC (\fr{h}, W)$ such that $M$ is finitely generated as an $R$-bimodule and flat as a right $R$-module. Define
       \[\DD (M) \coloneq \Hom_{\textnormal{mod-}R} (M, R)\qquad\text{and}\qquad \DD (M)_Q^w \coloneq \Hom_{\textnormal{mod-}Q} (M_Q^w, Q). \]
       The $R$-bimodule structure on $\DD (M)$ is given by $(rfs) (m) = f (rm)s$ for $r,s \in R$, $m \in M$, and $f \in \DD (M)$.
       One can check that $\DD (M)$ is an object in $\sfC (\fr{h}, W)$. For each $s \in S$, one can define an isomorphism
       \[B_s=R \otimes_{R^s} R(1) \to \DD (B_s) = \Hom_{\textnormal{mod-}R} (R \otimes_{R^s} R (1), R) , \qquad a \otimes b \mapsto (c \otimes d \mapsto \partial_s (ac) bd).  \] 
       By \cite[Lemma 2.20]{Abe19}, this extends to a canonical isomorphism $\DD (B_{\uw}) \cong B_{\uw}$ for any expression $\uw \in \Exp (W)$ which is compatible with $\otimes_R$.
       More precisely, if we write $\uw = (s_1, \ldots, s_k)$, there is a perfect right $R$-bilinear pairing on $B_{\uw}$ given by
       \begin{equation}\label{eq:pairing}
        \langle f_0 \otimes \ldots \otimes f_k, g_0 \otimes \ldots \otimes g_k \rangle_{\uw} = \partial_{s_k} ( \ldots \partial_{s_1} (f_0 g_0) f_1 g_1 ) f_k g_k,
       \end{equation}
       that defines the desired isomorphism. 
       As a result, $\DD$ restricts to a monoidal functor
       \[\DD : \BS (\fr{h}, W)^{\op} \to \BS (\fr{h}, W).\]
       The argument from \cite[Lemma 2.20]{Abe19} shows that $\DD$ is an involution on Abe--Bott--Samelson bimodules, i.e., there is a natural isomorphism $\DD \circ \DD \cong \id$.

       For a subexpression $\ue = (e_1, \ldots, e_k)$ of $\uw = (s_1, \ldots, s_k)$, define 
       \[t_{\uw, \ue} = 1 \otimes \delta_{s_1}^{e_1} \otimes \ldots \otimes \delta_{s_k}^{e_k} \in R \otimes_{R^{s_1}} R \otimes_{R^{s_2}} \ldots \otimes_{R^{s_k}} R (k) = B_{\uw}.\]
        We also define elements 
        \[ b_{\uw, \ue} = b_1 \otimes \ldots \otimes b_k \in B_{s_1} \otimes_R B_{s_2} \otimes_R \ldots \otimes_R B_{s_k} = B_{\uw}\] 
        where $b_i \in B_{s_i}$ is defined by
        \[ b_i = \begin{cases} 1 \otimes 1 & \text{if } \dec_i (\uw, \ue) = U0 \text{ or } U1, \\ \delta_{s_i} \otimes 1 & \text{if } \dec_i (\uw, \ue) = D0 \text{ or } D1. \end{cases} \]
       For $x \in W$, we set $b_{\uw, \ue}^x = \pi_{B_{\uw}}^x (b_{\uw, \ue}) \in B_{\uw}^x$. 

       \begin{proposition}\label{prop:bases_of_BS}
        Let $\uw \in \Exp (W)$ and $x \in W$.
        \begin{enumerate}
                \item The set $\{ t_{\uw, \ue} \}_{\ue \in \Sub (\uw)}$ forms a left $R$-module basis of $B_{\uw}$.
                \item The set $\{ b_{\uw, \ue}^x\}_{\ue \in \Sub^x (\uw)}$ forms a left $R$-module basis of $B_{\uw}^x$.
        \end{enumerate}
       \end{proposition}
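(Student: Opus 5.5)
Both parts will be proved by induction on $k = \ell(\uw)$, using the tensor structure $B_{\uw} = B_{\uw'} \otimes_R B_{s}$ with $\uw' = (s_1,\ldots,s_{k-1})$ and $s = s_k$. The case $k=0$ is trivial, since then $B_{\uw}=R$ and $B_{\uw}^e = R$.

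\textbf{Part (1).} The key input is that $B_s = R\otimes_{R^s}R(1)$ is free as a right (and as a left) $R$-module with basis $\{1\otimes 1, 1\otimes \delta_s\}$. This holds because $R$ is free over $R^s$ with basis $\{1,\delta_s\}$, by Demazure surjectivity and the standard decomposition $f = \tfrac{f+s f}{2}\ldots$; more precisely, $f = \partial_s(f\delta_s)\cdot 1 + \ldots$, using $\langle\delta_s,\alpha_s^\vee\rangle = 1$. Hence
\[B_{\uw'} \otimes_R B_s = B_{\uw'}\otimes_{R^s} R \cong B_{\uw'}\cdot 1 \oplus B_{\uw'}\cdot\delta_s\]
as left $R$-modules. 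Feeding in the inductive basis $t_{\uw',\ue'}$ gives exactly the elements $t_{\uw,\ue}$, with $e_k\in\{0,1\}$.

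\textbf{Part (2).} First identify the localization. Since $B_{\uw}$ is free as a right module, $(B_{\uw})_Q^x = \bigoplus_{y} (B_{\uw'})_Q^{y}\otimes_Q (B_s)_Q^{y^{-1}x}$, so only $y\in\{x, xs\}$ contribute. Consequently $\pi^x_{B_{\uw}}$ factors through $\pi^{x}_{B_{\uw'}}\otimes\pi^e_{B_s}$ and $\pi^{xs}_{B_{\uw'}}\otimes\pi^s_{B_s}$. Using \eqref{eq:loc_of_B_s}, the images are explicit: $\pi^e(1\otimes 1)$ and $\pi^s(1\otimes1)$ are units multiplied by the generators of $(B_s)_Q^{e}$ and $(B_s)^s_Q$, divided by $\pm\alpha_s$. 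Similarly, $\pi^e(\delta_s\otimes 1) $ and $\pi^s(\delta_s\otimes 1)$ give $s(\delta_s)$ and $\delta_s$ multiples, respectively. Thus $B_{\uw}^x$ is the image of the map
\[B_{\uw'}^{x}\oplus B_{\uw'}^{xs}\ \text{(twisted)} \longleftarrow B_{\uw'}\otimes B_s,\]
which can be described concretely on the basis of part (1).

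Now apply induction. The elements $b^x_{\uw,\ue}$ with $e_k=0$ are of the form $b^x_{\uw',\ue'}\otimes b_k$, and those with $e_k=1$ are of the form $b^{xs}_{\uw',\ue'}\otimes b_k$. Their images lie in the $x$- and $xs$-summands of $(B_{\uw'})_Q\otimes (B_s)_Q$ respectively, which are independent. Linear independence over $R$ therefore follows from the inductive hypothesis on each summand, together with the nonvanishing of the $B_s$ factor; here $R$ is a domain, so one may pass to $Q$.

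The main obstacle is spanning: showing that every element of $B_{\uw}^x$ is an $R$-combination, not merely a $Q$-combination. For this I would compare with $\pi^x$ applied to the basis $t_{\uw,\ue}$ and run a triangularity argument. Writing $\pi^x(m\otimes f)$ for $m\in B_{\uw'}$, the choice of $b_k$ according to the decoration (U versus D, i.e.\ whether $xs>x$ or $x s<x$ at the relevant stage) is precisely what makes the transition matrix from the images of $\{m\otimes 1, m\otimes\delta_s\}$ to $\{b^{x}\otimes b_k, b^{xs}\otimes b_k\}$ unitriangular over $R$, rather than involving $\alpha_s^{-1}$. Concretely, one case uses $1\otimes 1$ and the other uses $\delta_s\otimes 1$, so that the root denominators cancel. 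I expect to check this case by case for the four decorations, reducing it to the rank-one computation in $B_s$, where $\pi^e(B_s)$ and $\pi^s(B_s)$ are each free of rank one, generated by the images of $1\otimes1$. Alternatively, this is Abe's result \cite{Abe19}, which I would cite if the direct computation becomes messy.
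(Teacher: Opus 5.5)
Your part (1) is the paper's argument: iterate the decomposition $R=R^s\oplus\delta_sR^s$ of \cite[Lemma 2.10]{Abe19} on the last tensor factor. Two small corrections. Do not use $\tfrac{f+sf}{2}$, since $2$ need not be invertible in $\k$; write $f=(f-\delta_s\partial_s f)+\delta_s\partial_s f$ instead. Also, as a right $R$-module $B_s$ has basis $\{1\otimes 1,\delta_s\otimes 1\}$, not $\{1\otimes1,1\otimes\delta_s\}$, although you only use the left $R^s$-freeness of $R$.

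For part (2) the paper gives no argument of its own; it quotes \cite[Theorem 3.12]{Abe19}. Your fallback is therefore the paper's proof, but the direct induction you sketch has a genuine gap.

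\emph{Independence.} This step rests on a false claim. One has $\pi^x(m\otimes n)=\pi^x(m)\otimes\pi^e(n)+\pi^{xs}(m)\otimes\pi^s(n)$, and $\pi^{xs}(b_{\uw',\ue'})$ is generally nonzero even when $\uw'^{\ue'}=x$. Already in $B_s$ both $\pi^e(1\otimes 1)$ and $\pi^s(1\otimes 1)$ are nonzero. So the $b^x_{\uw,\ue}$ with $e_k=0$ do not lie in a single summand. The step is dispensable, though: $B^x_{\uw}\otimes_R Q=(B_{\uw})^x_Q$ has $Q$-rank $\#\Sub^x(\uw)$ by Lemma~\ref{lem:decomp_into_loc_pieces}, so an $R$-spanning set of that size is automatically independent.

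\emph{Spanning.} This is where the real problem lies. Trivialize $(B_s)^e_Q$ and $(B_s)^s_Q$ by $\pi^e(1\otimes1)$ and $\pi^s(1\otimes 1)$. Using \eqref{eq:loc_of_B_s}, $\pi^x$ sends $m\otimes(1\otimes 1)$ to $(\pi^x(m),\pi^{xs}(m))$ and $m\otimes(1\otimes\delta_s)$ to $x(\delta_s)(\pi^x(m),\pi^{xs}(m))$. Hence, as left $R$-modules,
\[B^x_{\uw}\cong N:=(\pi^x,\pi^{xs})(B_{\uw'})\subseteq B^x_{\uw'}\oplus B^{xs}_{\uw'}.\]
This $N$ is usually a proper submodule: for $\uw'=(s)$ and $x=e$ it is $\{(a,b)\in R^2 : a\equiv b \bmod \alpha_s\}$.

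So $B^x_{\uw}$ depends on how the $x$- and $xs$-components of a single element of $B_{\uw'}$ are glued. Your inductive hypothesis gives bases of each $B^y_{\uw'}$ separately, and together with rank-one computations in $B_s$ it cannot detect this gluing; no choice of $b_k$ repairs that.

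One always has $\{0\}\times x(\alpha_s)B^{xs}_{\uw'}\subseteq N$, by applying $(\pi^x,\pi^{xs})$ to $m\delta_s-x(\delta_s)m$. What is missing is the reverse divisibility: for $y<ys$, $\pi^y(m)=0$ should force $\pi^{ys}(m)\in y(\alpha_s)B^{ys}_{\uw'}$. With that in hand, your triangularity idea would go through by a degree count. But this statement is not part of what you induct on. Moreover, propagating it from $\uw'$ to $\uw' s$ for an arbitrary next letter $t$ involves the components $y,ys,yt,yts$ of $B_{\uw'}$ at once, so it does not follow from anything in your sketch. Supplying this input is exactly what the citation of Abe does; without it, part (2) is not proved.
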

       \begin{proof}
        Statement (1) follows by repeatedly using the $R^s$-module decomposition $R = R^s \oplus \delta_s R^s$ of \cite[Lemma 2.10]{Abe19}.
        Statement (2) is \cite[Theorem 3.12]{Abe19}.
       \end{proof}

       Let $\uw = (s_1, \ldots, s_k) \in \Exp (W)$ and $\ue = (e_1, \ldots, e_k) \in \Sub (\uw)$.
       We can define the $\ue$\emph{-component} of $B_{\uw} \otimes_R Q$, as the direct summand corresponding to the $Q$-bimodule
       \[(B_{\uw})_Q^{\ue} \coloneq Q_{s_1^{e_1}} \otimes_Q Q_{s_2^{e_2}} \otimes_Q \ldots \otimes_Q Q_{s_k^{e_k}} (-\ell (\uw)) \cong Q_{\uw^{\ue}} (-\ell (\uw))\]
       under the isomorphism $B_s \otimes_R Q \cong Q_e (-1) \oplus Q_s (-1)$ of \eqref{eq:loc_of_B_s}. We write $q_{\uw}^{\ue} \in (B_{\uw})_Q^{\ue}$ for the element corresponding to $1 \in Q_{\uw^{\ue}} (-\ell(\uw))$.
       Explicitly, this element is given by the tensor product
       \begin{equation}\label{eq:qwe}
        q_{\uw}^{\ue} = (\delta_{s_1} \otimes 1 - 1 \otimes s_1^{1-e_1} (\delta_{s_1})) \otimes \ldots \otimes  (\delta_{s_k} \otimes 1 - 1 \otimes s_k^{1-e_k} (\delta_{s_k})).
       \end{equation}
       
       \begin{lemma}[{\cite[\S 3.4]{Abe21}}]\label{lem:decomp_into_loc_pieces}
        Let $\uw \in \Exp (W)$. There is a decomposition of graded $Q$-bimodules,
        \[B_{\uw} \otimes_R Q \cong \bigoplus_{\ue \in \Sub (\uw)} B_{\uw, Q}^{\ue} \cong \bigoplus_{\ue \in \Sub (\uw)} Q_{\uw^{\ue}} (-\ell (\uw)).\]
       \end{lemma}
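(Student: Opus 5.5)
The plan is to prove the statement by induction on $\ell(\uw)$. The base case $\ell(\uw)=0$ is trivial, and the rank-one case $\uw=(s)$ contains all of the content. The inductive step will then be formal bookkeeping with tensor products of twisted bimodules.

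\emph{Rank one.} Write $x_e = \delta_s\otimes 1 - 1\otimes s(\delta_s)$ and $x_s = \delta_s\otimes 1 - 1\otimes \delta_s$ in $B_s\otimes_R Q$, as in \eqref{eq:loc_of_B_s}. I would proceed in four steps.
\begin{enumerate}
\item \emph{Twisted centrality.} I will check that $r x_e = x_e r$ and $r x_s = x_s s(r)$ for all $r\in R$. Since $R=R^s\oplus \delta_s R^s$, it suffices to take $r=\delta_s$. The key inputs are $\delta_s+s(\delta_s)\in R^s$ and $\delta_s s(\delta_s)\in R^s$. For example, writing $\delta_s^2=\delta_s(\delta_s+s\delta_s)-\delta_s s\delta_s$ and moving $R^s$-elements across the tensor sign gives $\delta_s x_e = x_e\delta_s$. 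The computation for $x_s$ is the same.
\item \emph{Free rank one.} The right $Q$-modules $x_eQ$ and $x_sQ$ are free of rank one, because $B_s$ is free as a right $R$-module on $1\otimes 1$ and $\delta_s\otimes 1$.
\item \emph{Generation.} We have $x_e-x_s = 1\otimes(\delta_s - s\delta_s) = 1\otimes\alpha_s$, and $\alpha_s$ is invertible in $Q$. Hence $1\otimes 1$ and then $\delta_s\otimes 1$ lie in $x_eQ + x_sQ$. These two elements generate $B_s\otimes_R Q$ as a right $Q$-module.
\item \emph{Directness.} If $m\in x_eQ\cap x_sQ$, then by step 1 we get $m(\delta_s - s\delta_s)=0$, that is, $m\alpha_s=0$. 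So $m=0$.
\end{enumerate}
Together these give $B_s\otimes_R Q\cong Q_e(-1)\oplus Q_s(-1)$. The grading shift comes from $\deg x_e=\deg x_s = 2-1$ in $B_s=R\otimes_{R^s}R(1)$. Step 1 also shows that the left $R$-action extends to a left $Q$-action on each summand, so $B_s\otimes_R Q$ is a $Q$-bimodule.

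\emph{Inductive step.} Write $\uw=(\uw',s)$. Since $Q$ is a localization of $R$ and each $B_{s_i}$ is free over $R$ on each side, we have
\[B_{\uw}\otimes_R Q\cong (B_{\uw'}\otimes_R Q)\otimes_Q (B_s\otimes_R Q).\]
This uses the $Q$-bimodule structure from the rank-one case, together with the identity $M\otimes_R N\otimes_R Q\cong (M\otimes_RQ)\otimes_Q(N\otimes_R Q)$, which holds because the left $R$-action on $N\otimes_R Q$ extends to $Q$. Applying the inductive hypothesis and distributing the tensor product over the direct sums, it remains to use $Q_x\otimes_Q Q_y\cong Q_{xy}$, given by $1\otimes 1\mapsto 1$. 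The summand indexed by $\ue=(\ue',e_k)$ is then $Q_{\uw'^{\ue'}s^{e_k}}(-\ell(\uw))=Q_{\uw^{\ue}}(-\ell(\uw))$, and its generator is exactly the element $q_{\uw}^{\ue}$ of \eqref{eq:qwe}.

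The main obstacle is step 1 of the rank-one case, the twisted-centrality identity. Once it holds, everything else is formal, but it requires the careful manipulation of $\delta_s^2$ through $R^s$ described above. A secondary point to handle with care is that the tensor identification in the inductive step really does require the left $Q$-action, which is why it must be established in the rank-one case first.
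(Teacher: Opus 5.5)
Your proof is correct. The rank-one analysis (twisted centrality of $x_e,x_s$, freeness, generation via $x_e-x_s=1\otimes\alpha_s$, directness via $m\alpha_s=0$) and the inductive tensor-product step are sound and produce exactly the generators $q_{\uw}^{\ue}$ of \eqref{eq:qwe}; the paper gives no argument beyond citing \cite[\S 3.4]{Abe21}, but the generators it records in \eqref{eq:loc_of_B_s} and \eqref{eq:qwe} show this is the intended route, so you have simply supplied the omitted details.
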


       \subsection{Generating morphisms}\label{ss:gen_mors}

       Let $s \in S$. Define an $R^s$-module map $\partial_s : R \to R^s$ by $\partial_s (f) = (f - s(f))/\alpha_s$.
       We use the identification $B_s \otimes_R B_s = R \otimes_{R^s} R \otimes_{R^s} R (2)$. We then define the \emph{$1$-color morphisms} of $R$-graded bimodules
       \begin{align*}
                \nu^s &: B_s \to B_s \otimes_R B_s , & f \otimes g &\mapsto f \otimes 1 \otimes g, \\
                \epsilon^s &: B_s \to R, & f \otimes g &\mapsto fg, \\
                \mu^s &: B_s \otimes_R B_s \to B_s , & f \otimes g \otimes h &\mapsto \partial_s (g) f \otimes h, \\
                \eta^s &: R \to B_s , & 1 &\mapsto \delta_s \otimes 1 - 1 \otimes s (\delta_s), \\
                \cup^s &: R \to B_s \otimes_R B_s, & 1 &\mapsto \delta_s \otimes 1 \otimes 1 - 1\otimes 1 \otimes s(\delta_s), \\
                \cap^s &: B_s \otimes_R B_s \to R, & f\otimes g \otimes h &\mapsto \partial_s (g) fh.
       \end{align*}
       The maps $\mu^s$ and $\nu^s$ are of degree $-1$, the maps $\epsilon^s$ and $\eta^s$ are of degree $1$, and the maps $\cup^s$ and $\cap^s$ are of degree $0$.
       Note that $\cup^s = \nu^s \circ \eta^s$ and $\cap^s = \epsilon^s \circ \mu^s$. It is straightforward to check that all of the above maps define morphisms in $\BS (\fr{h}, W)$.
       Moreover, an easy but tedious calculation shows that
       \[\DD (\epsilon^s) = \eta^s \qquad\text{and}\qquad \DD (\nu^s) = \mu^s.\]

       For each $\uw = (s_1, \ldots, s_k) \in \Exp (W)$, we define the \emph{$1$-tensor},
       \[u_{\uw} = 1 \otimes 1 \otimes \ldots \otimes 1 \in R \otimes_{R^{s_1}} R \otimes_{R^{s_2}} \ldots \otimes_{R^{s_k}} R (k) = B_{\uw}. \]

       Let $s,t \in S$ be distinct simple reflections such that $m = m_{st} < \infty$. 
       By \cite[Theorem 3.9]{Abe21}, there exists a homogeneous degree $0$ morphism
       \[ \beta_{st} : B_{{}_s \underline{m}} \to B_{{}_t \underline{m}} \]
       such that $\beta_{st} (u_{{}_s \underline{m}}) = u_{{}_t \underline{m}}$.
       Moreover, this morphism is unique since the degree-$0$ morphism space is a free $\k$-module of rank 1 (cf. \cite[Theorem 4.6]{Abe19}).
       This along with the trace pairings above forces that $\DD (\beta_{st}) = \beta_{ts}$. 

       More generally, if $\ux$ and $\uy$ are reduced expressions for some $w \in W$, then each path $p$ in the rex graph for $w$ gives rise to a morphism $\beta_p : B_{\ux} \to B_{\uy}$ composed of the various $\beta_{st}$-maps.
       It is clear from its construction that $\beta_p (u_{\ux}) = u_{\uy}$. Of course, this morphism depends on the choice of the path $p$.

       \subsection{Light leaves}\label{ss:light_leaves}

        We will review the construction of light leaves for Abe--Bott--Samelson bimodules introduced by Libedinsky \cite{Lib} and further studied in \cite{Abe19}.
        Light leaves are not canonical. They depend on the choice of certain rigidification data.
       
       \begin{definition}
                An $\LL$-datum is a triple $(\rex, \{\rex_s\}_{s \in S}, \{p_{\ux, \uy}\}_{\ux, \uy})$ consisting of the following:
                \begin{enumerate}
                        \item a map $\rex : W \to \Exp (W)$ such that for each $x \in W$, $\rex (x)$ is a reduced expression for $x$;
                        \item for each $s \in S$, a map $\rex_s : \mcR_s \to \Exp (W)$ where $\mcR_s = \{w \in W \mid ws < w\}$ and $\rex_s (x)$ is a reduced expression for $x$ ending in $s$;
                        \item for any two rexes $\ux$ and $\uy$ of $x$, a path $p_{\ux, \uy}$ from $\ux$ to $\uy$ in the rex graph of $x$.
                \end{enumerate}
       \end{definition}

       For the remainder of the section, we will fix an $\LL$-datum. 
       For reduced expressions $\ux$ and $\uy$ of $x \in W$, we will write $\beta_{\ux, \uy} : B_{\ux} \to B_{\uy}$ for the morphism in $\BS (\fr{h}, W)$ determined by $p_{\ux, \uy}$ as in \S\ref{ss:gen_mors}.

        Let $\ux=(s_1,\ldots,s_k)$ and $\ue\in\Sub(\ux)$.
        Set $w_0=e$. For each $i=1,\ldots, k$, we write $w_i=\ux_{\leq i}^{\ue_{\leq i}}$.
        We set $\uw = \rex (w_k)$ for the rex of $w_k$ fixed by our $\LL$-datum. 
        We construct a reduced expression $\underline r_i$ for $w_i$ at each stage, beginning with
        $\underline r_0=\emptyset$. The intermediate expression $\underline r_i$ need not equal $\rex(w_i)$.
        Put $\LL^0_{\ux,\ue}=\id_{B_{\ux}}$. For $1\leq i\leq k$, set $s=s_i$ and define $\phi_i : B_{\underline r_{i-1}}\otimes_R B_s \to B_{\underline r_i}$ by the following rules.
        \begin{enumerate}
        \item[$U0$:]  Set $\underline r_i=\underline r_{i-1}$ and
        \[\begin{tikzcd}
         \phi_i : B_{\underline{r}_{i-1}} \otimes_R B_{s} \arrow[rr, "\id \otimes \epsilon^{s}"] &  & B_{\underline{r}_{i-1}} = B_{\underline{r}_{i}}.
         \end{tikzcd}\]
        \item[$U1$:]  Set $\underline{r}_i=\underline{r}_{i-1}s$ and
        \[\begin{tikzcd}
         \phi_i : B_{\underline{r}_{i-1}} \otimes_R B_{s} = B_{\underline{r}_i} \arrow[rr, "\id_{B_{\underline{r}_{i}}}"] &  &  B_{\underline{r}_{i}}.
         \end{tikzcd}\]
        \item[$D0$:] Let $\underline{t} =\rex_s(w_{i-1})=\underline{q} s$. We can write $\underline{t} = \underline{q} s$ for some rex $\underline{q}$.
        Set $\underline{r}_i=\underline t$ and
        \[\begin{tikzcd}
         \phi_i : B_{\underline{r}_{i-1}} \otimes_R B_{s} \arrow[rrr, "\beta_{\underline{r}_{i-1}, \underline{t}} \otimes \id_{B_{s}}"] & & & B_{\underline{q}} \otimes_R B_s \otimes_R B_s \arrow[rr, "\id_{B_{\underline{q}}} \otimes \mu^s"] & & B_{\underline{q}} \otimes_R B_s = B_{\underline{r}_i}.
         \end{tikzcd}\]
        \item[$D1$:] Let $\underline{t} = \rex_s (w_{i-1})$. We can write $\underline{t} = \underline{q} s$ for some rex $\underline{q}$.
        Set $\underline r_i=\underline q$ and
         \[\begin{tikzcd}
         \phi_i : B_{\underline{r}_{i-1}} \otimes_R B_{s} \arrow[rrr, "\beta_{\underline{r}_{i-1}, \underline{t}} \otimes \id_{B_{s}}"] & & & B_{\underline{q}} \otimes_R B_s \otimes_R B_s \arrow[rr, "\id_{B_{\underline{q}}} \otimes \cap^s"] & & B_{\underline{q}} = B_{\underline{r}_i}.
         \end{tikzcd}\]
        \end{enumerate}
        Define
        \[ \LL^i_{\ux,\ue} =(\phi_i\otimes\id_{B_{(s_{i+1},\ldots,s_k)}}) \circ\LL^{i-1}_{\ux,\ue}: B_{\ux}\longrightarrow B_{\underline r_i s_{i+1}\cdots s_k}. \]
        Finally, the light leaf corresponding to $\ue$ is defined by
        \[ \LL_{\ux,\ue} =\beta_{\underline r_k,\uw}\circ\LL^k_{\ux,\ue}: B_{\ux}\longrightarrow B_{\uw}. \]
        This morphism has degree $d(\ux,\ue)$.

        \begin{remark} The construction of light leaves differs slightly from that of \cite{Abe19}. In particular, usually a rex move is applied at the end of each $\phi_i$ so that the target is $B_{\rex (w_i)}$. These rex moves can safely be removed as explained in \cite[Remark 10.34]{EMTW}. \end{remark}

       \subsection{Double leaves}

       Let $\ux \in \Exp (W)$ and $\ue \in \Sub (\ux)$, and set $\uw = \rex (\ux^{\ue})$. The \emph{reversed light leaf} is the map
       \[\overline{\LL}_{\ux, \ue} \coloneq \DD (\LL_{\ux, \ue}) : B_{\uw} \to B_{\ux}.\]
       This morphism is also of degree $d (\ux, \ue)$.
       
       \begin{definition}
        Let $\ux, \uy \in \Exp (W)$. Let $w \in W$, $\ue \in \Sub^w (\ux)$, and $\uf \in \Sub^w (\uy)$. A \emph{double leaf} is a degree $d (\ux, \ue) + d (\uy, \uf)$ map given by a composition of the form
        \[\dLL_{\ue, \uf}^w \coloneq \overline{\LL}_{\uy, \uf} \circ \LL_{\ux, \ue} : B_{\ux} \to B_{\uy}.\]
       \end{definition}

       \begin{theorem}[{\cite[Theorem 5.5]{Abe19}}]\label{thm:dll_basis}
                The set of double leaves $\{ \dLL_{\ue, \uf}^w \mid \ux^{\ue} = w = \uy^{\uf} \}$ is a left graded $R$-module basis of $\Hom_{\BS (\fr{h}, W)} (B_{\ux}, B_{\uy})$.
       \end{theorem}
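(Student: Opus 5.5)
The plan is the classical ``filtration by support plus graded rank count'' argument, adapted to Abe's localized setting. Fix $\ux, \uy$, and for $w \in W$ write $\Hom_{<w}$ (resp. $\Hom_{\leq w}$) for the span of morphisms $B_{\ux} \to B_{\uy}$ that factor through some $B_{\rex(v)}$ with $v < w$ (resp. $v \leq w$), up to grading shift. There are three steps:
\begin{enumerate}
\item show that the double leaves are $R$-linearly independent;
\item show that they have the graded rank predicted by the Hecke algebra;
\item show that they generate.
\end{enumerate}
Concretely, we first prove that, for fixed $w$, the double leaves $\dLL^w_{\ue,\uf}$ with $\ue \in \Sub^w(\ux)$ and $\uf \in \Sub^w(\uy)$ form a basis of $\Hom_{\leq w}/\Hom_{<w}$. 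Then we assemble over a linear extension of the Bruhat order.

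\textbf{Step 1 (light leaves as a basis of the $w$-stalk).} The key input is a statement about single light leaves. Consider the functional obtained by composing $\LL_{\ux,\ue}$, for $\ue \in \Sub^w(\ux)$, with the projection $\pi^w_{B_{\uw}}$ onto the $w$-part. By Proposition~\ref{prop:bases_of_BS}(2), $B^w_{\uw}$ is free of rank one on $b^w_{\uw,\mathbf 1}$, and $B^w_{\ux}$ has basis $\{b^w_{\ux,\uf}\}_{\uf\in\Sub^w(\ux)}$. I would compute $\LL_{\ux,\ue}$ on the localized components $(B_{\ux})^{\uf}_Q$ of Lemma~\ref{lem:decomp_into_loc_pieces} inductively along the steps $\phi_i$ of \S\ref{ss:light_leaves}. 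Each generator acts on the elements $q^{\uf}$ of \eqref{eq:qwe} by an explicit scalar, and the $\beta_{st}$ act on the top component $q^{\mathbf 1}$ by a unit. Running this induction, I expect to show the following triangularity. The induced matrix
\[
\bigl(\pi^w\circ \LL_{\ux,\ue}\bigr)(b^w_{\ux,\uf}), \qquad \ue,\uf\in\Sub^w(\ux),
\]
vanishes unless $\uf\leq\ue$ in the path dominance order, and its $(\ue,\ue)$-entry is a unit multiple of $b^w_{\uw,\mathbf 1}$. The decorations $U0/U1/D0/D1$ are what make $b_{\ux,\ue}$ the ``leading'' element for $\ue$.

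\textbf{Step 2 (independence and rank count).} Dualizing Step 1 with $\DD$ and the pairing \eqref{eq:pairing} gives the analogous statement for the reversed leaves $\oLL_{\uy,\uf}$. Composing, the $w$-stalk of $\dLL^w_{\ue,\uf}$ is, in the bases $b^w$, a product of two triangular matrices with unit diagonals. Hence the $\dLL^w_{\ue,\uf}$ are independent modulo $\Hom_{<w}$: any morphism in $\Hom_{<w}$ has vanishing $w$-stalk, since $B^w_{\rex(v)}=0$ for $v<w$. Independence of all double leaves then follows by downward induction on $w$.

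Counting degrees, the graded rank of the span of the double leaves is
\[
\sum_w \sum_{\ue,\uf} v^{d(\ux,\ue)+d(\uy,\uf)},
\]
which by Deodhar's defect formula equals the standard pairing $(\underline H_{\ux},\underline H_{\uy})$ in the Hecke algebra. I would then invoke Abe's computation that $\Hom(B_{\ux},B_{\uy})$ is graded free of exactly this rank (the Soergel Hom formula in \cite{Abe19}).

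\textbf{Step 3 (spanning).} Equal graded ranks do not by themselves give spanning over $R$; one also needs the span to be saturated. For this I would use the unitriangularity modulo $\Hom_{<w}$ from Step 2. Given any $\varphi\in\Hom_{\leq w}$, its $w$-stalk lies in the $R$-span of the stalks of the $\dLL^w_{\ue,\uf}$, because the triangular matrices have unit, and not merely nonzero, diagonal entries. Subtracting the corresponding combination puts $\varphi$ into $\Hom_{<w}$, and we induct. Here Lemma~\ref{lem:detecting_bimodularity} is used to identify a morphism with vanishing $w$-stalk as factoring through lower terms. The rank count is then only a consistency check.

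\textbf{Main obstacle.} The hard step is Step 1, and specifically that the diagonal entries are \emph{units} in $R$ rather than just nonzero elements of $Q$. The rex moves $\beta_{\underline r_{i-1},\underline t}$ in the $D0$ and $D1$ steps mix components, so one must track that they act by $1$ on the leading component. I would try to deduce this from $\beta_p(u_{\ux})=u_{\uy}$, together with the fact that $u$ projects onto the all-ones component up to a unit.
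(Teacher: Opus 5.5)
The paper gives no proof of this statement; it quotes \cite[Theorem 5.5]{Abe19}. Your argument therefore has to stand on its own, and it has a genuine gap in Steps 2--3.

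The claim that dualizing Step 1 with $\DD$ gives unit-triangularity of the reversed leaves on the stalks $B^w_{\uy}$ is false. Duality exchanges the stalk $\pi^w(M)$ with the costalk $M\cap M^w_Q$; it does not preserve stalks. Take $\ux=\uy=(s)$ and $w=e$. The reversed leaf $\oLL_{(s),(0)}=\eta^s$ sends $1$ to $\delta_s\otimes 1-1\otimes s(\delta_s)$, which equals $\alpha_s\, b^e_{(s),(0)}$ in $B^e_s$. Hence the $e$-stalk of the double leaf $\eta^s\circ\epsilon^s$ is $\alpha_s\cdot\id_{B^e_s}$, which is not a unit.

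In general, the $w$-stalk of every $\dLL^w_{\ue,\uf}$ takes values in the lattice $L_w\subseteq B^w_{\uy}$ spanned by the images of $b^w_{\rex(w),\mathbf{1}}$ under the $w$-stalks of the $\oLL_{\uy,\uf}$. This lattice is in general proper; here $L_e=\alpha_s B^e_s$. So ``unit diagonals, hence $\varphi^w$ lies in the $R$-span'' does not follow. What Step 3 actually needs is an a priori statement about arbitrary morphisms: $\varphi^w(B^w_{\ux})\subseteq L_w$ whenever $w$ is on top of the support of $\varphi$. That is the substantive input, of the same nature as the support-filtration analysis behind Abe's hom formula, and nothing in your proposal supplies it.

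There are further problems.
\begin{enumerate}
\item Lemma~\ref{lem:detecting_bimodularity} says nothing about factorization. With $\Hom_{\le w}$ defined via factorization through $B_{\rex(v)}$, $v\le w$, your induction has no starting point: showing that an arbitrary $\varphi$ lies in some $\Hom_{\le w}$ is essentially the spanning statement itself. The workable bookkeeping is the support of $\varphi_Q$. Induct on its top element in a linear extension of the Bruhat order, noting that subtracting double leaves through $w$ can only create support below $w$, and that empty support forces $\varphi=0$ by faithfulness of $(-)_Q$. This isolates the missing inclusion above but does not close it.
\item Step 1 is only announced, and its triangularity is stated in the wrong direction. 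For $\ux=(s,s)$ and $w=e$, the leaf $\LL_{\ux,(0,0)}=\epsilon^s\otimes\epsilon^s$ sends $b_{\ux,(1,1)}=1\otimes\delta_s\otimes 1$ to $\delta_s\neq 0$, although $(1,1)\not\le(0,0)$.
\item The diagonal entries you call the main obstacle are easy. Induct over the steps $\phi_i$ of \S\ref{ss:light_leaves}, using only $\beta_p(u_{\ux})=u_{\uy}$, bimodule linearity, $\epsilon^s(1\otimes 1)=1$ and $\partial_s(\delta_s)=1$. This gives $\LL_{\ux,\ue}(b_{\ux,\ue})=u_{\rex(w)}$ exactly.
\item If $\k$ is a field, Step 3 is unnecessary. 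Independence already follows from the localization triangularity of Theorem~\ref{thm:lower_triang_of_coord_functionals}. Together with the graded rank formula you quote, it forces spanning by comparing graded dimensions degree by degree. The saturation issue you rightly raise, and hence the gap, only matters over a general noetherian domain $\k$.
\end{enumerate}
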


       \begin{corollary}\label{cor:deg_0_mor_and_basis}
        Let $\varphi : B_{\ux} \to B_{\uy}$ be a degree $d$ morphism. There is a unique expansion
        \[\varphi = \sum_{\ux^{\ue} = w = \uy^{\uf}} c_{\ue, \uf}^w \dLL_{\ue, \uf}^w,\]
        for some coefficients $c_{\ue, \uf}^w \in R$ which are either $0$ or of degree $d -d (\ux, \ue) - d (\uy, \uf)$. All other double-leaf coefficients are zero.
       \end{corollary}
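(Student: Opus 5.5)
The plan is to deduce Corollary~\ref{cor:deg_0_mor_and_basis} directly from Theorem~\ref{thm:dll_basis}, together with the fact that morphisms in $\BS (\fr{h}, W)$ decompose into homogeneous components. Theorem~\ref{thm:dll_basis} says the double leaves $\dLL_{\ue, \uf}^w$ form a left $R$-basis of $\Hom_{\BS (\fr{h}, W)} (B_{\ux}, B_{\uy})$. So $\varphi$ admits a unique expansion $\varphi = \sum c_{\ue, \uf}^w \dLL_{\ue, \uf}^w$ with $c_{\ue, \uf}^w \in R$, with at most finitely many nonzero terms since the index set is finite. Uniqueness is then immediate from linear independence. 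The remaining content is the degree statement.

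For the degree statement, I would write each coefficient as a sum of homogeneous pieces, $c_{\ue, \uf}^w = \sum_n (c_{\ue, \uf}^w)_n$ with $(c_{\ue, \uf}^w)_n \in R^n$. Since $\dLL_{\ue, \uf}^w$ is homogeneous of degree $d (\ux, \ue) + d (\uy, \uf)$, each product $(c_{\ue, \uf}^w)_n \dLL_{\ue, \uf}^w$ is homogeneous of degree $n + d(\ux,\ue)+d(\uy,\uf)$.

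Next I group the terms by total degree. This gives a decomposition of $\varphi$ into homogeneous morphisms; I will justify below that such a decomposition is unique. Because $\varphi$ is homogeneous of degree $d$, every component of total degree $d' \neq d$ must vanish. Each such component is itself an $R$-linear combination of double leaves, so by linear independence (Theorem~\ref{thm:dll_basis} again) all of its coefficients vanish. Hence $(c_{\ue, \uf}^w)_n = 0$ unless $n = d - d(\ux,\ue) - d(\uy,\uf)$. Therefore each $c_{\ue, \uf}^w$ is either $0$ or homogeneous of that degree.

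The last sentence of the corollary, that all other double-leaf coefficients are zero, is just the convention that only pairs with $\ux^{\ue} = w = \uy^{\uf}$ index the basis.

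The only point needing care is the uniqueness of homogeneous decompositions of morphisms. Morphisms are $R$-bimodule maps between graded modules, and we allow non-homogeneous ones. Because $B_{\ux}$ is finitely generated, any bimodule map is a finite sum of homogeneous maps, and such a decomposition is unique. This is exactly the grading used in the phrase "graded $R$-module basis" in Theorem~\ref{thm:dll_basis}, so I would simply invoke that grading rather than reprove it. I expect no real obstacle beyond this bookkeeping.
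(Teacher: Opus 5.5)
Your proposal is correct and matches the paper, which gives no separate proof and treats this corollary as an immediate consequence of the graded double-leaf basis in Theorem~\ref{thm:dll_basis}; your homogeneous-component bookkeeping is the routine verification behind that. One small interpretive point: the paper later uses the final sentence ``all other double-leaf coefficients are zero'' (Remark~\ref{rem:generic_to_degree_d}) to mean that coefficients of double leaves with $d - d(\ux,\ue) - d(\uy,\uf)$ negative or odd vanish, and this follows at once from your degree statement because $R$ is concentrated in nonnegative even degrees.
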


       The rank of the space of homogeneous polynomials of a given degree is given by certain binomial coefficients. Let $I$ be an indexing set for 
       the set of double leaves from $B_{\ux}$ to $B_{\uy}$. For each $i \in I$, we set $d_i$ to be the degree of the associated double leaf. 
       For $d \in \Z$, set
       \[I_d = \{ i \in I \mid d - d_i \geq 0 \text{ and } d - d_i \text{ even} \}.\]
       For $k \in \Z_{\geq 0}$ and $d \in \Z$, we define
       \[I_d^k = \{ i \in I \mid d_i = d - 2k \}.\]
       Note that $I_d = \bigcup_{k \geq 0} I_d^k$.
       As a result of Corollary~\ref{cor:deg_0_mor_and_basis}, we see that
       \begin{align}
        \rk_{\k} \Hom^d (B_{\ux}, B_{\uy}) &= \sum_{i \in I_d } \binom{(d-d_i)/2 + \rk \fr{h} - 1}{\rk \fr{h} -1} \notag \\
        &= \sum_{k \geq 0} (\# I_d^k) \cdot \binom{k + \rk \fr{h} - 1}{\rk \fr{h} -1}. \label{eq:rk_of_homog_homs}
       \end{align}
       
       \begin{example}\label{ex:H3_hom_calc}
                Let $W = H_3$ and $\fr{h}$ be the realization from Example~\ref{ex:golden_realization}. Let
                \[\ux = stsutsutustsutu \qquad\text{and}\qquad \uy = utustsutustusts.\]
                These are reduced expressions for the longest element of $H_3$. Then
                \[
                    \rk_{\k} \Hom^0 (B_{\ux}, B_{\uy}) = \binom{5}{3} + 1031 \binom{4}{2} + 19732\cdot 3 + 168263 = 233655.
                \]
       \end{example}

       \subsection{Functorality of realizations}

       In this subsection, we will work with multiple realizations of the same Coxeter group.
       As a result, it is beneficial to introduce some extra notation to make the choice of realization more explicit.
       For example, if $\fr{h}$ is a realization of $W$, we will sometimes write
       \[R_{\fr{h}} = \Sym (\fr{h}^*) \qquad\text{and}\qquad Q_{\fr{h}} = R_{\fr{h}} \left[ \frac{1}{w (\alpha_s)} : w \in W, s\in S \right].\]
       Likewise, if $\uw$ is an expression, then we write
       \[ B_{\uw}^{\fr{h}} = B_{\uw} \in \BS (\fr{h}, W).\]
       We also adorn the Demazure operators $\partial_s$ by a superscript, such as $\partial_s^{\fr{h}}$.

       \begin{definition}
        Let $(\fr{h}, \{\alpha_s\}_{s \in S}, \{\alpha_s^{\vee}\}_{s \in S})$ be a realization of $W$ over $\k$ and $(\fr{h}', \{\alpha_s'\}_{s \in S}, \{(\alpha_s')^{\vee}\}_{s \in S})$ be a realization of $W$ over $\k'$. 
        A \emph{morphism of realizations} $\fr{h} \to \fr{h}'$ consists of a pair $(\sigma, f)$ where $\sigma : \k \to \k'$ is a ring homomorphism and $f : \fr{h}^* \to (\fr{h}')^*$ is a $\k$-linear map via $\sigma$ 
        such that for all $s \in S$  and $v \in \fr{h}^*$, 
        \[f ( \alpha_s ) = \alpha_s' \qquad\text{and}\qquad \langle (\alpha_s')^{\vee}, f (v) \rangle = \sigma (\langle \alpha_s^{\vee}, v \rangle).\]
       \end{definition}

       \begin{remark}
                It would be more natural to view a realization as the space $\fr{h}^*$ rather than $\fr{h}$ in the above definition so that a morphism of realizations
                arises via a linear map on the ambient spaces rather than their duals.
                This convention on realizations is used, for example, in \cite{Abe19}. 
       \end{remark}

       One can easily check from the definition that if $(\sigma, f) : \fr{h} \to \fr{h}'$ is a morphism of realizations, then $f$ is $W$-equivariant.
       As a result, there are induced maps 
       \[f : R_{\fr{h}} \to R_{\fr{h}'} \qquad \text{and}\qquad f_Q : Q_{\fr{h}} \to Q_{\fr{h}'}.\]
       These maps are graded, $W$-equivariant, and take the simple roots to simple roots. We then define a functor
       \[\grmod{R_{\fr{h}}} \longrightarrow \grmod{R_{\fr{h}'}}, \qquad M \mapsto R_{\fr{h}'} \otimes_{R_{\fr{h}}} M.\]
       Our goal is to upgrade this to a functor
       \begin{equation}\label{eq:functorial_BS}
        \BS_f : \BS (\fr{h}, W) \to \BS (\fr{h}', W).
       \end{equation}

       \begin{lemma}\label{lem:real_and_demazure}
        Let $(\sigma, f) : \fr{h} \to \fr{h}'$ be a morphism of realizations. Then
        \[f \circ \partial_s^{\fr{h}} = \partial_s^{\fr{h}'} \circ f.\]
        As a result, if $\delta_s \in \fr{h}^*$ satisfies $\partial_s^{\fr{h}} (\delta_s) = 1$, then for $\delta_s' \coloneq f (\delta_s)$, one has that $\partial_s^{\fr{h}'} (\delta_s') = 1$.
       \end{lemma}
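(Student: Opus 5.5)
The key fact is that $f$ is $W$-equivariant, which the text before Lemma~\ref{lem:real_and_demazure} already records. Since $f$ is $\k$-linear via $\sigma$ on $\fr{h}^*$, it extends uniquely to a ring homomorphism $f : R_{\fr{h}} \to R_{\fr{h}'}$. This extension is $W$-equivariant because the induced actions are by algebra automorphisms determined by the action on degree-two generators. The plan is to use this equivariance together with $f(\alpha_s) = \alpha_s'$ to transport the defining equation of $\partial_s$.

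Concretely, let $g \in R_{\fr{h}}$. By definition
\[
\alpha_s \, \partial_s^{\fr{h}}(g) = g - s(g).
\]
Applying the ring homomorphism $f$ and using equivariance gives
\[
\alpha_s' \, f\bigl(\partial_s^{\fr{h}}(g)\bigr) = f(g) - s\bigl(f(g)\bigr) = \alpha_s' \, \partial_s^{\fr{h}'}\bigl(f(g)\bigr).
\]
Since $R_{\fr{h}'} = \Sym((\fr{h}')^*)$ is a polynomial ring over the domain $\k'$ and $\alpha_s' \neq 0$, the ring $R_{\fr{h}'}$ is a domain. We may therefore cancel $\alpha_s'$, which yields $f \circ \partial_s^{\fr{h}} = \partial_s^{\fr{h}'} \circ f$. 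Nonvanishing of $\alpha_s'$ follows from surjectivity of $(\alpha_s')^\vee$ together with $\langle (\alpha_s')^{\vee}, \alpha_s' \rangle = 2$; if $2=0$ in $\k'$, surjectivity of the pairing against $\alpha_s'$ still prevents $\alpha_s' = 0$.

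For the second claim, apply the identity to $g = \delta_s$ and use $f(1) = 1$:
\[
\partial_s^{\fr{h}'}(\delta_s') = f\bigl(\partial_s^{\fr{h}}(\delta_s)\bigr) = f(1) = 1.
\]
If one prefers to avoid the cancellation argument, there is a direct route. For degree-two elements $v \in \fr{h}^*$ we have $\partial_s(v) = \langle \alpha_s^\vee, v\rangle$, so
\[
\partial_s^{\fr{h}'}(f(v)) = \langle (\alpha_s')^\vee, f(v)\rangle = \sigma(\langle \alpha_s^\vee, v\rangle) = f(\partial_s^{\fr{h}}(v)),
\]
using that $f$ restricted to scalars is $\sigma$. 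Both sides of the identity also satisfy the twisted Leibniz rule $\partial_s(gh) = \partial_s(g)h + s(g)\partial_s(h)$, so agreement on generators propagates to all of $R_{\fr{h}}$.

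The only real obstacle is checking $W$-equivariance of $f$ rigorously. It suffices to check the simple reflections on $\fr{h}^*$. Using the dual action $s(v) = v - \langle \alpha_s^\vee, v\rangle \alpha_s$, we get
\[
f(s(v)) = f(v) - \sigma(\langle \alpha_s^\vee, v\rangle)\,\alpha_s' = f(v) - \langle (\alpha_s')^\vee, f(v)\rangle\,\alpha_s' = s(f(v)),
\]
which is exactly the content of the definition of a morphism of realizations.
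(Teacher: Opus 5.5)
Your proposal is correct, and its core argument is exactly the paper's: apply $f$ to $\alpha_s\,\partial_s^{\fr{h}}(g) = g - s(g)$, use $W$-equivariance and $f(\alpha_s)=\alpha_s'$, cancel $\alpha_s'$ in the domain $R_{\fr{h}'}$, and then evaluate at $\delta_s$. Your extra verifications (equivariance on simple reflections and the twisted-Leibniz alternative) are sound supplements to what the paper leaves implicit; for $\alpha_s' \neq 0$ it is enough that $\alpha_s' : \fr{h}' \to \k'$ is surjective, so surjectivity of $(\alpha_s')^{\vee}$ plays no role.
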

       \begin{proof}
        Let $r \in R_{\fr{h}}$ and $s \in S$. Then
        \[\alpha_s' f (\partial_s^{\fr{h}} (r)) = f (\alpha_s \partial_s^{\fr{h}} (r)) = f(r - s(r)) = f(r) - s f(r) = \alpha_s' \partial_s^{\fr{h}'} f(r).\]
        Since $R_{\fr{h}'}$ is an integral domain, we conclude that the Demazure operators commute with $f$.
        As a result, $\partial_s^{\fr{h}'} (\delta_s') = f (\partial_s^{\fr{h}} (\delta_s)) = 1$.
       \end{proof}

       For each $s\in S$, we fix a choice of $\delta_s \in \fr{h}^*$ satisfying $\partial_s^{\fr{h}} (\delta_s) = 1$. We then set $\delta_s' \coloneq f(\delta_s) \in (\fr{h}')^*$.
       By Lemma~\ref{lem:real_and_demazure}, we have that $\partial_s^{\fr{h}'} (\delta_s') = 1$ as well. 
       
       By Proposition~\ref{prop:bases_of_BS} there are left $R_{\fr{h}}$-module bases
       \[ \{ t_{\uw, \ue}^{\fr{h}} = t_{\uw, \ue} \}_{\ue \in \Sub (\uw)}  \subset B_{\uw}^{\fr{h}} \qquad\text{and}\qquad \{ b_{\uw, \ue}^{\fr{h}, x} = b_{\uw, \ue}^x \}_{\ue \in \Sub^x (\uw)} \subset B_{\uw}^{\fr{h}, x} \]
        After the obvious changes, we can also define elements $t_{\uw, \ue}^{\fr{h}'} \in B_{\uw}^{\fr{h}'}$ and $b_{\uw, \ue}^{\fr{h}', x} \in B_{\uw}^{\fr{h}', x}$ using $\delta_s'$ instead of $\delta_s$.
       These also form left $R_{\fr{h}'}$-bases for their respective $R$-bimodules.

       \begin{lemma}\label{lem:Bs_left_module_isom}
                Let $\uw = (s_1, \ldots, s_k)$ be an expression in $W$. The map 
                \[
                \jmath_{\uw} : B_{\uw}^{\fr{h}} \to B_{\uw}^{\fr{h}'}, \qquad r_1 \otimes \ldots \otimes r_{k+1} \mapsto f (r_1) \otimes \ldots \otimes f (r_{k+1}) 
                \]
                is well-defined and $R_{\fr{h}}$-linear. Moreover, the induced map 
                \[\iota_{\uw} : R_{\fr{h}'} \otimes_{R_{\fr{h}}} B_{\uw}^{\fr{h}} \to B_{\uw}^{\fr{h}'}\]
                is an isomorphism of graded left $R_{\fr{h}'}$-modules.
       \end{lemma}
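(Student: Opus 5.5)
We will prove well-definedness by induction on $k$, and then show bijectivity by matching bases through Proposition~\ref{prop:bases_of_BS}(1). First view $B_{\uw}^{\fr{h}}$ as the iterated tensor product $R\otimes_{R^{s_1}} R \otimes_{R^{s_2}} \cdots \otimes_{R^{s_k}} R$ (with grading shift). The ring map $f : R_{\fr{h}} \to R_{\fr{h}'}$ is $W$-equivariant, so it sends $R_{\fr{h}}^{s}$ into $R_{\fr{h}'}^{s}$ for every $s \in S$. Hence the $(k+1)$-fold map $(r_1,\ldots,r_{k+1}) \mapsto f(r_1)\otimes\cdots\otimes f(r_{k+1})$ is additive in each variable. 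It is also balanced over each $R^{s_i}_{\fr{h}}$: moving $a \in R^{s_i}_{\fr{h}}$ across the $i$-th tensor sign corresponds to moving $f(a) \in R^{s_i}_{\fr{h}'}$ across it, and $f$ is multiplicative. By the universal property of iterated tensor products, $\jmath_{\uw}$ is therefore well-defined. It is $\sigma$-semilinear in general, and $R_{\fr{h}}$-linear when $R_{\fr{h}'}$ is regarded as an $R_{\fr{h}}$-module via $f$, since $\jmath_{\uw}(r\cdot x) = f(r)\jmath_{\uw}(x)$. Since $f$ preserves degree, $\jmath_{\uw}$ is graded. Consequently $\iota_{\uw}(r'\otimes x) = r'\jmath_{\uw}(x)$ is a well-defined graded left $R_{\fr{h}'}$-module map.

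For bijectivity we compare bases. By Proposition~\ref{prop:bases_of_BS}(1), $B_{\uw}^{\fr{h}}$ is a free left $R_{\fr{h}}$-module with basis $\{t_{\uw,\ue}^{\fr{h}}\}_{\ue\in\Sub(\uw)}$. Therefore $R_{\fr{h}'}\otimes_{R_{\fr{h}}} B_{\uw}^{\fr{h}}$ is free over $R_{\fr{h}'}$ with basis $\{1\otimes t_{\uw,\ue}^{\fr{h}}\}$. By definition,
\[
\iota_{\uw}\bigl(1\otimes t_{\uw,\ue}^{\fr{h}}\bigr) = f(1)\otimes f(\delta_{s_1})^{e_1}\otimes\cdots\otimes f(\delta_{s_k})^{e_k} = 1\otimes (\delta'_{s_1})^{e_1}\otimes\cdots\otimes(\delta'_{s_k})^{e_k} = t^{\fr{h}'}_{\uw,\ue}.
\]
These elements form a left $R_{\fr{h}'}$-basis of $B_{\uw}^{\fr{h}'}$. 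This uses Proposition~\ref{prop:bases_of_BS}(1) for $\fr{h}'$ together with the choice $\delta_s'=f(\delta_s)$, which is legitimate because $\partial^{\fr{h}'}_s(\delta'_s)=1$ by Lemma~\ref{lem:real_and_demazure}; this is exactly the input needed for the decomposition $R_{\fr{h}'}=R_{\fr{h}'}^s\oplus\delta'_sR_{\fr{h}'}^s$. A module map sending a basis bijectively onto a basis is an isomorphism, and the degrees agree because $\deg\delta'_s=\deg\delta_s$.

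The only delicate point is the balancing check in the first step. One must make sure that the invariant subrings map into invariant subrings, so that $\jmath_{\uw}$ is compatible with the relations of each $\otimes_{R^{s_i}}$. This reduces to the $W$-equivariance of $f$, which was already noted after the definition of a morphism of realizations. Everything else is bookkeeping.
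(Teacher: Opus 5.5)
Your proposal is correct and follows the paper's proof essentially verbatim. Well-definedness comes from $f(R_{\fr{h}}^{s}) \subseteq R_{\fr{h}'}^{s}$ via $W$-equivariance, and bijectivity from $\iota_{\uw}$ sending the base-changed basis $\{1 \otimes t^{\fr{h}}_{\uw,\ue}\}$ onto the basis $\{t^{\fr{h}'}_{\uw,\ue}\}$ built from $\delta'_s = f(\delta_s)$; the announced induction on $k$ is never actually used, since the universal property of the iterated tensor product already does that work.
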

       \begin{proof}
                The fact that $\jmath_{\uw}$ is well-defined follows from $f$ being $W$-equivariant, and so, $f (R_{\fr{h}}^s) \subseteq R_{\fr{h}'}^s$.

                From the discussion preceding the lemma, the collection of elements $\{ 1 \otimes t_{\uw, \ue}^{\fr{h}} \}_{\ue \in \Sub (\uw)}$ (resp. $\{ t_{\uw, \ue}^{\fr{h}'} \}_{\ue \in \Sub (\uw)}$)
                forms a left $R_{\fr{h}'}$-basis for $R_{\fr{h}'} \otimes_{R_{\fr{h}}} B_{\uw}^{\fr{h}}$ (resp. $B_{\uw}^{\fr{h'}}$).
                It follows from its construction that $\iota_{\uw} (1 \otimes t_{\uw, \ue}^{\fr{h}}) = t_{\uw, \ue}^{\fr{h}'}$ for all $\ue \in \Sub (\uw)$. As a result, $\iota_{\uw}$ is an isomorphism of left $R_{\fr{h}'}$-modules.
       \end{proof}

        \begin{lemma}\label{lem:Bs_w_left_module_isom}
                Let $\uw = (s_1, \ldots, s_k)$ be an expression in $W$ and $x \in W$. 
                There is a left $R_{\fr{h}}$-linear map
                \[
                \jmath_{\uw}^x : B_{\uw}^{\fr{h}, x} \to B_{\uw}^{\fr{h}', x} 
                \]
                such that 
                \[\jmath_{\uw}^x \circ \pi_{B_{\ux}^{\fr{h}}}^x = \pi_{B_{\uw}^{\fr{h}'}}^x \circ \jmath_{\uw}. \]
                 Moreover, the induced map 
                \[\iota_{\uw}^x : R_{\fr{h}'} \otimes_{R_{\fr{h}}} B_{\uw}^{\fr{h}, x} \to B_{\uw}^{\fr{h}', x}\]
                is an isomorphism of graded left $R_{\fr{h}'}$-modules.
       \end{lemma}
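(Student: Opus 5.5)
We define $\jmath_{\uw}^x$ by descending $\pi^x_{B^{\fr{h}'}_{\uw}}\circ\jmath_{\uw}$ along the surjection $\pi^x_{B^{\fr{h}}_{\uw}} : B^{\fr{h}}_{\uw}\twoheadrightarrow B^{\fr{h},x}_{\uw}$, and then compare bases using Proposition~\ref{prop:bases_of_BS}(2). (In the displayed identity we read the subscript $\ux$ as the evident typo for $\uw$.)

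\textbf{Well-definedness.} We need that $\jmath_{\uw}$ carries $\ker \pi^x_{B^{\fr{h}}_{\uw}}$ into $\ker \pi^x_{B^{\fr{h}'}_{\uw}}$. The cleanest route is through localization. The map $f$ extends to $f_Q : Q_{\fr{h}}\to Q_{\fr{h}'}$, since $f$ sends each root $w\alpha_s$ to $w\alpha_s'$, which is invertible in $Q_{\fr{h}'}$. Hence $\jmath_{\uw}$ extends to $\jmath_{\uw,Q} : B^{\fr{h}}_{\uw}\otimes_R Q_{\fr{h}} \to B^{\fr{h}'}_{\uw}\otimes_{R'} Q_{\fr{h}'}$, applying $f$ or $f_Q$ tensorwise. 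Comparing with the explicit formula \eqref{eq:qwe}, and using $f(s(\delta_s)) = s(\delta_s')$ by $W$-equivariance, we get
\[
\jmath_{\uw,Q}(q^{\ue}_{\uw}) = q'^{\ue}_{\uw}.
\]
Since $\jmath_{\uw,Q}$ is $f_Q$-semilinear for both the left and right actions, it maps the summand $Q_{\fr{h}}\, q^{\ue}_{\uw}$ into $Q_{\fr{h}'}\, q'^{\ue}_{\uw}$. By Lemma~\ref{lem:decomp_into_loc_pieces}, the $x$-isotypic part $(B_{\uw})^x_Q$ is $\bigoplus_{\ue\in\Sub^x(\uw)} (B_{\uw})^{\ue}_Q$. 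So $\jmath_{\uw,Q}$ respects the decompositions indexed by $W$ and commutes with the projections to the $x$-components. Restricting to $B_{\uw}\subset B_{\uw}\otimes_R Q$ (which holds by flatness, cf.~\eqref{eq:Mw_and_embedding}), we obtain
\[
\pi^x_{B'}\circ\jmath_{\uw} = \jmath_{\uw,Q}\circ\pi^x_{B}.
\]
We therefore define $\jmath_{\uw}^x$ as the restriction of $\jmath_{\uw,Q}$ to $B^{\fr{h},x}_{\uw}$. Its image lies in $\pi^x(\jmath_{\uw}(B))\subseteq B^{\fr{h}',x}_{\uw}$, it satisfies the required identity, and it is left $R_{\fr{h}}$-linear via $f$.

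\textbf{Isomorphism.} By the identity just proved,
\[
\jmath^x_{\uw}\bigl(b^x_{\uw,\ue}\bigr) = \pi^x\bigl(\jmath_{\uw}(b_{\uw,\ue})\bigr).
\]
The element $b_{\uw,\ue}$ is a tensor of the factors $1\otimes 1$ and $\delta_{s_i}\otimes 1$, and $\jmath_{\uw}$ sends it to the corresponding element $b'_{\uw,\ue}$ built from the $\delta'_{s_i}$. Note also that the decorations depend only on $W$, not on the realization. Hence $\jmath^x_{\uw}$ sends $b^{\fr{h},x}_{\uw,\ue}$ to $b^{\fr{h}',x}_{\uw,\ue}$. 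The induced map $\iota^x_{\uw}$ therefore sends the basis $\{1\otimes b^{\fr{h},x}_{\uw,\ue}\}_{\ue\in\Sub^x(\uw)}$, which is a basis because base change preserves free modules, bijectively onto the basis $\{b^{\fr{h}',x}_{\uw,\ue}\}$ from Proposition~\ref{prop:bases_of_BS}(2). Thus $\iota^x_{\uw}$ is an isomorphism. It is graded because $f$ preserves degree.

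\textbf{Main obstacle.} The delicate point is the compatibility of $\jmath_{\uw,Q}$ with the localization decomposition. This rests on the explicit generators $q^{\ue}_{\uw}$ from \eqref{eq:qwe} together with the $W$-equivariance of $f$ and of $f_Q$.
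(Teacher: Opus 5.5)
Your proposal is correct and follows essentially the same route as the paper: you extend $\jmath_{\uw}$ to $\jmath_{\uw,Q}$, use $\jmath_{\uw,Q}(q^{\ue,\fr{h}}_{\uw}) = q^{\ue,\fr{h}'}_{\uw}$ to show that $x$-components go to $x$-components, define $\jmath_{\uw}^x$ compatibly with $\pi^x$, and conclude by matching the bases $b^{x}_{\uw,\ue}$ of Proposition~\ref{prop:bases_of_BS}(2). The one small difference is that you define $\jmath_{\uw}^x$ as the restriction of $\jmath_{\uw,Q}$ rather than by the formula $\pi^x(m)\mapsto \pi^x(\jmath_{\uw}(m))$, which makes well-definedness automatic; you are also right that the $\ux$ in the displayed identity is a typo for $\uw$.
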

       \begin{proof}
                The map $\jmath_{\uw} : B_{\uw}^{\fr{h}} \to B_{\uw}^{\fr{h}'}$ induces a $Q_{\fr{h}}$-linear map
                \[\jmath_{\uw, Q} : B_{\uw}^{\fr{h}} \otimes_{R_{\fr{h}}} Q_{\fr{h}} \to B_{\uw}^{\fr{h}'} \otimes_{R_{\fr{h}'}} Q_{\fr{h}'}, \qquad m \otimes q \mapsto \jmath_{\uw} (m) \otimes f_Q (q).\]
                We claim that $\jmath_{\uw, Q}$ takes $x$-components to $x$-components.
                Recall from Lemma~\ref{lem:decomp_into_loc_pieces}, that there is a decomposition of $Q_{\fr{h}}$-modules
                \[(B_{\uw}^{\fr{h}})_Q^x = \bigoplus_{\uw^{\ue} = x} q_{\uw}^{\ue, \fr{h}} Q_{\fr{h}},\]
                where $q_{\uw}^{\ue, \fr{h}}$ is defined as in \eqref{eq:qwe}. It is clear that $\jmath_{\uw, Q} (q_{\uw}^{\ue, \fr{h}}) = q_{\uw}^{\ue, \fr{h}'}$.
                As a result, 
                \[ \jmath_{\uw, Q} ((B_{\uw}^{\fr{h}})_Q^x ) \subseteq (B_{\uw}^{\fr{h}'})_Q^x\]
                for all $x \in W$. The formula
                \begin{equation}\label{eq:Bs_w_left_module_isom_1}
                        \jmath_{\uw}^x (\pi_{B_{\uw}^{\fr{h}}}^x (m) ) = \pi_{B_{\uw}^{\fr{h}'}}^x (\jmath_{\uw} (m)) 
                \end{equation}
                then defines a left $R_{\fr{h}}$-linear map
                \[\jmath_{\uw}^x : B_{\uw}^{\fr{h}, x} \to B_{\uw}^{\fr{h}', x}.\]
                This map induces a left $R_{\fr{h}'}$-linear map
                \[\iota_{\uw}^x : R_{\fr{h}'} \otimes_{R_{\fr{h}}} B_{\uw}^{\fr{h}, x} \to B_{\uw}^{\fr{h}', x}.\]
                
                It remains to show that $\iota_{\uw}^x$ is an isomorphism of graded left $R_{\fr{h}'}$-modules.
                By Proposition~\ref{prop:bases_of_BS}, the sets $\{ 1 \otimes b_{\uw, \ue}^{\fr{h}, x} \}_{\ue \in \Sub^x (\uw)} \subset R_{\fr{h}'} \otimes_{R_{\fr{h}}} B_{\uw}^{\fr{h}, x}$ and $\{ b_{\uw, \ue}^{\fr{h}', x} \}_{\ue \in \Sub^x (\uw)} \subset B_{\uw}^{\fr{h}', x}$
                are left $R_{\fr{h}'}$-module bases.
                Moreover, it can be easily checked from the definitions and \eqref{eq:Bs_w_left_module_isom_1} that the basis for $R_{\fr{h}'} \otimes_{R_{\fr{h}}} B_{\uw}^{\fr{h}, x}$ is sent to the basis for $B_{\uw}^{\fr{h}' , x}$.
                As a result, $\iota_{\uw}^x$ is an isomorphism of graded left $R_{\fr{h}'}$-modules.
       \end{proof}

       Let $\varphi : B_{\ux}^{\fr{h}} \to B_{\uy}^{\fr{h}}$ be a morphism of Abe--Bott--Samelson bimodules.
       By Lemma~\ref{lem:Bs_left_module_isom}, the map 
       \[\BS_f (\varphi) \coloneq \iota_{\uy} \circ (1 \otimes \varphi) \circ \iota_{\ux}^{-1} : B_{\ux}^{\fr{h}'} \to B_{\uy}^{\fr{h}'}\]
       is a graded left $R_{\fr{h}'}$-linear map. By Lemma~\ref{lem:Bs_w_left_module_isom}, the map
       \[\BS_f^w (\varphi) \coloneq \iota_{\uy}^w \circ (1 \otimes \varphi^w) \circ (\iota_{\ux}^w)^{-1} : B_{\ux}^{\fr{h}', w} \to B_{\uy}^{\fr{h}', w}\]
       is a graded left $R_{\fr{h}'}$-linear map. 
       
       \begin{proposition}\label{prop:BS_functor}
        Let $\varphi : B_{\ux}^{\fr{h}} \to B_{\uy}^{\fr{h}}$.
        The map $\BS_f (\varphi) : B_{\ux}^{\fr{h}'} \to B_{\uy}^{\fr{h}'}$ is a morphism of Abe--Bott--Samelson bimodules. This defines a monoidal functor
        \[\BS_f : \BS (\fr{h}, W) \to \BS (\fr{h}', W).\]
       \end{proposition}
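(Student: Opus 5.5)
We will show that $\BS_f(\varphi)$ is a morphism in $\sfC(\fr{h}',W)$ by applying Lemma~\ref{lem:detecting_bimodularity}. The Abe--Bott--Samelson bimodules $B_{\ux}^{\fr{h}'}$ and $B_{\uy}^{\fr{h}'}$ are finitely generated as bimodules and free, hence flat, as right modules, so the lemma applies. It suffices to check, for every $w \in W$, the compatibility
\[\pi^w_{B_{\uy}^{\fr{h}'}} \circ \BS_f(\varphi) = \BS_f^w(\varphi) \circ \pi^w_{B_{\ux}^{\fr{h}'}},\]
since we already know that both maps are left $R_{\fr{h}'}$-linear.

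Both sides are left $R_{\fr{h}'}$-linear. Since $\iota_{\ux}$ is an isomorphism, it is enough to test the identity on elements $\jmath_{\ux}(m)$ with $m \in B_{\ux}^{\fr{h}}$, because these generate $B_{\ux}^{\fr{h}'}$ as a left $R_{\fr{h}'}$-module.

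For the left side, by construction $\BS_f(\varphi)(\jmath_{\ux}(m)) = \jmath_{\uy}(\varphi(m))$. Lemma~\ref{lem:Bs_w_left_module_isom} then gives
\[\pi^w(\jmath_{\uy}(\varphi(m))) = \jmath^w_{\uy}(\pi^w(\varphi(m))) = \jmath^w_{\uy}(\varphi^w(\pi^w(m))).\]
Here we use that $\varphi$ is a morphism in $\sfC(\fr{h},W)$, so it induces $\varphi^w$ on the $w$-pieces with $\pi^w\varphi = \varphi^w\pi^w$. This is the converse direction of Lemma~\ref{lem:detecting_bimodularity}, and it follows directly from the definition because $\varphi_Q$ preserves $Q$-components.

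For the right side, $\pi^w(\jmath_{\ux}(m)) = \jmath^w_{\ux}(\pi^w(m))$, and $\BS_f^w(\varphi)$ sends $\jmath^w_{\ux}(n)$ to $\jmath^w_{\uy}(\varphi^w(n))$ by definition. Hence the two sides agree.

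The remaining work is to show that $\BS_f$ is a monoidal functor. Functoriality is immediate from the formula $\BS_f(\psi\circ\varphi) = \iota\circ(1\otimes\psi\varphi)\circ\iota^{-1}$. For monoidality we need two facts:
\begin{itemize}
\item $\jmath_{\ux\uy}$ is identified with $\jmath_{\ux}\otimes\jmath_{\uy}$ under $B_{\ux\uy} = B_{\ux}\otimes_R B_{\uy}$. This holds because both are given by applying $f$ tensor factor by tensor factor.
\item $\BS_f(\varphi\otimes\psi) = \BS_f(\varphi)\otimes\BS_f(\psi)$. It suffices to check this on elements $\jmath(m)\otimes\jmath(n)$, which generate as a bimodule, using that both maps are bimodule maps.
\end{itemize}
Grading shifts are clearly preserved.

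The main subtle point is the claim that a morphism $\varphi$ in $\sfC$ induces well-defined maps $\varphi^w : B^w_{\ux}\to B^w_{\uy}$ compatible with $\pi^w$. This holds because $\varphi_Q$ is block diagonal with respect to the decomposition into $w$-components and restricts the image of $M$ to the image of $N$. Everything else is bookkeeping with the bases from Proposition~\ref{prop:bases_of_BS}.
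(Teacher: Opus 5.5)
Your proposal is correct and follows the paper's proof. Both reduce to Lemma~\ref{lem:detecting_bimodularity}, with the compatibility $\pi^w\circ\BS_f(\varphi)=\BS_f^w(\varphi)\circ\pi^w$ supplied by Lemma~\ref{lem:Bs_w_left_module_isom}; you additionally spell out the check on generators $\jmath_{\ux}(m)$, the existence of the maps $\varphi^w$, and the functoriality and monoidality that the paper dismisses as routine.
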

       \begin{proof}
                Set $\psi = \BS_f (\varphi)$ and $\psi^w = \BS_f^w (\varphi)$.
                By Lemma~\ref{lem:Bs_w_left_module_isom}, one has that
                \[\pi_{B_{\uy}^{\fr{h}'}}^w \circ \psi = \psi^w \circ \pi_{B_{\ux}^{\fr{h}'}}^w \]
                for all $w \in W$.
                It follows from Lemma~\ref{lem:detecting_bimodularity} that $\psi$ is a morphism of Abe--Bott--Samelson bimodules.

                The functorality and monoidality of $\BS_f$ is a routine check and omitted.
       \end{proof}

       \begin{proposition}\label{prop:behavior_of_BS_on_generators}
                \begin{enumerate}
                        \item Let $s \in S$. Recall the one-color maps,
                        \[\nu_{\fr{h}}^s : B_s \to B_s \otimes B_s, \qquad \epsilon_{\fr{h}}^s : B_s \to R, \qquad \mu_{\fr{h}}^s : B_s \otimes_R B_s \to B_s, \qquad \eta_{\fr{h}}^s : R \to B_s . \]
                        Under $\BS_f$, one has that
                        \[\BS_f (\nu_{\fr{h}}^s) = \nu_{\fr{h}'}^s, \qquad \BS_f (\epsilon_{\fr{h}}^s) = \epsilon_{\fr{h}'}^s, \qquad \BS_f (\mu_{\fr{h}}^s) = \mu_{\fr{h}'}^s, \qquad \BS_f (\eta_{\fr{h}}^s) = \eta_{\fr{h}'}^s.\]
                        \item Let $s,t \in S$ be distinct simple reflections with $m_{st} < \infty$. Recall the braid morphism
                        \[\beta_{st}^{\fr{h}} : B_{{}_s \underline{m}}^{\fr{h}} \to B_{{}_t \underline{m}}^{\fr{h}}.\]
                        We have that
                        \[\BS_f (\beta_{st}^{\fr{h}}) = \beta_{st}^{\fr{h}'}.\]
                        \item For a fixed common $\LL$-datum, $\BS_f$ sends each light leaf and double leaf to the corresponding one for $\fr{h}'$. In particular, it identifies the indexed double-leaf bases.
                \end{enumerate}
       \end{proposition}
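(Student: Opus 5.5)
Since $\BS_f(\varphi)$ is defined through the left-module isomorphisms $\iota_{\uw}$ of Lemma~\ref{lem:Bs_left_module_isom}, all three parts come down to evaluating maps on tensors. For a morphism $\varphi : B_{\ux}^{\fr{h}} \to B_{\uy}^{\fr{h}}$, the identity $\BS_f(\varphi)\circ \jmath_{\ux} = \jmath_{\uy}\circ \varphi$ holds by construction. The tensors $\jmath_{\ux}(m)$ generate $B_{\ux}^{\fr{h}'}$ as a left $R_{\fr{h}'}$-module; for instance, the images of the $t_{\ux,\ue}^{\fr{h}}$ are a basis. Hence to show $\BS_f(\varphi) = \psi$ for a given morphism $\psi$ of $\fr{h}'$-bimodules, it suffices to check the equality $\psi(\jmath_{\ux}(m)) = \jmath_{\uy}(\varphi(m))$ on a generating set of tensors $m$.

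For (1), I will check each one-color map on pure tensors. Take $\epsilon^s$ first: we have $\jmath(fg) = f(f)f(g)$, which is exactly $\epsilon^s_{\fr{h}'}(\jmath(f\otimes g))$. The map $\nu^s$ is handled the same way. For $\mu^s$, Lemma~\ref{lem:real_and_demazure} gives $f(\partial_s(g)) = \partial_s^{\fr{h}'}(f(g))$. For $\eta^s$, the image $\delta_s\otimes 1 - 1\otimes s(\delta_s)$ is sent to $\delta_s'\otimes 1 - 1\otimes s(\delta_s')$, by $W$-equivariance of $f$ and the choice $\delta_s' = f(\delta_s)$. Since $\BS_f$ is a functor, the same identities then hold for the cups and caps, which are composites of these maps.

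For (2), I will use the uniqueness statement from \S\ref{ss:gen_mors}. First, $\BS_f(\beta_{st}^{\fr{h}})$ is a morphism in $\BS(\fr{h}',W)$ by Proposition~\ref{prop:BS_functor}, and it is homogeneous of degree $0$ because $\iota$ preserves gradings. Second, it sends $u_{{}_s\underline{m}} = \jmath(u)$ to $\jmath(\beta_{st}(u)) = \jmath(u_{{}_t\underline m}) = u_{{}_t\underline m}$. Any degree-$0$ morphism $B_{{}_s\underline m}\to B_{{}_t\underline m}$ sending $u$ to $u$ equals $\beta_{st}^{\fr{h}'}$, because the degree-$0$ Hom space is free of rank one over $\k'$ and $\beta_{st}^{\fr{h}'}$ is a generator. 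This rank-one fact comes from Theorem~\ref{thm:dll_basis}: it has a single degree-$0$ double leaf. The main subtlety is this last step, namely that a degree-$0$ morphism fixing $u$ is forced to be $\beta_{st}$. If $c\,\beta_{st}^{\fr{h}'}$ sends $u$ to $u$, then $c\, u_{{}_t\underline m} = u_{{}_t\underline m}$. Since $u_{{}_t\underline m} = t_{{}_t\underline m,\mathbf 0}$ is a basis element, it is non-torsion, so $c = 1$.

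For (3), light leaves are built from the maps of (1) and (2) using only compositions, tensor products with identities, and the morphisms $\beta_{\ux,\uy}$ coming from paths in the rex graph. All of these choices are fixed by the common $\LL$-datum, and $\BS_f$ is a monoidal functor, so it carries $\LL^{\fr{h}}_{\ux,\ue}$ to $\LL^{\fr{h}'}_{\ux,\ue}$ step by step through the construction of the $\phi_i$. For double leaves it remains to show that $\BS_f\circ\DD = \DD\circ\BS_f$ on light leaves. Since $\DD$ is monoidal, contravariant, and satisfies $\DD(\epsilon^s)=\eta^s$, $\DD(\nu^s)=\mu^s$ and $\DD(\beta_{st})=\beta_{ts}$ in both realizations, each reversed light leaf $\overline{\LL}_{\ux,\ue}$ is a composite of generators of the same shape for $\fr{h}$ and for $\fr{h}'$. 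Applying (1) and (2) then gives $\BS_f(\overline{\LL}_{\ux,\ue}) = \overline{\LL}^{\fr{h}'}_{\ux,\ue}$, and therefore $\BS_f(\dLL^w_{\ue,\uf}) = \dLL^{w,\fr{h}'}_{\ue,\uf}$ with the same indexing.
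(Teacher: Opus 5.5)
Your proposal is correct and follows the paper's proof: (1) is a direct check on tensors via Lemma~\ref{lem:real_and_demazure}, (2) uses the uniqueness of $\beta_{st}$ plus the fact that $\BS_f$ sends $1$-tensors to $1$-tensors, and (3) follows formally from (1), (2), monoidality, and compatibility with $\DD$. The one loose spot is your justification of the rank-one fact: Theorem~\ref{thm:dll_basis} alone does not give it from the existence of a single degree-$0$ double leaf, because negative-degree double leaves would also contribute to $\Hom^0$. You would have to rule those out separately, or simply invoke the rank-one statement already recalled in \S\ref{ss:gen_mors}, as the paper does.
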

       \begin{proof}
        Statement (1) can be easily checked from the definitions of these morphisms using Lemma~\ref{lem:real_and_demazure} and the definition of $\BS_f$.
        For (2), the uniqueness of $\beta_{st}$ reduces the work to checking that $\BS_f (\beta_{st}^{\fr{h}})$ takes $1$-tensors to $1$-tensors.
        However, $\BS_f$ is defined by $f$ on each tensor factor, so this is clear as well.
        Finally, (3) is an immediate consequence of (1) and (2).
       \end{proof}

       \begin{corollary}\label{cor:BS_inj_or_surj}
        Let $(\sigma, f) : \fr{h} \to \fr{h}'$ be a morphism of realizations.
        \begin{enumerate}
                \item $f : R_\fr{h} \to R_{\fr{h}'}$ is injective if and only if $\BS_f$ is faithful.
                \item $f : R_{\fr{h}} \to R_{\fr{h}'}$ is surjective if and only if $\BS_f$ is full.
        \end{enumerate}
        \end{corollary}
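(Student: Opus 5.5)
The plan is to use the double-leaf basis (Theorem~\ref{thm:dll_basis}) together with Proposition~\ref{prop:behavior_of_BS_on_generators}(3). By that proposition, $\BS_f$ sends each double leaf $\dLL_{\ue,\uf}^w$ for $\fr{h}$ to the corresponding double leaf for $\fr{h}'$, and both sets are graded left bases of the respective Hom spaces. Write an arbitrary morphism as $\varphi=\sum c_{\ue,\uf}^w\dLL_{\ue,\uf}^w$ with $c\in R_{\fr{h}}$. The first step is to check that $\BS_f$ is $f$-semilinear with respect to the left action, i.e.
\[\BS_f(\varphi)=\sum f(c_{\ue,\uf}^w)\,\dLL_{\ue,\uf}^{w,\fr{h}'}.\]
This is immediate from the definition $\BS_f(\varphi)=\iota_{\uy}\circ(1\otimes\varphi)\circ\iota_{\ux}^{-1}$, since $1\otimes c\varphi = f(c)\cdot(1\otimes\varphi)$ in $R_{\fr{h}'}\otimes_{R_{\fr{h}}}(-)$. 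So, after choosing these bases, the map
\[\Hom_{\BS(\fr{h},W)}(B_{\ux},B_{\uy})\to\Hom_{\BS(\fr{h}',W)}(B_{\ux},B_{\uy})\]
is identified with the map $R_{\fr{h}}^{N}\to R_{\fr{h}'}^{N}$ given by applying $f$ coordinatewise. Here $N$ is the common number of double leaves, which depends only on the subexpression combinatorics.

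For (1): if $f$ is injective, then coordinatewise $f$ is injective on every Hom space, so $\BS_f$ is faithful. Conversely, suppose $\BS_f$ is faithful. Take the Hom space $\Hom(B_\emptyset,B_\emptyset)=\Hom(R,R)$, whose double-leaf basis is the single identity map. On this space $\BS_f$ is just $f:R_{\fr{h}}\to R_{\fr{h}'}$ under the identification of $c$ with multiplication by $c$, so $f$ is injective. For (2), the same argument works: surjectivity of $f$ gives surjectivity on each Hom space via the coordinate description, and fullness on $\End(R)$ forces $f$ to be surjective.

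The one point needing care is whether ``full/faithful'' is taken to include shifted objects $B_{\ux}(n)$ and non-homogeneous morphisms. Both are harmless, because the coordinate description above is insensitive to grading shifts: shifts only relabel degrees. I expect the main obstacle to be verifying semilinearity precisely, in particular that $\iota_{\ux}$ and $\iota_{\uy}$ are compatible with left multiplication by $f(c)$. Lemma~\ref{lem:Bs_left_module_isom} already states that they are left $R_{\fr{h}'}$-linear, so this step reduces to unwinding definitions.
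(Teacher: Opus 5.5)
Your proposal is correct and follows essentially the same route as the paper. For the forward implications it combines the double-leaf basis (Theorem~\ref{thm:dll_basis}) with Proposition~\ref{prop:behavior_of_BS_on_generators}~(3), and for the converses it uses the identity $f(r)=\BS_f(r\cdot\id_{R_{\fr{h}}})$ on $\Hom(R,R)$. Your explicit check that $\BS_f$ is $f$-semilinear for the left action is a step the paper leaves implicit, and it is what makes the coordinatewise description of $\BS_f$ on each Hom space precise.
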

        \begin{proof}
                We will just prove the first statement as the second follows from the same argument.
                If  $f : R_\fr{h} \to R_{\fr{h}'}$ is injective, then Theorem~\ref{thm:dll_basis} and Proposition~\ref{prop:behavior_of_BS_on_generators} (3) together imply that $\BS_f$ is faithful.
                On the other hand, assume that $\BS_f$ is faithful. For all $r \in R_{\fr{h}}$, one has that $f (r) = \BS_f (r \cdot \id_{R_{\fr{h}}})$. In particular, faithfulness implies that $f$ is injective.
        \end{proof}

       \subsection{The Elias--Williamson diagrammatic category}

       \begin{definition}
       Define a $\k$-linear monoidal category $\EW_{\leq 2} (\fr{h}, W)$ with:
       \begin{itemize}
        \item the objects are symbols $B_{\uw}$ for $\uw$ an expression, with the monoidal product $B_{\ux} \star B_{\uy} = B_{\ux \uy}$;
        \item the morphisms generated (under composition, monoidal product and $\k$-linear combinations) by the following morphisms:
        \begin{enumerate}
                \item for any polynomial $f \in R$, a morphism
                \[
  \tikzsetnextfilename{poly_box}
  \tikzstyle{every picture}=[tikzfig]
  \begin{tikzpicture}
	\begin{pgfonlayer}{nodelayer}
		\node [style=none] (0) at (-1, -1) {};
		\node [style=none] (1) at (1, -1) {};
		\node [style=none] (2) at (-1, 1) {};
		\node [style=none] (3) at (1, 1) {};
		\node [style=box] (4) at (0, 0) {$f$};
	\end{pgfonlayer}
	\begin{pgfonlayer}{edgelayer}
		\draw [style={dashed_row}] (2.center) to (3.center);
		\draw [style={dashed_row}] (0.center) to (1.center);
	\end{pgfonlayer}
\end{tikzpicture}
\]
                from $B_{\emptyset}$ to itself, of degree $\deg (f)$;
                \item for any $s \in S$, ``dot'' morphisms
                \[ 
  \tikzsetnextfilename{upper_dot}
  \tikzstyle{every picture}=[tikzfig]
  \begin{tikzpicture}
	\begin{pgfonlayer}{nodelayer}
		\node [style=none] (0) at (-1, -1) {};
		\node [style=none] (1) at (1, -1) {};
		\node [style=none] (2) at (-1, 1) {};
		\node [style=none] (3) at (1, 1) {};
		\node [style=none] (4) at (0, -1) {};
		\node [style={EW_s}] (5) at (0, 0) {};
	\end{pgfonlayer}
	\begin{pgfonlayer}{edgelayer}
		\draw [style={dashed_row}] (2.center) to (3.center);
		\draw [style={dashed_row}] (0.center) to (1.center);
		\draw [style={red_edge}] (5) to (4.center);
	\end{pgfonlayer}
\end{tikzpicture}
 \qquad\text{and}\qquad 
  \tikzsetnextfilename{lower_dot}
  \tikzstyle{every picture}=[tikzfig]
  \begin{tikzpicture}
	\begin{pgfonlayer}{nodelayer}
		\node [style=none] (0) at (-1, -1) {};
		\node [style=none] (1) at (1, -1) {};
		\node [style=none] (2) at (-1, 1) {};
		\node [style=none] (3) at (1, 1) {};
		\node [style={EW_s}] (5) at (0, 0) {};
		\node [style=none] (6) at (0, 1) {};
	\end{pgfonlayer}
	\begin{pgfonlayer}{edgelayer}
		\draw [style={dashed_row}] (2.center) to (3.center);
		\draw [style={dashed_row}] (0.center) to (1.center);
		\draw [style={red_edge}] (6.center) to (5);
	\end{pgfonlayer}
\end{tikzpicture}
\]
                from $B_s$ to $B_{\emptyset}$ and from $B_{\emptyset}$ to $B_s$, respectively, of degree 1;
                \item for any $s\in S$, trivalent morphisms
                \[ 
  \tikzsetnextfilename{split}
  \tikzstyle{every picture}=[tikzfig]
  \begin{tikzpicture}
	\begin{pgfonlayer}{nodelayer}
		\node [style=none] (0) at (-1, -1) {};
		\node [style=none] (1) at (1, -1) {};
		\node [style=none] (2) at (-1, 1) {};
		\node [style=none] (3) at (1, 1) {};
		\node [style=none] (4) at (-0.5, 1) {};
		\node [style=none] (5) at (0.5, 1) {};
		\node [style=none] (6) at (0, 0) {};
		\node [style=none] (7) at (0, -1) {};
	\end{pgfonlayer}
	\begin{pgfonlayer}{edgelayer}
		\draw [style={dashed_row}] (2.center) to (3.center);
		\draw [style={dashed_row}] (0.center) to (1.center);
		\draw [style={red_edge}] (7.center) to (6.center);
		\draw [style={red_edge}] (6.center) to (4.center);
		\draw [style={red_edge}] (6.center) to (5.center);
	\end{pgfonlayer}
\end{tikzpicture}
 \qquad\text{and}\qquad 
  \tikzsetnextfilename{merge}
  \tikzstyle{every picture}=[tikzfig]
  \begin{tikzpicture}
	\begin{pgfonlayer}{nodelayer}
		\node [style=none] (0) at (-1, -1) {};
		\node [style=none] (1) at (1, -1) {};
		\node [style=none] (2) at (-1, 1) {};
		\node [style=none] (3) at (1, 1) {};
		\node [style=none] (4) at (-0.5, -1) {};
		\node [style=none] (5) at (0.5, -1) {};
		\node [style=none] (6) at (0, 0) {};
		\node [style=none] (7) at (0, 1) {};
	\end{pgfonlayer}
	\begin{pgfonlayer}{edgelayer}
		\draw [style={dashed_row}] (2.center) to (3.center);
		\draw [style={dashed_row}] (0.center) to (1.center);
		\draw [style={red_edge}] (7.center) to (6.center);
		\draw [style={red_edge}] (6.center) to (4.center);
		\draw [style={red_edge}] (6.center) to (5.center);
	\end{pgfonlayer}
\end{tikzpicture}
\]
                from $B_s$ to $B_{(s,s)}$ and from $B_{(s,s)}$ to $B_s$, respectively, of degree $-1$;
                \item for any pair $(s,t)$ of distinct simple reflections such that $st$ has finite order $m = m_{st}$ in $W$, a morphism
                \[
  \tikzsetnextfilename{odd_braid}
  \tikzstyle{every picture}=[tikzfig]
  \begin{tikzpicture}
	\begin{pgfonlayer}{nodelayer}
		\node [style=none] (0) at (-1.5, -1) {};
		\node [style=none] (1) at (1.5, -1) {};
		\node [style=none] (2) at (-1.5, 1) {};
		\node [style=none] (3) at (1.5, 1) {};
		\node [style=none] (4) at (-1, -1) {};
		\node [style=none] (5) at (-1, 1) {};
		\node [style=none] (6) at (-0.5, 1) {};
		\node [style=none] (7) at (-0.5, -1) {};
		\node [style=none] (8) at (0, 0) {};
		\node [style=none] (9) at (1, -1) {};
		\node [style=none] (10) at (1, 1) {};
		\node [style=none] (11) at (0.25, 0.75) {\footnotesize$\ldots$};
		\node [style=none] (13) at (0.25, -0.75) {\footnotesize$\ldots$};
	\end{pgfonlayer}
	\begin{pgfonlayer}{edgelayer}
		\draw [style={dashed_row}] (2.center) to (3.center);
		\draw [style={dashed_row}] (0.center) to (1.center);
		\draw [style={red_edge}] (4.center) to (8.center);
		\draw [style={red_edge}] (8.center) to (6.center);
		\draw [style=blue edge] (8.center) to (5.center);
		\draw [style=blue edge] (8.center) to (10.center);
		\draw [style={red_edge}] (8.center) to (9.center);
		\draw [style=blue edge] (8.center) to (7.center);
	\end{pgfonlayer}
\end{tikzpicture}
 \text{ if } m_{st} \text{ is odd or } 
  \tikzsetnextfilename{even_braid}
  \tikzstyle{every picture}=[tikzfig]
  \begin{tikzpicture}
	\begin{pgfonlayer}{nodelayer}
		\node [style=none] (0) at (-1.5, -1) {};
		\node [style=none] (1) at (1.5, -1) {};
		\node [style=none] (2) at (-1.5, 1) {};
		\node [style=none] (3) at (1.5, 1) {};
		\node [style=none] (4) at (-1, -1) {};
		\node [style=none] (5) at (-1, 1) {};
		\node [style=none] (6) at (-0.5, 1) {};
		\node [style=none] (7) at (-0.5, -1) {};
		\node [style=none] (8) at (0, 0) {};
		\node [style=none] (9) at (1, -1) {};
		\node [style=none] (10) at (1, 1) {};
		\node [style=none] (11) at (0.25, 0.75) {\footnotesize$\ldots$};
		\node [style=none] (13) at (0.25, -0.75) {\footnotesize$\ldots$};
	\end{pgfonlayer}
	\begin{pgfonlayer}{edgelayer}
		\draw [style={dashed_row}] (2.center) to (3.center);
		\draw [style={dashed_row}] (0.center) to (1.center);
		\draw [style={red_edge}] (4.center) to (8.center);
		\draw [style={red_edge}] (8.center) to (6.center);
		\draw [style=blue edge] (8.center) to (5.center);
		\draw [style={red_edge}] (8.center) to (10.center);
		\draw [style=blue edge] (8.center) to (9.center);
		\draw [style=blue edge] (8.center) to (7.center);
	\end{pgfonlayer}
\end{tikzpicture}
  \text{if } m_{st} \text{ is even}\]
                from $B_{{}_s \underline{m}}$ to $B_{{}_t \underline{m}}$, of degree $0$.
        \end{enumerate}
        We set
        \[
  \tikzsetnextfilename{cap}
  \tikzstyle{every picture}=[tikzfig]
  \begin{tikzpicture}
	\begin{pgfonlayer}{nodelayer}
		\node [style=none] (0) at (-2.5, -1) {};
		\node [style=none] (1) at (-0.5, -1) {};
		\node [style=none] (2) at (-2.5, 1) {};
		\node [style=none] (3) at (-0.5, 1) {};
		\node [style=none] (4) at (-2, -1) {};
		\node [style=none] (5) at (-1, -1) {};
		\node [style=none] (6) at (0.5, -1) {};
		\node [style=none] (7) at (2.5, -1) {};
		\node [style=none] (8) at (0.5, 1) {};
		\node [style=none] (9) at (2.5, 1) {};
		\node [style=none] (10) at (1, -1) {};
		\node [style=none] (11) at (2, -1) {};
		\node [style=none] (12) at (1.5, -0.25) {};
		\node [style={EW_s}] (13) at (1.5, 0.5) {};
		\node [style=none] (14) at (0, 0) {$\coloneq$};
	\end{pgfonlayer}
	\begin{pgfonlayer}{edgelayer}
		\draw [style={dashed_row}] (2.center) to (3.center);
		\draw [style={dashed_row}] (0.center) to (1.center);
		\draw [style={red_edge}, bend left=90, looseness=3.50] (4.center) to (5.center);
		\draw [style={dashed_row}] (8.center) to (9.center);
		\draw [style={dashed_row}] (6.center) to (7.center);
		\draw [style={red_edge}] (10.center) to (12.center);
		\draw [style={red_edge}] (12.center) to (11.center);
		\draw [style={red_edge}] (12.center) to (13);
	\end{pgfonlayer}
\end{tikzpicture}
 \qquad\text{and}\qquad 
  \tikzsetnextfilename{cup}
  \tikzstyle{every picture}=[tikzfig]
  \begin{tikzpicture}
	\begin{pgfonlayer}{nodelayer}
		\node [style=none] (0) at (-0.5, 1) {};
		\node [style=none] (1) at (-2.5, 1) {};
		\node [style=none] (2) at (-0.5, -1) {};
		\node [style=none] (3) at (-2.5, -1) {};
		\node [style=none] (4) at (-1, 1) {};
		\node [style=none] (5) at (-2, 1) {};
		\node [style=none] (6) at (2.5, 1) {};
		\node [style=none] (7) at (0.5, 1) {};
		\node [style=none] (8) at (2.5, -1) {};
		\node [style=none] (9) at (0.5, -1) {};
		\node [style=none] (10) at (2, 1) {};
		\node [style=none] (11) at (1, 1) {};
		\node [style=none] (12) at (1.5, 0.25) {};
		\node [style={EW_s}] (13) at (1.5, -0.5) {};
		\node [style=none] (14) at (0, 0) {$\coloneq$};
	\end{pgfonlayer}
	\begin{pgfonlayer}{edgelayer}
		\draw [style={dashed_row}] (2.center) to (3.center);
		\draw [style={dashed_row}] (0.center) to (1.center);
		\draw [style={red_edge}, bend left=90, looseness=3.50] (4.center) to (5.center);
		\draw [style={dashed_row}] (8.center) to (9.center);
		\draw [style={dashed_row}] (6.center) to (7.center);
		\draw [style={red_edge}] (10.center) to (12.center);
		\draw [style={red_edge}] (12.center) to (11.center);
		\draw [style={red_edge}] (12.center) to (13);
	\end{pgfonlayer}
\end{tikzpicture}
.\]
        These morphisms are subject to the following relations:
        \begin{enumerate}
                \item the boxes add and multiply in the obvious way;
                \item The one-color relations
                \[
  \tikzsetnextfilename{one_color_relations}
  \tikzstyle{every picture}=[tikzfig]
  \input{./figures/one_color_relations.tikz}
\]
                \item The two-color relations
                \[
  \tikzsetnextfilename{two_color_relations}
  \tikzstyle{every picture}=[tikzfig]
  \input{./figures/two_color_relations.tikz}
\]
                where the boxes labeled by $\textnormal{JW}_{s,t}$ are the ``Jones--Wenzl projectors'' (see \cite[\S 5.2]{EW}).
        \end{enumerate}

        The composition of morphisms is induced by vertical concatenation. The monoidal product on morphisms is by horizontal concatenation of diagrams.
       \end{itemize}
       \end{definition}

       The only difference between $\EW_{\leq 2} (\fr{h}, W)$ and the usual Elias--Williamson diagrammatic category $\EW (\fr{h}, W)$ of \cite{EW} is the absence of the 3-color Zamolodchikov relations.
       We are primarily interested in the case where $W = H_3$ in which we wish to calculate the 3-color relation. 
       When the 3-color relation is known, there is an obvious full monoidal functor
       \[\EW_{\leq 2} (\fr{h}, W) \to \EW (\fr{h}, W).\]

       In \cite{EW} and \cite{Abe19}, a monoidal functor
       \[\scrF : \EW_{\leq 2} (\fr{h}, W) \to \BS (\fr{h}, W)\]
       is constructed. The functor is defined on objects by $\scrF (B_{\uw}) = B_{\uw}$ for all $\uw \in \Exp (W)$.
       On generating morphisms $\scrF$ is defined by, 
       \[
  \tikzsetnextfilename{poly_box}
  \tikzstyle{every picture}=[tikzfig]
  \begin{tikzpicture}
	\begin{pgfonlayer}{nodelayer}
		\node [style=none] (0) at (-1, -1) {};
		\node [style=none] (1) at (1, -1) {};
		\node [style=none] (2) at (-1, 1) {};
		\node [style=none] (3) at (1, 1) {};
		\node [style=box] (4) at (0, 0) {$f$};
	\end{pgfonlayer}
	\begin{pgfonlayer}{edgelayer}
		\draw [style={dashed_row}] (2.center) to (3.center);
		\draw [style={dashed_row}] (0.center) to (1.center);
	\end{pgfonlayer}
\end{tikzpicture}
 \mapsto f, \quad 
  \tikzsetnextfilename{upper_dot}
  \tikzstyle{every picture}=[tikzfig]
  \begin{tikzpicture}
	\begin{pgfonlayer}{nodelayer}
		\node [style=none] (0) at (-1, -1) {};
		\node [style=none] (1) at (1, -1) {};
		\node [style=none] (2) at (-1, 1) {};
		\node [style=none] (3) at (1, 1) {};
		\node [style=none] (4) at (0, -1) {};
		\node [style={EW_s}] (5) at (0, 0) {};
	\end{pgfonlayer}
	\begin{pgfonlayer}{edgelayer}
		\draw [style={dashed_row}] (2.center) to (3.center);
		\draw [style={dashed_row}] (0.center) to (1.center);
		\draw [style={red_edge}] (5) to (4.center);
	\end{pgfonlayer}
\end{tikzpicture}
 \mapsto \epsilon^s, \quad 
  \tikzsetnextfilename{lower_dot}
  \tikzstyle{every picture}=[tikzfig]
  \begin{tikzpicture}
	\begin{pgfonlayer}{nodelayer}
		\node [style=none] (0) at (-1, -1) {};
		\node [style=none] (1) at (1, -1) {};
		\node [style=none] (2) at (-1, 1) {};
		\node [style=none] (3) at (1, 1) {};
		\node [style={EW_s}] (5) at (0, 0) {};
		\node [style=none] (6) at (0, 1) {};
	\end{pgfonlayer}
	\begin{pgfonlayer}{edgelayer}
		\draw [style={dashed_row}] (2.center) to (3.center);
		\draw [style={dashed_row}] (0.center) to (1.center);
		\draw [style={red_edge}] (6.center) to (5);
	\end{pgfonlayer}
\end{tikzpicture}
 \mapsto \eta^s, \quad 
  \tikzsetnextfilename{split}
  \tikzstyle{every picture}=[tikzfig]
  \begin{tikzpicture}
	\begin{pgfonlayer}{nodelayer}
		\node [style=none] (0) at (-1, -1) {};
		\node [style=none] (1) at (1, -1) {};
		\node [style=none] (2) at (-1, 1) {};
		\node [style=none] (3) at (1, 1) {};
		\node [style=none] (4) at (-0.5, 1) {};
		\node [style=none] (5) at (0.5, 1) {};
		\node [style=none] (6) at (0, 0) {};
		\node [style=none] (7) at (0, -1) {};
	\end{pgfonlayer}
	\begin{pgfonlayer}{edgelayer}
		\draw [style={dashed_row}] (2.center) to (3.center);
		\draw [style={dashed_row}] (0.center) to (1.center);
		\draw [style={red_edge}] (7.center) to (6.center);
		\draw [style={red_edge}] (6.center) to (4.center);
		\draw [style={red_edge}] (6.center) to (5.center);
	\end{pgfonlayer}
\end{tikzpicture}
 \mapsto \nu^s, \quad 
  \tikzsetnextfilename{merge}
  \tikzstyle{every picture}=[tikzfig]
  \begin{tikzpicture}
	\begin{pgfonlayer}{nodelayer}
		\node [style=none] (0) at (-1, -1) {};
		\node [style=none] (1) at (1, -1) {};
		\node [style=none] (2) at (-1, 1) {};
		\node [style=none] (3) at (1, 1) {};
		\node [style=none] (4) at (-0.5, -1) {};
		\node [style=none] (5) at (0.5, -1) {};
		\node [style=none] (6) at (0, 0) {};
		\node [style=none] (7) at (0, 1) {};
	\end{pgfonlayer}
	\begin{pgfonlayer}{edgelayer}
		\draw [style={dashed_row}] (2.center) to (3.center);
		\draw [style={dashed_row}] (0.center) to (1.center);
		\draw [style={red_edge}] (7.center) to (6.center);
		\draw [style={red_edge}] (6.center) to (4.center);
		\draw [style={red_edge}] (6.center) to (5.center);
	\end{pgfonlayer}
\end{tikzpicture}
 \mapsto \mu^s, \quad 
  \tikzsetnextfilename{amb_braid}
  \tikzstyle{every picture}=[tikzfig]
  \begin{tikzpicture}
	\begin{pgfonlayer}{nodelayer}
		\node [style=none] (0) at (-1.5, -1) {};
		\node [style=none] (1) at (1.5, -1) {};
		\node [style=none] (2) at (-1.5, 1) {};
		\node [style=none] (3) at (1.5, 1) {};
		\node [style=none] (4) at (-1, -1) {};
		\node [style=none] (5) at (-1, 1) {};
		\node [style=none] (6) at (-0.5, 1) {};
		\node [style=none] (7) at (-0.5, -1) {};
		\node [style=none] (8) at (0, 0) {};
		\node [style=none] (9) at (1, -1) {};
		\node [style=none] (10) at (1, 1) {};
		\node [style=none] (11) at (0.25, 0.75) {\footnotesize$\ldots$};
		\node [style=none] (13) at (0.25, -0.75) {\footnotesize$\ldots$};
	\end{pgfonlayer}
	\begin{pgfonlayer}{edgelayer}
		\draw [style={dashed_row}] (2.center) to (3.center);
		\draw [style={dashed_row}] (0.center) to (1.center);
		\draw [style={red_edge}] (4.center) to (8.center);
		\draw [style={red_edge}] (8.center) to (6.center);
		\draw [style=blue edge] (8.center) to (5.center);
		\draw [style={purple_edge}] (8.center) to (10.center);
		\draw [style={purple_edge}] (8.center) to (9.center);
		\draw [style=blue edge] (8.center) to (7.center);
	\end{pgfonlayer}
\end{tikzpicture}
 \mapsto \beta_{st}.\]
       This functor turns out to be well-defined; although, it is quite laborious to show (see \cite[\S 5.3]{EW}).

       \section{Localization}

       \subsection{Localization matrices}

       We review the theory of localization matrices developed in \cite{EWloc,Abe21}. 

       We will construct a monoidal category $\sfC_Q (\fr{h}, W)$ whose objects consist of tuples $(P^w)_{w \in W}$ such that
       \begin{itemize}
        \item $P^w$ is a graded $Q$-bimodule;
        \item $m \cdot q = w (q) \cdot m$ for any $m \in P^w$ and $q \in Q$;
        \item $P^w$ is nonzero for finitely many $w \in W$.
       \end{itemize} 
       A morphism in $\sfC_Q (\fr{h}, W)$ from $(P_1^w)_{w \in W}$ to $(P_2^w)_{w \in W}$ consists of a tuple $(\varphi_w)_{w \in W}$ where each $\varphi_w : P_1^w \to P_2^w$ is a $Q$-bimodule homomorphism.
       The monoidal structure on $\sfC_Q (\fr{h}, W)$ is induced from the tensor product of graded $Q$-bimodules. More precisely,
       \[(P_1 \otimes_Q P_2)^w = \bigoplus_{xy = w} P_1^x \otimes_Q P_2^y.\]
       
       The assignment $M \mapsto (M_Q^w)_{w \in W}$ defines a monoidal functor
       \[\sfC (\fr{h}, W) \to \sfC_Q (\fr{h}, W).\]
       We will focus our attention on its restriction to Abe--Bott--Samelson bimodules,
       \[(-)_Q : \BS (\fr{h}, W) \to \sfC_Q (\fr{h}, W).\]
       which we call the \emph{localization functor}. It is explained in \cite[\S 3.3]{Abe21} that $(-)_Q$ is faithful.

       As a consequence of the Lemma~\ref{lem:decomp_into_loc_pieces}, one has that there is an isomorphism of ungraded left $Q$-modules
       \[\Hom_{\sfC_Q (\fr{h}, W)} ((B_{\ux})_Q, (B_{\uy})_Q) \cong \bigoplus_{\ux^{\ue} = w = \uy^{\uf}} Q. \]
       Let $\varphi : B_{\ux} \to B_{\uy}$ be a morphism in $\BS (\fr{h}, W)$. For subexpressions $\ue$ of $\ux$ and $\uf$ of $\uy$, we can consider the map
       \[(\varphi_Q)_{\uf, \ue} : B_{\ux, Q}^{\ue} \to B_{\uy, Q}^{\uf}\]
       given by the composition
       \[B_{\ux, Q}^{\ue} \hookrightarrow B_{\ux} \otimes_R Q \stackrel{\varphi \otimes 1}{\longrightarrow} B_{\uy} \otimes_R Q \twoheadrightarrow B_{\uy, Q}^{\uf}.\]
       Since both the source and target of this map are free rank-one $Q$-modules with fixed generators $q_{\ux}^{\ue}$ and $q_{\uy}^{\uf}$ respectively, it is determined by a scalar $\Lambda (\varphi)_{\uf, \ue} \in Q$.
       Note that $\Lambda (\varphi)_{\uf, \ue} = 0$ unless $\ux^{\ue} = \uy^{\uf}$. By considering all possible subexpressions of $\ux$ and $\uy$, we form a matrix
       \[\Lambda (\varphi) = (\Lambda (\varphi)_{\uf, \ue})_{\ue \in \Sub (\ux), \uf \in \Sub (\uy)}\]
       whose entries are in $Q$ called the \emph{localization matrix}. Since $(-)_Q$ is faithful, it follows that $\varphi$ is uniquely determined by its localization matrix.
       In particular, $\Lambda (\varphi) = 0$ if and only if $\varphi = 0$.

       \subsection{Localization matrices of products and duals}

        Let $\Mat_{\uy, \ux}$ denote the set of $\Sub (\uy) \times \Sub (\ux)$-matrices with entries in $Q$ such that if $A \in \Mat_{\uy, \ux}$, then $A_{\uf, \ue} \neq 0$ only when $\ux^{\ue} = \uy^{\uf}$.
        Of course, localization matrices are examples of such matrices. In particular, if $\varphi : B_{\ux} \to B_{\uy}$ is a morphism of Abe--Bott--Samelson bimodules, then $\Lambda (\varphi) \in \Mat_{\uy, \ux}$.

         Let $A \in \Mat_{\uy, \ux}$ and $B \in \Mat_{\uy', \ux'}$.
         We define a matrix $A \star B \in \Mat_{\uy \uy', \ux \ux'}$ whose entries are given by
         \begin{equation}\label{eq:tensor_prod_of_Q_mats}
                (A \star B)_{\uf \uf', \ue \ue'} \coloneq A_{\uf, \ue} \ux^{\ue} B_{\uf', \ue'}.
         \end{equation}

       \begin{lemma}\label{lem:prod_of_loc_mats}
                Let $\varphi : B_{\ux} \to B_{\uy}$ and $\psi : B_{\ux'} \to B_{\uy'}$. 
                Then
                \[ \Lambda (\varphi \otimes \psi) = \Lambda (\varphi) \star \Lambda (\psi). \]
       \end{lemma}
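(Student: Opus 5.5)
We would prove Lemma~\ref{lem:prod_of_loc_mats} by computing directly on the distinguished generators $q_{\ux}^{\ue}$. The key observation is how these generators behave under concatenation. By \eqref{eq:qwe}, $q_{\ux\ux'}^{\ue\ue'}$ is literally the tensor product of the factors defining $q_{\ux}^{\ue}$ and $q_{\ux'}^{\ue'}$. Hence, under the identification
\[B_{\ux\ux'}\otimes_R Q \cong (B_{\ux}\otimes_R Q)\otimes_Q (B_{\ux'}\otimes_R Q),\]
we have $q_{\ux\ux'}^{\ue\ue'} = q_{\ux}^{\ue}\otimes q_{\ux'}^{\ue'}$. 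Likewise the summand $B_{\ux\ux',Q}^{\ue\ue'}$ is $B_{\ux,Q}^{\ue}\otimes_Q B_{\ux',Q}^{\ue'}$, and similarly on the $\uy\uy'$ side. This is the compatibility of the decomposition in Lemma~\ref{lem:decomp_into_loc_pieces} with tensor products. It should be immediate from the definition of the $\ue$-component as an iterated tensor product of the pieces in \eqref{eq:loc_of_B_s}.

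Next, write
\[\varphi_Q(q_{\ux}^{\ue}) = \sum_{\uf} q_{\uy}^{\uf}\,\Lambda(\varphi)_{\uf,\ue}.\]
Here we must fix the side convention. The generator $q$ spans a rank-one $Q$-module, and the scalar $\Lambda(\varphi)_{\uf,\ue}$ should be read as acting on the right, since $B^{\uf}_{\uy,Q}$ is a free rank-one right $Q$-module generated by $q_{\uy}^{\uf}$. The same expansion holds for $\psi$. Then
\[(\varphi\otimes\psi)_Q(q_{\ux}^{\ue}\otimes q_{\ux'}^{\ue'}) = \varphi_Q(q_{\ux}^{\ue})\otimes \psi_Q(q_{\ux'}^{\ue'}) = \sum_{\uf,\uf'} q_{\uy}^{\uf}\,\Lambda(\varphi)_{\uf,\ue}\otimes q_{\uy'}^{\uf'}\,\Lambda(\psi)_{\uf',\ue'}.\]
To reach the form $q_{\uy}^{\uf}\otimes q_{\uy'}^{\uf'}\cdot(\text{scalar})$, we move $\Lambda(\varphi)_{\uf,\ue}$ across the tensor sign. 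In the graded setting it passes through freely as a left scalar on the second factor. It must then be pushed past $q_{\uy'}^{\uf'}$, which lies in the $\uy'^{\uf'}$-twisted component, where $m\cdot q = w(q)\cdot m$.

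The main obstacle is exactly this twist bookkeeping. In the formula \eqref{eq:tensor_prod_of_Q_mats} the factor $\ux^{\ue}$ appears between $A$ and $B$, which suggests the convention is that the scalar sits on the left of the generator. That is, $\varphi_Q(q^{\ue}) = \sum_{\uf} \Lambda_{\uf,\ue}\, q^{\uf}$ in the left $R$-module sense, and then $q^{\uf}\cdot \Lambda(\psi)_{\uf',\ue'}$, with $q^{\uf}$ in the $\uy^{\uf}$-component, becomes $\uy^{\uf}(\Lambda(\psi)_{\uf',\ue'})\cdot q^{\uf}$. Since only entries with $\uy^{\uf} = \ux^{\ue}$ are nonzero, the twisting element may be written $\ux^{\ue}$. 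The resulting coefficient is $\Lambda(\varphi)_{\uf,\ue}\,\ux^{\ue}(\Lambda(\psi)_{\uf',\ue'})$, matching \eqref{eq:tensor_prod_of_Q_mats} with $\ux^{\ue}B$ read as the $W$-action.

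I would carefully fix which side the coefficient lives on, consistent with how $\Lambda$ was defined. Then I would verify the commutation using the bimodule relation in $\sfC_Q(\fr{h},W)$ for $Q_{\uy^{\uf}}$. Finally, I would note that the degree shifts $(-\ell)$ introduce no signs, which completes the proof.
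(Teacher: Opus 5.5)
Your argument is correct and is essentially the paper's proof. The paper just invokes monoidality of the localization functor $(-)_Q$ (citing \cite[(1.2)]{EWloc}); your computation on $q_{\ux}^{\ue}\otimes q_{\ux'}^{\ue'}$, using $q_{\ux\ux'}^{\ue\ue'}=q_{\ux}^{\ue}\otimes q_{\ux'}^{\ue'}$ and the twist $q_{\uy}^{\uf}\cdot r=\uy^{\uf}(r)\,q_{\uy}^{\uf}$, is exactly what that monoidality amounts to at the level of matrices.

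You should commit to the left-scalar convention from the start and drop your first expansion. With scalars on the right, the same computation yields $(\ux'^{\ue'})^{-1}(\Lambda(\varphi)_{\uf,\ue})\,\Lambda(\psi)_{\uf',\ue'}$ rather than \eqref{eq:tensor_prod_of_Q_mats}.
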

       \begin{proof}
        The identity is a simple consequence of the localization functor being monoidal (see \cite[(1.2)]{EWloc}).
       \end{proof}

       Let $A \in \Mat_{\uy, \ux}$.
       Consider the diagonal matrix $G_{\ux} \in \Mat_{\ux, \ux}$ whose $(\ue, \ue)$-entry is given by $\zeta_{\ux} (\ue)$.
       We define a matrix $A^{\dagger} \in \Mat_{\ux, \uy}$ by
       \[A^{\dagger} \coloneq G_{\ux}^{-1} A^{\top} G_{\uy}.\]
       Note that 
       \[
        (A^{\dagger})^{\dagger} = A.
       \]
       Now suppose $B \in \Mat_{\ux, \uw}$.
       It follows from the definitions that
       \begin{equation}\label{eq:dagger_and_compositions}
                B^{\dagger} A^{\dagger} = (AB)^{\dagger}.
       \end{equation}

       \begin{lemma}\label{lem:dagger_and_product}
        Let $A \in \Mat_{\uy, \ux}$ and $B \in \Mat_{\uy', \ux'}$.
        Then
        \[(A \star B)^{\dagger} = A^{\dagger} \star B^{\dagger}.\]
       \end{lemma}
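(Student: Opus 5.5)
We compare the two sides entrywise, using only the definitions of $\star$, $\dagger$ and $G$. Fix subexpressions $\ue \in \Sub(\ux)$, $\ue' \in \Sub(\ux')$, $\uf \in \Sub(\uy)$, $\uf' \in \Sub(\uy')$. By definition of $\dagger$ and \eqref{eq:tensor_prod_of_Q_mats}, the left-hand side has entry
\[
((A \star B)^{\dagger})_{\ue\ue', \uf\uf'} = \frac{\zeta_{\uy\uy'}(\uf\uf')}{\zeta_{\ux\ux'}(\ue\ue')}\, A_{\uf,\ue}\, \ux^{\ue}\bigl(B_{\uf',\ue'}\bigr).
\]
The right-hand side has entry
\[
(A^\dagger \star B^\dagger)_{\ue\ue',\uf\uf'} = \frac{\zeta_{\uy}(\uf)}{\zeta_{\ux}(\ue)} A_{\uf,\ue}\; \uy^{\uf}\!\left(\frac{\zeta_{\uy'}(\uf')}{\zeta_{\ux'}(\ue')} B_{\uf',\ue'}\right).
\]
Note that the twist on the right is by $\uy^{\uf}$, the evaluation of the source of $A^\dagger$.

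We only need to treat the case $\ux^{\ue} = \uy^{\uf}$. Otherwise $A_{\uf,\ue} = 0$, so both entries vanish. In that case the twisting elements agree: call the common value $w$.

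The key step is a multiplicativity formula for $\zeta$ under concatenation. Directly from the definition of $\beta_i$, the roots $\beta_i(\ux\ux', \ue\ue')$ for $i \le \ell(\ux)$ are those of $(\ux,\ue)$. For the later indices, the Bruhat stroll is prefixed by $\ux^{\ue}$, so those roots are $\ux^{\ue}(\beta_j(\ux',\ue'))$. Hence
\[
\zeta_{\ux\ux'}(\ue\ue') = \zeta_{\ux}(\ue)\, \ux^{\ue}\bigl(\zeta_{\ux'}(\ue')\bigr),
\]
and likewise for $\uy\uy'$.

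Substituting these into the left-hand side, and using $\ux^{\ue} = \uy^{\uf} = w$ together with the fact that $w$ acts on $Q$ by ring automorphisms, the left-hand side becomes
\[
\frac{\zeta_{\uy}(\uf)\, w(\zeta_{\uy'}(\uf'))}{\zeta_{\ux}(\ue)\, w(\zeta_{\ux'}(\ue'))}\, A_{\uf,\ue}\, w(B_{\uf',\ue'}).
\]
This visibly equals the right-hand side. It also confirms that $A^\dagger \star B^\dagger$ lies in $\Mat_{\ux\ux',\uy\uy'}$ with the same support.

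The only point needing care is the twisting convention in $\star$. When $\star$ is applied to $A^\dagger$, the twist is by the evaluation of the column index of $A^\dagger$, which is a subexpression of $\uy$. The support condition is exactly what makes this twist coincide with $\ux^{\ue}$, and I expect this bookkeeping to be the main obstacle.
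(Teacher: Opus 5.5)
Your proposal is correct and follows the paper's proof essentially verbatim: you expand both sides entrywise, use the concatenation formula $\zeta_{\ux\ux'}(\ue\ue') = \zeta_{\ux}(\ue)\,\ux^{\ue}(\zeta_{\ux'}(\ue'))$, and observe that either $A_{\uf,\ue}=0$ (so both sides vanish) or $\ux^{\ue} = \uy^{\uf}$, which makes the twist $\uy^{\uf}$ in $A^{\dagger}\star B^{\dagger}$ agree with the twist $\ux^{\ue}$ in $A\star B$. Your justification of the concatenation formula via the Bruhat stroll spells out a step that the paper attributes simply to ``the definition of $\zeta$''.
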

       \begin{proof}
        The definition of $\zeta$ gives
        \[\zeta_{\ux \ux'} (\ue \ue') = \zeta_{\ux} (\ue) \ux^{\ue} (\zeta_{\ux'} (\ue')).\]
        We can use this with definition of $\star$ from \eqref{eq:tensor_prod_of_Q_mats} to compute
        \begin{equation}\label{eq:dagger_and_product_1}
                (A \star B)_{\ue \ue', \uf \uf'}^{\dagger} = \frac{\zeta_{\uy} (\uf) \uy^{\uf} (\zeta_{\uy'} (\uf'))}{\zeta_{\ux} (\ue) \ux^{\ue} (\zeta_{\ux'} (\ue'))} A_{\uf, \ue} \ux^{\ue} (B_{\uf', \ue'}).
        \end{equation}
        If $A_{\uf, \ue} = 0$, then right-hand side vanishes and there is nothing to show. 
        If $A_{\uf, \ue} \neq 0$, then we must have that $\ux^{\ue} = w = \uy^{\uf}$. 
        As a result \eqref{eq:dagger_and_product_1} becomes
        \begin{align*}
                 (A \star B)_{\ue \ue', \uf \uf'}^{\dagger} &= \left( \frac{\zeta_{\uy} (\uf)}{\zeta_{\ux} (\ue)} A_{\uf, \ue} \right) w \left( \frac{\zeta_{\uy'} (\uf') }{\zeta_{\ux'} (\ue')} B_{\uf', \ue'}\right) \\
                 &= A_{\ue, \uf}^{\dagger} \uy^{\uf} (B_{\ue', \uf'}^{\dagger}) \\
                 &= (A^{\dagger} \star B^{\dagger})_{\ue \ue', \uf \uf'}.
        \end{align*}
       \end{proof}

       \begin{lemma}\label{lem:loc_mat_of_dual}
                Let $\varphi : B_{\ux} \to B_{\uy}$, $\ue \in \Sub (\ux)$, and $\uf \in \Sub (\uy)$. 
                Then
                \begin{equation}\label{eq:loc_mat_of_dual_0}
                        \Lambda (\DD (\varphi)) = \Lambda (\varphi)^\dagger .
                \end{equation}
                In particular,
                \[\Lambda (\DD (\varphi))_{\ue, \uf} = \frac{\zeta_{\uy} (\uf)}{\zeta_{\ux} (\ue)} \Lambda (\varphi)_{\uf, \ue}. \]
       \end{lemma}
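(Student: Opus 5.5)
The plan is to reduce the identity $\Lambda(\DD(\varphi)) = \Lambda(\varphi)^\dagger$ to a statement about the pairing \eqref{eq:pairing} after localization. The key claim is that the localized pairing on $B_{\ux}\otimes_R Q$ is orthogonal with respect to the decomposition of Lemma~\ref{lem:decomp_into_loc_pieces}, with
\[\langle q_{\ux}^{\ue}, q_{\ux}^{\ue'}\rangle_{\ux} = \delta_{\ue,\ue'}\,\kappa\,\zeta_{\ux}(\ue)\]
for a universal sign or normalization $\kappa$ that does not depend on $\ue$. Granting this, the dual map $\DD(\varphi)$ is characterized as the adjoint of $\varphi$:
\[\langle \DD(\varphi)(n), m\rangle_{\ux} = \langle n, \varphi(m)\rangle_{\uy}\]
under the identifications $\DD(B_{\ux})\cong B_{\ux}$ and $\DD(B_{\uy})\cong B_{\uy}$.

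The proof then runs as follows. Pair $\DD(\varphi)(q_{\uy}^{\uf})$ against $q_{\ux}^{\ue}$. By orthogonality, the left side of the adjunction equals $\Lambda(\DD\varphi)_{\ue,\uf}\,\kappa\,\zeta_{\ux}(\ue)$, up to the twisted right action. The right side equals $\Lambda(\varphi)_{\uf,\ue}\,\kappa\,\zeta_{\uy}(\uf)$. Dividing gives the formula. When $\ux^{\ue}\neq\uy^{\uf}$, both entries vanish anyway.

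One must be careful that the pairing is only right $R$-bilinear, while the left actions interact through the twist by $w=\ux^{\ue}$. Since $w$ is the same on both sides when the entry is nonzero, the scalars come out consistently, just as in the manipulations in Lemma~\ref{lem:dagger_and_product}.

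To prove the orthogonality claim, I would first reduce to $\ux=(s)$ using monoidality. The pairing \eqref{eq:pairing} is built iteratively, and $\zeta_{\ux\ux'}(\ue\ue') = \zeta_{\ux}(\ue)\,\ux^{\ue}(\zeta_{\ux'}(\ue'))$, so both sides are multiplicative, with the twist by $\ux^{\ue}$ arising from moving scalars through $q_{\ux}^{\ue}$. For a single $s$, this is a direct computation with $\partial_s$. The elements are $q^1 = \delta_s\otimes 1 - 1\otimes\delta_s$ and $q^0 = \delta_s\otimes 1 - 1\otimes s(\delta_s)$. One computes $\langle q^i, q^j\rangle_s$ by expanding $\partial_s(ac)bd$. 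The cross terms vanish, and the diagonal terms give $\alpha_s$ up to a common sign, and $\zeta_{(s)}(\ue)=\alpha_s$ for both $\ue$. An overall sign, if present, would be uniform and cancels in the ratio.

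An alternative that avoids global pairing bookkeeping is to note that both sides of \eqref{eq:loc_mat_of_dual_0} are compatible with composition (by \eqref{eq:dagger_and_compositions}) and with $\otimes$ (by Lemmas~\ref{lem:prod_of_loc_mats} and \ref{lem:dagger_and_product}), since $\DD$ is monoidal and contravariant. It then suffices to check generators. For the six one-color maps, one verifies directly that $\Lambda(\eta^s)=\Lambda(\epsilon^s)^\dagger$ and $\Lambda(\mu^s)=\Lambda(\nu^s)^\dagger$ from their explicit $2\times1$ and $4\times2$ localization matrices (entries such as $1$, $\alpha_s$, $1/\alpha_s$). For $\beta_{st}$, one uses $\DD(\beta_{st})=\beta_{ts}$ and the known localization matrix of the braid morphism.

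The main obstacle is that this generator route does not cover arbitrary $\varphi$ unless $\BS$ is generated by these maps, which is exactly what is unknown when the $H_3$ relation is missing. Double leaves do form a basis (Theorem~\ref{thm:dll_basis}), and they are built from generators, so by $R$-linearity this suffices after all. Still, I would present the pairing argument as the primary proof and use the generator check only as a consistency test for the normalization $\kappa$. The step I expect to be fiddliest is the rank-one computation and the sign conventions in $\partial_s$ twists.
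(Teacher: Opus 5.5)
\textbf{The gap.} The pairing route you want to make primary rests on a false normalization. Write $g_{\ux}(\ue)=\langle q_{\ux}^{\ue},q_{\ux}^{\ue}\rangle_{\ux}$. These values are \emph{not} $\kappa\,\zeta_{\ux}(\ue)$ for a uniform $\kappa$.
\begin{itemize}
\item For $\ux=(s)$ one finds $g(0)=\partial_s(\delta_s^2)-2s(\delta_s)=\alpha_s$ but $g(1)=\partial_s(\delta_s^2)-2\delta_s=-\alpha_s$. The diagonal entries have opposite signs, so your rank-one check fails as stated.
\item For $\ux=(s,t)$ and $\ue=(1,1)$ one gets $g_{\ux}(\ue)=\alpha_t\,t(\alpha_s)$, whereas $\zeta_{\ux}(\ue)=\alpha_s\,s(\alpha_t)$. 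In type $A_2$ these are $\alpha_t(\alpha_s+\alpha_t)$ and $\alpha_s(\alpha_s+\alpha_t)$, which are not even proportional.
\end{itemize}
The reason is that $\langle m\otimes n,m'\otimes n'\rangle=\langle\langle m,m'\rangle n,n'\rangle$ forces the recursion $g_{\ux\ux'}(\ue\ue')=g_{\ux'}(\ue')\,(w')^{-1}(g_{\ux}(\ue))$ with $w'={\ux'}^{\ue'}$. This is not the recursion $\zeta_{\ux\ux'}(\ue\ue')=\zeta_{\ux}(\ue)\,\ux^{\ue}(\zeta_{\ux'}(\ue'))$. What is true is orthogonality together with $g_{\ux}(\ue)=(\ux^{\ue})^{-1}(\zeta_{\ux}(\ue))$.

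Accordingly, the twist does not simply ``come out consistently''. Take $\Lambda$ defined by left multiplication, which is the convention of the tables in \S\ref{ss:diag_localization}, and put $w=\ux^{\ue}$. Adjunction then gives $g_{\ux}(\ue)\,w^{-1}(\Lambda(\DD\varphi)_{\ue,\uf})=g_{\uy}(\uf)\,w^{-1}(\Lambda(\varphi)_{\uf,\ue})$. It is precisely applying $w$ to the Gram values that produces the $\zeta$'s. Using the true Gram values without this twist gives wrong signs: for instance, one gets $-1$ instead of $1$ for the $(1,01)$-entry of $\Lambda(\mu^s)=\Lambda(\nu^s)^\dagger$.

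\textbf{The fix and the comparison.} With the corrected lemma (base case as computed, vanishing cross terms, induction via the recursion above), your pairing argument does prove the statement. It then buys something the paper's proof does not. It handles an arbitrary $\varphi$ directly, with no appeal to Theorem~\ref{thm:dll_basis} and no generator checks, and it identifies $G_{\ux}$ as the twisted Gram matrix of the duality pairing.

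Your fallback is exactly the paper's proof: compatibility of both sides of \eqref{eq:loc_mat_of_dual_0} with composition and $\otimes$ via \eqref{eq:dagger_and_compositions} and Lemmas~\ref{lem:prod_of_loc_mats} and \ref{lem:dagger_and_product}, followed by a check on $\epsilon^s,\eta^s,\mu^s,\nu^s,\beta_{st}$ and polynomials. It is complete once one notes, as you do, that double leaves span. Your worry there was misplaced, however. The missing $H_3$ relation concerns the relations of $\EW_{\leq 2}$, i.e.\ faithfulness of $\scrF$. It does not bear on whether generators span the morphism spaces of $\BS$, which Theorem~\ref{thm:dll_basis} settles.
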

       \begin{proof}
                As a consequence of \eqref{eq:dagger_and_compositions} and Lemma~\ref{lem:dagger_and_product} and the monoidality of $\DD$, it suffices to check that \eqref{eq:loc_mat_of_dual_0} holds on the generating morphisms from \S\ref{ss:gen_mors}.
                These can be readily computed by hand. 
       \end{proof}

       \subsection{Diagrammatic localization}\label{ss:diag_localization}

       A related monoidal functor
       \[(-)_Q^{\diag} : \EW_{\leq 2} (\fr{h}, W) \to \sfC_Q (\fr{h}, W)\]
       is constructed in \cite{EWloc}. On objects, it is defined by 
       \[(B_{\uw})_Q^{\diag} \coloneq \bigoplus_{\ue \in \Sub (\uw)} Q_{\uw^{\ue}} (-\ell (\uw)).\]
       On morphisms, it suffices to define the localization matrix associated to one of the generating morphisms.
       
       \begin{enumerate}
        \item \emph{Polynomial. } 
        \[
  \tikzsetnextfilename{poly_box}
  \tikzstyle{every picture}=[tikzfig]
  \begin{tikzpicture}
	\begin{pgfonlayer}{nodelayer}
		\node [style=none] (0) at (-1, -1) {};
		\node [style=none] (1) at (1, -1) {};
		\node [style=none] (2) at (-1, 1) {};
		\node [style=none] (3) at (1, 1) {};
		\node [style=box] (4) at (0, 0) {$f$};
	\end{pgfonlayer}
	\begin{pgfonlayer}{edgelayer}
		\draw [style={dashed_row}] (2.center) to (3.center);
		\draw [style={dashed_row}] (0.center) to (1.center);
	\end{pgfonlayer}
\end{tikzpicture}
 \mapsto \begin{tabular}{c | c} & $\emptyset$ \\ \hline $\emptyset$ & $f$ \end{tabular}\]
        \item \emph{Upper dot. } 
        \[
  \tikzsetnextfilename{upper_dot}
  \tikzstyle{every picture}=[tikzfig]
  \begin{tikzpicture}
	\begin{pgfonlayer}{nodelayer}
		\node [style=none] (0) at (-1, -1) {};
		\node [style=none] (1) at (1, -1) {};
		\node [style=none] (2) at (-1, 1) {};
		\node [style=none] (3) at (1, 1) {};
		\node [style=none] (4) at (0, -1) {};
		\node [style={EW_s}] (5) at (0, 0) {};
	\end{pgfonlayer}
	\begin{pgfonlayer}{edgelayer}
		\draw [style={dashed_row}] (2.center) to (3.center);
		\draw [style={dashed_row}] (0.center) to (1.center);
		\draw [style={red_edge}] (5) to (4.center);
	\end{pgfonlayer}
\end{tikzpicture}
 \mapsto \begin{tabular}{c | c c } & 0 & 1 \\ \hline $\emptyset$ & $\alpha_s$ & 0 \end{tabular}\]
        \item \emph{Lower dot. }
        \[
  \tikzsetnextfilename{lower_dot}
  \tikzstyle{every picture}=[tikzfig]
  \begin{tikzpicture}
	\begin{pgfonlayer}{nodelayer}
		\node [style=none] (0) at (-1, -1) {};
		\node [style=none] (1) at (1, -1) {};
		\node [style=none] (2) at (-1, 1) {};
		\node [style=none] (3) at (1, 1) {};
		\node [style={EW_s}] (5) at (0, 0) {};
		\node [style=none] (6) at (0, 1) {};
	\end{pgfonlayer}
	\begin{pgfonlayer}{edgelayer}
		\draw [style={dashed_row}] (2.center) to (3.center);
		\draw [style={dashed_row}] (0.center) to (1.center);
		\draw [style={red_edge}] (6.center) to (5);
	\end{pgfonlayer}
\end{tikzpicture}
 \mapsto \begin{tabular}{c | c } & $\emptyset$ \\ \hline 0 & 1 \\ 1 & 0 \end{tabular}\]
        \item \emph{Split. }
        \[
  \tikzsetnextfilename{split}
  \tikzstyle{every picture}=[tikzfig]
  \begin{tikzpicture}
	\begin{pgfonlayer}{nodelayer}
		\node [style=none] (0) at (-1, -1) {};
		\node [style=none] (1) at (1, -1) {};
		\node [style=none] (2) at (-1, 1) {};
		\node [style=none] (3) at (1, 1) {};
		\node [style=none] (4) at (-0.5, 1) {};
		\node [style=none] (5) at (0.5, 1) {};
		\node [style=none] (6) at (0, 0) {};
		\node [style=none] (7) at (0, -1) {};
	\end{pgfonlayer}
	\begin{pgfonlayer}{edgelayer}
		\draw [style={dashed_row}] (2.center) to (3.center);
		\draw [style={dashed_row}] (0.center) to (1.center);
		\draw [style={red_edge}] (7.center) to (6.center);
		\draw [style={red_edge}] (6.center) to (4.center);
		\draw [style={red_edge}] (6.center) to (5.center);
	\end{pgfonlayer}
\end{tikzpicture}
 \mapsto \begin{tabular}{c | c c } & 0 & 1 \\ \hline 00 & $1/\alpha_s$ & 0 \\ 01 & 0 & $1/\alpha_s$ \\ 10 & 0 & $-1/\alpha_s$ \\ 11 & $-1/\alpha_s$ & 0 \end{tabular}\]
        \item \emph{Merge. }
        \[
  \tikzsetnextfilename{merge}
  \tikzstyle{every picture}=[tikzfig]
  \begin{tikzpicture}
	\begin{pgfonlayer}{nodelayer}
		\node [style=none] (0) at (-1, -1) {};
		\node [style=none] (1) at (1, -1) {};
		\node [style=none] (2) at (-1, 1) {};
		\node [style=none] (3) at (1, 1) {};
		\node [style=none] (4) at (-0.5, -1) {};
		\node [style=none] (5) at (0.5, -1) {};
		\node [style=none] (6) at (0, 0) {};
		\node [style=none] (7) at (0, 1) {};
	\end{pgfonlayer}
	\begin{pgfonlayer}{edgelayer}
		\draw [style={dashed_row}] (2.center) to (3.center);
		\draw [style={dashed_row}] (0.center) to (1.center);
		\draw [style={red_edge}] (7.center) to (6.center);
		\draw [style={red_edge}] (6.center) to (4.center);
		\draw [style={red_edge}] (6.center) to (5.center);
	\end{pgfonlayer}
\end{tikzpicture}
 \mapsto \begin{tabular}{c | c c c c } & 00 & 01 & 10 & 11 \\ \hline 0 & 1 & 0 & 0 & 1 \\ 1 & 0 & 1 & 1 & 0 \end{tabular}\]
        \item \emph{$2m$-valent vertices.} Let $s,t$ be distinct simple reflections such that $m = m_{st} < \infty$. 
        We define
        \[
  \tikzsetnextfilename{amb_braid}
  \tikzstyle{every picture}=[tikzfig]
  \begin{tikzpicture}
	\begin{pgfonlayer}{nodelayer}
		\node [style=none] (0) at (-1.5, -1) {};
		\node [style=none] (1) at (1.5, -1) {};
		\node [style=none] (2) at (-1.5, 1) {};
		\node [style=none] (3) at (1.5, 1) {};
		\node [style=none] (4) at (-1, -1) {};
		\node [style=none] (5) at (-1, 1) {};
		\node [style=none] (6) at (-0.5, 1) {};
		\node [style=none] (7) at (-0.5, -1) {};
		\node [style=none] (8) at (0, 0) {};
		\node [style=none] (9) at (1, -1) {};
		\node [style=none] (10) at (1, 1) {};
		\node [style=none] (11) at (0.25, 0.75) {\footnotesize$\ldots$};
		\node [style=none] (13) at (0.25, -0.75) {\footnotesize$\ldots$};
	\end{pgfonlayer}
	\begin{pgfonlayer}{edgelayer}
		\draw [style={dashed_row}] (2.center) to (3.center);
		\draw [style={dashed_row}] (0.center) to (1.center);
		\draw [style={red_edge}] (4.center) to (8.center);
		\draw [style={red_edge}] (8.center) to (6.center);
		\draw [style=blue edge] (8.center) to (5.center);
		\draw [style={purple_edge}] (8.center) to (10.center);
		\draw [style={purple_edge}] (8.center) to (9.center);
		\draw [style=blue edge] (8.center) to (7.center);
	\end{pgfonlayer}
\end{tikzpicture}
 \mapsto G_{s,t},\]
        where for $\ue \in \Sub ({}_s \underline{m})$ and $\uf \in \Sub ({}_t \underline{m})$, the $(\uf, \ue)$-entry of the localization matrix for the $2m$-valent vertex is
        \[(G_{s,t})_{\uf, \ue} \coloneq \begin{cases} \frac{\pi_{s,t}}{\zeta_{{}_t \underline{m}} (\uf)} & \text{if } {}_s\underline{m}^{\ue} = {}_t \underline{m}^{\uf}, \\ 0 & \text{otherwise}.\end{cases}\]
       \end{enumerate}

       It follows from the construction that the following diagram commutes
       \[\begin{tikzcd}
{\EW_{\leq 2} (\fr{h}, W)} \arrow[rr, "\scrF"] \arrow[rd, "(-)_Q^{\diag}"'] &                      & {\BS (\fr{h}, W)} \arrow[ld, "(-)_Q"] \\
                                                          & {\sfC_Q (\fr{h}, W)}. &                                      
\end{tikzcd}\]
The content of this commutative diagram is that we know what the localization matrices are for the generating morphisms of the diagrammatic category.

        \begin{goal}
                Let $W$ be a finite rank-3 Coxeter group. Let $w_0$ be the longest element in $W$ and pick two reduced expressions $\ux$ and $\uy$ for $w_0$.
                Let $p$ and $q$ be distinct paths in the rex graph of $w_0$ from $\ux$ to $\uy$ around a Zamolodchikov cycle.
                There are corresponding morphisms $\beta_p$ and $\beta_q$ from $B_{\ux}$ to $B_{\uy}$ in $\BS (\fr{h}, W)$. 
                To complete the definition of the Elias--Williamson category, one needs to find a linear combination of double leaves
                \[\beta_p - \beta_q = \sum_{\ux^\ue = w = \uy^\uf} c_{\ue, \uf}^w \dLL_{\ue, \uf}.\]
                In light of the localization functor, it suffices to solve the enormous matrix equations
                \begin{equation}\label{eq:big_eqn_system}
                        \Lambda (\beta_p - \beta_q) =  \sum_{\ux^\ue = w = \uy^\uf} c_{\ue, \uf}^w \Lambda (\dLL_{\ue, \uf}).
                \end{equation}
                When $W$ is of type $A_3$ or type $B_3$, the saving grace is that the $c_{\ue, \uf}^w = 0$ for all subexpressions provided one chooses $\ux$ and $\uy$ in a clever manner.
                For type $A_1 \times I_2 (m)$, one again has that $c_{\ue, \uf}^w = 0$; however, one can show this directly without localization.

                The hard case, and the subject of the paper, is handling the type $H_3$ case. It was already calculated by Elias--Williamson that no choice of rexes will cause the coefficients $c_{\ue, \uf}^w$ to be all $0$.
                Proceeding naively, even writing down this system of equations would involve the multiplication of many sparse matrices with $2^{15} = 32768$ rows.
                Computing the localization matrices is still doable with some effort (although in practice you never multiply these massive sparse matrices).
                However, solving the system of equations efficiently requires some new insight.
        \end{goal}

        Before we continue with developing some general tools to attack this problem, we will first note some reductions/heuristics that we will employ.
        \begin{enumerate}
                \item Since $\beta_p - \beta_q$ is of degree 0, we do not need to consider all double leaves. Instead, we only need to consider those whose degrees are non-positive and have even parity.
                As explained in Example~\ref{ex:H3_hom_calc}, this leaves 189027 double leaves to consider. 
                \item The golden realization from Example~\ref{ex:golden_realization} turns out to be somewhat universal amongst $H_3$ realizations. This allows us to restrict to just a 3-dimensional realization.
                From this, we can write explicit bases for the degree $d$ component of $R$ for the finitely many degrees allowed by the defects of the source and target subexpressions. 
                \item The localization matrices have strong cellular properties. 
                By abusing this, we can construct a recursive algorithm for solving the system of equations \eqref{eq:big_eqn_system}.
        \end{enumerate}

        \subsection{Localizing light leaves}

        \begin{lemma}\label{lem:column_of_loc_matrix_for_LL}
                If a column $\uc$ of $\Lambda (\LL_{\ux, \ue})$ is nonzero, then $\uc \leq \ue$.
        \end{lemma}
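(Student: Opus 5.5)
The plan is to induct along the construction of $\LL_{\ux,\ue}$ in \S\ref{ss:light_leaves}. We track, for each $i$, which columns of $\Lambda(\LL^i_{\ux,\ue})$ can be nonzero. The claim will be strengthened to the following statement.

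For $0\le i\le k$, suppose a column $\uc$ of $\Lambda(\LL^i_{\ux,\ue})$ has a nonzero entry in some row $\ug$, where $\ug$ is a subexpression of $\underline r_i s_{i+1}\cdots s_k$. Then:
\begin{enumerate}[label=(\alph*)]
\item $\ux_{\le j}^{\uc_{\le j}}\le \ux_{\le j}^{\ue_{\le j}}=w_j$ for all $j\le i$;
\item the row $\ug$ restricted to $\underline r_i$ evaluates to $\ux_{\le i}^{\uc_{\le i}}$;
\item the remaining entries of $\ug$ agree with $c_{i+1},\dots,c_k$.
\end{enumerate}
Taking $i=k$ gives $\uc\le \ue$ in the path dominance order. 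The final rex move $\beta_{\underline r_k,\uw}$ only multiplies on the left by an invertible-on-nonzero-pattern matrix, so it cannot create new nonzero columns. The base case $i=0$ is trivial, since $\LL^0=\id$ and its localization matrix is the identity matrix.

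For the inductive step, Lemma~\ref{lem:prod_of_loc_mats} shows that $\Lambda(\phi_i\otimes\id)=\Lambda(\phi_i)\star\Lambda(\id)$. Hence a nonzero entry of $\Lambda(\LL^i)$ in column $\uc$ comes from a nonzero entry of $\Lambda(\LL^{i-1})$ in column $\uc$ at some intermediate row $\ug'$, composed with a nonzero entry of $\Lambda(\phi_i)$. Write $v=\ux_{\le i-1}^{\uc_{\le i-1}}\le w_{i-1}$. The key point is that every localization matrix lies in $\Mat$, i.e.\ it is supported on pairs with equal evaluation. Thus $\Lambda(\phi_i)$, with input $B_{\underline r_{i-1}}\otimes B_{s_i}$ and input subexpression evaluating to $v s_i^{c_i}$, produces an output on $\underline r_i$ that evaluates to the same element $v s_i^{c_i}=\ux_{\le i}^{\uc_{\le i}}$. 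This gives (b), and (c) is automatic because the identity factor is untouched. It remains to show $v s_i^{c_i}\le w_i$, which requires only the fact that the target $B_{\underline r_i}$, with $\underline r_i$ a rex for $w_i$, has subexpressions evaluating only to elements $\le w_i$. This is the subword property of Bruhat order. So (a) follows from (b): any element appearing in a row of $B_{\underline r_i}$ is a subword of a reduced word for $w_i$.

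The steps above use only the support condition together with the fact that each $\underline r_i$ is reduced, which holds in all four cases U0/U1/D0/D1 by construction. The step I expect to need care is the bookkeeping of the $\star$-product twist $\ux^{\ue}$ in \eqref{eq:tensor_prod_of_Q_mats}. That twist is a ring automorphism applied to entries and so preserves nonvanishing. We must also check that the intermediate rex moves $\beta_{\underline r_{i-1},\underline t}$ in D0/D1 stay within the rex $\underline t$ of $w_{i-1}$, so the subword argument still applies at that intermediate stage. Given this, the claim reduces to the subword property applied stage by stage.
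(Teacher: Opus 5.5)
Your proposal is correct and uses the same ingredients as the paper's proof: a zero column stays zero under left multiplication, the $\star$-formula for $\Lambda(F\otimes\id)$, the support condition $\Lambda(\varphi)\in\Mat$, and the subword property for the rex $\underline r_i$ of $w_i$. The paper skips your induction by writing $\LL_{\ux,\ue}=G_i\circ(F_i\otimes\id)$ for each fixed $i$, which gives a row $\uh$ of $\underline r_i$ with $\underline r_i^{\uh}=\ux_i^{\uc}$ in one step; also, the two points you flag for care are non-issues, since no invertibility of the final rex move is needed and the intermediate rex moves in D0/D1 never enter (only the target $\underline r_i$ is used).
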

        \begin{proof}
                Write $\ux = (s_1, \ldots, s_k)$ and set $\uw = \rex (\ux^{\ue})$.
                Assume that $\Lambda (\LL_{\ux, \ue})_{\bullet, \uc} \neq 0$. By the definition of the path dominance order, we need to show that $\ux_i^{\uc} \leq \ux_i^{\ue}$ for all $i=1,\ldots, k$. 
                Fix $i \in \{1, \ldots, k \}$ and let $\underline{r}_i$ be the reduced expression for $\ux_i^{\ue}$ occurring after the $i$-th step of the light-leaf construction. Recall the map $\LL_{\ux, \ue}^i : B_{\ux} \to B_{\underline{r}_i s_{i+1} \ldots s_k}$.
                By construction, we can write 
                \begin{equation}\label{eq:column_of_loc_matrix_for_LL_1}
                        \LL_{\ux, \ue}^i = F_i \otimes \id_{B_{(s_{i+1}, \ldots, s_k)}}
                \end{equation}
                where $F_i : B_{(s_1, \ldots, s_i)} \to B_{\underline{r}_i}$. Now write $\LL_{\ux, \ue} = G_i \circ \LL_{\ux, \ue}^i$ where $G_i : B_{\underline{r}_i s_{i+1} \ldots s_k} \to B_{\uw}$ is the morphism obtained by finishing the light leaf construction.
                By functorality of localization, we have that $\Lambda (\LL_{\ux, \ue}) = \Lambda (G_i) \Lambda (\LL_{\ux, \ue}^i)$.
                Since the column $\uc$ of $\Lambda (\LL_{\ux, \ue})$ is nonzero, elementary linear algebra says that the column $\uc$ of $\Lambda (\LL_{\ux, \ue}^i)$ is nonzero.
                By Lemma~\ref{lem:prod_of_loc_mats}, the tensor product decomposition of \eqref{eq:column_of_loc_matrix_for_LL_1} implies that the column $\uc_{\leq i}$ of $\Lambda (F_i)$ is nonzero.
                In particular, we can find a subexpression $\uh$ of $\underline{r}_i$ such that $\Lambda (F_i)_{\uh, \uc_{\leq i}} \neq 0$, and hence, $\ux_i^{\uc} = \underline{r}_i^{\uh}$.
                Since $\underline{r}_i$ is a reduced expression for $\ux_i^{\ue}$, we must have that $\underline{r}_i^{\uh} \leq \ux_i^{\ue}$, and so $\ux_i^{\uc} \leq \ux_i^{\ue}$.
        \end{proof}

        \begin{remark}
                Lemma~\ref{lem:column_of_loc_matrix_for_LL} can be thought of as a mild generalization of path dominance upper-triangularity for light leaves (see \cite[Proposition 6.6]{EW}).
                In particular, if $\uc$ is a subexpression of $\ux$, then path dominance upper-triangularity says that if $\Lambda (\LL_{\ux, \ue})_{\mathbf{1}, \uc} \neq 0$, then $\uc \leq \ue$.
        \end{remark}

        \begin{lemma}[{\cite[Proposition 6.6]{EW}}]\label{lem:coeff_of_LL_at_e}
                Let $\ue$ be a subexpression of $\ux \in \Exp (W)$.
                Then
                \[\Lambda (\LL_{\ux, \ue})_{\mathbf{1}, \ue} = \gamma_{\ux} (\ue) \neq 0.\]
                In particular, this entry of the localization matrix is independent of the rigidification datum. 
        \end{lemma}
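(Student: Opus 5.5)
I would prove this by induction along the light-leaf construction, tracking a single entry of the localization matrix step by step. Write $\ux=(s_1,\ldots,s_k)$, let $F_i : B_{(s_1,\ldots,s_i)} \to B_{\underline{r}_i}$ be as in the proof of Lemma~\ref{lem:column_of_loc_matrix_for_LL}, and let $\mathbf{1}_i$ denote the all-ones subexpression of $\underline{r}_i$. The claim to prove inductively is
\[\Lambda (F_i)_{\mathbf{1}_i, \ue_{\leq i}} = \gamma_{\ux_{\leq i}} (\ue_{\leq i}).\]
The final rex move $\beta_{\underline{r}_k, \uw}$ is then handled by the rex-move case below, which gives the statement.

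Since $F_i = \phi_i \circ (F_{i-1} \otimes \id_{B_{s_i}})$, functoriality and Lemma~\ref{lem:prod_of_loc_mats} give
\[\Lambda (F_i)_{\mathbf{1}_i, \ue_{\leq i}} = \sum_{\uh, g} \Lambda (\phi_i)_{\mathbf{1}_i, \uh g} \, \Lambda (F_{i-1})_{\uh, \ue_{\leq i-1}} \, w' \bigl(\Lambda(\id)_{g, e_i}\bigr),\]
where $\uh$ runs over subexpressions of $\underline{r}_{i-1}$ and $w'$ denotes the evaluation $\underline{r}_{i-1}^{\uh}$. The identity factor forces $g = e_i$.

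The key reduction is that only $\uh = \mathbf{1}_{i-1}$ contributes. Any contributing $\uh$ must satisfy $\underline{r}_{i-1}^{\uh} = \ux_{i-1}^{\ue} = w_{i-1}$. Because $\underline{r}_{i-1}$ is a reduced expression for $w_{i-1}$, a subexpression evaluating to $w_{i-1}$ must use every letter, so $\uh = \mathbf{1}_{i-1}$. The same argument shows that in any composite with a rex move $\beta_{\underline{r}, \underline{t}}$ between reduced expressions, the only relevant entry is the $(\mathbf{1},\mathbf{1})$ one.

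It therefore suffices to compute two kinds of local entries.

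\textbf{Rex moves.} For any rex move, $(\beta)_{\mathbf{1},\mathbf{1}} = 1$. For a single braid vertex, $(G_{s,t})_{\mathbf{1},\mathbf{1}} = \pi_{s,t}/\zeta(\mathbf{1}) = 1$ by \eqref{eq:pi_st}. Tensoring with identities on either side, via $\star$, keeps this entry equal to $1$, since all relevant entries are $(\mathbf{1},\mathbf{1})$ entries of identities.

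\textbf{The four decorations.} Here I would use the generator matrices of \S\ref{ss:diag_localization}, twisted by $w_{i-1}$ as in \eqref{eq:tensor_prod_of_Q_mats}.
\begin{enumerate}
\item[U1:] The map $\phi_i$ is the identity, so the factor is $1$.
\item[U0:] The factor comes from $\epsilon^s$ with entry $\alpha_s$ at column $0$. Twisting by $w_{i-1}$ gives $w_{i-1}(\alpha_{s_i}) = \beta_i$.
\item[D0:] The factor is the merge entry at row $1$, column $(1,0)$, which equals $1$, after the rex move. The twist changes nothing.
\item[D1:] The factor is $\cap^s = \epsilon^s\circ\mu^s$ at column $(1,1)$. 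This is the merge entry $(0,(1,1)) = 1$ times the twisted $\epsilon$-entry. The twist is now by $q = w_{i-1}s$, giving $q(\alpha_s) = -w_{i-1}(\alpha_s) = -\beta_i$.
\end{enumerate}
Multiplying these factors over $i$ yields exactly $\gamma_{\ux}(\ue)$. This is nonzero because $R$ is a domain and each $\beta_i$ is a nonzero root.

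Independence of the rigidification datum follows because every rex move contributes exactly $1$.

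The main obstacle I expect is the bookkeeping of the twists in the $\star$ product, and in particular getting the sign in the D1 case right. One also has to make sure the D0 entry, which uses the target subexpression $\mathbf{1}$ with last letter $1$, picks up no stray factor. I would double-check these cases on the single letter $(s,s)$, where $\ue = (1,1)$ should give $-\alpha_s$.
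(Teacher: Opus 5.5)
Your proof is correct. The paper gives no argument of its own for this lemma; it is quoted from \cite[Proposition 6.6]{EW}. So what you have written is a self-contained verification inside the paper's localization framework, not a reconstruction of a proof in the text.

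Its ingredients are all sound:
\begin{itemize}
\item The recursion $F_i=\phi_i\circ(F_{i-1}\otimes\id_{B_{s_i}})$ holds.
\item Every $\underline{r}_i$, and every $\underline{t}=\rex_{s_i}(w_{i-1})$, is a reduced expression for the current element. Hence a nonzero entry in column $\ue_{\le i}$ can only lie in an all-ones row, and each sum collapses to a product of local entries.
\item The local factors $1$, $w_{i-1}(\alpha_{s_i})=\beta_i$, $1$ and $w_{i-1}s_i(\alpha_{s_i})=-\beta_i$ for U1, U0, D0 and D1 follow from the $\epsilon^s$ and merge tables of \S\ref{ss:diag_localization}. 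You also need the twist by the source evaluation in \eqref{eq:tensor_prod_of_Q_mats}; for D1 that evaluation is $\underline{q}^{\mathbf 1}=w_{i-1}s_i$, as you say.
\end{itemize}
This is the natural step-by-step induction along the light-leaf construction. It has the merit of making independence of the $\LL$-datum visible, since rex moves enter only through $(\mathbf 1,\mathbf 1)$-entries equal to $1$.

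If you prefer not to rely on $\scrF$ and the table entry $G_{s,t}$, that last fact also follows directly from $\beta_{st}(u)=u$. In each tensor factor one has $1\otimes 1=\alpha_s^{-1}(q^0+q^1)$. So the $q_{\uw}^{\mathbf 1}$-coefficient of $u_{\uw}$ is $\zeta_{\uw}(\mathbf 1)^{-1}$, and $\zeta_{{}_s\underline{m}}(\mathbf 1)=\zeta_{{}_t\underline{m}}(\mathbf 1)$.

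One correction to your proposed sanity check. For $\ux=(s,s)$ and $\ue=(1,1)$ the decorations are U1 then D1, with $\beta_2=s(\alpha_s)=-\alpha_s$. So the lemma predicts $\gamma_{\ux}(\ue)=-\beta_2=+\alpha_s$, not $-\alpha_s$.

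Your own D1 rule gives the same value, since the twist is by $w_1s=e$. A direct computation agrees: here $\LL_{\ux,\ue}=\cap^s$, and expanding $q^{11}=(\delta_s\otimes 1-1\otimes\delta_s)\otimes(\delta_s\otimes 1-1\otimes\delta_s)$ gives $\cap^s(q^{11})=2\delta_s-\partial_s(\delta_s^2)=2\delta_s-(\delta_s+s(\delta_s))=\alpha_s$. So the argument stands; only the expected test value has the wrong sign.
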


        \begin{proposition}\label{prop:loc_of_dl}
                Let $\ux, \uy \in \Exp (W)$ and $\ue \in \Sub (\ux)$, $\uf \in \Sub (\uy)$ such that $\ux^{\ue} = w = \uy^{\uf}$. Then
                \[\Lambda (\dLL_{\ue, \uf}^w)_{\ur, \uc} = \frac{1}{\zeta_{\uy} (\ur)} \sum_{\uh \in \Sub (\rex (w))} \zeta_{\rex (w)} (\uh) (\Lambda (\LL_{\uy, \uf})_{\uh, \ur} ) \Lambda (\LL_{\ux, \ue})_{\uh, \uc}.\]
        \end{proposition}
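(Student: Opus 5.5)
The plan is to combine functoriality of the localization functor with Lemma~\ref{lem:loc_mat_of_dual}. Set $\uw = \rex(w)$. By definition $\dLL_{\ue,\uf}^w = \oLL_{\uy,\uf} \circ \LL_{\ux,\ue}$ with $\oLL_{\uy,\uf} = \DD(\LL_{\uy,\uf}) : B_{\uw} \to B_{\uy}$. Since $(-)_Q$ is a functor and localization matrices are defined with respect to the fixed generators $q_{\ux}^{\ue}$ of the rank-one pieces $B_{\ux,Q}^{\ue}$, composition of morphisms corresponds to matrix multiplication. This gives
\[\Lambda(\dLL_{\ue,\uf}^w) = \Lambda(\oLL_{\uy,\uf})\,\Lambda(\LL_{\ux,\ue}),\]
where the product is over the intermediate index set $\Sub(\uw)$. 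This is the same fact already used in the proof of Lemma~\ref{lem:column_of_loc_matrix_for_LL}.

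Entrywise this reads
\[\Lambda(\dLL_{\ue,\uf}^w)_{\ur,\uc} = \sum_{\uh \in \Sub(\uw)} \Lambda(\oLL_{\uy,\uf})_{\ur,\uh}\,\Lambda(\LL_{\ux,\ue})_{\uh,\uc}.\]
Next apply Lemma~\ref{lem:loc_mat_of_dual} to $\varphi = \LL_{\uy,\uf} : B_{\uy} \to B_{\uw}$. In that lemma, the roles of $(\ux,\uy)$ are played by $(\uy,\uw)$, and the indices $(\ue,\uf)$ by $(\ur,\uh)$. It gives
\[\Lambda(\oLL_{\uy,\uf})_{\ur,\uh} = \frac{\zeta_{\uw}(\uh)}{\zeta_{\uy}(\ur)}\,\Lambda(\LL_{\uy,\uf})_{\uh,\ur}.\]
Substituting this and pulling the $\uh$-independent factor $1/\zeta_{\uy}(\ur)$ out of the sum yields exactly the claimed formula.

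The only point needing care is the conventions. I must check that the multiplication rule for localization matrices involves no twisting by $W$ under composition; twisting only occurs for $\star$. This holds because each $\varphi_Q$ is a map of $(R,Q)$-bimodules and hence right $Q$-linear. With the generators $q$ fixed, each block $B_{\ux,Q}^{\ue} \to B_{\uy,Q}^{\uf}$ is right multiplication by a scalar, so composites multiply. I also need the indexing of $\dagger$ (rows by the target, columns by the source) to match Lemma~\ref{lem:loc_mat_of_dual} when it is applied to a morphism with target $B_{\uw}$. I expect no real obstacle beyond this bookkeeping. Note that $\zeta_{\uy}(\ur)$ is a product of roots, so it is invertible in $Q$ and the division is legitimate.
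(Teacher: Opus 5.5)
Your proof is correct and follows the paper's argument, which simply cites Lemma~\ref{lem:loc_mat_of_dual} and leaves implicit the functoriality step $\Lambda(\oLL_{\uy,\uf}\circ\LL_{\ux,\ue})=\Lambda(\oLL_{\uy,\uf})\,\Lambda(\LL_{\ux,\ue})$ that you make explicit. One minor bookkeeping point: the $\star$ formula shows the paper's entries are left coefficients, $\varphi_Q(q_{\ux}^{\ue})=\sum_{\uf}\Lambda(\varphi)_{\uf,\ue}\,q_{\uy}^{\uf}$, not right ones as you phrase it; since the component maps are $Q$-bimodule homomorphisms, composition is matrix multiplication in either convention, so your conclusion stands.
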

        \begin{proof}
                This is an immediate consequence of Lemma~\ref{lem:loc_mat_of_dual}.
        \end{proof}

        \subsection{Localization coordinates}

        For the remainder of the section, we will fix expressions $\ux$ and $\uy$.

        Define the \emph{height} of a subexpression $\ue$ for $\ux \in \Exp (W)$ by
        \[ h_{\ux} (\ue) = \sum_{i=1}^{\ell (\ux)} \ell (\ux_i^{\ue}). \]
        If $\uc \leq \ue$, then $h_{\ux} (\uc) \leq h_{\ux} (\ue)$. If we further assume that $\uc \neq \ue$, then there exists some $i$ such that $\ux_i^{\uc} < \ux_i^{\ue}$ which implies that $h_{\ux} (\uc) < h_{\ux} (\ue)$.

        For the algorithm that will be presented in the following section, it is convenient to give the set of double leaves from $B_{\ux}$ to $B_{\uy}$ a total order. 
        In particular, enumerate the double leaves $\dLL_i = \dLL_{\ue^i, \uf^i}^{w_i}$ such that
        \begin{enumerate}
                \item for $i < j$, then $\ell (w_j) \leq \ell (w_i)$;
                \item if $i < j$ and $\ell (w_i) = \ell (w_j)$, then $h_{\ux} (\ue^j) + h_{\uy} (\uf^j) \leq h_{\ux} (\ue^i) + h_{\uy} (\uf^i)$.
        \end{enumerate}
        Any enumeration that abides by these requirements is allowed, and there are typically many choices of such an enumeration. Let $N$ be the total number of double leaves from $B_{\ux}$ to $B_{\uy}$, so that our double leaves are ordered $\dLL_1, \dLL_2, \ldots, \dLL_N$.

        For a morphism $\varphi : B_{\ux} \to B_{\uy}$, define 
        \[\lambda_i (\varphi) \coloneq \Lambda (\varphi)_{\uf^i, \ue^i}.\]

        \begin{theorem}\label{thm:lower_triang_of_coord_functionals}
                If $\lambda_i (\dLL_j) \neq 0$, then $j \leq i$. Moreover,
                \begin{equation}\label{eq:lower_triang_of_coord_functionals_0}
                \lambda_i (\dLL_i) = \frac{\zeta_{\rex (w_i)} (\mathbf{1})}{\zeta_{\uy} (\uf^i)} \gamma_{\ux} (\ue^i) \gamma_{\uy} (\uf^i) \in Q^{\times}.
                \end{equation}
        \end{theorem}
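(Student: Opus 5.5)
We will read off everything from Proposition~\ref{prop:loc_of_dl}, applied to $\dLL_j$ and evaluated at the entry $(\ur,\uc)=(\uf^i,\ue^i)$:
\[\lambda_i(\dLL_j)=\frac{1}{\zeta_{\uy}(\uf^i)}\sum_{\uh\in\Sub(\rex(w_j))}\zeta_{\rex(w_j)}(\uh)\,\Lambda(\LL_{\uy,\uf^j})_{\uh,\uf^i}\,\Lambda(\LL_{\ux,\ue^j})_{\uh,\ue^i}.\]
Suppose this is nonzero. Then some $\uh$ makes both factors nonzero. Lemma~\ref{lem:column_of_loc_matrix_for_LL} then gives $\ue^i\le\ue^j$ in $\Sub(\ux)$ and $\uf^i\le\uf^j$ in $\Sub(\uy)$.

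Evaluating the path dominance relation at the last index, we get $w_i=\ux^{\ue^i}\le\ux^{\ue^j}=w_j$ in Bruhat order. Hence $\ell(w_i)\le\ell(w_j)$.
\begin{itemize}
\item If $\ell(w_i)<\ell(w_j)$, condition (1) of the enumeration forces $j<i$.
\item If $\ell(w_i)=\ell(w_j)$, the height remark gives $h_{\ux}(\ue^i)\le h_{\ux}(\ue^j)$ and $h_{\uy}(\uf^i)\le h_{\uy}(\uf^j)$. Equality in both holds only if $\ue^i=\ue^j$ and $\uf^i=\uf^j$, i.e.\ $i=j$. Otherwise the height sum for $j$ is strictly larger, and condition (2) forbids $i<j$, so $j<i$.
\end{itemize}
One subtlety in the second case: condition (2) only rules out $i<j$ when the $j$-th sum is strictly larger, so ties in height between distinct leaves cannot occur here. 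Either way $j\le i$.

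For the diagonal entry, take $j=i$ and $w=w_i$. A nonzero summand indexed by $\uh$ needs $\Lambda(\LL_{\ux,\ue^i})_{\uh,\ue^i}\ne0$, which forces $\uh$ to evaluate to $w$. Since $\rex(w)$ is a reduced expression of $w$, the only subexpression of $\rex(w)$ evaluating to $w$ is $\mathbf{1}$. So the sum collapses to the single term $\uh=\mathbf{1}$. Lemma~\ref{lem:coeff_of_LL_at_e} gives $\Lambda(\LL_{\ux,\ue^i})_{\mathbf 1,\ue^i}=\gamma_{\ux}(\ue^i)$, and similarly for $\uy$, which yields \eqref{eq:lower_triang_of_coord_functionals_0}.

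It remains to check that this value lies in $Q^\times$. Every $\beta_k$ is a root in $\Phi$ and hence invertible in $Q$. Therefore the $\zeta$'s and $\gamma$'s are products of units, and the ratio is a unit.

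The main obstacle I expect is the claim that $\rex(w)$ has only the subexpression $\mathbf 1$ evaluating to $w$. This follows because a subexpression omitting some letter has length less than $\ell(w)$, so it cannot evaluate to $w$. Beyond that, the tie-breaking bookkeeping is the only step requiring care.
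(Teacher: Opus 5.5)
Your proof is correct and follows the paper's argument essentially step for step: you use the sum from Proposition~\ref{prop:loc_of_dl}, obtain path dominance from Lemma~\ref{lem:column_of_loc_matrix_for_LL}, break ties by length and then height against the enumeration, and collapse the diagonal sum to $\uh=\mathbf{1}$ before applying Lemma~\ref{lem:coeff_of_LL_at_e}. The only cosmetic difference is that you split cases by $\ell(w_i)<\ell(w_j)$ versus $\ell(w_i)=\ell(w_j)$, where the paper splits by $w_i\neq w_j$ versus $w_i=w_j$; these are equivalent here because $w_i\le w_j$.
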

        \begin{proof}
                Assume that $\lambda_i (\dLL_j) \neq 0$.
                 By Proposition~\ref{prop:loc_of_dl}, one has that 
                \begin{equation}\label{eq:lower_triang_of_coord_functionals_1}
                        \lambda_i (\dLL_j) = \frac{1}{\zeta_{\uy} (\uf^i)} \sum_{\uh \in \Sub (\rex (w_j))} \zeta_{\rex (w_j)} (\uh) (\Lambda (\LL_{\uy, \uf^j})_{\uh, \uf^i} ) (\Lambda (\LL_{\ux, \ue^j})_{\uh, \ue^i} ).
                \end{equation}
                If \eqref{eq:lower_triang_of_coord_functionals_1} is nonzero, then Lemma~\ref{lem:column_of_loc_matrix_for_LL} implies that $\ue^i \leq \ue^j$ and $\uf^i \leq \uf^j$.
                In particular, $w_i = \ux^{\ue^i} \leq \ux^{\ue^j} = w_j$. If $w_i \neq w_j$, then $\ell (w_i) < \ell (w_j)$, and so $j \leq i$.
                If $w_i = w_j$ but $i \neq j$, then either $\ue^i \neq \ue^j$ or $\uf^i \neq \uf^j$. 
                In particular, $h_{\ux} (\ue^i) + h_{\uy} (\uf^i) < h_{\ux} (\ue^j) + h_{\uy} (\uf^j)$, and so $j \leq i$.

               We will now consider $\lambda_i (\dLL_i)$.
                For $\Lambda (\LL_{\ux, \ue^i})_{\uh, \ue^i} \neq 0$, we must have that $\rex (w_i)^{\uh} = \ux^{\ue^i} = w_i$. This can only happen when $\uh = \mathbf{1}$.
                As a result, \eqref{eq:lower_triang_of_coord_functionals_1} simplifies to
                \begin{align*}
                        \lambda_i (\dLL_i) &= \frac{\zeta_{\rex (w_i)} (\mathbf{1})}{\zeta_{\uy} (\uf^i)}  (\Lambda (\LL_{\uy, \uf^i})_{\mathbf{1}, \uf^i}) (\Lambda (\LL_{\ux, \ue^i})_{\mathbf{1}, \ue^i}) \\
                        &= \frac{\zeta_{\rex (w_i)} (\mathbf{1})}{\zeta_{\uy} (\uf^i)} \gamma_{\ux} (\ue^i) \gamma_{\uy} (\uf^i),
                \end{align*}
                where the second equality follows from Lemma~\ref{lem:coeff_of_LL_at_e}.
                Note that all factors in \eqref{eq:lower_triang_of_coord_functionals_0} are roots or products of roots. In particular, their inverses exist in $Q$, and so, \eqref{eq:lower_triang_of_coord_functionals_0} is invertible in $Q$.
        \end{proof}

        \begin{corollary}\label{cor:recursion_for_coeffs}
                Let $\varphi : B_{\ux} \to B_{\uy}$. We can write
                \begin{equation}\label{eq:recursion_for_coeffs_1}
                        \varphi = \sum_{i=1}^N c_i \dLL_i
                \end{equation}
                for some $c_i \in R$. Then
                \begin{equation}\label{eq:recursion_for_coeffs_2}
                        c_i = \frac{\lambda_i (\varphi) - \sum_{j < i} c_j \lambda_i (\dLL_j)}{\lambda_i (\dLL_i)}.
                \end{equation}
        \end{corollary}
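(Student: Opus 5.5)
Existence of the expansion \eqref{eq:recursion_for_coeffs_1} with $c_i \in R$ is exactly Theorem~\ref{thm:dll_basis} (equivalently Corollary~\ref{cor:deg_0_mor_and_basis}). So the content of the corollary is the formula \eqref{eq:recursion_for_coeffs_2}. My plan is to apply the $Q$-linear functional $\lambda_i$ to both sides of \eqref{eq:recursion_for_coeffs_1} and then use the triangularity of Theorem~\ref{thm:lower_triang_of_coord_functionals}.

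First I would check that $\lambda_i$ is left $R$-linear on $\Hom(B_{\ux},B_{\uy})$. Concretely, for $c\in R$ and morphisms $\varphi,\psi$ I want
\[\Lambda(c\varphi+\psi)=c\,\Lambda(\varphi)+\Lambda(\psi).\]
Additivity holds because localization is a functor of additive categories. For the scalar, the left action of $c$ on a morphism is $(c\cdot\id_R)\otimes\varphi$. By Lemma~\ref{lem:prod_of_loc_mats} and the polynomial-box entry in \S\ref{ss:diag_localization}, its localization matrix is $c\star\Lambda(\varphi)$. Since the source of the polynomial box is indexed by the empty subexpression, whose evaluation is $e$, the twist in \eqref{eq:tensor_prod_of_Q_mats} is trivial, and this equals entrywise multiplication by $c$. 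Hence $\lambda_i$ is $R$-linear.

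Applying $\lambda_i$ to \eqref{eq:recursion_for_coeffs_1} gives
\[\lambda_i(\varphi)=\sum_{j=1}^N c_j\lambda_i(\dLL_j).\]
By Theorem~\ref{thm:lower_triang_of_coord_functionals}, $\lambda_i(\dLL_j)=0$ whenever $j>i$, so the sum runs only over $j\le i$. Isolating the $j=i$ term, and using that $\lambda_i(\dLL_i)\in Q^\times$ by \eqref{eq:lower_triang_of_coord_functionals_0}, we can divide in $Q$ to obtain \eqref{eq:recursion_for_coeffs_2}.

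The only mildly delicate step is the $R$-linearity of $\lambda_i$, specifically the convention that scalars act on the left so that no Weyl group twist appears. Everything else is formal. The resulting formula determines $c_i$ recursively from $c_1,\ldots,c_{i-1}$, which is what forward substitution needs.
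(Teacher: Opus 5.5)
Your proposal is correct and follows the paper's proof. You apply $\lambda_i$ to the expansion, use Theorem~\ref{thm:lower_triang_of_coord_functionals} to reduce the sum to $j \le i$, and divide by the unit $\lambda_i(\dLL_i) \in Q^\times$. Your explicit check that $\lambda_i$ is left $R$-linear is a step the paper uses silently; your argument via the polynomial box and Lemma~\ref{lem:prod_of_loc_mats}, with trivial twist because the empty expression evaluates to $e$, is correct and matches the left-multiplication convention for localization matrices.
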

        \begin{proof}
                We can apply $\lambda_i$ to \eqref{eq:recursion_for_coeffs_1} and use Theorem~\ref{thm:lower_triang_of_coord_functionals} to deduce that
                \[\lambda_i (\varphi) = c_i \lambda_i (\dLL_i) + \sum_{j < i } c_j \lambda_i (\dLL_j).\]
                Since $\lambda_i (\dLL_i) \neq 0$ by Theorem~\ref{thm:lower_triang_of_coord_functionals}, we can rearrange the above equation to deduce \eqref{eq:recursion_for_coeffs_2}.
        \end{proof}

        \begin{corollary}[Equality Testing]\label{cor:eq_testing}
                Let $\varphi, \psi : B_{\ux} \to B_{\uy}$ be two morphisms of Abe--Bott--Samelson bimodules.
                Then $\varphi = \psi$ if and only if $\lambda_i (\varphi) = \lambda_i (\psi)$ for all $i=1,\ldots, N$.
        \end{corollary}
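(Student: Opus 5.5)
The forward direction is trivial: if $\varphi = \psi$ then all their localization matrix entries agree, in particular the selected entries $\lambda_i$. For the converse, the plan is to reduce to the statement that a morphism $\theta : B_{\ux} \to B_{\uy}$ with $\lambda_i(\theta) = 0$ for all $i$ must vanish. Each $\lambda_i$ is additive, being a single entry of $\Lambda(-)$, and $\Lambda$ is additive since the localization functor is additive. So $\lambda_i(\varphi - \psi) = \lambda_i(\varphi) - \lambda_i(\psi) = 0$, and the reduction applies with $\theta = \varphi - \psi$.

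To show $\theta = 0$, expand $\theta = \sum_{i=1}^N c_i \dLL_i$ with $c_i \in R$ using Theorem~\ref{thm:dll_basis}; the basis is finite since $\Sub(\ux)$ and $\Sub(\uy)$ are finite. Note that $\theta$ need not be homogeneous: the expansion holds because the double leaves form a basis of the full Hom space, or alternatively one can work degree by degree using Corollary~\ref{cor:deg_0_mor_and_basis}. We then apply Corollary~\ref{cor:recursion_for_coeffs} and induct on $i$. Assume $c_j = 0$ for all $j < i$. Then
\[c_i = \frac{\lambda_i(\theta) - \sum_{j<i} c_j \lambda_i(\dLL_j)}{\lambda_i(\dLL_i)} = 0,\]
where the division is legitimate because $\lambda_i(\dLL_i) \in Q^{\times}$ by Theorem~\ref{thm:lower_triang_of_coord_functionals}. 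Hence every $c_i = 0$, so $\theta = 0$.

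A subtle point worth stating explicitly is that $R \to Q$ is injective, as $Q$ is a localization of the domain $R$ at nonzero roots. Thus vanishing of $c_i$ computed in $Q$ gives $c_i = 0$ in $R$. Equivalently, the $N\times N$ matrix $(\lambda_i(\dLL_j))$ is lower triangular with diagonal entries that are units of $Q$, hence invertible over $Q$, so the map $\lambda$ is injective on the $R$-span of the double leaves. There is no real obstacle here: the only step needing care is this passage from linear independence over $Q$ to vanishing in $R$, and it is immediate from the torsion-freeness of $R \subset Q$.
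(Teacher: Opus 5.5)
Your proposal is correct and follows essentially the same route as the paper, which just cites the lower triangularity with unit diagonal from Theorem~\ref{thm:lower_triang_of_coord_functionals}. Your forward-substitution induction on $\varphi-\psi$ is the same argument the paper writes out for Proposition~\ref{prop:eq_testing_interpolate}, and your remark that $R \hookrightarrow Q$ is injective correctly settles the one point the paper leaves implicit.
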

        \begin{proof}
                This is an obvious consequence of Theorem~\ref{thm:lower_triang_of_coord_functionals}.
        \end{proof}

        \begin{remark}
                The utility of Corollary~\ref{cor:recursion_for_coeffs} is that it gives a recursive algorithm for finding the coefficients appearing in a double leaves expansion.
                More precisely, the lower triangularity of Theorem~\ref{thm:lower_triang_of_coord_functionals} allows one to perform an efficient forward substitution of the system of equations
                \[\Lambda (\varphi) = \sum_{i=1}^N c_i \Lambda (\dLL_i).\]

                After these localization coefficients are computed, determining the $c_i$'s amounts to simple forward substitution. For example,
                \begin{align*}
                        c_1 &= \frac{\lambda_1 (\varphi)}{\lambda_1 (\dLL_1)} \\
                        c_2 &= \frac{\lambda_2 (\varphi) - c_1 \lambda_2 (\dLL_1)}{\lambda_2 (\dLL_2)} \\
                        c_3 &=  \frac{\lambda_3 (\varphi) - c_1 \lambda_3 (\dLL_1) - c_2 \lambda_3 (\dLL_2)}{\lambda_3 (\dLL_3)}.
                \end{align*}
        \end{remark}

        \begin{remark}\label{rem:generic_to_degree_d}
            The results of this section can be made more useful by filtering based on degree. 
            Let $\varphi : B_{\ux} \to B_{\uy}$ be a degree $d$ morphism.
            For each $i=1,\ldots, N$, set $d_i$ to be the degree of $\dLL_i$. We can consider the set
            \[I_d = \{ i=1,\ldots, N \mid d-d_i \in 2\Z_{\geq 0} \}.\]
            By Corollary~\ref{cor:deg_0_mor_and_basis}, the only double leaves which have a non-zero coefficient in $\varphi$ are those indexed by $I_d$.
            In particular, we can write
            \[\varphi = \sum_{i \in I_d} c_i \dLL_i.\]
             There are variations of Corollaries~\ref{cor:recursion_for_coeffs} and \ref{cor:eq_testing} with this setup by replacing the indexing set $\{1, \ldots, N\}$ by $I_d$.
        \end{remark}

       \section{The Coefficient Algorithm}

       \subsection{Computing entries of the localization matrix}\label{ss:entries_of_loc_mat}

       Let $\varphi : B_{\ux} \to B_{\uy}$ be a morphism of Abe--Bott--Samelson bimodules. We can write
       \[\varphi = \sum_{i=1}^n a_i \varphi_i,\]
       where $a_i \in \k$ and $\varphi_i$ is a morphism corresponding to an Elias--Williamson graph.
       In particular, we can write $\varphi_i = g_{i, r_i} \circ \ldots \circ g_{i, 1}$ where each $g_{i, j}$ is an Elias--Williamson generator tensored on either side with identity morphisms.
       The localization matrices $\Lambda (g_{i, j})$ are very easy to compute using the tables from \S\ref{ss:diag_localization}.
       However, it is computationally expensive to compute the localization matrix of $\varphi$ by taking a product of sparse matrices
       \[\Lambda (\varphi) = \sum_{i=1}^n a_i \Lambda (g_{i, r_i}) \ldots \Lambda (g_{i, 1}).\]
       There is an easy solution to this problem though. If we want to know the $\uc \in \Sub (\ux)$-column of $\Lambda (\varphi_i)$, we can instead compute
       \[\Lambda (\varphi_i)_{\bullet, \uc} = \Lambda (g_{i, r_i}) (\Lambda (g_{i, r_i -1}) ( \ldots (\Lambda (g_{i,1})_{\bullet, \uc}))).\]
       This is much faster than multiplying the whole sparse matrices.

        \subsection{Choice of \texorpdfstring{$\LL$}{LL}-datum}\label{ss:ll_datum}

        The choice of $\LL$-datum can essentially be made arbitrarily, but to ensure that the algorithm is reproducible, 
        we outline the choice that is used in the implemented form of our algorithm.

        To each $w \in W$, we need to assign a reduced word $\rex (w)$ to $w$.
        The collection of reduced expressions $\textnormal{Rex} (w)$ can be equipped with the lexicographic order.
        First, we fix a total ordering on $S$, denoted $<$. Set $k = \ell (w)$.
        Let $\ux = (s_1, \ldots, s_k)$ and $\uy = (t_1, \ldots, t_k)$. 
        Let $i$ be the first place in which $s_i \neq t_i$. We declare $\ux < \uy$ if $s_i < t_i$ and $\ux > \uy$ if $s_i > t_i$.
        This makes $\textnormal{Rex} (w)$ into a totally ordered set. 
        We define $\rex (w)$ to be the minimal element in $\textnormal{Rex} (w)$ under this total order.

        Next, to each $w \in W$ and $s \in S$ such that $ws < w$, we need to assign a reduced word $\rex_s (w)$ of $w$ ending in $s$.
        To do this, we simply define $\rex_s (w) = \rex (ws) s$.

        Finally, between any two reduced expressions $\ux$ and $\uy$ of $w$, we need to assign a path $p_{\ux, \uy} $ from $\ux$ to $\uy$ in the rex graph of $w$.
        Let $P_{\ux, \uy}$ be the set of all paths from $\ux$ to $\uy$. We denote by $P_{\ux, \uy}^{\min}$ the subset of $P_{\ux, \uy}$ consisting of paths whose lengths are minimal and set $K$ for this minimum length.
        We can also equip $P_{\ux, \uy}^{\min}$ with the lexicographic order. In particular, any path $p \in P_{\ux, \uy}^{\min}$ corresponds to a sequence $ \sigma (p) = (\sigma_1 (p), \ldots, \sigma_K (p))$ where $\sigma_i (p)$ denotes the (left-most) position of the braid move applied on the $i$-th edge.
        Let $p,q \in P_{\ux, \uy}^{\min}$ be distinct paths, and take $i$ to be the first place in which $\sigma_i (p) \neq \sigma_i (q)$. 
        We declare $p < q$ if $\sigma_i (p) < \sigma_i (q)$, and $p > q$ otherwise.
        This endows $P_{\ux, \uy}^{\min}$ with a total order. We then define $p_{\ux, \uy}$ to be the minimal element in $P_{\ux, \uy}^{\min}$ under this order.

        These choices are used with the intermediate-expression convention of \S\ref{ss:light_leaves}. In particular, no rex move is inserted at a $U1$ step or after a $D0$ merge, and the final target is rewritten to $\rex (w_k)$ only after the last step of the light-leaf construction. 

        \subsection{The coefficient algorithm}

        \begin{algorithm}[The Coefficient Algorithm]\label{algo:coeff_algo}
                Let $\varphi : B_{\ux} \to B_{\uy}$ be a morphism of Abe--Bott--Samelson bimodules of homogeneous degree $d$.
                
                Consider the collection of double leaves from $B_{\ux}$ to $B_{\uy}$ of degree $\leq d$ and such that the degree has the same parity as $d$.
                Enumerate this collection by $\dLL_1, \dLL_2, \ldots, \dLL_{N_d}$ where $\dLL_i = \dLL_{\ue^i, \uf^i}^{w_i}$ such that 
                \begin{enumerate}[label=(\alph*)]
                        \item for $i < j$, then $\ell (w_j) \leq \ell (w_i)$;
                        \item for $i < j$ and $\ell (w_i) = \ell (w_j)$, then $h_{\ux} (\ue^j) + h_{\uy} (\uf^j) \leq  h_{\ux} (\ue^i) + h_{\uy} (\uf^i)$.
                \end{enumerate}
                Any enumeration satisfying (a) and (b) is allowed.

                Write
                \[\varphi = \sum_{i=1}^{N_d} c_i \dLL_i\]
                for some $c_i \in R$. 
                \begin{enumerate}   
                        \item If $N_d = 0$, then $\varphi = 0$, so we may assume that $N_d > 0$.   
                        \item For $i=1, \ldots, N_d$, we solve  the coefficients $c_i$ as follows:
                        \[c_i = \frac{\lambda_i (\varphi) - \sum_{j < i} c_j \lambda_i (\dLL_j)}{\lambda_i (\dLL_i)}.\]
                        The terms appearing on the right-hand side can be computed as needed:
                        \begin{enumerate}
                                \item Compute the terms $\lambda_i (\varphi) = \Lambda (\varphi)_{\uf^i, \ue^i}$ using the observations from \S\ref{ss:entries_of_loc_mat}.
                                \item For the diagonal entries, 
                                \begin{equation}\label{eq:coeff_algo_1}
                                 \lambda_i (\dLL_i) = \frac{\zeta_{\rex(w_i)} (\mathbf{1})}{\zeta_{\uy} (\uf^i)} \gamma_{\ux} (\ue^i) \gamma_{\uy} (\uf^i).
                                \end{equation}
                                 \item For $j < i$, we only need to compute $\lambda_i (\dLL_j)$ when $c_j \neq 0$. In this case, we compute these values by the equation
                                 \begin{equation}\label{eq:coeff_algo_2}
                                 \lambda_i (\dLL_j) = \frac{1}{\zeta_{\uy} (\uf^i)} \sum_{\uh \in \Sub (\rex (w_j))} \zeta_{\rex (w_j)} (\uh) (\Lambda (\LL_{\uy, \uf^j})_{\uh, \uf^i}) (\Lambda (\LL_{\ux, \ue^j})_{\uh, \ue^i}).
                                 \end{equation}
                                 This amounts to computing a weighted dot product of the $\uf^i$-column vector of $\Lambda (\LL_{\uy, \uf^j})$ and the $\ue^i$-column vector of $\Lambda (\LL_{\ux, \ue^j})$.
                                 These columns of the localization matrix can be computed as needed using the observations from \S\ref{ss:entries_of_loc_mat}. Some extra details on speeding up this computation are given in Remark~\ref{rem:computing_dll_coeffs} below.
                        \end{enumerate}
                        \item After $i$ has been iterated from $1$ to $N_d$, all coefficients $c_1, \ldots, c_{N_d}$ have been calculated.
                \end{enumerate}
        \end{algorithm}
        \begin{proof}
                The validity of the algorithm is a culmination of Proposition~\ref{prop:loc_of_dl}, Theorem~\ref{thm:lower_triang_of_coord_functionals}, and Corollary~\ref{cor:recursion_for_coeffs}.
        \end{proof}

        \begin{remark}\label{rem:computing_dll_coeffs}
                In Algorithm~\ref{algo:coeff_algo}, we compute $\lambda_i (\dLL_j)$ for $j < i$ using Proposition~\ref{prop:loc_of_dl}.
                We could have alternatively computed these using the same method as $\lambda_i (\varphi)$; however, in implementing the algorithm there is a preference towards the former method.
                Namely, one can cache the columns of the localization matrices for the light leaves appearing in Proposition~\ref{prop:loc_of_dl}.
                Since different double leaves often share one of their light leaves, this saves significant computation. Likewise, some columns of $\Lambda (\LL_{\ux, \ue})$ can be skipped from the computation due to the path-dominance ordering of Lemma~\ref{lem:column_of_loc_matrix_for_LL}. 
        \end{remark}

        \begin{remark}
                Let $\varphi, \psi : B_{\ux} \to B_{\uy}$ be degree $d$ morphisms.
                By Corollary~\ref{cor:eq_testing}, it suffices to check that $\lambda_i (\varphi) = \lambda_i (\psi)$ for all $i=1,\ldots, N_d$.
                This gives a means of verifying the output of Algorithm~\ref{algo:coeff_algo}.
                Alternatively, one can compare the localization matrices of both.
        \end{remark}

        \subsection{Polynomial interpolation}

        We now assume that $\k$ is a characteristic 0 field. As a result, we can regard any rational number as an element of $\k$.

        One of the main problems with Algorithm~\ref{algo:coeff_algo} is that much of the arithmetic takes place in $R$ or $Q$.
        In general, computers are much more efficient at computing in the ground field $\k$ instead of a polynomial ring $R$, especially when $\k$ is something like $\Q$ or $\Q (\phi)$.
        In practice, this limitation makes Algorithm~\ref{algo:coeff_algo} quite computationally expensive, including for our example of the $H_3$ Zamolodchikov relation.
        We can fix this by using polynomial interpolation, i.e., evaluating polynomials in $R$ at a finite number of points.

        The use of polynomial interpolation for efficient polynomial arithmetic is far from novel.
        For example, the Fast Fourier Transform uses evaluation and interpolation at all $N$-th roots of unity, for a suitable $N$, to perform fast polynomial multiplication.
        This reduces the time complexity of polynomial multiplication from $O(n^2)$ to $O (n \log n)$. 
        The key observation in using polynomial interpolation for fast polynomial arithmetic is that one needs to choose the evaluation points carefully. 

        Let $A_n = \k [x_1, \ldots, x_n]$. We set $A_n^{\leq d}$ for the $\k$-vector space of polynomials of total degree at most $d$ and $A_n^d$ for the $\k$-vector space of homogeneous degree $d$ polynomials. 
        For a tuple $a = (a_1, \ldots, a_n) \in \k^n$ and $p \in A_n$, we write
        \[p (a) = p(a_1, \ldots, a_n) \]
        for the element of $\k$ obtained by evaluating $x_i = a_i$. 

        Consider tuples $a = (a_1, \ldots, a_n) \in \Z_{\geq 0}^n$. 
        We make $\Z_{\geq 0}^n$ into a poset by declaring $a \leq b$ if and only if $a_i \leq b_i$ for all $i=1,\ldots, n$. 
        We write
        \[x^{a} = \prod_{i=1}^n x_i^{a_i}, \qquad \abs{a} = \sum_{i=1}^n a_i \qquad\text{and}\qquad a! =  \prod_{i=1}^n a_i ! .\]
        For $a \in \Z_{\geq 0}^n$ and $b \in \Z_{\geq 0}^n$, we define the multi-index binomial coefficients
        \[\binom{b}{a} = \prod_{i=1}^n \binom{b_i}{a_i}. \]
        Note that $\binom{b}{a} \neq 0$ implies that $a \leq b$.

        Recall the usual binomial polynomials,
        \[\binom{x}{m} = \frac{1}{m!} x (x-1) \ldots (x-m+1) \in \Q [x]\]
        for any $m \in \Z_{\geq 0}$. We can then generalize these to multi-index binomial polynomials,
        \[B_{a}^{b} (x) = \prod_{i=1}^n \binom{x_i - b_i}{a_i} \in A_n.\]
        By construction, $B_{a}^{b} (x)$ has degree $\abs{a}$, and so, $B_{a}^{b} (x) \in A_n^{\leq \abs{a}}$.
        By expanding $B_{a}^{b} (x)$, we can write 
        \begin{equation}\label{eq:leading_term_of_binom_poly}
                B_{a}^{b} (x) = \frac{x^{a}}{a !} + r (x)
        \end{equation}
        where $r(x)$ is a polynomial of degree $< \abs{a}$.
        
        Fix $d \geq 0$ and define a subset of $\Z_{\geq 0}^n$,
        \[L_d = \left\{ a \in \Z_{\geq 0}^n \mid \abs{a} \leq d \right\}.\]
        It is well-known that $L_d$ consists of exactly $\binom{n+d}{n}$ points, and that this matches the dimension of $A_n^{\leq d}$.
        Fix $\tilde{a} \in \Z_{\geq 0}^n$. We then define
        \[ X_d^{\tilde{a}} = \tilde{a} + L_d.\]
        Note that $X_i^{\tilde{a}} \subset X_{i+1}^{\tilde{a}}$ for all $i$ and that $\# X_i^{\tilde{a}} = \binom{n+i}{n}$.

        The following proposition is a special case of multivariant Newton interpolation on lower sets developed in \cite{Kuntzmann} and \cite[Theorem 1]{DF}. 
        For the sake of completeness, to improve the exposition, and to record the explicit inverse used in the algorithm, we have included a proof.

        \begin{proposition}\label{prop:interpolation}
                Let $\tilde{a} \in \Z_{\geq 0}^n$.
                \begin{enumerate}
                        \item There is an isomorphism of vector spaces
                        \[\ev_d^{\tilde{a}} : A_n^{\leq d} \stackrel{\sim}{\to} \Fun (X_d^{\tilde{a}}, \k), \qquad p \mapsto (a \mapsto p (a)).\]
                        \item For a function $\mu : X_d^{\tilde{a}} \to \k$, we set
                        \[b_{a}^{\tilde{a}} (\mu) = \sum_{b \leq a} (-1)^{\abs{a} - \abs{b}} \binom{a}{b} \mu (\tilde{a} + b),\]
                        Then
                        \begin{equation}\label{eq:interpolation_1}
                        (\ev_d^{\tilde{a}})^{-1} (\mu) (x_1, \ldots, x_n) = \sum_{a \in L_d} b_{a}^{\tilde{a}} (\mu) B_{a}^{\tilde{a}}.
                        \end{equation}
                \end{enumerate}
        \end{proposition}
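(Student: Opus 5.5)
The plan is to prove (2) directly, since (2) implies (1). Once (2) holds, $\ev_d^{\tilde a}$ is surjective. The two spaces have the same dimension, because $\# X_d^{\tilde a} = \#L_d = \binom{n+d}{n} = \dim A_n^{\leq d}$. Hence $\ev_d^{\tilde a}$ is bijective, and it is clearly linear. By translating $x \mapsto x - \tilde a$ I may also assume $\tilde a = 0$. Indeed, $B_a^{\tilde a}(x) = B_a^0(x-\tilde a)$, and $b_a^{\tilde a}(\mu) = b_a^0(\mu(\tilde a + \cdot))$. Polynomial degree is preserved by this translation.

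For $\tilde a = 0$, set $p = \sum_{a \in L_d} b_a(\mu) B_a^0$. This lies in $A_n^{\leq d}$ since $\deg B_a^0 = \abs{a} \leq d$. The key evaluation is, for $c \in L_d$,
\[B_a^0(c) = \prod_i \binom{c_i}{a_i} = \binom{c}{a}.\]
This vanishes unless $a \leq c$, and any $a \leq c$ automatically lies in $L_d$. So
\[p(c) = \sum_{a \leq c} \binom{c}{a} \sum_{b \leq a} (-1)^{\abs a - \abs b}\binom{a}{b}\mu(b).\]

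Next I swap the sums and use $\binom{c}{a}\binom{a}{b} = \binom{c}{b}\binom{c-b}{a-b}$. The coefficient of $\mu(b)$ becomes
\[\binom{c}{b}\sum_{b\le a\le c}(-1)^{\abs{a-b}}\binom{c-b}{a-b}.\]
This factors over coordinates as $\binom{c}{b}\prod_i (1-1)^{c_i-b_i}$, which equals $\delta_{b,c}$. Hence $p(c) = \mu(c)$ for all $c \in X_d^0$, which proves (2) and therefore (1).

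There is no real obstacle here. The only points needing care are, first, the binomial inversion identity (multi-index Möbius inversion on the product of chains), and second, checking that the index range $a \leq c$ stays inside $L_d$. The latter holds because $L_d$ is a lower set.
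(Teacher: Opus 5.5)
Your proof is correct and follows the paper's argument: you verify the formula in (2) by evaluating at $\tilde a + c$, rewriting $\binom{c}{a}\binom{a}{b} = \binom{c}{b}\binom{c-b}{a-b}$, and collapsing the inner sum with the multi-index binomial theorem, which is exactly the paper's computation. The only differences are minor. You obtain (1) from surjectivity, via the explicit formula, together with the dimension count, whereas the paper obtains injectivity from the unitriangularity $B_a^{\tilde a}(\tilde a + b) = \binom{b}{a}$ and also notes along the way that $\{B_a^{\tilde a}\}_{a \in L_d}$ is a basis. Your translation to $\tilde a = 0$ is a harmless notational simplification.
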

        \begin{proof}
                By \eqref{eq:leading_term_of_binom_poly}, the set $\{ B_{a}^{\tilde{a}} \}_{a \in L_d}$ is a basis for $A_n^{\leq d}$.
                Moreover, under the standard basis of $A_n^{\leq d}$ by monomials ordered under the graded lexicographic order, the change of basis matrix is triangular.

                At a point $\tilde{a} + b \in X_d^{\tilde{a}}$, we have
                \[B_{a}^{\tilde{a}} (\tilde{a} + b) = \binom{b}{a}.\]
                In particular, $B_{a}^{\tilde{a}} (\tilde{a} + b) = 0$ unless $a \leq b$ and $B_{a}^{\tilde{a}} (\tilde{a} + a) = 1$.
                This implies that the vectors $\ev_d^{\tilde{a}} (B_{a}^{\tilde{a}} (x))$ are linearly independent, and hence $\ev_d^{\tilde{a}}$ is injective.
                Both the domain and codomain have dimension $\binom{n+d}{n}$, and so, $\ev_d^{\tilde{a}}$ is in fact an isomorphism.

                To verify \eqref{eq:interpolation_1}, we evaluate the right-hand side at $\tilde{a} + c$. This gives
                \begin{align*}
                        \left( \sum_{a \in L_d} b_{a}^{\tilde{a}} (\mu) B_{a}^{\tilde{a}} \right) (\tilde{a} + c) &= \sum_{a \in L_d} b_{a}^{\tilde{a}} (\mu) B_{a}^{\tilde{a}} (\tilde{a} + c) \\
                        &= \sum_{a \in L_d} b_{a}^{\tilde{a}} (\mu) \binom{c}{a} \\ 
                        &= \sum_{b \leq a \leq c}  (-1)^{\abs{a} - \abs{b}} \binom{c}{a} \binom{a}{b} \mu (\tilde{a} + b) \\
                        &= \sum_{b \leq a \leq c}  (-1)^{\abs{a} - \abs{b}} \binom{c}{b} \binom{c - b}{a - b} \mu (\tilde{a} + b).
                \end{align*}
                For a fixed $b$, we set $e = a - b$. The condition on the sum becomes $0 \leq e \leq c - b$ and $\abs{e} = \abs{a} - \abs{b}$.
                Therefore, we have that 
                \begin{equation}\label{eq:interpolation_2}
                        \sum_{b \leq a \leq c}  (-1)^{\abs{a} - \abs{b}} \binom{c}{b} \binom{c - b}{a - b} \mu (\tilde{a} + b) = \sum_{b \leq c} \mu ( \tilde{a} + b) \binom{c}{b} \sum_{e \leq c - b} (-1)^{\abs{e}} \binom{c - b}{e} 
                \end{equation}
                The inner sum on the right-hand side of \eqref{eq:interpolation_2} can be computed by a multi-index version of the binomial theorem. 
                Namely, it vanishes unless $b = c$ in which case it is 1. Therefore,
                \[\sum_{b \leq c} \mu ( \tilde{a} + b) \binom{c}{b} \sum_{e \leq c - b} (-1)^{\abs{e}} \binom{c - b}{e} = \mu (\tilde{a} + c).\]
        \end{proof}

        Dehomogenization gives an isomorphism
        \[A_{n+1}^d \stackrel{\sim}{\to} A_n^{\leq d}, \qquad p \mapsto p(x_1, \ldots, x_n, 1).\]
        Its inverse is given by homogenization,
        \[p \mapsto x_{n+1}^d p \left( \frac{x_1}{x_{n+1}}, \ldots, \frac{x_{n}}{x_{n+1}} \right).\]

        For $\tilde{a} \in \Z_{\geq 0}^n$, define
        \[Y_d^{\tilde{a}} = \{  (a, 1) \in \Z_{\geq 0}^{n+1} : a \in X_d^{\tilde{a}} \}.\]

        \begin{corollary}\label{cor:homog_interpolation}\footnote{Corollary \ref{cor:homog_interpolation} was discovered by ChatGPT without proof in the case of $n=2$ 
                and $\tilde{a} = ( 2,4)$ when it attempted to improve upon Algorithm \ref{algo:coeff_algo}. 
                The author believes that ChatGPT's discovery was based on earlier work of Dyn--Floater \cite{DF}; although, ChatGPT did not provide any references, so this is mostly speculation. }
                Let $\tilde{a} \in \Z_{\geq 0}^n$.
                \begin{enumerate}
                        \item There is an isomorphism of vector spaces
                        \[\ev_d^{\tilde{a}} : A_{n+1}^d \stackrel{\sim}{\to} \Fun (Y_d^{\tilde{a}}, \k), \qquad p \mapsto (a \mapsto p(a)).\]
                        \item For a function $\mu : Y_d^{\tilde{a}} \to \k$, we set
                        \[\overline{b}_{a}^{\tilde{a}} (\mu) = \sum_{b \leq a} (-1)^{\abs{a} - \abs{b}} \binom{a}{b} \mu (\tilde{a} + b, 1).\]
                        Then 
                        \[(\ev_d^{\tilde{a}})^{-1} (\mu) (x_1, \ldots, x_{n+1}) = \sum_{a \in L_d} \overline{b}_{a}^{\tilde{a}} (\mu) x_{n+1}^d B_{a}^{\tilde{a}} \left( \frac{x_1}{x_{n+1}}, \ldots, \frac{x_n}{x_{n+1}} \right).\]
                \end{enumerate}
        \end{corollary}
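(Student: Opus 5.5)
The plan is to deduce Corollary~\ref{cor:homog_interpolation} directly from Proposition~\ref{prop:interpolation} by composing with the dehomogenization isomorphism
\[\mathrm{deh} : A_{n+1}^d \stackrel{\sim}{\to} A_n^{\leq d}, \qquad p \mapsto p(x_1,\ldots,x_n,1),\]
whose inverse is homogenization $\mathrm{hom}(q) = x_{n+1}^d\, q(x_1/x_{n+1},\ldots,x_n/x_{n+1})$. We first check that these maps are mutually inverse. Write $p = \sum_{\abs{a}+j = d} c_{a,j} x^a x_{n+1}^j$. Then $\mathrm{deh}(p) = \sum c_{a,j} x^a$, and homogenizing this returns each monomial $x^a$ multiplied by $x_{n+1}^{d-\abs{a}}$, which recovers $p$. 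Conversely, for $q \in A_n^{\leq d}$ the element $\mathrm{hom}(q)$ is a genuine homogeneous polynomial of degree $d$, and setting $x_{n+1}=1$ returns $q$.

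Next we identify the point sets. The bijection $X_d^{\tilde a} \to Y_d^{\tilde a}$, $a \mapsto (a,1)$, induces an isomorphism $\theta : \Fun(Y_d^{\tilde a},\k) \to \Fun(X_d^{\tilde a},\k)$ given by $\theta(\mu)(a) = \mu(a,1)$. For $p \in A_{n+1}^d$ and $a \in X_d^{\tilde a}$ we have $p(a,1) = \mathrm{deh}(p)(a)$. Hence the evaluation map in part (1) factors as
\[\ev_d^{\tilde a}\big|_{A_{n+1}^d} = \theta^{-1} \circ \ev_d^{\tilde a}\big|_{A_n^{\leq d}} \circ \mathrm{deh}.\]
All three factors are isomorphisms, the middle one by Proposition~\ref{prop:interpolation}(1), so part (1) follows.

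For part (2), invert the factorization to get $(\ev_d^{\tilde a})^{-1}(\mu) = \mathrm{hom}\bigl((\ev_d^{\tilde a})^{-1}(\theta\mu)\bigr)$. By definition $\overline b_a^{\tilde a}(\mu) = b_a^{\tilde a}(\theta\mu)$, so Proposition~\ref{prop:interpolation}(2) gives
\[(\ev_d^{\tilde a})^{-1}(\theta\mu) = \sum_{a\in L_d} \overline b_a^{\tilde a}(\mu)\, B_a^{\tilde a}.\]
Homogenization is $\k$-linear, so applying it term by term to this sum produces exactly the stated formula.

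The only point requiring care is that each individual term $x_{n+1}^d B_a^{\tilde a}(x/x_{n+1})$ really lies in $A_{n+1}^d$. This holds because $\deg B_a^{\tilde a} = \abs{a} \leq d$, so every negative power of $x_{n+1}$ is cleared. Beyond this check, the argument is pure bookkeeping and there is no substantive obstacle.
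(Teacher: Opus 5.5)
Your proposal is correct and follows the paper's proof. The paper also writes down the commutative square relating evaluation on $A_{n+1}^d$ and on $A_n^{\leq d}$, via dehomogenization and the bijection $X_d^{\tilde{a}} \to Y_d^{\tilde{a}}$, $a \mapsto (a,1)$, and transports Proposition~\ref{prop:interpolation} along it. Your explicit checks, that homogenization inverts dehomogenization and that each term $x_{n+1}^d B_a^{\tilde{a}}(x_1/x_{n+1},\ldots,x_n/x_{n+1})$ is a genuine polynomial, fill in details the paper leaves implicit.
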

        \begin{proof}
                By construction the following diagram commutes,
                \[\begin{tikzcd}
A_{n+1}^d \arrow[d, "\ev_d^{\tilde{a}}"'] \arrow[r, "\sim"] & A_n^{\leq d} \arrow[d, "\ev_d^{\tilde{a}}"] \\
{\Fun (Y_d^{\tilde{a}}, \k)} \arrow[r, "\sim"]              & {\Fun (X_d^{\tilde{a}}, \k),}               
\end{tikzcd}\]
where the top horizontal map is given by dehomogenization and the bottom horizontal map is induced from the set-theoretic bijection $X_d^{\tilde{a}} \to Y_d^{\tilde{a}}$ taking $a$ to $(a, 1)$.
The corollary then follows from translating the statement of Proposition~\ref{prop:interpolation} along this commutative diagram.
        \end{proof}

        \begin{example}\label{ex:points_for_H3}
                Let $n = 2$ and set $\tilde{a} = (1,1)$. 
                Then
                \begin{align*}
                        Y_0^{(1,1)} &= \{ (1,1,1) \} \\
                        Y_1^{(1,1)} \setminus Y_0^{(1,1)} &= \{ (2,1,1), (1,2,1) \} \\
                        Y_2^{(1,1)} \setminus Y_1^{(1,1)} &= \{ (3,1,1), (2,2,1), (1,3,1) \} \\
                        Y_3^{(1,1)} \setminus Y_2^{(1,1)} &= \{ (4,1,1), (3,2,1), (2,3,1), (1,4,1) \} \\
                        Y_4^{(1,1)} \setminus Y_3^{(1,1)} &= \{ (5,1,1), (4,2,1), (3,3,1), (2,4,1), (1,5,1) \}.
                \end{align*}
        \end{example}

        \begin{remark}
                There are many other choices of interpolation that one can take for homogeneous polynomials of fixed degree $d$.
                For example, one could take the points
                \[\{ a \in \Z_{\geq 0}^{n+1} \mid \abs{a} = d \}.\]
                The advantage of Corollary~\ref{cor:homog_interpolation} is that for $d < d'$, the points we need to evaluate a polynomial at satisfy $Y_{d}^{\tilde{a}} \subset Y_{d'}^{\tilde{a}}$.
        \end{remark}

        \subsection{A coefficient algorithm via interpolation}

        Let $r$ be the dimension of $\fr{h}$ as a $\k$-vector space and assume that $r \geq 1$. 
        If $r = 0$, then $R = \k$ and no polynomial interpolation is needed.
        We fix a basis $x_1,  \ldots, x_r$ of $\fr{h}^*$ which induces an isomorphism
        \begin{equation}\label{eq:triv_of_R}
          R \cong \k [x_1, \ldots, x_r].  
        \end{equation}
        Fix some $\tilde{a} \in \Z_{\geq 0}^{r-1}$. For each $d \geq 0$, we define 
        \[ Y_d^{\tilde{a}} = \{ (\tilde{a} + a, 1) \in \Z_{\geq 0}^r : a \in \Z_{\geq 0}^{r-1}, \abs{a} \leq d \}.\]
        We can identify $R^{2d}$ with homogeneous polynomials of degree $d$ in the variables $x_1, \ldots, x_r$.
        By Corollary~\ref{cor:homog_interpolation}, evaluation defines an isomorphism 
        \begin{equation}
          \ev_d^{\tilde{a}} : R^{2d} \stackrel{\sim}{\to} \Fun (Y_d^{\tilde{a}}, \k).      
        \end{equation}
        Moreover, there is an explicit formula for the inverse (Corollary~\ref{cor:homog_interpolation} (2)).
        
        Let $\varphi : B_{\ux} \to B_{\uy}$ be an Abe--Bott--Samelson morphism of degree $d$. We restrict to the $N_d$ double leaves which could possibly contribute to the double-leaf expansion. If $N_d = 0$, then $\varphi = 0$, and so we will assume that $N_d > 0$.
        We will write $d_i$ for the degree of $\dLL_i$. By assumption, $d - d_i \geq 0$ and even. 
        Define
        \[K = \frac{1}{2} \left( \max_{i=1,\ldots, N_d} ( d-d_i) \right).\]
        We can find coefficients $c_i \in R$ such that 
        \[\varphi = \sum_{i=1}^{N_d} c_i \dLL_i.\]
        Since $\dLL_i$ has degree $d_i$, the coefficient is in $R^{d - d_i}$. 
        In particular, by Corollary~\ref{cor:homog_interpolation}, it suffices to compute $c_i$ at the points $Y_{K}^{\tilde{a}}$ for a fixed $\tilde{a} \in \Z_{\geq 0}^{r-1}$.

        At first glance, this seems to require substantially more computations than Algorithm~\ref{algo:coeff_algo}.
        However, once we compute $c_i$ at the points $Y_{(d-d_i)/2}^{\tilde{a}}$, the other points in $Y_{K}^{\tilde{a}} \setminus Y_{(d-d_i)/2}^{\tilde{a}}$ can be computed by Corollary~\ref{cor:homog_interpolation} (2).

        \begin{example}
                Let $W = H_3$ and $\fr{h}$ be the golden realization. 
                Consider the reduced expressions for the longest element $w_0$ in $W$, 
                \[ \ux = stsutsutustsutu \qquad\text{and}\qquad \uy = utustsutustusts.\]
                The table below shows how many points are involved for each degree involved in the computation of the coefficient expansion for a morphism $\varphi : B_{\ux} \to B_{\uy}$ of degree 0.
                The entries of the second column of the table can be computed using \eqref{eq:rk_of_homog_homs} (see also Example~\ref{ex:H3_hom_calc}).
                
                \begin{center}
                        \begin{tabular}{c | c | c | c}
                                \textbf{Double-leaf} & \textbf{Number of} & $\# Y_{-d_i/2}^{\tilde{a}}$ & \textbf{Number of }$c_i$ \\
                                \textbf{Degree} & \textbf{Double Leaves} & & \textbf{Point Evaluations} \\
                                \hline
                                $0$ & 168,263 & 1 & 168,263 \\
                                $-2$ & 19,732 & 3 & 59,196 \\
                                $-4$ & 1,031 & 6 & 6,186 \\
                                $-6$ & 1 & 10 & 10 \\
                                \hline
                                \textbf{Total} & 189,027 & & 233,655
                        \end{tabular}
                \end{center}

                In this case, Algorithm~\ref{algo:coeff_algo} requires solving 189,027 equations in $Q$; however, each of these computations is expensive due to arithmetic 
                in polynomial rings being comparably difficult to perform in computer-algebra systems. On the other hand, the interpolation algorithm solves $233,655$ equations over $\Q (\phi)$, using the nested subsets of $Y_3^{\tilde{a}}$.
                Once a coefficient is reconstructed, its values at later points are obtained by polynomial evaluation rather than solving another equation which is quite fast. Those inexpensive calculations are not counted in the above table.
        \end{example}

        Before we present the modified version of Algorithm~\ref{algo:coeff_algo} using the points in $Y_K^{\tilde{a}}$, it is worthwhile to mention the role that $\tilde{a}$ plays.
        In Algorithm~\ref{algo:coeff_algo}, we have to divide by 
        \[\lambda_i (\dLL_i) = \frac{\zeta_{\rex (w_i)} (\mathbf{1})}{\zeta_{\uy} (\uf^i)} \gamma_{\ux} (\ue^i) \gamma_{\uy} (\uf^i),\]
        which is a product of roots and their inverses in $Q$.
        In the point-method, we need to make sure that $\lambda_i (\dLL_i) (a) \neq 0$ for all $a \in Y_K^{\tilde{a}}$.
        The untranslated version where $\tilde{a} = (0,\ldots, 0)$ typically does not work.
        For example, suppose the chosen coordinates $x_1, \ldots, x_r$ are the simple roots and $r > 1$.
        Then $(0, \ldots, 0, 1) \in Y_K^{\mathbf{0}}$ lies on the root hyperplanes $x_i = 0$ for every $i < r$.
        
        \begin{definition}
                We say that $\tilde{a} \in \Z_{\geq 0}^{r-1}$ is $K$-\emph{regular} if $\alpha (a) \neq 0$ for all $\alpha \in \Phi$ and $a \in Y_K^{\tilde{a}}$.  
        \end{definition}

        \begin{lemma}\label{lem:existence_of_K_regular}
                Assume that $\Phi$ is finite. There exists some $\tilde{a} = (\tilde{a}_1, \ldots, \tilde{a}_{r-1}) \in \Z_{> 0}^{r-1}$ such that $\tilde{a}$ is $K$-regular.
        \end{lemma}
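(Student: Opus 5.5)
We have to produce a point $\tilde a\in\Z_{>0}^{r-1}$ such that no root vanishes at any point of the finite set $Y_K^{\tilde a}$. Since $\Phi$ is finite and $Y_K^{\tilde a}=\{(\tilde a+a,1): a\in L_K\}$ is a finite translate of a fixed finite set $L_K$, the approach is to reduce the statement to the assertion that a single nonzero polynomial in $\tilde a$ does not vanish at some positive integer point.

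First, write each root $\alpha\in\Phi$ in the fixed coordinates of \eqref{eq:triv_of_R} as a linear form $\alpha=\sum_{j=1}^r c_{\alpha,j}x_j$ with $c_{\alpha,j}\in\k$. Each root is nonzero: $\alpha=w\alpha_s$ with $w$ acting invertibly on $\fr{h}^*$, and $\alpha_s\neq 0$ because $\langle\alpha_s^\vee,\alpha_s\rangle=2\neq 0$ (characteristic $0$). For each $\alpha\in\Phi$ and each $a\in L_K$, consider the affine polynomial in the variables $t=(t_1,\dots,t_{r-1})$ given by
\[
P_{\alpha,a}(t)=\sum_{j=1}^{r-1}c_{\alpha,j}(t_j+a_j)+c_{\alpha,r}.
\]
Then $\tilde a$ is $K$-regular if and only if $P_{\alpha,a}(\tilde a)\neq 0$ for all $\alpha\in\Phi$ and $a\in L_K$.

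The main point to check is that each $P_{\alpha,a}$ is a nonzero polynomial. If $c_{\alpha,j}\neq 0$ for some $j<r$, then $P_{\alpha,a}$ has a nonzero linear term. Otherwise $\alpha=c_{\alpha,r}x_r$ with $c_{\alpha,r}\neq 0$, so $P_{\alpha,a}$ is the nonzero constant $c_{\alpha,r}$. This is the one spot where the argument could fail: it works precisely because the last coordinate of every point of $Y_K^{\tilde a}$ is fixed at $1$ rather than $0$.

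Now set $P=\prod_{\alpha\in\Phi}\prod_{a\in L_K}P_{\alpha,a}$. This is a finite product of nonzero polynomials over the domain $\k[t_1,\dots,t_{r-1}]$, hence nonzero. It remains to show that a nonzero polynomial over a field of characteristic $0$ cannot vanish on all of $\Z_{>0}^{r-1}$. This follows by the standard induction on the number of variables. Write $P=\sum_k g_k(t_1,\dots,t_{r-2})\,t_{r-1}^k$ with some $g_k\neq 0$. By induction, choose positive integers $b_1,\dots,b_{r-2}$ with $g_k(b)\neq 0$. Then $P(b,t_{r-1})$ is a nonzero one-variable polynomial, so it has finitely many roots. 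Since the positive integers are infinitely many distinct elements of $\k$, some positive integer $b_{r-1}$ is not a root. Alternatively, one can apply Proposition~\ref{prop:interpolation}(1): injectivity of evaluation on the grid $X_D^{\mathbf 1}$, with $D=\deg P$, shows that $P$ is nonzero at some point of $\mathbf 1+L_D\subset\Z_{>0}^{r-1}$. Any such point is the required $K$-regular $\tilde a$. The degenerate case $r=1$ is immediate, since then $Y_K^{\tilde a}=\{(1)\}$ and each root is a nonzero multiple of $x_1$.
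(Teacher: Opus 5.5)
Your argument is correct, but it takes a different route from the paper.

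\textbf{The paper's route.} The paper never leaves one variable. For each $\alpha=\sum_i b_ix_i\in\Phi$ and $a\in L_K$, it evaluates $\alpha$ along the moment curve $(t+a_1,t^2+a_2,\ldots,t^{r-1}+a_{r-1},1)$. This gives
\[
p_{\alpha,a}(t)=b_r+\sum_{i<r}b_ia_i+\sum_{i<r}b_it^i\in\k[t],
\]
which is nonzero by the same dichotomy you use. The distinct powers $t^i$ are what prevent the $b_i$ from cancelling; a diagonal line $(t,\ldots,t)$ would not guarantee this. The paper then picks a single positive integer $T$ outside the finite union of roots and sets $\tilde a=(T,T^2,\ldots,T^{r-1})$.

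\textbf{Your route.} You multiply the affine forms $P_{\alpha,a}$ into one nonzero $P\in\k[t_1,\ldots,t_{r-1}]$. You then use the fact that a nonzero polynomial cannot vanish on all of $\Z_{>0}^{r-1}$ in characteristic $0$, either by induction on variables or by injectivity of evaluation in Proposition~\ref{prop:interpolation}.

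\textbf{What each buys.} The paper's argument is shorter and needs only that a nonzero one-variable polynomial has finitely many roots. Yours makes transparent that the non-regular shifts are exactly the integer points on finitely many affine hyperplanes $P_{\alpha,a}=0$. Your interpolation variant also pins down a concrete search region: some point of $(1,\ldots,1)+L_D$ works, with $D\le\#\Phi\cdot\#L_K$. Its coordinates are at most $1+D$, whereas the paper's shift has last coordinate $T^{r-1}$. Finally, your explicit check that every root is nonzero makes precise a point the paper's proof uses tacitly.
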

        \begin{proof}
                The case of $r=1$ is obvious. Assume that $r \geq 2$. Let $\alpha \in \Phi$ and $a = (a_1, \ldots, a_{r-1}) \in L_K$.
                Write
                \[\alpha = \sum_{i=1}^r b_i x_i.\]
                for some $b_i \in \k$. 
                We will evaluate $\alpha$ at the point $(t + a_1,t^2 + a_2, \ldots, t^{r-1} + a_{r-1}, 1)$ where $t$ is a formal variable,
                \[p_{\alpha, a} (t) \coloneq \alpha (t + a_1,t^2 + a_2, \ldots, t^{r-1} + a_{r-1}, 1) = b_r + \sum_{i=1}^{r-1} b_i a_i + \sum_{i=1}^{r-1} b_i t^i \in \k [t].\]
                Note that $p_{\alpha, a} \neq 0$. Indeed, if some $b_i \neq 0$ with $i < r$, it has a nonzero positive-degree coefficient.
                Otherwise, it is the nonzero constant $b_r$.
                
                Note that $p_{\alpha, a}$ has finitely many roots.
                Since $\Phi$ and $L_K$ are finite, the union of the roots of $p_{\alpha, a}$ for all $\alpha \in \Phi$ and $a \in L_K$ is also finite.
                In particular, there must be some positive integer $T$ which is not a root of $p_{\alpha, a}$. We set   
                \[\tilde{a} = (T, T^2, \ldots, T^{r-1}).\]
                This choice of $\tilde{a}$ is $K$-regular.
        \end{proof}

        \begin{example}\label{ex:K_reg}
                \begin{enumerate}
                        \item   Let $(W,S)$ be any Coxeter system of finite rank. Let $\fr{h}^* = \bigoplus_{s \in S} \R \alpha_s$.
                                Define elements $\alpha_s^{\vee} \in \fr{h} = (\fr{h}^*)^*$ by
                                \[\langle \alpha_t^{\vee}, \alpha_s \rangle = -2\cos(\pi/m_{st}),\]
                                where we set $m_{ss} = 1$ and $\pi/\infty = 0$ by convention. The realization $\fr{h}$ is called the \emph{dual geometric realization}.
                                By \cite[Theorem 5.4]{Humphreys}, any root $\alpha \in \Phi$ can be written as
                                \[\alpha = \sum_{s} c_s \alpha_s\]
                                with all $c_s \geq 0$ or all $c_s \leq 0$, with at least one nonzero coefficient.
                                Every point on $Y_K^{(1,\ldots, 1)}$ has strictly positive coordinates with respect to the simple-root basis. Since every root has either all positive or all negative coefficients, it evaluates non-trivially at every such point.
                                Therefore, the shift $\tilde{a} = (1,\ldots, 1) \in \Z_{ > 0}^{r-1}$ is $K$ regular for every $K \geq 0$.
                        \item   Let $W = H_3$ and $\fr{h}$ be the golden realization. The set of simple roots forms a basis for $\fr{h}^*$.
                                There are 30 roots listed below.
                                \[
                                \begin{matrix}
                                        \alpha_s & \alpha_t & \alpha_u \\
                                         -\alpha_s & -\alpha_t & -\alpha_u \\
                                        \alpha_t + \alpha_u & \alpha_s + \phi \alpha_t & \phi \alpha_s + \alpha_t \\ 
                                        -\alpha_t - \alpha_u & -\alpha_s - \phi \alpha_t & -\phi \alpha_s - \alpha_t \\
                                        \phi (\alpha_s + \alpha_t) & \phi \alpha_s + \alpha_t + \alpha_u & \alpha_s + \phi (\alpha_t + \alpha_u) \\
                                         - \phi(\alpha_s + \alpha_t) & -\phi \alpha_s - \alpha_t - \alpha_u & -\alpha_s - \phi ( \alpha_t + \alpha_u) \\
                                        \phi (\alpha_s + \alpha_t + \alpha_u) & \phi (\alpha_s + \alpha_t) + (\alpha_t + \alpha_u) &  \phi (\alpha_s + \alpha_t + \alpha_u) + \alpha_t \\
                                         -\phi (\alpha_s + \alpha_t + \alpha_u) & -\phi (\alpha_s + \alpha_t) - (\alpha_t + \alpha_u) & -\phi (\alpha_s + \alpha_t + \alpha_u) - \alpha_t \\
                                         \phi (\alpha_s + \alpha_t) + (\alpha_s + \alpha_t + \alpha_u) & \phi (\alpha_s + \alpha_t + \alpha_u) + (\alpha_s + \alpha_t) & \phi (\alpha_s + 2 \alpha_t + \alpha_u) +  \alpha_s \\
                                         -\phi (\alpha_s + \alpha_t) - (\alpha_s + \alpha_t + \alpha_u) & -\phi (\alpha_s + \alpha_t + \alpha_u) - (\alpha_s + \alpha_t) & -\phi (\alpha_s + 2 \alpha_t + \alpha_u) -  \alpha_s 
                                \end{matrix}
                                \] 
                                Every root in this list has either positive or negative coefficients under the embedding $\phi = (1 + \sqrt{5})/2 > 0$. The same argument as in (1) shows that the shift $\tilde{a} = (1,1)$ is $K$-regular for every $K \geq 0$.
                        \item Let $W = I_2 (\infty)$ with $\fr{h}^* = \Q e_1 \oplus \Q e_2$ and set $\alpha_s = e_1$ and $\alpha_t = e_2 - e_1$.
                        We can make $\fr{h} = (\fr{h}^*)^*$ into a realization with Cartan matrix
                        \[\begin{bmatrix} 2 & -2 \\ -2 & 2\end{bmatrix}.\]
                        Every sum of the form $e_1 + ne_2$ for $n \in \Z$ is a root for $\fr{h}$.
                        At every point $(a,1)$ with integer $a$, the root $e_1 - ae_2$ vanishes. In particular, $a \in \Z_{\geq 0}$ is never $K$-regular.
                        \end{enumerate}
        \end{example}

         \begin{algorithm}[The Coefficient Algorithm via Interpolation]\label{algo:coeff_algo_interpolate}
                Assume that $\Phi$ is finite.
                Let $\varphi : B_{\ux} \to B_{\uy}$ be a morphism of Abe--Bott--Samelson bimodules of homogeneous degree $d$. 

                Consider the collection of double leaves from $B_{\ux}$ to $B_{\uy}$ of degree $\leq d$ and such that the degree has the same parity as $d$.
                Enumerate this collection by $\dLL_1, \dLL_2, \ldots, \dLL_{N_d}$ where $\dLL_i = \dLL_{\ue^i, \uf^i}^{w_i}$ such that
                \begin{enumerate}[label=(\alph*)]
                        \item for $i < j$, then $\ell (w_j) \leq \ell (w_i)$;
                        \item for $i < j$ and $\ell (w_i) = \ell (w_j)$, then $h_{\ux} (\ue^j) + h_{\uy} (\uf^j) \leq  h_{\ux} (\ue^i) + h_{\uy} (\uf^i)$.
                \end{enumerate}
                Any enumeration satisfying (a) and (b) is allowed. We set $d_i$ to be the degree of $\dLL_i$. Define $k_i = (d-d_i)/2$.
                
                Write
                \[\varphi = \sum_{i=1}^{N_d} c_i  \dLL_i\]
                for some $c_i \in R^{d-d_i}$ which under the trivialization \eqref{eq:triv_of_R} is viewed as a homogeneous polynomial of degree $k_i$ in the variables $x_1, \ldots, x_r$.
                    
                \begin{enumerate}
                        \item If $N_d = 0$, then $\varphi = 0$. Otherwise, define $K = \max_{i=1,\ldots, N_d} k_i$ and pick some $K$-regular shift $\tilde{a}$. By convention, we define $Y_{-1}^{\tilde{a}} = \emptyset$.
                        \item For $k=0, 1, \ldots, K$, perform the following steps:
                        \begin{enumerate}
                            \item For each $a \in Y_k^{\tilde{a}} \setminus Y_{k-1}^{\tilde{a}}$, we induct on $i=1,\ldots, N_d$. At each index $i$, we solve for $c_i (a)$ as follows.
                            \begin{enumerate}
                                \item If $k_i < k$, the polynomial $c_i (x_1, \ldots, x_r)$ is already determined by interpolation. We compute $c_i (a)$ then by evaluating this known polynomial.
                                \item If $k_i \geq k$, we compute $c_i (a)$ by the equation
                                \begin{equation}\label{eq:coeff_algo_interpolate_1}
                                    c_i (a) = \frac{\lambda_i (\varphi) (a) - \sum_{j < i} c_j (a) \lambda_i (\dLL_j) (a)}{\lambda_i (\dLL_i) (a)}.
                                \end{equation}
                                If $c_j (a) = 0$ for $j < i$, we skip computing $\lambda_i (\dLL_j) (a)$. If $c_j (a) \neq 0$, then $\lambda_i (\dLL_j) (a)$ can be computed from \eqref{eq:coeff_algo_2}, but evaluated at the point $a$ (see also Remark~\ref{rem:computing_dll_coeffs}).
                                The columns of the localization matrices are not computed symbolically in $Q$, but instead, we work only with terms in $\k$ after applying the evaluation morphism $R \to \k$ at the point $a$.  
                                The value $\lambda_i (\dLL_i) (a)$ is computed from \eqref{eq:coeff_algo_1}, but likewise at the point $a$ instead of symbolically.
                            \end{enumerate}
                            \item After all points of $Y_k \setminus Y_{k-1}$ are iterated through, we can reconstruct the homogeneous polynomials $c_i$ of degree $k_i = k$ from their values at the points of $Y_{k}^{\tilde{a}}$ using Corollary~\ref{cor:homog_interpolation}.
                            Store this polynomial for later steps.
                        \end{enumerate}
                        \item After $k$ is iterated from $0$ to $K$, all polynomials $c_1, \ldots, c_{N_d}$ have been constructed.
                \end{enumerate}
        \end{algorithm}
        \begin{proof}
            By $K$-regularity of $\tilde{a}$, every value $\lambda_i (\dLL_i) (a) \neq 0$. As a result, \eqref{eq:coeff_algo_interpolate_1} is well-defined.
            The validity of the algorithm then follows from the same reasoning as Algorithm~\ref{algo:coeff_algo} using Corollary~\ref{cor:homog_interpolation}.
        \end{proof}

                We also have the following equality test of morphisms which is a polynomial-interpolation version of Corollary~\ref{cor:eq_testing}.

        \begin{proposition}\label{prop:eq_testing_interpolate}
                Assume that $\Phi$ is finite.
                Let $\varphi, \psi : B_{\ux} \to B_{\uy}$ be two morphisms of degree $d$.
                Enumerate the admissible double leaves from $B_{\ux}$ to $B_{\uy}$ by $\dLL_1, \ldots, \dLL_{N_d}$ as in Algorithm~\ref{algo:coeff_algo_interpolate}.
                Write $d_i$ for the degree of $\dLL_i$, and set $k_i = (d-d_i)/2$. Pick $\tilde{a} \in \Z_{\geq 0}^{r-1}$ which is $k_i$-regular for all $i$.
                Then $\varphi = \psi$ if and only if $\lambda_i (\varphi - \psi) (a) = 0$ for every $i$ and every $a \in Y_{k_i}^{\tilde{a}}$.
        \end{proposition}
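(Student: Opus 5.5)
The forward direction is immediate, since $\varphi=\psi$ forces $\lambda_i(\varphi-\psi)=0$ identically. For the converse, set $\theta=\varphi-\psi$, a degree $d$ morphism. By Corollary~\ref{cor:deg_0_mor_and_basis} (and Remark~\ref{rem:generic_to_degree_d}) we can expand
\[\theta=\sum_{i=1}^{N_d} c_i\,\dLL_i\]
with each $c_i\in R^{d-d_i}$ homogeneous of degree $k_i$ in $x_1,\ldots,x_r$. The goal is to show every $c_i=0$. I will prove, by induction on $i$ in the fixed enumeration, the stronger statement that $c_i(a)=0$ for all $a\in Y_{k_i}^{\tilde a}$. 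Corollary~\ref{cor:homog_interpolation}(1) then gives $c_i=0$ as a polynomial, because $\ev_{k_i}^{\tilde a}$ is injective on $R^{2k_i}$.

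For the inductive step, fix $i$ and assume $c_j=0$ as polynomials for all $j<i$. Apply $\lambda_i$ to the expansion of $\theta$. By Theorem~\ref{thm:lower_triang_of_coord_functionals}, $\lambda_i(\dLL_j)=0$ for $j>i$, so
\[\lambda_i(\theta)=c_i\,\lambda_i(\dLL_i)+\sum_{j<i}c_j\,\lambda_i(\dLL_j)=c_i\,\lambda_i(\dLL_i)\]
as elements of $Q$.

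To evaluate this at a point $a\in Y_{k_i}^{\tilde a}$, I need the identity to survive specialization. All entries involved lie in the subring of $Q$ obtained by inverting only the roots in $\Phi$. The localization matrices of generators (\S\ref{ss:diag_localization}) have denominators that are products of roots, and so do the formulas in Proposition~\ref{prop:loc_of_dl} and in \eqref{eq:lower_triang_of_coord_functionals_0}. Since $\tilde a$ is $k_i$-regular, no root vanishes on $Y_{k_i}^{\tilde a}$, so evaluation at $a$ is a well-defined ring homomorphism on this subring. It gives
\[0=\lambda_i(\theta)(a)=c_i(a)\,\lambda_i(\dLL_i)(a).\]
The factor $\lambda_i(\dLL_i)(a)$ is a product of roots and inverse roots evaluated at $a$, hence nonzero by regularity. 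Therefore $c_i(a)=0$ for all $a\in Y_{k_i}^{\tilde a}$, which completes the induction.

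The main point needing care is the well-definedness of evaluation at $a$ on the localization-matrix entries and on the identity above. I would handle it by observing that $\Lambda$ of any morphism built from generators has entries in $R[\alpha^{-1}:\alpha\in\Phi]$, and $\Phi$ is finite. Two further details should be checked: that the regularity hypothesis covers every point used (it does, since each $Y_{k_i}^{\tilde a}$ is used only for its own $i$), and that the identity $\lambda_i(\theta)=c_i\lambda_i(\dLL_i)$ holds in $Q$ before specializing, which Theorem~\ref{thm:lower_triang_of_coord_functionals} provides.
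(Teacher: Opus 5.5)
Your proof is correct and follows the paper's argument essentially step for step. You induct along the enumeration, use Theorem~\ref{thm:lower_triang_of_coord_functionals} to reduce $\lambda_i(\varphi-\psi)$ to $c_i\lambda_i(\dLL_i)$, evaluate on $Y_{k_i}^{\tilde a}$ using regularity, and conclude $c_i=0$ by Corollary~\ref{cor:homog_interpolation}. Your extra remark, that evaluation at $a$ is a ring homomorphism on $Q$ because $Q$ is exactly $R$ with the roots of $\Phi$ inverted and none of them vanish at $a$, makes explicit a point the paper leaves implicit.
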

        \begin{proof}
                Write $\varphi - \psi = \sum_i c_i \dLL_i$ where $c_i \in R^{2k_i}$.
                Suppose inductively that $c_j = 0$ for $j < i$. Theorem~\ref{thm:lower_triang_of_coord_functionals} gives
                \[\lambda_i (\varphi - \psi) = c_i \lambda_i (\dLL_i).\]
                By assumption, $\lambda_i (\varphi - \psi) (a) = 0$ and $\lambda_i (\dLL_i) (a) \neq 0$ for all $a \in Y_{k_i}^{\tilde{a}}$.
                As a result, $c_i (a) = 0$ for all $a \in Y_{k_i}^{\tilde{a}}$. Corollary~\ref{cor:homog_interpolation} then gives that $c_i = 0$. The converse is immediate. 
        \end{proof}

        \begin{remark}
                For a fixed input, the finiteness assumptions on $\Phi$ can be replaced by a finite denominator condition.
                Choose finite expressions in the generating morphisms for the input and for the light leaves being used.
                Let $\Phi'$ contain every root occurring in a denominator of the localized computations. In particular, include every root occurring in the numerator or denominator of $\lambda_i (\dLL_i)$.

                Only finitely many roots are needed for this fixed computation.
                The same argument as Lemma~\ref{lem:existence_of_K_regular} applied to $\Phi'$ shows that there exists some $\tilde{a} \in \Z_{\geq 0}^{r-1}$ such that $\alpha (a) \neq 0$
                for all $\alpha \in \Phi'$ and $a \in Y_K^{\tilde{a}}$. This is enough to make Algorithm~\ref{algo:coeff_algo_interpolate} and Proposition~\ref{prop:eq_testing_interpolate} work even when $\Phi$ is infinite.
        \end{remark}

       \subsection{Extension to general realizations}\label{ss:gen_realizations}

        While Algorithm~\ref{algo:coeff_algo} works over any noetherian domain, our polynomial interpolation algorithm makes crucial use of our ground ring being a characteristic 0 field.
        Even in characteristic 0, the number of points one has to check, for fixed maximal coefficient degree, has polynomial growth in the rank of $\fr{h}$, so interpolation favors small realizations as well.
        This does not mean that all hope is lost if one wants an efficient way to compute the double-leaf coefficients for general realizations.
        We will present such a strategy in this subsection which can sometimes be used. 

        \begin{definition}
                Let $\fr{h}$ be a balanced realization for $W$ over a noetherian domain $\k$. 
                An \emph{integral model} for $\fr{h}$ consists of the following:
                \begin{enumerate}
                 \item a characteristic-zero integral domain $\mathcal{Z}$ that is finitely generated as a $\Z$-algebra;
                \item a balanced realization $\fr{h}_{\mathcal{Z}}$ over $\mathcal{Z}$;
                \item a morphism of realizations $(\sigma, f) : \fr{h}_{\mathcal{Z}} \to \fr{h}$.
                \end{enumerate}
        \end{definition}

        \begin{proposition}\label{prop:integral_models}
                Every balanced realization admits an integral model.
        \end{proposition}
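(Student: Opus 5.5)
The plan is to build $\fr{h}_{\mathcal{Z}}$ as a "universal" realization generated over $\Z$ by exactly the data of $\fr{h}$ that the axioms use. Concretely, $\fr{h}$ is free of some finite rank $n$ over $\k$. Fix a $\k$-basis $v_1,\ldots,v_n$ of $\fr{h}^*$ and the dual basis of $\fr{h}$. Expand the simple roots as $\alpha_s = \sum_i a_{s,i} v_i$ and the coroots as $\alpha_s^\vee = \sum_i c_{s,i} v_i^*$. Surjectivity of $\alpha_s : \fr{h} \to \k$ and $\alpha_s^\vee : \fr{h}^* \to \k$ gives vectors with $\langle u_s, \alpha_s\rangle = 1$ and $\langle \alpha_s^\vee, u_s' \rangle = 1$; record their coordinates as well. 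Let $\mathcal{Z} \subset \k$ be the $\Z$-subalgebra generated by all of these finitely many coordinates. Then $\mathcal{Z}$ is a finitely generated $\Z$-algebra and, being a subring of a domain, it is a domain.

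Next I would define $\fr{h}_{\mathcal{Z}} = \bigoplus_i \mathcal{Z} v_i^*$, with roots and coroots given by the same coordinate formulas. The axioms of a realization are then checked as follows.
\begin{enumerate}
\item The pairings $\langle \alpha_s^\vee, \alpha_t\rangle = a_{st}$ are polynomial in the generators, so they lie in $\mathcal{Z}$ and agree with their values in $\k$.
\item The vectors $u_s$ and $u'_s$ lie over $\mathcal{Z}$, which gives surjectivity.
\item The Coxeter relations $(st)^{m_{st}} = 1$ and $s^2=1$ are polynomial identities among matrix entries in $\mathcal{Z}$. They hold in $\k$ and $\mathcal{Z} \hookrightarrow \k$ is injective, so they hold in $\mathcal{Z}$.
\item The quantum binomial vanishing \eqref{eq:qbinom_vanishing} and the balancedness condition \eqref{eq:balanced} are likewise polynomial identities in the $a_{st}$, so they transfer the same way.
\end{enumerate}
The morphism of realizations is $(\sigma, f)$, where $\sigma$ is the inclusion $\mathcal{Z} \hookrightarrow \k$ and $f$ sends $v_i \mapsto v_i$. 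The two required compatibilities hold by construction.

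The main obstacle is characteristic zero: if $\k$ has characteristic $p$, then $\mathcal{Z}$ also has characteristic $p$, so a plain subring does not work. In that case I would instead lift to a polynomial ring. Take $\mathcal{Z}_0 = \Z[X_{s,i}, Y_{s,i}, \ldots]$ with indeterminates for each coordinate, and quotient by the ideal $I$ generated by the polynomial relations above: the Coxeter relations, \eqref{eq:qbinom_vanishing}, \eqref{eq:balanced}, and the surjectivity witness equations. Specialization then gives a map $\mathcal{Z}_0/I \to \k$. The difficulty is that $\mathcal{Z}_0/I$ must be a characteristic-zero domain.

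To handle this I would use Theorem~\ref{thm:abe_conditions}. It says the rank-two conditions for $m$ odd are $a_{st}=a_{ts}$ and $\Psi'_m(-a_{st})=0$, and for $m$ even are $\Psi_m(a_{st}a_{ts})=0$. So I would build $\mathcal{Z}$ directly, instead of as an arbitrary quotient:
\begin{enumerate}
\item adjoin to $\Z$ roots of the cyclotomic-type polynomials $\Psi_m$ and $\Psi'_m$, giving a ring of the form $\Z[2\cos(\pi/m)]$ for the relevant $m$;
\item adjoin the remaining free parameters as polynomial variables, so that $\mathcal{Z}$ is a domain of characteristic zero;
\item arrange that the reduction map from this ring to $\k$ exists.
\end{enumerate}
Since $\Psi_m(a_{st}a_{ts})=0$ in $\k$, one can send a root of $\Psi_m$ to $a_{st}a_{ts}$, and send the free off-diagonal entries to variables.

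After that, surjectivity and the $W$-action still need to be arranged. If this fails, I would fall back on presenting the statement for characteristic-zero $\k$ via the subring construction, which I expect is the case actually used for interpolation.
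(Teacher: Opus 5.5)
\textbf{Gap: the positive-characteristic case is not proved.} Your subring construction is correct when $\k$ has characteristic zero, and it even gives a model of the same rank as $\fr{h}$. But that is the case where one could already pass to $\Frac(\k)$ and interpolate; integral models exist for the other case. The proposition covers balanced realizations over any noetherian domain, and for $\operatorname{char}\k=p>0$ your proposal stops short of a proof, so the fallback proves a strictly weaker statement. Two ideas are missing.

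\textbf{First: you never obtain a characteristic-zero domain mapping to $\k$.}
\begin{enumerate}
\item The kernel of any specialization to $\k$ contains $p$, so you cannot quotient by it.
\item You give no reason why $\mathcal{Z}_0/I$ should be a domain, or even $\Z$-torsion-free.
\item Rings built by adjoining roots of $\Psi_m,\Psi_m'$ edge by edge are tensor products such as $\Z[\phi]\otimes_{\Z}\Z[\phi]$ (two edges labelled $5$). These are not domains.
\item For even $m$ only the product $a_{st}a_{ts}$ is constrained, so both entries must be generators. One root of $\Psi_m$ plus one free variable does not give polynomial Cartan entries.
\end{enumerate}
The paper's fix is to build a ring $\overline{A}$ that is free over $\Z$, together with a homomorphism $\tau:\overline{A}\to\k$. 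Here $\overline{A}$ is $\bigotimes_{\{s,t\}}A_{\{s,t\}}$ with free variables $d_{st}$ ($s\neq t$) adjoined. The factor $A_{\{s,t\}}$ is $\Z$, $\Z[x]/(\Psi_m'(x))$, $\Z[x_{st},x_{ts}]/(\Psi_m(x_{st}x_{ts}))$, or $\Z[x_{st},x_{ts}]$, according as $m=m_{st}$ equals $2$, is odd, is even and larger than $2$, or is $\infty$. Theorem~\ref{thm:abe_conditions} makes $x_{st}\mapsto -a_{st}$ well defined. Since $\k$ is a domain, $\ker\tau$ is prime, and one sets $\mathcal{Z}=\overline{A}/\fr{p}$ for a minimal prime $\fr{p}\subseteq\ker\tau$. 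Nonzero integers are nonzerodivisors in $\overline{A}$, while minimal primes consist of zerodivisors. Hence $\fr{p}$ contains no nonzero integer, $\mathcal{Z}$ has characteristic zero, and $\tau$ still factors through $\mathcal{Z}$.

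\textbf{Second: you should not try to lift all of $\fr{h}$.} A morphism of realizations only has to send simple roots to simple roots and respect the pairings with coroots; it need not become an isomorphism after base change. So take $\fr{h}_{\mathcal{Z}}^*$ free on symbols $\alpha_s$ and $e_t$ ($s,t\in S$), with
\begin{itemize}
\item $\langle\alpha_s^\vee,\alpha_s\rangle=2$, and $\langle\alpha_s^\vee,\alpha_t\rangle=-x_{st}$ for $s\neq t$;
\item $\langle\alpha_s^\vee,e_t\rangle=d_{st}$, with $d_{ss}=1$;
\item $f(\alpha_s)=\alpha_s$ and $f(e_t)=\delta_t$, where $\tau(d_{st})=\langle\alpha_s^\vee,\delta_t\rangle$.
\end{itemize}
This disposes of exactly the points you left open. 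Demazure surjectivity is automatic, since $\alpha_s$ is a basis vector and $\alpha_s^\vee(e_s)=1$. The $W$-action on this free module follows from the rank-two minimal-polynomial conditions by Hazi's Proposition~1.4. The quantum-binomial and balancedness conditions hold again by Theorem~\ref{thm:abe_conditions}. Without these two ingredients the positive-characteristic case remains unproved.
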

        \begin{proof}
                Let $\fr{h}$ be a balanced realization of $(W,S)$ over $\k$ with Cartan matrix $(a_{st})$. 

                Let $s,t \in S$ with $s \neq t$ and write $m = m_{st}$.
                We define a ring associated to the unordered pair $\{s,t\}$,
                \[A_{\{s,t\}} \coloneq \begin{cases} \Z[x_{st}, x_{ts}] / (x_{st}, x_{ts}) & m = 2, \\ \Z [x_{st}, x_{ts}]/(\Psi_m' (x_{st}), x_{st} - x_{ts}) & m > 2 \text{ odd}, \\ \Z [x_{st}, x_{ts}] / (\Psi_m (x_{st} x_{ts})) & m > 2 \text{ even}, \\ \Z [x_{st}, x_{ts}] & m=\infty. \end{cases}\]
                Here $\Psi_m$ and $\Psi_m'$ are the minimal polynomials from \eqref{eq:min_polys}.
                We then define a ring
                \[A \coloneq \bigotimes_{\{s,t\} \subset S} A_{\{s,t\}}.\]
                Each factor is free as a $\Z$-module. In the odd case, this follows from division by the monic polynomial $\Psi_m'$. In the even case, writing $d = \deg \Psi_m$, the monomials $x_{st}^i x_{ts}^j$ with $\min (i,j) < d$ form a $\Z$-basis of the quotient. Division by the leading monomial $x_{st}^d x_{ts}^d$ gives existence and uniqueness of the remainder. Hence $A$ is free over $\Z$.
                Define a Cartan matrix $(\tilde{a}_{st})$ associated to $A$ given by $\tilde{a}_{ss} = 2$ for all $s \in S$ and $\tilde{a}_{st} = -x_{st}$ for $s \neq t$.
                By Theorem~\ref{thm:abe_conditions}, there exists a ring homomorphism 
                \begin{equation}\label{eq:integral_models_1}
                  A \to \k, \qquad x_{st} \mapsto -a_{st}.      
                \end{equation}

                Fix $\delta_s \in \fr{h}^*$ for each $s \in S$ such that $\langle \alpha_s^{\vee}, \delta_s \rangle = 1$.
                We can define 
                \[\overline{A} = A [d_{st} : s\neq t \in S].\]
                By convention, we set $d_{ss} = 1$. We can now extend \eqref{eq:integral_models_1} to a morphism
                \[\tau : \overline{A} \to \k, \qquad d_{st} \mapsto \langle \alpha_s^{\vee}, \delta_t \rangle.\]
                Of course, $\overline{A}$ is still a free $\Z$-module. We define a prime ideal $\fr{q} = \ker (\tau)$.
                Let $\fr{p} \subseteq \fr{q}$ be a minimal prime ideal and define
                \[\mathcal{Z} = \overline{A}/\fr{p}.\]
                The ring homomorphism $\tau$ factors through a ring homomorphism
                \[\sigma : \mathcal{Z} \to \k.\]
                Note that since $\mathfrak{p}$ is a minimal prime ideal and $\overline{A}$ is free over $\Z$, we must have that $\mathcal{Z}$ has characteristic $0$.

                Define a free $\mathcal{Z}$-module,
                \[\fr{h}_{\mathcal{Z}}^* \coloneq \bigoplus_{s \in S} \mathcal{Z} \alpha_s \oplus \bigoplus_{t \in S} \mathcal{Z} e_t.\]
                We define the coroots in $\fr{h}_{\mathcal{Z}} \coloneq (\fr{h}_{\mathcal{Z}}^*)^*$ by
                \[\langle \alpha_s^{\vee}, \alpha_t \rangle = \tilde{a}_{st} \qquad\text{and}\qquad \langle \alpha_s^{\vee}, e_t \rangle = d_{st}.\]
                The Cartan entries satisfy the minimal-polynomial conditions of \cite[Definition 1.1]{Hazi2}. By \cite[Proposition 1.4]{Hazi2}, the reflection formulas define a $W$-action on $\fr{h}_{\mathcal{Z}}^*$, and dually on $\fr{h}_{\mathcal{Z}}$. 

                Next, we can verify that Demazure surjectivity holds for $\fr{h}_{\mathcal{Z}}$.
                By definition, each $\alpha_s$ is a basis vector of $\fr{h}_{\mathcal{Z}}^*$, so $\alpha_s : \fr{h}_{\mathcal{Z}} \to \mathcal{Z}$ is surjective.
                On the other hand, $\alpha_s^{\vee} (e_s) = d_{ss} = 1$, and so $\alpha_s^{\vee} : \fr{h}_{\mathcal{Z}}^* \to \mathcal{Z}$ is also surjective.
                Abe's conditions \eqref{eq:qbinom_vanishing} and the balancedness equations \eqref{eq:balanced} are satisfied by our construction using Theorem~\ref{thm:abe_conditions}.
                Therefore, $\fr{h}_{\mathcal{Z}}$ is a balanced realization of $(W,S)$ over $\mathcal{Z}$.

                Finally, we define $f : \fr{h}_{\mathcal{Z}}^* \to \fr{h}^*$ by $\alpha_s \mapsto \alpha_s$ and $e_t \mapsto \delta_t$. 
                The pair $(\sigma, f)$ then defines a morphism of realizations $\fr{h}_{\mathcal{Z}} \to \fr{h}$ which completes the proof.
        \end{proof}

        Unfortunately, the integral model produced in Proposition~\ref{prop:integral_models} is less than ideal. 
        The ring $\mathcal{Z}$ is quite complicated, and the realization $\fr{h}_{\mathcal{Z}}$ has rank $2 \cdot (\# S)$ which is about twice as large as desirable.
        In practice, it might be more feasible to create an integral model using more explicit knowledge of the given realization.
        For example, in type $H_3$, the integral models we use are much more suitable (see Lemma~\ref{lem:universal_H3_realization}).

        Let $\fr{h}_{\mathcal{Z}}$ be an integral model for $\fr{h}$. Consider $\mathcal{Q} = \Frac (\mathcal{Z})$ which is a characteristic 0 field.
        We can then define the base change by
        \[\fr{h}_{\mathcal{Q}}^* \coloneq \mathcal{Q} \otimes_{\mathcal{Z}} \fr{h}_{\mathcal{Z}}^*.\]
        This defines a realization with $\fr{h}_{\mathcal{Q}} = (\fr{h}_{\mathcal{Q}}^*)^*$.
        There is an obvious morphism of realizations 
        \[(\iota, g) : \fr{h}_{\mathcal{Z}} \hookrightarrow \fr{h}_{\mathcal{Q}},\]
        where $g (v) = 1 \otimes v$ and $\iota : \mathcal{Z} \hookrightarrow \mathcal{Q}$ is the embedding. 
        By Proposition~\ref{prop:BS_functor} there are functors
        \[\begin{tikzcd}
        {\BS (\fr{h}, W)} &  & {\BS (\fr{h}_{\mathcal{Z}}, W)} \arrow[ll, "\BS_f"'] \arrow[rr, "\BS_g"] &  & {\BS (\fr{h}_{\mathcal{Q}}, W)},
        \end{tikzcd}\]
        where $\BS_g$ is faithful by Corollary~\ref{cor:BS_inj_or_surj}.

        \begin{proposition}\label{prop:rational_reduction}
                With the same setup as above, let $\varphi : B_{\ux}^{\fr{h}} \to B_{\uy}^{\fr{h}}$ be a morphism of Abe--Bott--Samelson bimodules.
                Assume that $\varphi = \BS_f (\psi)$ for some $\psi : B_{\ux}^{\fr{h}_{\mathcal{Z}}} \to B_{\uy}^{\fr{h}_{\mathcal{Z}}}$. 
                If $\BS_g (\psi) = 0$, then $\varphi = 0$.
        \end{proposition}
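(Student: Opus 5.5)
Since $\BS_g$ is faithful by Corollary~\ref{cor:BS_inj_or_surj}, $\BS_g(\psi)=0$ should force $\psi=0$, and then $\varphi=\BS_f(\psi)=\BS_f(0)=0$ because $\BS_f$ is additive. The real content is to justify faithfulness of $\BS_g$ from injectivity of $g : R_{\fr{h}_{\mathcal{Z}}} \to R_{\fr{h}_{\mathcal{Q}}}$.

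First I will check that $g$ induces an injection on symmetric algebras. The module $\fr{h}_{\mathcal{Z}}^*$ is free over $\mathcal{Z}$; in the construction of Proposition~\ref{prop:integral_models} it has basis $\{\alpha_s, e_t\}$, and in general a realization is free by definition. So $R_{\fr{h}_{\mathcal{Z}}} \cong \mathcal{Z}[x_1,\ldots,x_m]$ and $R_{\fr{h}_{\mathcal{Q}}} \cong \mathcal{Q}[x_1,\ldots,x_m]$, with $g$ the coefficientwise inclusion induced by $\iota : \mathcal{Z} \hookrightarrow \mathcal{Q}$. This map is injective because $\mathcal{Z}$ is a domain. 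Corollary~\ref{cor:BS_inj_or_surj}(1) then gives that $\BS_g$ is faithful.

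To make the argument self-contained rather than merely cite the corollary, I will also spell out its mechanism. By Theorem~\ref{thm:dll_basis}, write $\psi = \sum c_{\ue,\uf}^w \dLL_{\ue,\uf}^w$ with $c_{\ue,\uf}^w \in R_{\fr{h}_{\mathcal{Z}}}$. By Proposition~\ref{prop:behavior_of_BS_on_generators}(3), together with the fact that $\BS_g$ is $g$-semilinear on left multiplication by polynomials (immediate from $\BS_g(\varphi) = \iota_{\uy}\circ(1\otimes\varphi)\circ\iota_{\ux}^{-1}$), we get
\[
\BS_g(\psi) = \sum g(c_{\ue,\uf}^w)\, \dLL_{\ue,\uf}^w .
\]
Here the double leaves are those for $\fr{h}_{\mathcal{Q}}$, defined with the same $\LL$-datum. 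Since these double leaves form a basis over $R_{\fr{h}_{\mathcal{Q}}}$ (again Theorem~\ref{thm:dll_basis}, as $\fr{h}_{\mathcal{Q}}$ is a balanced realization), $\BS_g(\psi)=0$ forces every $g(c_{\ue,\uf}^w)=0$. Injectivity of $g$ then gives $c_{\ue,\uf}^w = 0$, hence $\psi=0$.

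Finally, $\varphi = \BS_f(\psi) = 0$. An equally direct route is to apply the same semilinearity to $f$: we get $\varphi = \sum f(c^w_{\ue,\uf})\,\dLL^w_{\ue,\uf}$, which vanishes since all $c^w_{\ue,\uf}$ vanish.

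The only step needing care is that $\fr{h}_{\mathcal{Q}}$ is a genuine balanced realization. Demazure surjectivity, the $W$-action, and the vanishing conditions \eqref{eq:qbinom_vanishing} and \eqref{eq:balanced} all pass to the base change $\mathcal{Q}\otimes_{\mathcal{Z}}-$ because they are polynomial identities in the Cartan entries. This is what licenses Theorem~\ref{thm:dll_basis} over $\mathcal{Q}$, and I expect it to be routine.
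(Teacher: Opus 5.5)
Your proposal is correct and follows the paper's proof. The paper argues the same way: faithfulness of $\BS_g$ (Corollary~\ref{cor:BS_inj_or_surj}) gives $\psi = 0$, and hence $\varphi = \BS_f(\psi) = 0$. Your additional steps — checking that $g$ is injective, expanding $\psi$ in the double-leaf basis, and confirming that $\fr{h}_{\mathcal{Q}}$ is a balanced realization — spell out what the paper simply cites or asserts.
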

        \begin{proof}
                If $\BS_g (\psi) = 0$, then $\psi = 0$ since $\BS_g$ is faithful by Corollary~\ref{cor:BS_inj_or_surj}.
                As a result, $\varphi = \BS_f (\psi) = 0$ as well.
        \end{proof}

        Proposition~\ref{prop:rational_reduction} allows us to sometimes translate an equality-of-morphisms problem in $\BS (\fr{h}, W)$ to an equality-of-morphisms problem in $\BS (\fr{h}_{\mathcal{Q}}, W)$.
        The latter can be approached using interpolation.

       \section{The \texorpdfstring{$3$}{3}-color Relation in Type \texorpdfstring{$H_3$}{H3} }

       \subsection{The rational case}

        Let $W = H_3$.
        We write $S = \{ s,t,u\} $ with $m_{st} = 5$, $m_{tu} =3$ and $m_{su} = 2$. 
        Let $\fr{h}_{\Q (\phi)}$ be the golden realization over $\Q (\phi)$. Consider the reduced expressions for the longest element $w_0$ in $W$,
        \[\ux = stsutsutustsutu \qquad\text{and}\qquad \uy = utustsutustusts.\]
        To solve the missing $3$-color relation, we will compute the double-leaf coefficients for the morphism in $\BS (\fr{h}_{\Q (\phi)}, W)$ corresponding
        to the following difference of Elias--Williamson graphs,
        \begin{equation}\label{eq:h3_sides}
          \varphi = 
  \tikzsetnextfilename{h3_sides}
  \tikzstyle{every picture}=[tikzfig]
  \input{./figures/h3_sides.tikz}
.      
        \end{equation}
        To see why this suffices, observe that $\ux$ and $\uy$ are both reduced expressions. 
        As a result, any double leaf between $B_{\ux}$ and $B_{\uy}$ whose subexpressions are not the all-one subexpressions factors through a reduced expression for some $w < w_0$.
        Given a rex path $p$ from $\ux$ to $\uy$, the corresponding morphism $\beta_{p} : B_{\ux} \to B_{\uy}$ has $\Lambda (\beta_p)_{\mathbf{1}, \mathbf{1}} = 1$ by the 
        defining criterion for $\beta_p$ (see \S\ref{ss:gen_mors}).
        As a result, $\Lambda (\varphi)_{\mathbf{1}, \mathbf{1}} = 0$. 
        This shows that the only double leaves appearing with non-zero coefficients in the expansion of $\varphi$ factor through some element $w < w_0$.

        For the remainder of the section, $\varphi$ will always refer the morphism in \eqref{eq:h3_sides}.

        Using our fixed enumeration of the double leaves from $B_{\ux}$ to $B_{\uy}$ of even degree $\leq 0$, we can then write
        \[\varphi = \sum_{i=1}^{189027} c_i \dLL_i\]
        for some $c_i \in R$. The number of double leaves of a given degree is given in the table below.
        \begin{center}
                 \begin{tabular}{c | c }
                                 \textbf{Degree of $\dLL_i$} & \textbf{Number of double leaves} \\
                                \hline
                                $0$ & 168,263  \\
                                $-2$ & 19,732  \\
                                $-4$ & 1,031  \\
                                $-6$ & 1  \\
                                \hline
                                \textbf{Total} & 189,027
                        \end{tabular}
        \end{center}
        
        Using Algorithm~\ref{algo:coeff_algo_interpolate}, these coefficients have been computed explicitly. 
        The entire calculation took about 30 minutes on the author's personal computer.
        We verified the relation using Proposition~\ref{prop:eq_testing_interpolate}, which took about 10 minutes.
        More detailed specifications of these calculations can be found in the codebase accompanying the paper.
        
        The numbers of nonzero terms in each degree are given in the table below,
        \begin{center}
                \begin{tabular}{c | c }
                         \textbf{Degree of $\dLL_i$} & \textbf{Nonzero coefficients} \\
                         \hline
                         $0$ & 97 \\
                         $-2$ & 20 \\
                         $-4$ & 7 \\
                         $-6$ & 1 \\ 
                         \hline
                         \textbf{Total } & 125
                \end{tabular}
        \end{center}

        We will use an integer mask to write our subexpressions as integers. In particular, for a subexpressions $\ue = (e_1, \ldots, e_k)$, we will write
        \[\operatorname{mask} (\ue) = \sum_{i=1}^k e_i 2^{i-1}.\]
        For the remainder of the section, we will abuse notation and commonly identify a subexpression with its mask. There is no mathematical content here, it is just a convenient way to compact the notation. 
        Under this notational convention, the table of coefficients is given in Figure~\ref{fig:coeffs}.

\begin{figure}
\begin{minipage}{0.21\textwidth}
    \centering
\begin{tabular}{l l l}
 $c_i$ & $\ue^i$ & $\uf^i$ \\ 
 \hline
$-2$ & 20791 & 2234 \\
 $-2$ & 20791 & 3450 \\
 $-2$ & 21455 & 2234 \\
 $-2$ & 16439 & 2234 \\
 $-2$ & 21455 & 3450 \\
 $-2$ & 16439 & 3450 \\
 $-2$ & 20791 & 2749 \\
 $-2$ & 17103 & 2234 \\
 $-2$ & 17103 & 3450 \\
 $-2$ & 20791 & 7549 \\
 $-2$ & 21455 & 2749 \\
 $-2$ & 16439 & 2749 \\
 $-2$ & 21455 & 7549 \\
 $-2$ & 16439 & 7549 \\
 $-2$ & 17103 & 2749 \\
 $-2$ & 17103 & 7549 \\
 $2$ & 21177 & 2491 \\
 $2$ & 697 & 2491 \\
 $2$ & 21177 & 187 \\
 $2$ & 23929 & 2491 \\
 $2$ & 697 & 187 \\
 $2$ & 3449 & 2491 \\
 $2$ & 23929 & 187 \\
 $2$ & 3449 & 187 \\
 $2$ & 21177 & 11215 \\
 $2$ & 21177 & 1743 \\
 $2$ & 697 & 11215 \\
 $2$ & 697 & 1743 \\
 $2$ & 23929 & 11215 \\
 $2$ & 23929 & 1743 \\
 $2$ & 3449 & 11215 \\
 $2$ & 3449 & 1743 \\
 $-2$ & 21177 & 5769 \\
 $-2$ & 21177 & 27529 \\
 $-2$ & 697 & 5769 \\
 $-2$ & 23929 & 5769 \\
 $-2$ & 697 & 27529 \\
 $-2$ & 23929 & 27529 \\
$-2$ & 3449 & 5769 \\
 $-2$ & 3449 & 27529 \\
  $2$ & 20855 & 2490 \\
 $2$ & 20535 & 2490 
\end{tabular}
\end{minipage}\hfill
\begin{minipage}[t]{0.28\textwidth}
    \centering
    \begin{tabular}{l l l}
         $c_i$ & $\ue^i$ & $\uf^i$ \\ 
 \hline
  $2$ & 375 & 2490  \\
 $2$ & 20855 & 186 \\
 $-2$ & 4407 & 186 \\
 $-2$ & 4407 & 1402 \\
 $2$ & 55 & 2490 \\
 $2$ & 20535 & 186 \\
 $2$ & 375 & 186 \\
 $-2$ & 5071 & 186 \\
 $-2$ & 5071 & 1402 \\
 $-2$ & 55 & 1402 \\
 $2$ & 20855 & 11214 \\
 $2$ & 20855 & 1742 \\
 $-2$ & 4407 & 701 \\
 $2$ & 20535 & 11214 \\
 $2$ & 20535 & 1742 \\
 $2$ & 375 & 11214 \\
 $2$ & 375 & 1742 \\
 $-2$ & 719 & 186 \\
 $-2$ & 719 & 1402 \\
 $-2$ & 4407 & 5501 \\
 $-2$ & 5071 & 701 \\
 $2$ & 55 & 11214 \\
 $2$ & 55 & 1742 \\
 $-2$ & 55 & 701 \\
 $-2$ & 5071 & 5501 \\
 $-2$ & 55 & 5501 \\
 $-2$ & 719 & 701 \\
 $-2$ & 719 & 5501 \\
 $-2 \phi$ & 20857 & 2953 \\
 $-2 \phi$ & 20857 & 23433 \\
 $-2 \phi$ & 23417 & 2953 \\
 $-2 \phi - 2$ & 23417 & 23433 \\
 $\left(2\right)\alpha_{\text{u}}$ & 20857 & 24585 \\
 $\left(2\right)\alpha_{\text{u}}$ & 23561 & 2953 \\
 $-2$ & 20857 & 24905 \\
 $\left(2 \phi\right)\alpha_{\text{u}}$ & 23417 & 24585 \\
 $\left(2 \phi\right)\alpha_{\text{u}}$ & 23561 & 23433 \\
 $-2$ & 23617 & 2953 \\
 $-2 \phi$ & 23417 & 24905 \\
 $-2 \phi$ & 23617 & 23433 \\
 $\left(-2 \phi\right)\alpha_{\text{u}}^{2}$ & 23561 & 24585 \\
 $\left(2 \phi\right)\alpha_{\text{u}}$ & 23561 & 24905 
\end{tabular}
\end{minipage}\hfill
\begin{minipage}[t]{0.31\textwidth}
    \centering
    \begin{tabular}{l l l}
         $c_i$ & $\ue^i$ & $\uf^i$ \\ 
 \hline
  $\left(2 \phi\right)\alpha_{\text{u}}$ & 23617 & 24585 \\
$-2 \phi$ & 23617 & 24905 \\
$-2$ & 20849 & 26633 \\
 $-2$ & 23817 & 3017 \\
 $-2 \phi$ & 23409 & 26633 \\
 $-2 \phi$ & 23817 & 23497 \\
 $\left(2 \phi\right)\alpha_{\text{u}}$ & 23817 & 26633 \\
 $-2 \phi$ & 23817 & 26953 \\
 $-2 \phi$ & 23873 & 26633 \\
 $\left(2\right)\alpha_{\text{u}}$ & 1913 & 15241 \\
 $-2$ & 6001 & 15241 \\
 $\left(-2 \phi\right)\alpha_{\text{u}}^{2}$ & 1913 & 9 \\
 $\left(-2 \phi\right)\alpha_{\text{u}}^{2}$ & 9 & 15241 \\
 $\left(2 \phi\right)\alpha_{\text{u}}$ & 1913 & 329 \\
 $\left(2 \phi\right)\alpha_{\text{u}}$ & 65 & 15241 \\
 $\left(2 \phi\right)\alpha_{\text{u}}$ & 4361 & 15241 \\
 $-2 \phi$ & 4417 & 15241 \\
 $\left(2 \phi\right)\alpha_{\text{u}}$ & 6001 & 9 \\
 $-2 \phi$ & 6001 & 329 \\
 $\left(2 \phi + 2\right)\alpha_{\text{u}}^{3}$ & 9 & 9 \\
 $\left(-2 \phi - 2\right)\alpha_{\text{u}}^{2}$ & 9 & 329 \\
 $\left(-2 \phi - 2\right)\alpha_{\text{u}}^{2}$ & 65 & 9 \\
 $\left(-2 \phi - 2\right)\alpha_{\text{u}}^{2}$ & 4361 & 9 \\
 $\left(2 \phi + 2\right)\alpha_{\text{u}}$ & 65 & 329 \\
 $\left(2 \phi + 2\right)\alpha_{\text{u}}$ & 4361 & 329 \\
 $\left(2 \phi + 2\right)\alpha_{\text{u}}$ & 4417 & 9 \\
 $-2 \phi - 2$ & 4417 & 329 \\
 $-2$ & 18297 & 15305 \\
 $\left(2 \phi\right)\alpha_{\text{u}}$ & 18297 & 2057 \\
 $-2 \phi$ & 18297 & 2377 \\
 $\left(2 \phi\right)\alpha_{\text{u}}$ & 16393 & 15305 \\
 $-2 \phi$ & 16449 & 15305 \\
 $-2 \phi$ & 22385 & 2057 \\
 $-2 \phi$ & 20745 & 15305 \\
 $\left(-2 \phi - 2\right)\alpha_{\text{u}}^{2}$ & 16393 & 2057 \\
 $\left(2 \phi + 2\right)\alpha_{\text{u}}$ & 16393 & 2377 \\
 $\left(2 \phi + 2\right)\alpha_{\text{u}}$ & 16449 & 2057 \\
 $\left(2 \phi + 2\right)\alpha_{\text{u}}$ & 20745 & 2057 \\
 $-2 \phi - 2$ & 16449 & 2377 \\
 $-2 \phi - 2$ & 20745 & 2377 \\
 $-2 \phi - 2$ & 20801 & 2057 \\
 & &
\end{tabular}\end{minipage} \caption{The nonzero coefficients for the double-leaf expansion of $\varphi$, with the $\LL$-datum of \S\ref{ss:ll_datum}.} \label{fig:coeffs} \end{figure}

The size of a double-leaf expansion depends both on the chosen rex morphisms and on the light-leaf datum. 
Keeping the datum of \S\ref{ss:ll_datum} fixed, we calculated the $H_3$ relation for the $286$ reduced expressions of $w_0$ and, for each, the chosen pair of paths with target the reversed expression.
Among the $286$ computations, the smallest number of nonzero double-leaf terms was $125$, and this occurred for $16$ possible choices of source expression.
The relation displayed above is one of these choices. Here and below, ``minimal'' will only refer to this fixed family of computations. 
It is possible that a different choice of light-leaf datum could produce fewer terms.

The chosen relation has a useful rotational symmetry. 
In more detail, using cups and caps, rotation through $180^{\circ}$ defines a contravariant functor
\[\rho : \BS (\fr{h}, W)^{\op} \to \BS (\fr{h}, W)\]
that takes $B_{\uw}$ to $B_{\overline{\uw}}$ where $\overline{\uw}$ is the reversed expression. 
In our example $\uy = \overline{\ux}$, and rotation interchanges the left and right rex morphisms. Hence  
\[\rho (\varphi) = -\varphi.\]
Many of the minimal $H_3$ relations are rotationally anti-invariant; however, the author has no real understanding of 
why this is the case. It remains an interesting question to understand the $H_3$ relation conceptually rather than computationally.

Another interesting observation about the minimal $H_3$ relation is that every coefficient in Figure~\ref{fig:coeffs} belongs to $2 \Z [\phi] [\alpha_s, \alpha_t, \alpha_u]$. 
As the next subsection shows, this means that the integral lift $\varphi_{\mathcal{Z}}$ belongs to 
\[2\Hom_{\BS (\fr{h}_{\mathcal{Z}}, W )} (B_{\ux}, B_{\uy})\]
Again, the author has no explanation for this. We will not investigate this oddity (or even-ty?) in the present work, but we will leave it as an interesting question for the future.

\begin{question} Why is the integral lift of $\varphi$ divisible by $2$? Similarly, what importance does $\varphi/2$ have in the theory of Soergel bimodules? \end{question}

\subsection{The case of a general realization}

        The approach of \S\ref{ss:gen_realizations} can be used to deduce the Zamolodchikov relation in type $H_3$ due to the existence of a \emph{universal integral model}.

        \begin{lemma}\label{lem:universal_H3_realization}
                Let $W = H_3$. There exists a balanced realization $\fr{h}_{\mathcal{Z}}$ over $\mathcal{Z} = \Z[\tau]/(\tau^2-\tau-1)$ such that $\fr{h}_{\mathcal{Z}}$ is an integral model for any balanced realization $\fr{h}$ of $W$.
                Moreover, $\Q (\phi) \cong \Frac (\mathcal{Z})$, and as a result, the base change of $\fr{h}_{\mathcal{Z}}$ to $\Q (\phi)$ is isomorphic to the golden realization $\fr{h}_{\Q (\phi)}$. 
        \end{lemma}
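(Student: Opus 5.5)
The plan is to write down an explicit realization over $\mathcal{Z} = \Z[\tau]/(\tau^2-\tau-1)$ that is free of rank $6$ with basis $\{\alpha_s,\alpha_t,\alpha_u,e_s,e_t,e_u\}$. This mimics the proof of Proposition~\ref{prop:integral_models}, but now the pairings $\langle \alpha_r^\vee, e_{r'}\rangle$ are fixed universal integers rather than free variables. Concretely, the coroots are defined by
\[
\langle \alpha_r^{\vee}, \alpha_{r'}\rangle = \text{golden Cartan matrix with } -\phi \text{ replaced by } -\tau,
\qquad
\langle \alpha_r^{\vee}, e_{r'}\rangle = \delta_{r r'}.
\]
The realization axioms are then checked as in Proposition~\ref{prop:integral_models}. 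Demazure surjectivity is immediate from the choice of basis. The $W$-action and the conditions \eqref{eq:qbinom_vanishing} and \eqref{eq:balanced} come from Theorem~\ref{thm:abe_conditions} together with \cite[Proposition 1.4]{Hazi2}. The key inputs are that $\Psi_5'(z) = z^2 - z - 1$ vanishes at $\tau$, and that $\Psi_3'(z) = z - 1$ vanishes at $1 = -a_{tu}$.

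Next I show that $\fr{h}_{\mathcal{Z}}$ maps to an arbitrary balanced realization $\fr{h}$ of $H_3$ over $\k$.

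First I build $\sigma:\mathcal{Z}\to\k$. By Theorem~\ref{thm:abe_conditions}:
\begin{itemize}
\item since $m_{su}=2$, we have $a_{su}=a_{us}=0$;
\item since $m_{tu}=3$ is odd, we have $a_{tu}=a_{ut}$ and $\Psi_3'(-a_{tu})=0$, so $a_{tu}=a_{ut}=-1$;
\item since $m_{st}=5$ is odd, we have $a_{st}=a_{ts}$ and $c\coloneq -a_{st}$ satisfies $c^2-c-1=0$.
\end{itemize}
Setting $\sigma(\tau)=c$ therefore gives a ring map $\mathcal{Z}\to\k$ matching the Cartan matrices.

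Second I build $f:\fr{h}^*_{\mathcal{Z}}\to\fr{h}^*$. On roots, $f(\alpha_r)=\alpha_r$. For the $e_{r'}$ I need elements $\delta'_{r'}\in\fr{h}^*$ with $\langle\alpha_r^\vee,\delta'_{r'}\rangle=\delta_{rr'}$, i.e.\ a dual family to the coroots. This is the main obstacle. A general balanced realization need not have linearly independent coroots in a way that lets us choose such a family: Demazure surjectivity only gives $\delta_r$ with $\langle\alpha_r^\vee,\delta_r\rangle=1$, and says nothing about the off-diagonal pairings. I would resolve this by correcting the $\delta_r$ with roots. Put $\delta'_r=\delta_r+\sum_{q} m_{rq}\alpha_q$ and solve
\[
\langle \alpha_p^\vee, \delta'_r\rangle = \delta_{pr} \quad\text{for all } p,
\]
which is a linear system whose coefficient matrix is the Cartan matrix.

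For this to work the Cartan matrix must be invertible over $\k$. Its determinant is a polynomial in $c$; reduce it modulo $c^2=c+1$. For the golden values, symmetrizing gives the positive-definite $H_3$ Gram matrix, with determinant $2(3-\phi)-2\phi^2$ up to sign. I expect the reduction to give something like $\pm 2(2-\phi)$ times a unit. That leaves characteristic-$2$ or similar degenerations as a possible failure point.

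If invertibility fails over some $\k$, I would instead change the model. Let the $e_r$ pair with the coroots through a matrix $D$ chosen so that $\begin{bmatrix}A & D\end{bmatrix}$ is unimodular. The simplest choice is $D$ equal to the identity, combined with the image of $e_r$ being $\delta_r$ plus corrections that only need to hit the off-diagonal entries $\langle\alpha_p^\vee,\delta_r\rangle$. That amounts to absorbing a unitriangular change of the $e$-basis, which is always possible since we are free to modify the target elements within $\fr{h}^*$. Concretely, the final choice would be $\langle\alpha_p^\vee,e_r\rangle$ equal to these off-diagonal values. These vary with $\fr{h}$, though, so universality is exactly what must be checked here. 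I expect to spend most of the effort pinning down this step.

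The last claim is the statement about the base change, and it is routine. Since $\tau^2-\tau-1$ is irreducible, $\Frac(\mathcal{Z})\cong\Q(\phi)$ via $\tau\mapsto\phi$. Base-changing $\fr{h}_{\mathcal{Z}}$ gives the golden Cartan matrix over $\Q(\phi)$. The golden Cartan matrix is nondegenerate over $\Q(\phi)$, so one can change basis to split off the span of the roots. The resulting realization agrees with the golden realization of Example~\ref{ex:golden_realization} up to the extra directions, and these get identified after the split.
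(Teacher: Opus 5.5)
There is a genuine gap, and it is at the step you flagged: your rank-$6$ model with $\langle\alpha_p^\vee,e_r\rangle=\delta_{pr}$ is not an integral model for every balanced realization. A morphism out of it must send the $e_r$ to a family in $\fr{h}^*$ dual to the coroots. Suppose $\fr{h}^*$ is spanned by the simple roots. Then the attainable pairing vectors $(\langle\alpha_p^\vee,v\rangle)_p$ are exactly the column space of the Cartan matrix. Its determinant is $2(2-c)$, and $2-c$ is a unit because $(2-c)(1+c)=1$. So a dual family exists only if $2\in\k^\times$. Concretely, the rank-$3$ realization on the simple roots over $\mathcal{Z}$ itself (or its reduction to $\F_4$) is balanced and receives no morphism from your model.

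Your two fallbacks do not rescue this. Letting $D$ depend on $\fr{h}$ abandons universality. A fixed $D$ that works for the rank-$3$ realization over $\mathcal{Z}$ must have its columns in the image of the Cartan matrix, and then the $e_r$ only contribute a $W$-trivial summand.

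In every version the base-change claim also fails. Your model becomes free of rank $6$ over $\Q(\phi)$: the golden realization plus a rank-$3$ trivial summand. It therefore cannot be isomorphic to the rank-$3$ golden realization, and ``identified after the split'' has no meaning here.

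The missing observation is that no auxiliary vectors are needed. Take $\fr{h}_{\mathcal{Z}}^*=\mathcal{Z}\alpha_s\oplus\mathcal{Z}\alpha_t\oplus\mathcal{Z}\alpha_u$ with your Cartan matrix. Demazure surjectivity already holds:
\begin{itemize}
\item the $\alpha_r$ are basis vectors;
\item every coroot takes a unit value on some simple root, since $\langle\alpha_s^\vee,\alpha_t\rangle=-\tau$ with $\tau(\tau-1)=1$, and $\langle\alpha_t^\vee,\alpha_u\rangle=\langle\alpha_u^\vee,\alpha_t\rangle=-1$.
\end{itemize}
Then $\sigma(\tau)=c$, $f(\alpha_r)=\alpha_r$ is a morphism of realizations into every balanced $\fr{h}$. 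The compatibility condition only has to be checked on the basis $\{\alpha_r\}$, where it is exactly the equality of Cartan matrices you already derived; invertibility over $\k$ never enters. This is the paper's argument. The base-change statement is then immediate: the golden Cartan matrix has determinant $2(2-\phi)\neq 0$, so the golden simple roots form a basis of $\fr{h}_{\Q(\phi)}^*$.
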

        \begin{proof}
               Consider the Cartan matrix of an arbitrary balanced realization $\fr{h}$ over an integral domain $\k$,
                \[\begin{bmatrix}
                        a_{ss} & a_{st} & a_{su} \\
                        a_{ts} & a_{tt} & a_{tu} \\
                        a_{us} & a_{ut} & a_{uu} 
                \end{bmatrix}.\]
                We must have that $a_{ss} = a_{tt} = a_{uu} = 2$ by the definition of a realization.
                For the entries $a_{su}$ and $a_{us}$, the vanishing of quantum binomial coefficients forces
                \[-a_{su} = [2]_x (-a_{su}, -a_{us}) = 0 \qquad\text{and}\qquad -a_{us} = [2]_y (-a_{su}, -a_{us}) = 0.\]
                As a result, $a_{su} = a_{us} = 0$.
                Next, for the entries $a_{tu}$ and $a_{ut}$, balancedness forces
                \[ -a_{tu} = [2]_x (-a_{tu}, -a_{ut}) = 1 \qquad\text{and}\qquad -a_{ut} = [2]_y (-a_{tu}, -a_{ut}) = 1.\]
                As a result, $a_{tu} = a_{ut} = -1$.  Finally, for the entries $a_{st}$ and $a_{ts}$, balancedness forces
                \[[2]_x (-a_{st}, -a_{ts}) = [3]_x (-a_{st}, a_{ts}) = [3]_y (-a_{st}, a_{ts}) = [2]_y (-a_{st}, -a_{ts}).\]
                Evaluating these quantum numbers gives $\theta \coloneq -a_{st} = -a_{ts}$ and $\theta^2 - \theta - 1 = 0$.
                The Cartan matrix then is of the form
                \begin{equation}\label{eq:universal_H3_realization}
                \begin{bmatrix} 2 & -\theta & 0 \\ - \theta & 2 & -1 \\ 0 & -1 & 2 \end{bmatrix}.
                \end{equation}

                Set $\mathcal{Z} = \Z [\tau]/(\tau^2 - \tau - 1)$. We can then define a realization
                \[\fr{h}_{\mathcal{Z}}^* \coloneq \mathcal{Z} \alpha_s \oplus \mathcal{Z} \alpha_t \oplus \mathcal{Z} \alpha_u.\]
                We define $\alpha_r^{\vee} \in \fr{h}_{\mathcal{Z}} \coloneq (\fr{h}_{\mathcal{Z}}^*)^*$ for $r=s,t,u$ by the Cartan matrix \eqref{eq:universal_H3_realization} where $\theta = \tau$.
                It is a tad tedious, but not difficult to check that $\fr{h}_{\mathcal{Z}}$ is a balanced realization of $W$.

                We can then define a morphism of realizations $\fr{h}_{\mathcal{Z}} \to \fr{h}$ as follows. The ring homomorphism $\mathcal{Z} \to \k$ is defined by $\tau \mapsto \theta$.
                The linear map $\fr{h}_{\mathcal{Z}}^* \to \fr{h}^*$ is defined by $\alpha_r \mapsto \alpha_r$ for $r\in S$. The Cartan matrix \eqref{eq:universal_H3_realization} 
                guarantees that this data defines a morphism of realizations. 
                
                Finally, the map $\mathcal{Z} = \Z [\tau]/(\tau^2 - \tau - 1) \to \Q (\phi)$ defined by $\tau \mapsto \phi$ induces an isomorphism $\Frac (\mathcal{Z}) \cong \Q (\phi)$.
                Over $\Q (\phi)$ the Cartan matrix has determinant $2 (2 - \phi) \neq 0$. Thus the simple roots in the golden realization form a basis. Sending our three basis roots to those roots gives an isomorphism between the base change of $\fr{h}_{\mathcal{Z}}$ to $\Q (\phi)$ and the golden realization.
        \end{proof}

        Lemma~\ref{lem:universal_H3_realization} and Proposition~\ref{prop:rational_reduction} give a convenient reduction to the golden realization in proving the Zamolodchikov relation in type $H_3$.
        In particular, we can first solve for the double-leaves coefficients with the golden realization.
        We then checked that all the coefficients $c_i$ in fact elements of $R_{\fr{h}_{\mathcal{Z}}}$ under the inclusion $R_{\fr{h}_{\mathcal{Z}}} \hookrightarrow R_{\fr{h}_{\Q (\phi)}}$.
        As a result, the relation lifts to the integral model.
        Finally, the image from the integral model to an arbitrary balanced realization induces the relation in full generality.

        \begin{remark}
                The displayed integral coefficients all belong to $2 R_{\fr{h}_{\mathcal{Z}}}$. 
                As a result, one can observe that if $\fr{h}$ is a realization of $H_3$ over $\F_4 = \F_2 [\tau] / (\tau^2 - \tau - 1)$, then the ``naive'' $3$-color relation holds, i.e., $\varphi = 0$. 
        \end{remark}

        \subsection{Thick simplifications}

        Even with the minimal 3-color relation, there are still too many terms to perform most computations by hand. The goal of this subsection is to simplify
        the $3$-color relation to something that is genuinely usable. There are two main reasons for this.
        \begin{enumerate}
                \item Part of the reason diagrammatics is so powerful is in its ability to perform complicated computations by hand.
                \item The author still has no conceptual explanation for the lower terms appearing in the $3$-color relation. 
                Likewise, one would ideally hope that there is a way of verifying the $3$-color relation without computer assistance. 
                To fulfil this dream, there needs to be a cleaner way of writing the relation in the first place.
        \end{enumerate}
        Our approach to simplification is to use \emph{thick Soergel calculus}. A review of the graphical calculus in this section can be found in Appendix~\ref{apdx:thick_calculus}. 
        There are only two thick strands that will appear in our simplification corresponding to $I = \{s,t\}$ and $J = \{t,u\}$.
        We will color the $I$-thick strand purple and the $J$-thick strand teal. In order to use thick Soergel calculus, we need to check that Assumption~\ref{ass:gen_bs} holds.
        This assumption can fail for a general realization; however, we do not actually need the full assumption. Instead, we only need to check it for $I$ and $J$ as above.

        \begin{lemma}\label{lem:gen_bs_ass_satisfied_for_H3}
                With the same notation as above,
                \begin{enumerate}
                        \item $W_I \to \Aut (\fr{h}^*)$ and $W_J \to \Aut (\fr{h}^*)$ are injective,
                        \item $\partial_I ((3-2\theta) \alpha_t \alpha_u^4) = 1$ and $\partial_J ((\theta - 2) \alpha_s^2 \alpha_t) = 1$.
                \end{enumerate}
        \end{lemma}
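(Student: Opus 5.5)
The two parts are independent, and I will do (2) first because it is a finite computation. Throughout, $\fr{h}$ is an arbitrary balanced realization of $H_3$ over $\k$. By the proof of Lemma~\ref{lem:universal_H3_realization}, its Cartan matrix has the form \eqref{eq:universal_H3_realization} with $\theta^2=\theta+1$. I read $\partial_I$ (resp. $\partial_J$) as the Demazure operator of the longest element $w_I$ (length $5$) of $W_I$ (resp. $w_J$, length $3$). It is the composite $\partial_{r_1}\cdots\partial_{r_k}$ along any reduced expression; I take its independence of the chosen rex as standard. The degrees match: $\alpha_t\alpha_u^4$ has degree $5$ and $\alpha_s^2\alpha_t$ has degree $3$, so both results are scalars.

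For the computation I use two rules. On linear forms, $\partial_r(v)=\langle \alpha_r^\vee,v\rangle$, and the twisted Leibniz rule is $\partial_r(fg)=\partial_r(f)g+r(f)\partial_r(g)$. Since $a_{su}=a_{us}=0$, the form $\alpha_u$ is $s$-invariant and $\alpha_s$ is $u$-invariant.
\begin{itemize}
\item For $\partial_I((3-2\theta)\alpha_t\alpha_u^4)$, I apply the rex $sts ts$ right to left. The first step, $\partial_s$, sees only the factor $\alpha_t$ and gives $-\theta\alpha_u^4$. The remaining $\partial_t,\partial_s,\partial_t,\partial_s$ act on polynomials in $\alpha_u$, $t(\alpha_u)=\alpha_u+\alpha_t$ and their $s$-translates. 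I will track these via the reflection formulas $t(\alpha_s)=\alpha_s+\theta\alpha_t$ and $s(\alpha_t)=\alpha_t+\theta\alpha_s$.
\item For $\partial_J((\theta-2)\alpha_s^2\alpha_t)$, I use the rex $utu$. The first $\partial_u$ sees only $\alpha_t$ (since $\alpha_s$ is $u$-invariant) and gives $\alpha_s^2\cdot\langle\alpha_u^\vee,\alpha_t\rangle=-\alpha_s^2$. Two more steps follow.
\end{itemize}
In each case I reduce the resulting scalar using only $\theta^2=\theta+1$. I expect to get $(3-2\theta)^{-1}=-(1+2\theta)$, using $(3-2\theta)(1+2\theta)=-1$, and $(\theta-2)^{-1}$ respectively. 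Because only the Cartan matrix \eqref{eq:universal_H3_realization} enters, this is an identity over $\Z[\tau]/(\tau^2-\tau-1)$ that specializes to every $\fr{h}$. If the chosen rex gives messy intermediate terms, I will switch to the other rex for $w_I$ as a cross-check.

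For (1), $W_I\cong I_2(5)$ is dihedral of order $10$. Its normal subgroups are $1$, the rotation subgroup $\langle st\rangle$, and $W_I$ itself. So it suffices to show two things: $s$ acts nontrivially, and $st$ acts nontrivially. The first holds in every characteristic, since $s(\delta_s)=\delta_s-\alpha_s\ne\delta_s$ by Demazure surjectivity. For the second, suppose $st$ acts trivially. Then $s$ and $t$ act identically. Comparing $s(\alpha_t)$ with $t(\alpha_t)$, and $s(\delta_t)$ with $t(\delta_t)$, and pairing the resulting identities with $\alpha_s^\vee$ and $\alpha_t^\vee$, I get scalar equations in $\theta$ and $d=\langle\alpha_s^\vee,\delta_t\rangle$. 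These force $\theta=3$ and $5=0$. The same argument handles $W_J\cong S_3$, where the equations force $3=0$ or a similar contradiction to be excluded.

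The main obstacle is exactly these small-characteristic cases (characteristic $5$ for $I$, and possibly characteristic $3$ for $J$), where the naive pairing contradiction disappears. There I would first try to derive injectivity from (2) directly. The idea is that the longest-element Demazure operator of a non-faithfully-acting parabolic subgroup collapses: if $s$ and $t$ act identically, then $\partial_s\partial_t\partial_s$-type composites reduce, via $\partial_s^2=0$, to something vanishing in top degree. That would contradict $\partial_I(\cdot)=1$. If this fails, I would do a direct matrix computation of $st$ acting on the basis-free span of $\alpha_s,\alpha_t,\delta_s,\delta_t$, using the full Demazure data.
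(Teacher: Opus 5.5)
Your plan for (2) is the paper's: the paper records $\partial_I(\alpha_t\alpha_u^4)=-\theta^3$ and $\partial_J(\alpha_s^2\alpha_t)=-\theta^2$ and inverts these using $\theta^2=\theta+1$. Your predicted values $-(1+2\theta)=-\theta^3$ and $(\theta-2)^{-1}=-\theta^2$ agree, and the computations along $ststs$ and $utu$ do produce them.

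The gap is in (1), in exactly the cases you flag, and your main argument cannot be repaired there. It uses only the $\{s,t\}$-data $\alpha_s,\alpha_t,\alpha_s^\vee,\alpha_t^\vee,\delta_s,\delta_t$, and no argument of that kind can work when $5=0$ in $\k$. In that case $\theta=3$ and the $\{s,t\}$ Cartan matrix is singular. There are balanced realizations of $I_2(5)$ over $\F_5$ on which $s$ and $t$ act identically, e.g.\ $\alpha_s=\alpha_t$ and $\alpha_s^\vee=\alpha_t^\vee$, which is consistent because $2=-3$ there. On such a realization every pairing identity you write down holds. The same happens for $\{t,u\}$ when $3=0$.

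What is missing is input from the third generator. The paper uses two facts. First, $st(\alpha_u)-\alpha_u=\alpha_t+\theta\alpha_s$ pairs to $-1$ with $\alpha_u^\vee$. Second, $tu(\alpha_s)-\alpha_s=\theta\alpha_t$ pairs to $-\theta\in\k^\times$ with $\alpha_u^\vee$. So both rotations act nontrivially in every characteristic, and your normal-subgroup argument finishes (the paper phrases it via the trivial center of $D_{2p}$). In your own formulation this is a one-liner: $s(\alpha_u)=\alpha_u\neq\alpha_u+\alpha_t=t(\alpha_u)$ and $u(\alpha_s)=\alpha_s\neq\alpha_s+\theta\alpha_t=t(\alpha_s)$. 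Your separate check that $s$ acts nontrivially is unnecessary: once $st\notin\ker$, the kernel can be neither $\langle st\rangle$ nor $W_I$.

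Your first fallback is a genuinely different, characteristic-free route, and it does work. However, it needs the step you left implicit, which is what makes the composite collapse; it vanishes identically, not just in top degree.
\begin{itemize}
\item Suppose $s$ and $t$ act identically on $\fr{h}^*$, hence on $R$. Then $\alpha_s\partial_s(f)=f-s(f)=f-t(f)=\alpha_t\partial_t(f)$ for all $f\in R$.
\item Taking $f=\delta_s$ gives $\alpha_s=c\,\alpha_t$ with $c=\langle\alpha_t^\vee,\delta_s\rangle\in\k$.
\item Cancelling $\alpha_t$ in the domain $R$ then gives $\partial_t=c\,\partial_s$.
\item Hence $\partial_I=\partial_s\partial_t\partial_s\partial_t\partial_s=c^2\partial_s^5=0$, since $\partial_s^2=0$. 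This contradicts (2).
\end{itemize}
The same argument gives $\partial_J=0$ if $t$ and $u$ act identically. This route deduces (1) from (2), with the third root entering only through the test polynomials $\alpha_t\alpha_u^4$ and $\alpha_s^2\alpha_t$. The paper's argument is shorter and independent of (2).
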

        \begin{proof}
                Recall that there exists some $\theta \in \k^{\times}$ such that the Cartan matrix is of the form \eqref{eq:universal_H3_realization}.
                The following table describes the action of each simple reflection on the simple roots,
                \begin{center}
                        \begin{tabular}{c | c | c | c}
                            & $\alpha_s$ & $\alpha_t$ & $\alpha_u$ \\
                            \hline
                            $s$ & $-\alpha_s$ & $\alpha_t + \theta \alpha_s$ & $\alpha_u$ \\
                            \hline
                            $t$ & $\alpha_s + \theta \alpha_t$ & $-\alpha_t$ & $\alpha_t + \alpha_u$ \\
                            \hline
                            $u$ & $\alpha_s$ & $\alpha_t + \alpha_u$ & $-\alpha_u$    
                        \end{tabular}
                \end{center}
                We first claim that $W_I \to \Aut (\fr{h}^*)$ is injective. First, we can calculate using the above table that $st (\alpha_u) - \alpha_u = \alpha_t + \theta \alpha_s$.
                Note that $\alpha_t + \theta \alpha_s \neq 0$ since $\langle \alpha_u^{\vee}, \alpha_t + \theta \alpha_u \rangle = -1$, and hence $st (\alpha_u) \neq \alpha_u$. Therefore, $st$ acts non-trivially on $\fr{h}^*$.
                Moreover, since $st$ has prime order $3$, $\langle st \rangle$ has index $2$ in $W_I$, we must have that the kernel of $W_I \to \Aut (\fr{h}^*)$ is contained in the center of $W_I$.
                However, a dihedral group of the form $D_{2p}$ for an odd prime $p$ has trivial center, therefore, the kernel must be trivial.

                Next, we claim that $W_J \to \Aut (\fr{h}^*)$ is injective. 
                Again, we can calculate using the above table to get that $tu (\alpha_s) - \alpha_s = \theta \alpha_t$. Since $\langle \alpha_u^{\vee}, \theta \alpha_t \rangle = - \theta \neq 0$, we have that $tu (\alpha_s) \neq \alpha_s$.
                Again since $tu$ has odd prime order $3$, the same argument as for $W_I$ shows that $W_J \to \Aut (\fr{h}^*)$ is injective.

                It is easy to check from the definitions that
                \[\partial_I (\alpha_t \alpha_u^4) = -\theta^3 \qquad\text{and}\qquad \partial_J (\alpha_s^2 \alpha_t) = -\theta^2.\]
                Now, $\theta$ satisfies $\theta^2 - \theta - \theta = 0$, so we can check that $-\theta^{-3} = 3-2\theta$ and $-\theta^{-2} = \theta - 2$.
                The claim then readily follows.
        \end{proof}

        Lemma \ref{lem:gen_bs_ass_satisfied_for_H3} allows us to draw thick Elias--Williamson diagrams unambiguously for an arbitrary balanced realization and interpret them as morphisms of generalized Abe--Bott--Samelson bimodules.

        At this point, we could simply draw the simplified relation now and say that it can be verified by computer, but this would not tell you \emph{how} such a simplification can be found.
        As a result, we will describe the fairly natural steps that one can take to get from our 125-term relation to its simplification. 
        The majority of these steps can be computed, somewhat tediously, by hand. To make the notation simpler, we will label light leaves and double leaves by their integer masks rather than the subexpressions, as we did in Figure~\ref{fig:coeffs}. 

        Consider the matrix $C = (c_{\ue,\uf})$ of coefficients for $\varphi$, i.e., $c_{\ue, \uf}$ is the coefficient of $\dLL_{\ue, \uf}$ in the double leaf expansion of $\varphi$.
        Our general strategy is to look for blocks of $C$ that could correspond to a thick Elias--Williamson diagram.
        More precisely, we can look for subsets $E$ of the possible source subexpressions and $E'$ of the possible target subexpressions such that the $E \times E'$-block
        is a dense matrix. There is a natural way to find such blocks.
        For each $w \in W$, we can pick $E$ and $E'$ such that if $\ue \in E$ and $\uf \in E'$, then $\ux^{\ue} = w = \uy^{\uf}$.
        There are only $7$ possible $w$ that give nonzero blocks. We will start with the easiest block to study corresponding to $w = ststsu$.

        \begin{figure}[h]
                \begin{center}
                        \begin{tabular}{r|*{4}{c}}
                        $\ue \backslash \uf$ & $2234$ & $2749$ & $3450$ & $7549$ \\
                        \hline
                        $16439$ & $-2$ & $-2$ & $-2$ & $-2$ \\
                        $17103$ & $-2$ & $-2$ & $-2$ & $-2$ \\
                        $20791$ & $-2$ & $-2$ & $-2$ & $-2$ \\
                        $21455$ & $-2$ & $-2$ & $-2$ & $-2$ 
                        \end{tabular} 
                \end{center}
                \caption{The $ststsu$-block of the coefficient matrix.}\label{fig:easy_block}
        \end{figure}

        Define $K = K_2 \circ K_1 : B_{\ux} \to B_{\uy}$, where
        \[K_1 = \LL_{\ux, 16439} + \LL_{\ux, 17103} + \LL_{\ux, 20791} + \LL_{\ux, 21455},\qquad K_2 = \oLL_{\uy, 2234} + \oLL_{\uy, 2749} + \oLL_{\uy, 3450}, + \oLL_{\uy, 7549}. \]
        The summand of $\varphi$ corresponding to Figure~\ref{fig:easy_block} is $-2K$.
        We will first combine $K_1$ into a single thick Elias--Williamson diagram. The constituent light leaves are given as follows,
        \[
  \tikzsetnextfilename{K1_1}
  \tikzstyle{every picture}=[tikzfig]
  \begin{tikzpicture}
	\begin{pgfonlayer}{nodelayer}
		\node [style=none] (0) at (-8, -4) {};
		\node [style=none] (1) at (-7.5, -4) {};
		\node [style=none] (2) at (-7, -4) {};
		\node [style=none] (3) at (-6.5, -4) {};
		\node [style=none] (4) at (-6, -4) {};
		\node [style=none] (5) at (-5.5, -4) {};
		\node [style=none] (6) at (-5, -4) {};
		\node [style=none] (7) at (-4.5, -4) {};
		\node [style=none] (8) at (-4, -4) {};
		\node [style=none] (9) at (-3.5, -4) {};
		\node [style=none] (10) at (-3, -4) {};
		\node [style=none] (11) at (-2.5, -4) {};
		\node [style=none] (12) at (-2, -4) {};
		\node [style=none] (13) at (-1.5, -4) {};
		\node [style=none] (14) at (-1, -4) {};
		\node [style={dot_green}] (15) at (-6.5, -3.25) {};
		\node [style={dot_green}] (16) at (-5, -3.25) {};
		\node [style={dot_green}] (17) at (-4, -3.25) {};
		\node [style={dot_green}] (18) at (-2, -3.25) {};
		\node [style=none] (19) at (-6.75, -2.5) {};
		\node [style=none] (20) at (-6.75, -1.5) {};
		\node [style=none] (21) at (-6.75, -0.5) {};
		\node [style=none] (22) at (-6.75, 0.5) {};
		\node [style=none] (23) at (-6.75, 1.5) {};
		\node [style=none] (24) at (-6.75, 2.5) {};
		\node [style=none] (25) at (-6, -2) {};
		\node [style=none] (26) at (-6, 0) {};
		\node [style=none] (27) at (-6, 2) {};
		\node [style=none] (28) at (-6, -1) {};
		\node [style=none] (29) at (-6, 1) {};
		\node [style=none] (30) at (-7.75, 3.25) {};
		\node [style=none] (31) at (-7.25, 3.25) {};
		\node [style=none] (32) at (-6.75, 3.25) {};
		\node [style=none] (33) at (-6.25, 3.25) {};
		\node [style=none] (34) at (-5.75, 3.25) {};
		\node [style=none] (35) at (-5.25, 3.25) {};
		\node [style=none] (36) at (-4.5, 2.5) {};
		\node [style=none] (37) at (1, -4) {};
		\node [style=none] (38) at (1.5, -4) {};
		\node [style=none] (39) at (2, -4) {};
		\node [style=none] (40) at (2.5, -4) {};
		\node [style=none] (41) at (3, -4) {};
		\node [style=none] (42) at (3.5, -4) {};
		\node [style=none] (43) at (4, -4) {};
		\node [style=none] (44) at (4.5, -4) {};
		\node [style=none] (45) at (5, -4) {};
		\node [style=none] (46) at (5.5, -4) {};
		\node [style=none] (47) at (6, -4) {};
		\node [style=none] (48) at (6.5, -4) {};
		\node [style=none] (49) at (7, -4) {};
		\node [style=none] (50) at (7.5, -4) {};
		\node [style=none] (51) at (8, -4) {};
		\node [style={dot_green}] (52) at (5, -3.25) {};
		\node [style={dot_green}] (53) at (7, -3.25) {};
		\node [style=none] (54) at (2.25, -0.5) {};
		\node [style=none] (55) at (2.25, 0.5) {};
		\node [style=none] (56) at (2.25, 1.5) {};
		\node [style=none] (57) at (2.25, 2.5) {};
		\node [style=none] (58) at (3, 0) {};
		\node [style=none] (59) at (3, 2) {};
		\node [style=none] (60) at (2.25, -0.5) {};
		\node [style=none] (61) at (3, 1) {};
		\node [style=none] (62) at (1.25, 3.25) {};
		\node [style=none] (63) at (1.75, 3.25) {};
		\node [style=none] (64) at (2.25, 3.25) {};
		\node [style=none] (65) at (2.75, 3.25) {};
		\node [style=none] (66) at (3.25, 3.25) {};
		\node [style=none] (67) at (3.75, 3.25) {};
		\node [style=none] (68) at (4.5, 2.5) {};
		\node [style={EW_t}] (69) at (3, -3.25) {};
		\node [style=none] (70) at (3, -2) {};
		\node [style=none] (71) at (-5.5, 3.75) {$\textnormal{LL}_{\underline{x}, 16439}$};
		\node [style=none] (72) at (2.75, 3.75) {$\textnormal{LL}_{\underline{x}, 17103}$};
	\end{pgfonlayer}
	\begin{pgfonlayer}{edgelayer}
		\draw [style={red_edge}] (0.center) to (19.center);
		\draw [style={red_edge}] (19.center) to (2.center);
		\draw [style={red_edge}, in=90, out=-15, looseness=1.25] (19.center) to (5.center);
		\draw [style=blue edge] (1.center) to (19.center);
		\draw [style=blue edge, in=90, out=-15, looseness=1.25] (19.center) to (4.center);
		\draw [style={green_edge}] (15) to (3.center);
		\draw [style={green_edge}] (16) to (6.center);
		\draw [style={green_edge}] (17) to (8.center);
		\draw [style={green_edge}] (18) to (12.center);
		\draw [style=blue edge] (19.center) to (20.center);
		\draw [style=blue edge, bend left=75, looseness=1.75] (19.center) to (20.center);
		\draw [style=blue edge] (19.center) to (25.center);
		\draw [style=blue edge] (25.center) to (20.center);
		\draw [style=blue edge, bend left=45, looseness=1.25] (20.center) to (21.center);
		\draw [style=blue edge, bend right=45, looseness=1.25] (20.center) to (21.center);
		\draw [style=blue edge, bend left=75, looseness=1.75] (21.center) to (22.center);
		\draw [style=blue edge] (21.center) to (22.center);
		\draw [style=blue edge] (21.center) to (26.center);
		\draw [style=blue edge] (26.center) to (22.center);
		\draw [style=blue edge, bend left=45, looseness=1.25] (22.center) to (23.center);
		\draw [style=blue edge, bend right=45, looseness=1.25] (22.center) to (23.center);
		\draw [style=blue edge] (23.center) to (24.center);
		\draw [style=blue edge, bend left=75, looseness=1.75] (23.center) to (24.center);
		\draw [style=blue edge] (23.center) to (27.center);
		\draw [style=blue edge] (27.center) to (24.center);
		\draw [style={red_edge}, bend left=45, looseness=1.25] (19.center) to (20.center);
		\draw [style={red_edge}, bend right=45, looseness=1.25] (19.center) to (20.center);
		\draw [style={red_edge}] (20.center) to (21.center);
		\draw [style={red_edge}, bend left=75, looseness=1.75] (20.center) to (21.center);
		\draw [style={red_edge}] (20.center) to (28.center);
		\draw [style={red_edge}] (28.center) to (21.center);
		\draw [style={red_edge}, bend left=45, looseness=1.25] (21.center) to (22.center);
		\draw [style={red_edge}, bend right=45, looseness=1.25] (21.center) to (22.center);
		\draw [style={red_edge}] (22.center) to (23.center);
		\draw [style={red_edge}, bend left=75, looseness=1.75] (22.center) to (23.center);
		\draw [style={red_edge}] (22.center) to (29.center);
		\draw [style={red_edge}] (29.center) to (23.center);
		\draw [style={red_edge}, bend left=45, looseness=1.25] (23.center) to (24.center);
		\draw [style={red_edge}, bend right=45, looseness=1.25] (23.center) to (24.center);
		\draw [style={red_edge}] (30.center) to (24.center);
		\draw [style={red_edge}] (24.center) to (32.center);
		\draw [style={red_edge}] (24.center) to (34.center);
		\draw [style=blue edge] (31.center) to (24.center);
		\draw [style=blue edge] (24.center) to (33.center);
		\draw [style=blue edge, in=0, out=90] (7.center) to (25.center);
		\draw [style=blue edge, in=0, out=90] (10.center) to (26.center);
		\draw [style=blue edge, in=0, out=90] (13.center) to (27.center);
		\draw [style={red_edge}, in=0, out=90] (9.center) to (28.center);
		\draw [style={red_edge}, in=0, out=90] (11.center) to (29.center);
		\draw [style={green_edge}, in=90, out=-15] (36.center) to (14.center);
		\draw [style={green_edge}, in=-90, out=150, looseness=1.50] (36.center) to (35.center);
		\draw [style={green_edge}] (52) to (45.center);
		\draw [style={green_edge}] (53) to (49.center);
		\draw [style=blue edge, bend left=75, looseness=1.75] (54.center) to (55.center);
		\draw [style=blue edge] (54.center) to (55.center);
		\draw [style=blue edge] (54.center) to (58.center);
		\draw [style=blue edge] (58.center) to (55.center);
		\draw [style=blue edge, bend left=45, looseness=1.25] (55.center) to (56.center);
		\draw [style=blue edge, bend right=45, looseness=1.25] (55.center) to (56.center);
		\draw [style=blue edge] (56.center) to (57.center);
		\draw [style=blue edge, bend left=75, looseness=1.75] (56.center) to (57.center);
		\draw [style=blue edge] (56.center) to (59.center);
		\draw [style=blue edge] (59.center) to (57.center);
		\draw [style={red_edge}, bend left=45, looseness=1.25] (54.center) to (55.center);
		\draw [style={red_edge}, bend right=45, looseness=1.25] (54.center) to (55.center);
		\draw [style={red_edge}] (55.center) to (56.center);
		\draw [style={red_edge}, bend left=75, looseness=1.75] (55.center) to (56.center);
		\draw [style={red_edge}] (55.center) to (61.center);
		\draw [style={red_edge}] (61.center) to (56.center);
		\draw [style={red_edge}, bend left=45, looseness=1.25] (56.center) to (57.center);
		\draw [style={red_edge}, bend right=45, looseness=1.25] (56.center) to (57.center);
		\draw [style={red_edge}] (62.center) to (57.center);
		\draw [style={red_edge}] (57.center) to (64.center);
		\draw [style={red_edge}] (57.center) to (66.center);
		\draw [style=blue edge] (63.center) to (57.center);
		\draw [style=blue edge] (57.center) to (65.center);
		\draw [style=blue edge, in=0, out=90] (47.center) to (58.center);
		\draw [style=blue edge, in=0, out=90] (50.center) to (59.center);
		\draw [style={red_edge}, in=-15, out=90] (46.center) to (60.center);
		\draw [style={red_edge}, in=0, out=90] (48.center) to (61.center);
		\draw [style={green_edge}, in=90, out=-15] (68.center) to (51.center);
		\draw [style={green_edge}, in=-90, out=150, looseness=1.50] (68.center) to (67.center);
		\draw [style={red_edge}, in=90, out=-165] (60.center) to (37.center);
		\draw [style=blue edge, in=-135, out=90, looseness=0.75] (38.center) to (60.center);
		\draw [style=blue edge] (69) to (41.center);
		\draw [style={green_edge}, in=90, out=90, looseness=2.25] (39.center) to (43.center);
		\draw [style={red_edge}, in=-165, out=120, looseness=1.25] (40.center) to (70.center);
		\draw [style={red_edge}, in=-15, out=60, looseness=1.25] (42.center) to (70.center);
		\draw [style={red_edge}, in=-90, out=90] (70.center) to (60.center);
		\draw [style=blue edge, in=-30, out=90] (44.center) to (60.center);
	\end{pgfonlayer}
\end{tikzpicture}
\]
        \[
  \tikzsetnextfilename{K1_2}
  \tikzstyle{every picture}=[tikzfig]
  \begin{tikzpicture}
	\begin{pgfonlayer}{nodelayer}
		\node [style=none] (0) at (-8, -4) {};
		\node [style=none] (1) at (-7.5, -4) {};
		\node [style=none] (2) at (-7, -4) {};
		\node [style=none] (3) at (-6.5, -4) {};
		\node [style=none] (4) at (-6, -4) {};
		\node [style=none] (5) at (-5.5, -4) {};
		\node [style=none] (6) at (-5, -4) {};
		\node [style=none] (7) at (-4.5, -4) {};
		\node [style=none] (8) at (-1.5, -4) {};
		\node [style=none] (9) at (-1, -4) {};
		\node [style={dot_green}] (10) at (-6.5, -3.25) {};
		\node [style={dot_green}] (11) at (-5, -3.25) {};
		\node [style=none] (12) at (-6.75, -2.5) {};
		\node [style=none] (13) at (-6.75, -1.5) {};
		\node [style=none] (14) at (-6.75, 1.5) {};
		\node [style=none] (15) at (-6.75, 2.5) {};
		\node [style=none] (16) at (-6, -2) {};
		\node [style=none] (17) at (-6, 2) {};
		\node [style=none] (18) at (-7.75, 3.25) {};
		\node [style=none] (19) at (-7.25, 3.25) {};
		\node [style=none] (20) at (-6.75, 3.25) {};
		\node [style=none] (21) at (-6.25, 3.25) {};
		\node [style=none] (22) at (-5.75, 3.25) {};
		\node [style=none] (23) at (-5.25, 3.25) {};
		\node [style=none] (24) at (-4.5, 2.5) {};
		\node [style=none] (25) at (-5.75, 0) {};
		\node [style=none] (26) at (1.25, -4) {};
		\node [style=none] (27) at (1.75, -4) {};
		\node [style=none] (28) at (4.75, -4) {};
		\node [style=none] (29) at (7.75, -4) {};
		\node [style=none] (30) at (8.25, -4) {};
		\node [style=none] (31) at (2.5, 1.5) {};
		\node [style=none] (32) at (2.5, 2.5) {};
		\node [style=none] (33) at (3.25, 2) {};
		\node [style=none] (34) at (1.5, 3.25) {};
		\node [style=none] (35) at (2, 3.25) {};
		\node [style=none] (36) at (2.5, 3.25) {};
		\node [style=none] (37) at (3, 3.25) {};
		\node [style=none] (38) at (3.5, 3.25) {};
		\node [style=none] (39) at (4, 3.25) {};
		\node [style=none] (40) at (4.75, 2.5) {};
		\node [style=none] (41) at (2.25, -4) {};
		\node [style=none] (42) at (2.75, -4) {};
		\node [style=none] (43) at (3.25, -4) {};
		\node [style=none] (44) at (3.75, -4) {};
		\node [style=none] (45) at (4.25, -4) {};
		\node [style={EW_t}] (46) at (3.25, -3.25) {};
		\node [style=none] (47) at (3.25, -2) {};
		\node [style=none] (48) at (5.25, -4) {};
		\node [style=none] (49) at (5.75, -4) {};
		\node [style=none] (50) at (6.25, -4) {};
		\node [style=none] (51) at (6.75, -4) {};
		\node [style=none] (52) at (7.25, -4) {};
		\node [style={EW_t}] (53) at (6.25, -3.25) {};
		\node [style=none] (54) at (6.25, -2) {};
		\node [style=none] (55) at (-4, -4) {};
		\node [style=none] (56) at (-3.5, -4) {};
		\node [style=none] (57) at (-3, -4) {};
		\node [style=none] (58) at (-2.5, -4) {};
		\node [style=none] (59) at (-2, -4) {};
		\node [style={EW_t}] (60) at (-3, -3.25) {};
		\node [style=none] (61) at (-3, -2) {};
		\node [style=none] (62) at (-4.75, 3.75) {$\textnormal{LL}_{\underline{x}, 20791}$};
		\node [style=none] (63) at (3.5, 3.75) {$\textnormal{LL}_{\underline{x}, 21455}$};
	\end{pgfonlayer}
	\begin{pgfonlayer}{edgelayer}
		\draw [style={red_edge}] (0.center) to (12.center);
		\draw [style={red_edge}] (12.center) to (2.center);
		\draw [style={red_edge}, in=90, out=-15, looseness=1.25] (12.center) to (5.center);
		\draw [style=blue edge] (1.center) to (12.center);
		\draw [style=blue edge, in=90, out=-15, looseness=1.25] (12.center) to (4.center);
		\draw [style={green_edge}] (10) to (3.center);
		\draw [style={green_edge}] (11) to (6.center);
		\draw [style=blue edge] (12.center) to (13.center);
		\draw [style=blue edge, bend left=75, looseness=1.75] (12.center) to (13.center);
		\draw [style=blue edge] (12.center) to (16.center);
		\draw [style=blue edge] (16.center) to (13.center);
		\draw [style=blue edge] (14.center) to (15.center);
		\draw [style=blue edge, bend left=75, looseness=1.75] (14.center) to (15.center);
		\draw [style=blue edge] (14.center) to (17.center);
		\draw [style=blue edge] (17.center) to (15.center);
		\draw [style={red_edge}, bend left=45, looseness=1.25] (12.center) to (13.center);
		\draw [style={red_edge}, bend right=45, looseness=1.25] (12.center) to (13.center);
		\draw [style={red_edge}, bend left=45, looseness=1.25] (14.center) to (15.center);
		\draw [style={red_edge}, bend right=45, looseness=1.25] (14.center) to (15.center);
		\draw [style={red_edge}] (18.center) to (15.center);
		\draw [style={red_edge}] (15.center) to (20.center);
		\draw [style={red_edge}] (15.center) to (22.center);
		\draw [style=blue edge] (19.center) to (15.center);
		\draw [style=blue edge] (15.center) to (21.center);
		\draw [style=blue edge, in=0, out=90] (7.center) to (16.center);
		\draw [style=blue edge, in=0, out=90] (8.center) to (17.center);
		\draw [style={green_edge}, in=90, out=-15] (24.center) to (9.center);
		\draw [style={green_edge}, in=-90, out=150, looseness=1.50] (24.center) to (23.center);
		\draw [style={red_edge}] (14.center) to (25.center);
		\draw [style={red_edge}] (13.center) to (25.center);
		\draw [style={red_edge}] (13.center) to (14.center);
		\draw [style={red_edge}, bend right=45, looseness=1.25] (14.center) to (13.center);
		\draw [style=blue edge, bend right] (14.center) to (13.center);
		\draw [style=blue edge, bend left] (14.center) to (13.center);
		\draw [style=blue edge] (31.center) to (32.center);
		\draw [style=blue edge, bend left=75, looseness=1.75] (31.center) to (32.center);
		\draw [style=blue edge] (31.center) to (33.center);
		\draw [style=blue edge] (33.center) to (32.center);
		\draw [style={red_edge}, bend left=45, looseness=1.25] (31.center) to (32.center);
		\draw [style={red_edge}, bend right=45, looseness=1.25] (31.center) to (32.center);
		\draw [style={red_edge}] (34.center) to (32.center);
		\draw [style={red_edge}] (32.center) to (36.center);
		\draw [style={red_edge}] (32.center) to (38.center);
		\draw [style=blue edge] (35.center) to (32.center);
		\draw [style=blue edge] (32.center) to (37.center);
		\draw [style=blue edge, in=0, out=90] (29.center) to (33.center);
		\draw [style={green_edge}, in=90, out=-15] (40.center) to (30.center);
		\draw [style={green_edge}, in=-90, out=150, looseness=1.50] (40.center) to (39.center);
		\draw [style=blue edge] (46) to (43.center);
		\draw [style={green_edge}, in=90, out=90, looseness=2.25] (41.center) to (45.center);
		\draw [style={red_edge}, in=-165, out=120, looseness=1.25] (42.center) to (47.center);
		\draw [style={red_edge}, in=-15, out=60, looseness=1.25] (44.center) to (47.center);
		\draw [style=blue edge] (53) to (50.center);
		\draw [style={green_edge}, in=90, out=90, looseness=2.25] (48.center) to (52.center);
		\draw [style={red_edge}, in=-165, out=120, looseness=1.25] (49.center) to (54.center);
		\draw [style={red_edge}, in=-15, out=60, looseness=1.25] (51.center) to (54.center);
		\draw [style=blue edge] (60) to (57.center);
		\draw [style={green_edge}, in=90, out=90, looseness=2.25] (55.center) to (59.center);
		\draw [style={red_edge}, in=-165, out=120, looseness=1.25] (56.center) to (61.center);
		\draw [style={red_edge}, in=-15, out=60, looseness=1.25] (58.center) to (61.center);
		\draw [style={red_edge}, in=90, out=-15] (25.center) to (61.center);
		\draw [style={red_edge}, in=-165, out=90, looseness=0.75] (26.center) to (31.center);
		\draw [style=blue edge, in=-120, out=90, looseness=0.75] (27.center) to (31.center);
		\draw [style={red_edge}, in=-90, out=90] (47.center) to (31.center);
		\draw [style={red_edge}, in=90, out=-15, looseness=0.75] (31.center) to (54.center);
		\draw [style=blue edge, in=-45, out=90] (28.center) to (31.center);
	\end{pgfonlayer}
\end{tikzpicture}
\]
        Using Lemmas~\ref{lem:thick_relns} and \ref{lem:thick_jw}, we can simplify these diagrams using thick Soergel calculus as follows,
        \[
  \tikzsetnextfilename{K1_thick_1_1}
  \tikzstyle{every picture}=[tikzfig]
  \begin{tikzpicture}
	\begin{pgfonlayer}{nodelayer}
		\node [style=none] (0) at (-8, 0.5) {};
		\node [style=none] (1) at (-7.5, 0.5) {};
		\node [style=none] (2) at (-7, 0.5) {};
		\node [style=none] (3) at (-6.5, 0.5) {};
		\node [style=none] (4) at (-6, 0.5) {};
		\node [style=none] (5) at (-5.5, 0.5) {};
		\node [style=none] (6) at (-5, 0.5) {};
		\node [style=none] (7) at (-4.5, 0.5) {};
		\node [style=none] (8) at (-4, 0.5) {};
		\node [style=none] (9) at (-3.5, 0.5) {};
		\node [style=none] (10) at (-3, 0.5) {};
		\node [style=none] (11) at (-2.5, 0.5) {};
		\node [style=none] (12) at (-2, 0.5) {};
		\node [style=none] (13) at (-1.5, 0.5) {};
		\node [style=none] (14) at (-1, 0.5) {};
		\node [style={dot_green}] (15) at (-6.5, 1.25) {};
		\node [style={dot_green}] (16) at (-5, 1.25) {};
		\node [style={dot_green}] (17) at (-4, 1.25) {};
		\node [style={dot_green}] (18) at (-2, 1.25) {};
		\node [style=none] (19) at (-6.75, 2) {};
		\node [style=none] (24) at (-6.75, 7) {};
		\node [style=none] (25) at (-6.75, 2.5) {};
		\node [style=none] (26) at (-6.75, 4.5) {};
		\node [style=none] (27) at (-6.75, 6.5) {};
		\node [style=none] (28) at (-6.75, 3.5) {};
		\node [style=none] (29) at (-6.75, 5.5) {};
		\node [style=none] (30) at (-7.75, 7.75) {};
		\node [style=none] (31) at (-7.25, 7.75) {};
		\node [style=none] (32) at (-6.75, 7.75) {};
		\node [style=none] (33) at (-6.25, 7.75) {};
		\node [style=none] (34) at (-5.75, 7.75) {};
		\node [style=none] (35) at (-5.25, 7.75) {};
		\node [style=none] (36) at (-4.5, 7) {};
		\node [style=none] (37) at (1, 0.5) {};
		\node [style=none] (38) at (1.5, 0.5) {};
		\node [style=none] (39) at (2, 0.5) {};
		\node [style=none] (40) at (2.5, 0.5) {};
		\node [style=none] (41) at (3, 0.5) {};
		\node [style=none] (42) at (3.5, 0.5) {};
		\node [style=none] (43) at (4, 0.5) {};
		\node [style=none] (44) at (4.5, 0.5) {};
		\node [style=none] (45) at (5, 0.5) {};
		\node [style=none] (46) at (5.5, 0.5) {};
		\node [style=none] (47) at (6, 0.5) {};
		\node [style=none] (48) at (6.5, 0.5) {};
		\node [style=none] (49) at (7, 0.5) {};
		\node [style=none] (50) at (7.5, 0.5) {};
		\node [style=none] (51) at (8, 0.5) {};
		\node [style={dot_green}] (54) at (5, 1.25) {};
		\node [style={dot_green}] (55) at (7, 1.25) {};
		\node [style=none] (61) at (2.25, 7) {};
		\node [style=none] (63) at (2.25, 5) {};
		\node [style=none] (64) at (2.25, 6.5) {};
		\node [style=none] (66) at (2.25, 5.75) {};
		\node [style=none] (67) at (1.25, 7.75) {};
		\node [style=none] (68) at (1.75, 7.75) {};
		\node [style=none] (69) at (2.25, 7.75) {};
		\node [style=none] (70) at (2.75, 7.75) {};
		\node [style=none] (71) at (3.25, 7.75) {};
		\node [style=none] (72) at (3.75, 7.75) {};
		\node [style=none] (73) at (4.5, 7) {};
		\node [style={EW_t}] (74) at (3, 1.25) {};
		\node [style=none] (170) at (-5.5, 8.25) {$\textnormal{LL}_{\underline{x}, 16439}$};
		\node [style=none] (171) at (2.75, 8.25) {$\textnormal{LL}_{\underline{x}, 17103}$};
		\node [style=none] (174) at (2.25, 3) {};
		\node [style={EW_t}] (175) at (2.75, 2.25) {};
		\node [style=none] (176) at (2.25, 3.5) {};
		\node [style=none] (177) at (2.25, 4.25) {};
	\end{pgfonlayer}
	\begin{pgfonlayer}{edgelayer}
		\draw [style={red_edge}] (0.center) to (19.center);
		\draw [style={red_edge}] (19.center) to (2.center);
		\draw [style={red_edge}, in=90, out=-15, looseness=1.25] (19.center) to (5.center);
		\draw [style=blue edge] (1.center) to (19.center);
		\draw [style=blue edge, in=90, out=-15, looseness=1.25] (19.center) to (4.center);
		\draw [style={green_edge}] (15) to (3.center);
		\draw [style={green_edge}] (16) to (6.center);
		\draw [style={green_edge}] (17) to (8.center);
		\draw [style={green_edge}] (18) to (12.center);
		\draw [style={red_edge}] (30.center) to (24.center);
		\draw [style={red_edge}] (24.center) to (32.center);
		\draw [style={red_edge}] (24.center) to (34.center);
		\draw [style=blue edge] (31.center) to (24.center);
		\draw [style=blue edge] (24.center) to (33.center);
		\draw [style=blue edge, in=0, out=90, looseness=1.25] (7.center) to (25.center);
		\draw [style=blue edge, in=0, out=90] (10.center) to (26.center);
		\draw [style=blue edge, in=0, out=90] (13.center) to (27.center);
		\draw [style={red_edge}, in=0, out=90] (9.center) to (28.center);
		\draw [style={red_edge}, in=0, out=90] (11.center) to (29.center);
		\draw [style={green_edge}, in=90, out=-15] (36.center) to (14.center);
		\draw [style={green_edge}, in=-90, out=150, looseness=1.50] (36.center) to (35.center);
		\draw [style={green_edge}] (54) to (45.center);
		\draw [style={green_edge}] (55) to (49.center);
		\draw [style={red_edge}] (67.center) to (61.center);
		\draw [style={red_edge}] (61.center) to (69.center);
		\draw [style={red_edge}] (61.center) to (71.center);
		\draw [style=blue edge] (68.center) to (61.center);
		\draw [style=blue edge] (61.center) to (70.center);
		\draw [style=blue edge, in=0, out=90, looseness=1.25] (47.center) to (63.center);
		\draw [style=blue edge, in=0, out=90, looseness=1.25] (50.center) to (64.center);
		\draw [style={red_edge}, in=0, out=90, looseness=1.25] (48.center) to (66.center);
		\draw [style={green_edge}, in=90, out=-15] (73.center) to (51.center);
		\draw [style={green_edge}, in=-90, out=150, looseness=1.50] (73.center) to (72.center);
		\draw [style=blue edge] (74) to (41.center);
		\draw [style={green_edge}, in=90, out=90, looseness=2.25] (39.center) to (43.center);
		\draw [style={thick_purple}] (24.center) to (19.center);
		\draw [style={red_edge}] (37.center) to (174.center);
		\draw [style=blue edge] (38.center) to (174.center);
		\draw [style={red_edge}] (40.center) to (174.center);
		\draw [style=blue edge] (174.center) to (175);
		\draw [style={red_edge}, in=-15, out=90, looseness=0.75] (42.center) to (174.center);
		\draw [style={thick_purple}] (61.center) to (174.center);
		\draw [style={red_edge}, in=90, out=0, looseness=1.25] (177.center) to (46.center);
		\draw [style=blue edge, in=90, out=0] (176.center) to (44.center);
	\end{pgfonlayer}
\end{tikzpicture}
\]
        \[
  \tikzsetnextfilename{K1_thick_1_2}
  \tikzstyle{every picture}=[tikzfig]
  \begin{tikzpicture}
	\begin{pgfonlayer}{nodelayer}
		\node [style=none] (76) at (-8.25, -8.25) {};
		\node [style=none] (77) at (-7.75, -8.25) {};
		\node [style=none] (78) at (-7.25, -8.25) {};
		\node [style=none] (79) at (-6.75, -8.25) {};
		\node [style=none] (80) at (-6.25, -8.25) {};
		\node [style=none] (81) at (-5.75, -8.25) {};
		\node [style=none] (82) at (-5.25, -8.25) {};
		\node [style=none] (83) at (-4.75, -8.25) {};
		\node [style=none] (89) at (-1.75, -8.25) {};
		\node [style=none] (90) at (-1.25, -8.25) {};
		\node [style={dot_green}] (91) at (-6.75, -7.5) {};
		\node [style={dot_green}] (92) at (-5.25, -7.5) {};
		\node [style=none] (95) at (-7, -6.75) {};
		\node [style=none] (100) at (-7, -1.75) {};
		\node [style=none] (101) at (-7, -6) {};
		\node [style=none] (103) at (-7, -2.25) {};
		\node [style=none] (106) at (-8, -1) {};
		\node [style=none] (107) at (-7.5, -1) {};
		\node [style=none] (108) at (-7, -1) {};
		\node [style=none] (109) at (-6.5, -1) {};
		\node [style=none] (110) at (-6, -1) {};
		\node [style=none] (111) at (-5.5, -1) {};
		\node [style=none] (112) at (-4.75, -1.75) {};
		\node [style=none] (113) at (-7, -3) {};
		\node [style=none] (129) at (7.5, -8.25) {};
		\node [style=none] (130) at (8, -8.25) {};
		\node [style=none] (136) at (2.25, -1.75) {};
		\node [style=none] (138) at (2.25, -2.25) {};
		\node [style=none] (139) at (1.25, -1) {};
		\node [style=none] (140) at (1.75, -1) {};
		\node [style=none] (141) at (2.25, -1) {};
		\node [style=none] (142) at (2.75, -1) {};
		\node [style=none] (143) at (3.25, -1) {};
		\node [style=none] (144) at (3.75, -1) {};
		\node [style=none] (145) at (4.5, -1.75) {};
		\node [style=none] (156) at (5, -8.25) {};
		\node [style=none] (157) at (5.5, -8.25) {};
		\node [style=none] (158) at (6, -8.25) {};
		\node [style=none] (159) at (6.5, -8.25) {};
		\node [style=none] (160) at (7, -8.25) {};
		\node [style={EW_t}] (161) at (6, -7.5) {};
		\node [style=none] (163) at (-4.25, -8.25) {};
		\node [style=none] (164) at (-3.75, -8.25) {};
		\node [style=none] (165) at (-3.25, -8.25) {};
		\node [style=none] (166) at (-2.75, -8.25) {};
		\node [style=none] (167) at (-2.25, -8.25) {};
		\node [style={EW_t}] (168) at (-3.25, -7.5) {};
		\node [style=none] (172) at (-5, -0.5) {$\textnormal{LL}_{\underline{x}, 20791}$};
		\node [style=none] (173) at (3.25, -0.5) {$\textnormal{LL}_{\underline{x}, 21455}$};
		\node [style=none] (178) at (-7, -4) {};
		\node [style=none] (179) at (-7, -5) {};
		\node [style={EW_t}] (180) at (-3.5, -6.25) {};
		\node [style=none] (181) at (1, -8.25) {};
		\node [style=none] (182) at (1.5, -8.25) {};
		\node [style=none] (183) at (2, -8.25) {};
		\node [style=none] (184) at (2.5, -8.25) {};
		\node [style=none] (185) at (3, -8.25) {};
		\node [style=none] (186) at (3.5, -8.25) {};
		\node [style=none] (187) at (4, -8.25) {};
		\node [style=none] (188) at (4.5, -8.25) {};
		\node [style=none] (189) at (2.25, -3.75) {};
		\node [style=none] (190) at (2.25, -3) {};
		\node [style={EW_t}] (191) at (3, -7.5) {};
		\node [style=none] (192) at (2.25, -5.75) {};
		\node [style={EW_t}] (193) at (2.75, -6.5) {};
		\node [style=none] (194) at (2.25, -5.25) {};
		\node [style=none] (195) at (2.25, -4.5) {};
		\node [style={EW_t}] (196) at (5.75, -6.5) {};
	\end{pgfonlayer}
	\begin{pgfonlayer}{edgelayer}
		\draw [style={red_edge}] (76.center) to (95.center);
		\draw [style={red_edge}] (95.center) to (78.center);
		\draw [style={red_edge}, in=90, out=-15, looseness=1.25] (95.center) to (81.center);
		\draw [style=blue edge] (77.center) to (95.center);
		\draw [style=blue edge, in=90, out=-15, looseness=1.25] (95.center) to (80.center);
		\draw [style={green_edge}] (91) to (79.center);
		\draw [style={green_edge}] (92) to (82.center);
		\draw [style={red_edge}] (106.center) to (100.center);
		\draw [style={red_edge}] (100.center) to (108.center);
		\draw [style={red_edge}] (100.center) to (110.center);
		\draw [style=blue edge] (107.center) to (100.center);
		\draw [style=blue edge] (100.center) to (109.center);
		\draw [style=blue edge, in=0, out=90, looseness=1.25] (83.center) to (101.center);
		\draw [style=blue edge, in=0, out=90, looseness=1.25] (89.center) to (103.center);
		\draw [style={green_edge}, in=90, out=-15] (112.center) to (90.center);
		\draw [style={green_edge}, in=-90, out=150, looseness=1.50] (112.center) to (111.center);
		\draw [style={red_edge}] (139.center) to (136.center);
		\draw [style={red_edge}] (136.center) to (141.center);
		\draw [style={red_edge}] (136.center) to (143.center);
		\draw [style=blue edge] (140.center) to (136.center);
		\draw [style=blue edge] (136.center) to (142.center);
		\draw [style=blue edge, in=0, out=90, looseness=1.25] (129.center) to (138.center);
		\draw [style={green_edge}, in=90, out=-15] (145.center) to (130.center);
		\draw [style={green_edge}, in=-90, out=150, looseness=1.50] (145.center) to (144.center);
		\draw [style=blue edge] (161) to (158.center);
		\draw [style={green_edge}, in=90, out=90, looseness=2.25] (156.center) to (160.center);
		\draw [style=blue edge] (168) to (165.center);
		\draw [style={green_edge}, in=90, out=90, looseness=2.25] (163.center) to (167.center);
		\draw [style={thick_purple}] (95.center) to (100.center);
		\draw [style={red_edge}, in=90, out=0] (179.center) to (164.center);
		\draw [style={red_edge}, in=0, out=90, looseness=1.25] (166.center) to (113.center);
		\draw [style=blue edge, in=90, out=0, looseness=0.75] (178.center) to (180);
		\draw [style=blue edge] (191) to (185.center);
		\draw [style={green_edge}, in=90, out=90, looseness=2.25] (183.center) to (187.center);
		\draw [style={red_edge}] (181.center) to (192.center);
		\draw [style=blue edge] (182.center) to (192.center);
		\draw [style={red_edge}] (184.center) to (192.center);
		\draw [style=blue edge] (192.center) to (193);
		\draw [style={red_edge}, in=-15, out=90, looseness=0.75] (186.center) to (192.center);
		\draw [style=blue edge, in=90, out=0] (194.center) to (188.center);
		\draw [style={thick_purple}] (192.center) to (136.center);
		\draw [style={red_edge}, in=0, out=90] (157.center) to (195.center);
		\draw [style={red_edge}, in=0, out=90] (159.center) to (190.center);
		\draw [style=blue edge, in=90, out=0, looseness=0.75] (189.center) to (196);
	\end{pgfonlayer}
\end{tikzpicture}
\]
        Before simplifying further, we will introduce some new notation,
        \begin{equation}\label{eq:half_JW_reln}
                
  \tikzsetnextfilename{half_JW}
  \tikzstyle{every picture}=[tikzfig]
  \begin{tikzpicture}
	\begin{pgfonlayer}{nodelayer}
		\node [style=none] (36) at (-3.75, -1) {};
		\node [style=none] (38) at (-2.75, -1) {};
		\node [style=none] (39) at (-3.75, 1) {};
		\node [style=none] (40) at (-2.75, 1) {};
		\node [style={thick_dot_ochre}] (41) at (-3.25, 0) {};
		\node [style=none] (42) at (-2.25, 0) {$\coloneq$};
		\node [style=none] (43) at (-1.75, -1) {};
		\node [style=none] (44) at (-1.25, -1) {};
		\node [style={EW_t}] (45) at (-0.75, -0.75) {};
		\node [style=none] (46) at (-1.25, -0.5) {};
		\node [style=none] (47) at (-1.75, 1) {};
		\node [style=none] (48) at (-1.25, 1) {};
		\node [style={dot_green}] (49) at (-0.75, 0.75) {};
		\node [style=none] (50) at (-1.25, 0.5) {};
		\node [style=none] (53) at (0, 0) {$=$};
		\node [style=none] (54) at (0.75, -1) {};
		\node [style=none] (55) at (0.75, 1) {};
		\node [style=none] (56) at (1.75, -1) {};
		\node [style=none] (57) at (1.75, 1) {};
		\node [style=none] (58) at (2.25, 0) {$+$};
		\node [style=none] (59) at (2.75, -1) {};
		\node [style=none] (60) at (2.75, 1) {};
		\node [style=none] (61) at (3.75, 1) {};
		\node [style=none] (62) at (3.75, -1) {};
		\node [style={dot_green}] (63) at (1.75, -0.25) {};
		\node [style={dot_green}] (64) at (0.75, 0.25) {};
		\node [style={EW_t}] (65) at (3.75, 0.25) {};
		\node [style={EW_t}] (66) at (2.75, -0.25) {};
	\end{pgfonlayer}
	\begin{pgfonlayer}{edgelayer}
		\draw [style=blue edge] (36.center) to (41);
		\draw [style=blue edge] (41) to (40.center);
		\draw [style={green_edge}] (38.center) to (41);
		\draw [style={green_edge}] (41) to (39.center);
		\draw [style=blue edge] (43.center) to (46.center);
		\draw [style=blue edge] (46.center) to (45);
		\draw [style={green_edge}] (50.center) to (49);
		\draw [style={green_edge}] (50.center) to (47.center);
		\draw [style=blue edge] (50.center) to (48.center);
		\draw [style={green_edge}] (46.center) to (44.center);
		\draw [style={thick_teal}] (50.center) to (46.center);
		\draw [style=blue edge] (54.center) to (57.center);
		\draw [style=blue edge] (66) to (59.center);
		\draw [style=blue edge] (65) to (61.center);
		\draw [style={green_edge}] (60.center) to (62.center);
		\draw [style={green_edge}] (55.center) to (64);
		\draw [style={green_edge}] (63) to (56.center);
	\end{pgfonlayer}
\end{tikzpicture}

        \end{equation}
        Note that the second equality follows from Lemma~\ref{lem:thick_jw} and the Elias--Jones--Wenzl relation.
        We can then use \eqref{eq:half_JW_reln} and the last relation from Lemma~\ref{lem:thick_relns} to perform our last simplification of $K_1$,
        \begin{equation}
                K_1 = 
  \tikzsetnextfilename{K1_thick_2}
  \tikzstyle{every picture}=[tikzfig]
  \begin{tikzpicture}
	\begin{pgfonlayer}{nodelayer}
		\node [style=none] (0) at (-3.25, -2.25) {};
		\node [style=none] (1) at (-2.75, -2.25) {};
		\node [style=none] (2) at (-2.25, -2.25) {};
		\node [style=none] (3) at (-1.75, -2.25) {};
		\node [style=none] (4) at (-1.25, -2.25) {};
		\node [style=none] (5) at (-0.75, -2.25) {};
		\node [style=none] (6) at (-0.25, -2.25) {};
		\node [style=none] (7) at (0.25, -2.25) {};
		\node [style=none] (8) at (0.75, -2.25) {};
		\node [style=none] (9) at (1.25, -2.25) {};
		\node [style=none] (10) at (1.75, -2.25) {};
		\node [style=none] (11) at (2.25, -2.25) {};
		\node [style=none] (12) at (2.75, -2.25) {};
		\node [style=none] (13) at (3.25, -2.25) {};
		\node [style=none] (14) at (3.75, -2.25) {};
		\node [style=none] (30) at (-1, 2.25) {};
		\node [style=none] (31) at (-0.5, 2.25) {};
		\node [style=none] (32) at (0, 2.25) {};
		\node [style=none] (33) at (0.5, 2.25) {};
		\node [style=none] (34) at (1, 2.25) {};
		\node [style={thick_dot_ochre}] (36) at (1.75, -1.5) {};
		\node [style=none] (37) at (1.75, -0.25) {};
		\node [style={thick_dot_ochre}] (39) at (-1.25, -1.5) {};
		\node [style=none] (40) at (-1.75, -0.25) {};
		\node [style=none] (41) at (0, 0.75) {};
		\node [style=none] (42) at (0, 1.5) {};
		\node [style=none] (43) at (1.5, 2.25) {};
	\end{pgfonlayer}
	\begin{pgfonlayer}{edgelayer}
		\draw [style=blue edge] (36) to (37.center);
		\draw [style={red_edge}, in=-135, out=90, looseness=0.75] (9.center) to (37.center);
		\draw [style={red_edge}, in=-30, out=90, looseness=0.75] (11.center) to (37.center);
		\draw [style=blue edge, in=-165, out=90, looseness=0.75] (7.center) to (37.center);
		\draw [style=blue edge, in=-15, out=90] (13.center) to (37.center);
		\draw [style={red_edge}, in=-165, out=90] (0.center) to (40.center);
		\draw [style={red_edge}, in=-120, out=90, looseness=0.75] (2.center) to (40.center);
		\draw [style=blue edge, in=-135, out=90] (1.center) to (40.center);
		\draw [style=blue edge, bend left=15] (40.center) to (39);
		\draw [style={red_edge}, in=-15, out=90, looseness=1.25] (5.center) to (40.center);
		\draw [style={thick_purple}, in=-135, out=90, looseness=0.75] (40.center) to (41.center);
		\draw [style={thick_purple}, in=90, out=-60, looseness=0.75] (41.center) to (37.center);
		\draw [style={thick_purple}] (42.center) to (41.center);
		\draw [style={red_edge}] (30.center) to (42.center);
		\draw [style={red_edge}] (42.center) to (32.center);
		\draw [style={red_edge}] (42.center) to (34.center);
		\draw [style=blue edge] (31.center) to (42.center);
		\draw [style=blue edge] (42.center) to (33.center);
		\draw [style={green_edge}, in=90, out=-90, looseness=0.50] (43.center) to (14.center);
		\draw [style=blue edge] (36) to (10.center);
		\draw [style={green_edge}] (8.center) to (36);
		\draw [style={green_edge}] (36) to (12.center);
		\draw [style={green_edge}] (39) to (6.center);
		\draw [style={green_edge}] (39) to (3.center);
		\draw [style=blue edge] (39) to (4.center);
	\end{pgfonlayer}
\end{tikzpicture}

        \end{equation}
        The simplification of $K_2$ proceeds in the same manner. The end result is the following thick diagram, 
        \[ K_2 = 
  \tikzsetnextfilename{K2_thick}
  \tikzstyle{every picture}=[tikzfig]
  \begin{tikzpicture}
	\begin{pgfonlayer}{nodelayer}
		\node [style=none] (0) at (3.75, 2.25) {};
		\node [style=none] (1) at (3.25, 2.25) {};
		\node [style=none] (2) at (2.75, 2.25) {};
		\node [style=none] (3) at (2.25, 2.25) {};
		\node [style=none] (4) at (1.75, 2.25) {};
		\node [style=none] (5) at (1.25, 2.25) {};
		\node [style=none] (6) at (0.75, 2.25) {};
		\node [style=none] (7) at (0.25, 2.25) {};
		\node [style=none] (8) at (-0.25, 2.25) {};
		\node [style=none] (9) at (-0.75, 2.25) {};
		\node [style=none] (10) at (-1.25, 2.25) {};
		\node [style=none] (11) at (-1.75, 2.25) {};
		\node [style=none] (12) at (-2.25, 2.25) {};
		\node [style=none] (13) at (-2.75, 2.25) {};
		\node [style=none] (14) at (-3.25, 2.25) {};
		\node [style=none] (30) at (2, -2) {};
		\node [style=none] (31) at (1.5, -2) {};
		\node [style=none] (32) at (1, -2) {};
		\node [style=none] (33) at (0.5, -2) {};
		\node [style=none] (34) at (0, -2) {};
		\node [style=none] (43) at (-0.5, -2) {};
		\node [style=none] (47) at (-1.25, 0.5) {};
		\node [style=none] (48) at (0.5, -0.5) {};
		\node [style=none] (49) at (0.5, -1.25) {};
		\node [style={thick_dot_ochre}] (50) at (-2.75, 1.75) {};
		\node [style={thick_dot_ochre}] (52) at (0.25, 1.75) {};
		\node [style=none] (53) at (2.5, 0.5) {};
		\node [style={EW_t}] (54) at (3, 1) {};
		\node [style={EW_t}] (55) at (3.25, 1.75) {};
		\node [style=none] (56) at (3.75, 0.5) {};
	\end{pgfonlayer}
	\begin{pgfonlayer}{edgelayer}
		\draw [style=blue edge] (7.center) to (52);
		\draw [style=blue edge, in=150, out=-75, looseness=0.75] (50) to (47.center);
		\draw [style=blue edge] (47.center) to (10.center);
		\draw [style={green_edge}] (8.center) to (52);
		\draw [style={green_edge}] (52) to (6.center);
		\draw [style={red_edge}, in=135, out=-90] (11.center) to (47.center);
		\draw [style={red_edge}, in=-90, out=45] (47.center) to (9.center);
		\draw [style={thick_purple}, in=90, out=-90, looseness=0.75] (47.center) to (48.center);
		\draw [style={thick_purple}] (48.center) to (49.center);
		\draw [style={red_edge}] (49.center) to (43.center);
		\draw [style={red_edge}] (49.center) to (33.center);
		\draw [style={red_edge}] (49.center) to (31.center);
		\draw [style=blue edge] (49.center) to (34.center);
		\draw [style=blue edge] (49.center) to (32.center);
		\draw [style={thick_purple}, in=-90, out=75, looseness=0.75] (48.center) to (53.center);
		\draw [style={red_edge}, in=135, out=-90] (5.center) to (53.center);
		\draw [style={red_edge}] (2.center) to (53.center);
		\draw [style=blue edge, in=105, out=-90] (4.center) to (53.center);
		\draw [style=blue edge] (55) to (1.center);
		\draw [style=blue edge] (54) to (53.center);
		\draw [style={red_edge}, in=-90, out=15] (53.center) to (0.center);
		\draw [style={green_edge}, in=90, out=-90] (3.center) to (56.center);
		\draw [style={green_edge}, in=-90, out=90] (30.center) to (56.center);
		\draw [style=blue edge] (13.center) to (50);
		\draw [style=blue edge, in=-105, out=30, looseness=0.75] (47.center) to (52);
		\draw [style={green_edge}] (14.center) to (50);
		\draw [style={green_edge}] (50) to (12.center);
	\end{pgfonlayer}
\end{tikzpicture}
\]
        Finally, we compose these maps and use Lemma~\ref{lem:thick_relns} one more time to get our final simplification of $K$,
        \[ K = 
  \tikzsetnextfilename{K_thick}
  \tikzstyle{every picture}=[tikzfig]
  \begin{tikzpicture}
	\begin{pgfonlayer}{nodelayer}
		\node [style=none] (0) at (3.25, 3.25) {};
		\node [style=none] (1) at (2.75, 3.25) {};
		\node [style=none] (2) at (2.25, 3.25) {};
		\node [style=none] (3) at (1.75, 3.25) {};
		\node [style=none] (4) at (1.25, 3.25) {};
		\node [style=none] (5) at (0.75, 3.25) {};
		\node [style=none] (6) at (0.25, 3.25) {};
		\node [style=none] (7) at (-0.25, 3.25) {};
		\node [style=none] (8) at (-0.75, 3.25) {};
		\node [style=none] (9) at (-1.25, 3.25) {};
		\node [style=none] (10) at (-1.75, 3.25) {};
		\node [style=none] (11) at (-2.25, 3.25) {};
		\node [style=none] (12) at (-2.75, 3.25) {};
		\node [style=none] (13) at (-3.25, 3.25) {};
		\node [style=none] (14) at (-3.75, 3.25) {};
		\node [style=none] (47) at (-1.75, 1.5) {};
		\node [style={thick_dot_ochre}] (50) at (-3.25, 2.75) {};
		\node [style={thick_dot_ochre}] (52) at (-0.25, 2.75) {};
		\node [style=none] (53) at (2, 1.5) {};
		\node [style={EW_t}] (54) at (2.5, 2) {};
		\node [style={EW_t}] (55) at (2.75, 2.75) {};
		\node [style=none] (56) at (3.25, 1.5) {};
		\node [style=none] (57) at (-3.25, -3.25) {};
		\node [style=none] (58) at (-2.75, -3.25) {};
		\node [style=none] (59) at (-2.25, -3.25) {};
		\node [style=none] (60) at (-1.75, -3.25) {};
		\node [style=none] (61) at (-1.25, -3.25) {};
		\node [style=none] (62) at (-0.75, -3.25) {};
		\node [style=none] (63) at (-0.25, -3.25) {};
		\node [style=none] (64) at (0.25, -3.25) {};
		\node [style=none] (65) at (0.75, -3.25) {};
		\node [style=none] (66) at (1.25, -3.25) {};
		\node [style=none] (67) at (1.75, -3.25) {};
		\node [style=none] (68) at (2.25, -3.25) {};
		\node [style=none] (69) at (2.75, -3.25) {};
		\node [style=none] (70) at (3.25, -3.25) {};
		\node [style=none] (71) at (3.75, -3.25) {};
		\node [style={thick_dot_ochre}] (78) at (1.75, -2.5) {};
		\node [style=none] (79) at (1.75, -1.25) {};
		\node [style={thick_dot_ochre}] (81) at (-1.25, -2.5) {};
		\node [style=none] (82) at (-1.75, -1.25) {};
		\node [style=none] (83) at (0, -0.25) {};
		\node [style=none] (84) at (0, 0.5) {};
	\end{pgfonlayer}
	\begin{pgfonlayer}{edgelayer}
		\draw [style=blue edge] (7.center) to (52);
		\draw [style=blue edge, in=150, out=-75, looseness=0.75] (50) to (47.center);
		\draw [style=blue edge] (47.center) to (10.center);
		\draw [style={green_edge}] (8.center) to (52);
		\draw [style={green_edge}] (52) to (6.center);
		\draw [style={red_edge}, in=135, out=-90] (11.center) to (47.center);
		\draw [style={red_edge}, in=-90, out=45] (47.center) to (9.center);
		\draw [style={red_edge}, in=135, out=-90] (5.center) to (53.center);
		\draw [style={red_edge}] (2.center) to (53.center);
		\draw [style=blue edge, in=105, out=-90] (4.center) to (53.center);
		\draw [style=blue edge] (55) to (1.center);
		\draw [style=blue edge] (54) to (53.center);
		\draw [style={red_edge}, in=-90, out=15] (53.center) to (0.center);
		\draw [style={green_edge}, in=90, out=-90] (3.center) to (56.center);
		\draw [style=blue edge] (78) to (79.center);
		\draw [style={red_edge}, in=-135, out=90, looseness=0.75] (66.center) to (79.center);
		\draw [style={red_edge}, in=-30, out=90, looseness=0.75] (68.center) to (79.center);
		\draw [style=blue edge, in=-165, out=90, looseness=0.75] (64.center) to (79.center);
		\draw [style=blue edge, in=-15, out=90] (70.center) to (79.center);
		\draw [style={red_edge}, in=-165, out=90] (57.center) to (82.center);
		\draw [style={red_edge}, in=-120, out=90, looseness=0.75] (59.center) to (82.center);
		\draw [style=blue edge, in=-135, out=90] (58.center) to (82.center);
		\draw [style=blue edge, bend left=15] (82.center) to (81);
		\draw [style={red_edge}, in=-15, out=90, looseness=1.25] (62.center) to (82.center);
		\draw [style={thick_purple}, in=-135, out=90, looseness=0.75] (82.center) to (83.center);
		\draw [style={thick_purple}, in=90, out=-60, looseness=0.75] (83.center) to (79.center);
		\draw [style={thick_purple}] (84.center) to (83.center);
		\draw [style={green_edge}, in=270, out=90] (71.center) to (56.center);
		\draw [style=blue edge] (13.center) to (50);
		\draw [style=blue edge, in=-120, out=30, looseness=0.75] (47.center) to (52);
		\draw [style={green_edge}] (14.center) to (50);
		\draw [style={green_edge}] (50) to (12.center);
		\draw [style={thick_purple}, in=90, out=-90, looseness=0.75] (47.center) to (84.center);
		\draw [style={thick_purple}] (84.center) to (83.center);
		\draw [style={thick_purple}, in=-90, out=75, looseness=0.75] (84.center) to (53.center);
		\draw [style=blue edge] (78) to (67.center);
		\draw [style={green_edge}] (65.center) to (78);
		\draw [style={green_edge}] (78) to (69.center);
		\draw [style={green_edge}] (81) to (63.center);
		\draw [style={green_edge}] (81) to (60.center);
		\draw [style=blue edge] (81) to (61.center);
	\end{pgfonlayer}
\end{tikzpicture}
\]
        We have now simplified 16 of the double leaves appearing in the coefficient expansion of $\varphi$ to $-2K$.

        Next, we will consider the block of $C$ corresponding to $w = wststs$, shown in Figure~\ref{fig:harder_block}. Let $F : B_{\ux} \to B_{\uy}$ be its contribution to $\varphi$.

        \begin{figure}[H]
                \begin{center}
                        \begin{tabular}{r|*{7}{c}}
                        $\ue \backslash \uf$ & $186$ & $1742$ & $2490$ & $11214$ & $701$ & $1402$ & $5501$ \\
                        \hline
                        $55$ & $0$ & $2$ & $2$ & $2$ & $-2$ & $-2$ & $-2$ \\
                        $375$ & $2$ & $2$ & $2$ & $2$ & $0$ & $0$ & $0$ \\
                        $20535$ &  $2$ & $2$ & $2$ & $2$ & $0$ & $0$ & $0$ \\
                        $20855$ &  $2$ & $2$ & $2$ & $2$ & $0$ & $0$ & $0$ \\
                        $719$ & $-2$ & $0$ & $0$ & $0$ & $-2$ & $-2$ & $-2$ \\
                        $4407$ & $-2$ & $0$ & $0$ & $0$ & $-2$ & $-2$ & $-2$ \\
                        $5071$ & $-2$ & $0$ & $0$ & $0$ & $-2$ & $-2$ & $-2$ 
                        \end{tabular} 
                \end{center}
                \caption{The $ststs$-block of the coefficient matrix.}\label{fig:harder_block}
        \end{figure}

        We can write $F = 2 (G-H)$ where
\begin{align*}
G&=(\oLL_{\uy,186}+\oLL_{\uy,1742} +\oLL_{\uy,2490}+\oLL_{\uy,11214}) \circ(\LL_{\ux,55}+\LL_{\ux,375} +\LL_{\ux,20535}+\LL_{\ux,20855}),\\
H&=(\oLL_{\uy,186}+\oLL_{\uy,701} +\oLL_{\uy,1402}+\oLL_{\uy,5501}) \circ(\LL_{\ux,55}+\LL_{\ux,719} +\LL_{\ux,4407}+\LL_{\ux,5071}).
\end{align*}
        We can then simplify $G$ and $H$ into single thick diagrams in an analogous fashion as we did for $K$. 
        \[G = 
  \tikzsetnextfilename{P_thick}
  \tikzstyle{every picture}=[tikzfig]
  \begin{tikzpicture}
	\begin{pgfonlayer}{nodelayer}
		\node [style=none] (0) at (3.25, 3.25) {};
		\node [style=none] (14) at (-3.75, 3.25) {};
		\node [style=none] (57) at (-3.25, -3.25) {};
		\node [style=none] (58) at (-2.75, -3.25) {};
		\node [style=none] (59) at (-2.25, -3.25) {};
		\node [style=none] (60) at (-1.75, -3.25) {};
		\node [style=none] (61) at (-1.25, -3.25) {};
		\node [style=none] (62) at (-0.75, -3.25) {};
		\node [style=none] (82) at (-1.75, -1.5) {};
		\node [style=none] (83) at (0, -0.5) {};
		\node [style=none] (84) at (0, 0.25) {};
		\node [style=none] (85) at (3.75, -3.25) {};
		\node [style=none] (86) at (3.25, -3.25) {};
		\node [style=none] (87) at (2.75, -3.25) {};
		\node [style=none] (88) at (2.25, -3.25) {};
		\node [style=none] (89) at (1.75, -3.25) {};
		\node [style=none] (90) at (1.25, -3.25) {};
		\node [style=none] (91) at (0.75, -3.25) {};
		\node [style=none] (92) at (0.25, -3.25) {};
		\node [style=none] (93) at (-0.25, -3.25) {};
		\node [style=none] (95) at (1.75, -1.5) {};
		\node [style={thick_dot_ochre}] (96) at (0.25, -2.75) {};
		\node [style={thick_dot_ochre}] (98) at (3.25, -2.75) {};
		\node [style=none] (99) at (-3.25, 3.25) {};
		\node [style=none] (100) at (-2.75, 3.25) {};
		\node [style=none] (101) at (-2.25, 3.25) {};
		\node [style=none] (102) at (-1.75, 3.25) {};
		\node [style=none] (103) at (-1.25, 3.25) {};
		\node [style=none] (104) at (-0.75, 3.25) {};
		\node [style=none] (105) at (-0.25, 3.25) {};
		\node [style={thick_dot_ochre}] (107) at (-1.75, 2.5) {};
		\node [style=none] (108) at (-1.75, 1.25) {};
		\node [style={dot_green}] (109) at (-3.75, 2.25) {};
		\node [style={dot_green}] (110) at (-1.75, -2.5) {};
		\node [style=none] (118) at (0.25, 3.25) {};
		\node [style=none] (119) at (0.75, 3.25) {};
		\node [style=none] (120) at (1.25, 3.25) {};
		\node [style=none] (121) at (2.25, 3.25) {};
		\node [style=none] (122) at (1.75, 3.25) {};
		\node [style=none] (123) at (2.75, 3.25) {};
		\node [style={thick_dot_ochre}] (125) at (1.5, 2.5) {};
		\node [style=none] (126) at (1.75, 1.25) {};
	\end{pgfonlayer}
	\begin{pgfonlayer}{edgelayer}
		\draw [style={thick_purple}, in=-135, out=90, looseness=0.75] (82.center) to (83.center);
		\draw [style={thick_purple}] (84.center) to (83.center);
		\draw [style=blue edge] (86.center) to (98);
		\draw [style=blue edge, in=-150, out=60, looseness=0.75] (96) to (95.center);
		\draw [style=blue edge] (95.center) to (89.center);
		\draw [style={green_edge}] (87.center) to (98);
		\draw [style={green_edge}] (98) to (85.center);
		\draw [style={red_edge}, in=-135, out=90] (90.center) to (95.center);
		\draw [style={red_edge}, in=90, out=-45] (95.center) to (88.center);
		\draw [style=blue edge] (107) to (108.center);
		\draw [style={red_edge}, in=135, out=-90, looseness=0.75] (101.center) to (108.center);
		\draw [style={red_edge}, in=30, out=-90, looseness=0.75] (103.center) to (108.center);
		\draw [style=blue edge, in=165, out=-90, looseness=0.75] (99.center) to (108.center);
		\draw [style=blue edge, in=15, out=-90] (105.center) to (108.center);
		\draw [style={green_edge}] (14.center) to (109);
		\draw [style={red_edge}, bend left] (57.center) to (82.center);
		\draw [style={red_edge}, bend left=15] (59.center) to (82.center);
		\draw [style={red_edge}, bend left] (82.center) to (62.center);
		\draw [style=blue edge, bend left=15] (82.center) to (61.center);
		\draw [style=blue edge, bend left] (58.center) to (82.center);
		\draw [style={green_edge}] (110) to (60.center);
		\draw [style=blue edge, in=105, out=-90, looseness=0.75] (125) to (126.center);
		\draw [style={red_edge}, in=135, out=-90, looseness=0.75] (119.center) to (126.center);
		\draw [style={red_edge}, in=90, out=-90] (121.center) to (126.center);
		\draw [style=blue edge, in=60, out=-90, looseness=0.75] (123.center) to (126.center);
		\draw [style={red_edge}, in=-90, out=15] (126.center) to (0.center);
		\draw [style={thick_purple}, in=90, out=-90, looseness=0.75] (108.center) to (84.center);
		\draw [style={thick_purple}] (84.center) to (83.center);
		\draw [style={thick_purple}, in=-90, out=75, looseness=0.75] (84.center) to (126.center);
		\draw [style={thick_purple}, in=90, out=-60, looseness=0.75] (83.center) to (95.center);
		\draw [style=blue edge] (92.center) to (96);
		\draw [style=blue edge, in=120, out=-30, looseness=0.75] (95.center) to (98);
		\draw [style={green_edge}] (93.center) to (96);
		\draw [style={green_edge}] (96) to (91.center);
		\draw [style=blue edge] (107) to (102.center);
		\draw [style={green_edge}] (100.center) to (107);
		\draw [style={green_edge}] (107) to (104.center);
		\draw [style=blue edge] (125) to (120.center);
		\draw [style={green_edge}, in=150, out=-90] (118.center) to (125);
		\draw [style={green_edge}] (125) to (122.center);
	\end{pgfonlayer}
\end{tikzpicture}
\qquad\text{and}\qquad H=
  \tikzsetnextfilename{Q_thick}
  \tikzstyle{every picture}=[tikzfig]
  \begin{tikzpicture}
	\begin{pgfonlayer}{nodelayer}
		\node [style=none] (0) at (-3.25, -3.25) {};
		\node [style=none] (14) at (3.75, -3.25) {};
		\node [style=none] (57) at (3.25, 3.25) {};
		\node [style=none] (58) at (2.75, 3.25) {};
		\node [style=none] (59) at (2.25, 3.25) {};
		\node [style=none] (60) at (1.75, 3.25) {};
		\node [style=none] (61) at (1.25, 3.25) {};
		\node [style=none] (62) at (0.75, 3.25) {};
		\node [style=none] (82) at (1.75, 1.5) {};
		\node [style=none] (83) at (0, 0.5) {};
		\node [style=none] (84) at (0, -0.25) {};
		\node [style=none] (85) at (-3.75, 3.25) {};
		\node [style=none] (86) at (-3.25, 3.25) {};
		\node [style=none] (87) at (-2.75, 3.25) {};
		\node [style=none] (88) at (-2.25, 3.25) {};
		\node [style=none] (89) at (-1.75, 3.25) {};
		\node [style=none] (90) at (-1.25, 3.25) {};
		\node [style=none] (91) at (-0.75, 3.25) {};
		\node [style=none] (92) at (-0.25, 3.25) {};
		\node [style=none] (93) at (0.25, 3.25) {};
		\node [style=none] (95) at (-1.75, 1.5) {};
		\node [style={thick_dot_ochre}] (96) at (-0.25, 2.75) {};
		\node [style={thick_dot_ochre}] (98) at (-3.25, 2.75) {};
		\node [style=none] (99) at (3.25, -3.25) {};
		\node [style=none] (100) at (2.75, -3.25) {};
		\node [style=none] (101) at (2.25, -3.25) {};
		\node [style=none] (102) at (1.75, -3.25) {};
		\node [style=none] (103) at (1.25, -3.25) {};
		\node [style=none] (104) at (0.75, -3.25) {};
		\node [style=none] (105) at (0.25, -3.25) {};
		\node [style={thick_dot_ochre}] (107) at (1.75, -2.5) {};
		\node [style=none] (108) at (1.75, -1.25) {};
		\node [style={dot_green}] (109) at (3.75, -2.25) {};
		\node [style={dot_green}] (110) at (1.75, 2.5) {};
		\node [style=none] (118) at (-0.25, -3.25) {};
		\node [style=none] (119) at (-0.75, -3.25) {};
		\node [style=none] (120) at (-1.25, -3.25) {};
		\node [style=none] (121) at (-2.25, -3.25) {};
		\node [style=none] (122) at (-1.75, -3.25) {};
		\node [style=none] (123) at (-2.75, -3.25) {};
		\node [style={thick_dot_ochre}] (125) at (-1.5, -2.5) {};
		\node [style=none] (126) at (-1.75, -1.25) {};
	\end{pgfonlayer}
	\begin{pgfonlayer}{edgelayer}
		\draw [style={thick_purple}, in=45, out=-90, looseness=0.75] (82.center) to (83.center);
		\draw [style={thick_purple}] (84.center) to (83.center);
		\draw [style=blue edge] (86.center) to (98);
		\draw [style=blue edge, in=30, out=-90, looseness=0.75] (96) to (95.center);
		\draw [style=blue edge] (95.center) to (89.center);
		\draw [style={green_edge}] (87.center) to (98);
		\draw [style={green_edge}] (98) to (85.center);
		\draw [style={red_edge}, in=45, out=-90] (90.center) to (95.center);
		\draw [style={red_edge}, in=-90, out=135] (95.center) to (88.center);
		\draw [style=blue edge] (107) to (108.center);
		\draw [style={red_edge}, in=-45, out=90, looseness=0.75] (101.center) to (108.center);
		\draw [style={red_edge}, in=-150, out=90, looseness=0.75] (103.center) to (108.center);
		\draw [style=blue edge, in=-15, out=90, looseness=0.75] (99.center) to (108.center);
		\draw [style=blue edge, in=-165, out=90] (105.center) to (108.center);
		\draw [style={green_edge}] (14.center) to (109);
		\draw [style={red_edge}, bend left] (57.center) to (82.center);
		\draw [style={red_edge}, bend left=15] (59.center) to (82.center);
		\draw [style={red_edge}, bend left] (82.center) to (62.center);
		\draw [style=blue edge, bend left=15] (82.center) to (61.center);
		\draw [style=blue edge, bend left] (58.center) to (82.center);
		\draw [style={green_edge}] (110) to (60.center);
		\draw [style=blue edge, in=-75, out=90] (125) to (126.center);
		\draw [style={red_edge}, in=-45, out=90, looseness=0.75] (119.center) to (126.center);
		\draw [style={red_edge}, in=-90, out=90] (121.center) to (126.center);
		\draw [style=blue edge, in=-120, out=90, looseness=0.75] (123.center) to (126.center);
		\draw [style={red_edge}, in=90, out=-165] (126.center) to (0.center);
		\draw [style={thick_purple}, in=-90, out=90, looseness=0.75] (108.center) to (84.center);
		\draw [style={thick_purple}] (84.center) to (83.center);
		\draw [style={thick_purple}, in=90, out=-105, looseness=0.75] (84.center) to (126.center);
		\draw [style={thick_purple}, in=-90, out=120, looseness=0.75] (83.center) to (95.center);
		\draw [style=blue edge] (92.center) to (96);
		\draw [style=blue edge, in=-75, out=150, looseness=0.75] (95.center) to (98);
		\draw [style={green_edge}] (93.center) to (96);
		\draw [style={green_edge}] (96) to (91.center);
		\draw [style=blue edge] (107) to (102.center);
		\draw [style={green_edge}] (100.center) to (107);
		\draw [style={green_edge}] (107) to (104.center);
		\draw [style=blue edge] (125) to (120.center);
		\draw [style={green_edge}, in=-30, out=90] (118.center) to (125);
		\draw [style={green_edge}] (125) to (122.center);
	\end{pgfonlayer}
\end{tikzpicture}
\]
        This expresses the thirty nonzero terms in the $ststs$-block as a difference of two thick Elias--Williamson diagrams.

        From here we could attempt to simplify the other blocks of $C$ analogously. This is certainly possible, but the computation becomes incredibly tedious. 
        However, a useful observation can save us a lot of effort.
        First, recall that $\varphi$ is rotation anti-invariant, i.e., $\rho (\varphi) = -\varphi$.
        We can see a shadow of this fact for the summand $F$. Indeed, from visual inspection we see that $\rho (G) = H$. 
        We do not currently see what the rotation of $K$ is in the double-leaf expansion. Nonetheless, we could write
        \[\varphi = 2 (\rho(K) - K) + 2 (G-H) + \textnormal{remainder}.\]
        To find what terms correspond to $\rho (K)$, we can use Algorithm~\ref{algo:coeff_algo_interpolate} on $\rho (K)$.
        This is where we really luck out, $2\rho (K)$ is exactly the sum of the remaining 79 nonzero terms. In other words, we get an equality,
        \[\varphi = 2 (\rho(K) - K) + 2 (G-H).\]
        Diagrammatically, we have that
        \begin{align*}
                \varphi &= 2 \left( 
  \tikzsetnextfilename{K_thick_rot}
  \tikzstyle{every picture}=[tikzfig]
  \begin{tikzpicture}
	\begin{pgfonlayer}{nodelayer}
		\node [style=none] (0) at (-3.25, -3.25) {};
		\node [style=none] (1) at (-2.75, -3.25) {};
		\node [style=none] (2) at (-2.25, -3.25) {};
		\node [style=none] (3) at (-1.75, -3.25) {};
		\node [style=none] (4) at (-1.25, -3.25) {};
		\node [style=none] (5) at (-0.75, -3.25) {};
		\node [style=none] (6) at (-0.25, -3.25) {};
		\node [style=none] (7) at (0.25, -3.25) {};
		\node [style=none] (8) at (0.75, -3.25) {};
		\node [style=none] (9) at (1.25, -3.25) {};
		\node [style=none] (10) at (1.75, -3.25) {};
		\node [style=none] (11) at (2.25, -3.25) {};
		\node [style=none] (12) at (2.75, -3.25) {};
		\node [style=none] (13) at (3.25, -3.25) {};
		\node [style=none] (14) at (3.75, -3.25) {};
		\node [style=none] (47) at (1.75, -1.5) {};
		\node [style={thick_dot_ochre}] (50) at (3.25, -2.75) {};
		\node [style={thick_dot_ochre}] (52) at (0.25, -2.75) {};
		\node [style=none] (53) at (-2, -1.5) {};
		\node [style={EW_t}] (54) at (-2.5, -2) {};
		\node [style={EW_t}] (55) at (-2.75, -2.75) {};
		\node [style=none] (56) at (-3.25, -1.5) {};
		\node [style=none] (57) at (3.25, 3.25) {};
		\node [style=none] (58) at (2.75, 3.25) {};
		\node [style=none] (59) at (2.25, 3.25) {};
		\node [style=none] (60) at (1.75, 3.25) {};
		\node [style=none] (61) at (1.25, 3.25) {};
		\node [style=none] (62) at (0.75, 3.25) {};
		\node [style=none] (63) at (0.25, 3.25) {};
		\node [style=none] (64) at (-0.25, 3.25) {};
		\node [style=none] (65) at (-0.75, 3.25) {};
		\node [style=none] (66) at (-1.25, 3.25) {};
		\node [style=none] (67) at (-1.75, 3.25) {};
		\node [style=none] (68) at (-2.25, 3.25) {};
		\node [style=none] (69) at (-2.75, 3.25) {};
		\node [style=none] (70) at (-3.25, 3.25) {};
		\node [style=none] (71) at (-3.75, 3.25) {};
		\node [style={thick_dot_ochre}] (78) at (-1.75, 2.5) {};
		\node [style=none] (79) at (-1.75, 1.25) {};
		\node [style={thick_dot_ochre}] (81) at (1.25, 2.5) {};
		\node [style=none] (82) at (1.75, 1.25) {};
		\node [style=none] (83) at (0, 0.25) {};
		\node [style=none] (84) at (0, -0.5) {};
	\end{pgfonlayer}
	\begin{pgfonlayer}{edgelayer}
		\draw [style=blue edge] (7.center) to (52);
		\draw [style=blue edge, in=-30, out=105, looseness=0.75] (50) to (47.center);
		\draw [style=blue edge] (47.center) to (10.center);
		\draw [style={green_edge}] (8.center) to (52);
		\draw [style={green_edge}] (52) to (6.center);
		\draw [style={red_edge}, in=-45, out=90] (11.center) to (47.center);
		\draw [style={red_edge}, in=90, out=-135] (47.center) to (9.center);
		\draw [style={red_edge}, in=-45, out=90] (5.center) to (53.center);
		\draw [style={red_edge}] (2.center) to (53.center);
		\draw [style=blue edge, in=-75, out=90] (4.center) to (53.center);
		\draw [style=blue edge] (55) to (1.center);
		\draw [style=blue edge] (54) to (53.center);
		\draw [style={red_edge}, in=90, out=-165] (53.center) to (0.center);
		\draw [style={green_edge}, in=-90, out=90] (3.center) to (56.center);
		\draw [style=blue edge] (78) to (79.center);
		\draw [style={red_edge}, in=45, out=-90, looseness=0.75] (66.center) to (79.center);
		\draw [style={red_edge}, in=150, out=-90, looseness=0.75] (68.center) to (79.center);
		\draw [style=blue edge, in=15, out=-90, looseness=0.75] (64.center) to (79.center);
		\draw [style=blue edge, in=165, out=-90] (70.center) to (79.center);
		\draw [style={red_edge}, in=15, out=-90] (57.center) to (82.center);
		\draw [style={red_edge}, in=60, out=-90, looseness=0.75] (59.center) to (82.center);
		\draw [style=blue edge, in=45, out=-90] (58.center) to (82.center);
		\draw [style=blue edge, bend left=15] (82.center) to (81);
		\draw [style={red_edge}, in=165, out=-90, looseness=1.25] (62.center) to (82.center);
		\draw [style={thick_purple}, in=45, out=-90, looseness=0.75] (82.center) to (83.center);
		\draw [style={thick_purple}, in=-90, out=120, looseness=0.75] (83.center) to (79.center);
		\draw [style={thick_purple}] (84.center) to (83.center);
		\draw [style={green_edge}, in=90, out=-90] (71.center) to (56.center);
		\draw [style=blue edge] (13.center) to (50);
		\draw [style=blue edge, in=60, out=-150, looseness=0.75] (47.center) to (52);
		\draw [style={green_edge}] (14.center) to (50);
		\draw [style={green_edge}] (50) to (12.center);
		\draw [style={thick_purple}, in=-90, out=90, looseness=0.75] (47.center) to (84.center);
		\draw [style={thick_purple}] (84.center) to (83.center);
		\draw [style={thick_purple}, in=90, out=-105, looseness=0.75] (84.center) to (53.center);
		\draw [style=blue edge] (78) to (67.center);
		\draw [style={green_edge}] (65.center) to (78);
		\draw [style={green_edge}] (78) to (69.center);
		\draw [style={green_edge}] (81) to (63.center);
		\draw [style={green_edge}] (81) to (60.center);
		\draw [style=blue edge] (81) to (61.center);
	\end{pgfonlayer}
\end{tikzpicture}
 - 
  \tikzsetnextfilename{K_thick}
  \tikzstyle{every picture}=[tikzfig]
  \begin{tikzpicture}
	\begin{pgfonlayer}{nodelayer}
		\node [style=none] (0) at (3.25, 3.25) {};
		\node [style=none] (1) at (2.75, 3.25) {};
		\node [style=none] (2) at (2.25, 3.25) {};
		\node [style=none] (3) at (1.75, 3.25) {};
		\node [style=none] (4) at (1.25, 3.25) {};
		\node [style=none] (5) at (0.75, 3.25) {};
		\node [style=none] (6) at (0.25, 3.25) {};
		\node [style=none] (7) at (-0.25, 3.25) {};
		\node [style=none] (8) at (-0.75, 3.25) {};
		\node [style=none] (9) at (-1.25, 3.25) {};
		\node [style=none] (10) at (-1.75, 3.25) {};
		\node [style=none] (11) at (-2.25, 3.25) {};
		\node [style=none] (12) at (-2.75, 3.25) {};
		\node [style=none] (13) at (-3.25, 3.25) {};
		\node [style=none] (14) at (-3.75, 3.25) {};
		\node [style=none] (47) at (-1.75, 1.5) {};
		\node [style={thick_dot_ochre}] (50) at (-3.25, 2.75) {};
		\node [style={thick_dot_ochre}] (52) at (-0.25, 2.75) {};
		\node [style=none] (53) at (2, 1.5) {};
		\node [style={EW_t}] (54) at (2.5, 2) {};
		\node [style={EW_t}] (55) at (2.75, 2.75) {};
		\node [style=none] (56) at (3.25, 1.5) {};
		\node [style=none] (57) at (-3.25, -3.25) {};
		\node [style=none] (58) at (-2.75, -3.25) {};
		\node [style=none] (59) at (-2.25, -3.25) {};
		\node [style=none] (60) at (-1.75, -3.25) {};
		\node [style=none] (61) at (-1.25, -3.25) {};
		\node [style=none] (62) at (-0.75, -3.25) {};
		\node [style=none] (63) at (-0.25, -3.25) {};
		\node [style=none] (64) at (0.25, -3.25) {};
		\node [style=none] (65) at (0.75, -3.25) {};
		\node [style=none] (66) at (1.25, -3.25) {};
		\node [style=none] (67) at (1.75, -3.25) {};
		\node [style=none] (68) at (2.25, -3.25) {};
		\node [style=none] (69) at (2.75, -3.25) {};
		\node [style=none] (70) at (3.25, -3.25) {};
		\node [style=none] (71) at (3.75, -3.25) {};
		\node [style={thick_dot_ochre}] (78) at (1.75, -2.5) {};
		\node [style=none] (79) at (1.75, -1.25) {};
		\node [style={thick_dot_ochre}] (81) at (-1.25, -2.5) {};
		\node [style=none] (82) at (-1.75, -1.25) {};
		\node [style=none] (83) at (0, -0.25) {};
		\node [style=none] (84) at (0, 0.5) {};
	\end{pgfonlayer}
	\begin{pgfonlayer}{edgelayer}
		\draw [style=blue edge] (7.center) to (52);
		\draw [style=blue edge, in=150, out=-75, looseness=0.75] (50) to (47.center);
		\draw [style=blue edge] (47.center) to (10.center);
		\draw [style={green_edge}] (8.center) to (52);
		\draw [style={green_edge}] (52) to (6.center);
		\draw [style={red_edge}, in=135, out=-90] (11.center) to (47.center);
		\draw [style={red_edge}, in=-90, out=45] (47.center) to (9.center);
		\draw [style={red_edge}, in=135, out=-90] (5.center) to (53.center);
		\draw [style={red_edge}] (2.center) to (53.center);
		\draw [style=blue edge, in=105, out=-90] (4.center) to (53.center);
		\draw [style=blue edge] (55) to (1.center);
		\draw [style=blue edge] (54) to (53.center);
		\draw [style={red_edge}, in=-90, out=15] (53.center) to (0.center);
		\draw [style={green_edge}, in=90, out=-90] (3.center) to (56.center);
		\draw [style=blue edge] (78) to (79.center);
		\draw [style={red_edge}, in=-135, out=90, looseness=0.75] (66.center) to (79.center);
		\draw [style={red_edge}, in=-30, out=90, looseness=0.75] (68.center) to (79.center);
		\draw [style=blue edge, in=-165, out=90, looseness=0.75] (64.center) to (79.center);
		\draw [style=blue edge, in=-15, out=90] (70.center) to (79.center);
		\draw [style={red_edge}, in=-165, out=90] (57.center) to (82.center);
		\draw [style={red_edge}, in=-120, out=90, looseness=0.75] (59.center) to (82.center);
		\draw [style=blue edge, in=-135, out=90] (58.center) to (82.center);
		\draw [style=blue edge, bend left=15] (82.center) to (81);
		\draw [style={red_edge}, in=-15, out=90, looseness=1.25] (62.center) to (82.center);
		\draw [style={thick_purple}, in=-135, out=90, looseness=0.75] (82.center) to (83.center);
		\draw [style={thick_purple}, in=90, out=-60, looseness=0.75] (83.center) to (79.center);
		\draw [style={thick_purple}] (84.center) to (83.center);
		\draw [style={green_edge}, in=270, out=90] (71.center) to (56.center);
		\draw [style=blue edge] (13.center) to (50);
		\draw [style=blue edge, in=-120, out=30, looseness=0.75] (47.center) to (52);
		\draw [style={green_edge}] (14.center) to (50);
		\draw [style={green_edge}] (50) to (12.center);
		\draw [style={thick_purple}, in=90, out=-90, looseness=0.75] (47.center) to (84.center);
		\draw [style={thick_purple}] (84.center) to (83.center);
		\draw [style={thick_purple}, in=-90, out=75, looseness=0.75] (84.center) to (53.center);
		\draw [style=blue edge] (78) to (67.center);
		\draw [style={green_edge}] (65.center) to (78);
		\draw [style={green_edge}] (78) to (69.center);
		\draw [style={green_edge}] (81) to (63.center);
		\draw [style={green_edge}] (81) to (60.center);
		\draw [style=blue edge] (81) to (61.center);
	\end{pgfonlayer}
\end{tikzpicture}
 \right) \\
                &+ 2 \left( 
  \tikzsetnextfilename{P_thick}
  \tikzstyle{every picture}=[tikzfig]
  \begin{tikzpicture}
	\begin{pgfonlayer}{nodelayer}
		\node [style=none] (0) at (3.25, 3.25) {};
		\node [style=none] (14) at (-3.75, 3.25) {};
		\node [style=none] (57) at (-3.25, -3.25) {};
		\node [style=none] (58) at (-2.75, -3.25) {};
		\node [style=none] (59) at (-2.25, -3.25) {};
		\node [style=none] (60) at (-1.75, -3.25) {};
		\node [style=none] (61) at (-1.25, -3.25) {};
		\node [style=none] (62) at (-0.75, -3.25) {};
		\node [style=none] (82) at (-1.75, -1.5) {};
		\node [style=none] (83) at (0, -0.5) {};
		\node [style=none] (84) at (0, 0.25) {};
		\node [style=none] (85) at (3.75, -3.25) {};
		\node [style=none] (86) at (3.25, -3.25) {};
		\node [style=none] (87) at (2.75, -3.25) {};
		\node [style=none] (88) at (2.25, -3.25) {};
		\node [style=none] (89) at (1.75, -3.25) {};
		\node [style=none] (90) at (1.25, -3.25) {};
		\node [style=none] (91) at (0.75, -3.25) {};
		\node [style=none] (92) at (0.25, -3.25) {};
		\node [style=none] (93) at (-0.25, -3.25) {};
		\node [style=none] (95) at (1.75, -1.5) {};
		\node [style={thick_dot_ochre}] (96) at (0.25, -2.75) {};
		\node [style={thick_dot_ochre}] (98) at (3.25, -2.75) {};
		\node [style=none] (99) at (-3.25, 3.25) {};
		\node [style=none] (100) at (-2.75, 3.25) {};
		\node [style=none] (101) at (-2.25, 3.25) {};
		\node [style=none] (102) at (-1.75, 3.25) {};
		\node [style=none] (103) at (-1.25, 3.25) {};
		\node [style=none] (104) at (-0.75, 3.25) {};
		\node [style=none] (105) at (-0.25, 3.25) {};
		\node [style={thick_dot_ochre}] (107) at (-1.75, 2.5) {};
		\node [style=none] (108) at (-1.75, 1.25) {};
		\node [style={dot_green}] (109) at (-3.75, 2.25) {};
		\node [style={dot_green}] (110) at (-1.75, -2.5) {};
		\node [style=none] (118) at (0.25, 3.25) {};
		\node [style=none] (119) at (0.75, 3.25) {};
		\node [style=none] (120) at (1.25, 3.25) {};
		\node [style=none] (121) at (2.25, 3.25) {};
		\node [style=none] (122) at (1.75, 3.25) {};
		\node [style=none] (123) at (2.75, 3.25) {};
		\node [style={thick_dot_ochre}] (125) at (1.5, 2.5) {};
		\node [style=none] (126) at (1.75, 1.25) {};
	\end{pgfonlayer}
	\begin{pgfonlayer}{edgelayer}
		\draw [style={thick_purple}, in=-135, out=90, looseness=0.75] (82.center) to (83.center);
		\draw [style={thick_purple}] (84.center) to (83.center);
		\draw [style=blue edge] (86.center) to (98);
		\draw [style=blue edge, in=-150, out=60, looseness=0.75] (96) to (95.center);
		\draw [style=blue edge] (95.center) to (89.center);
		\draw [style={green_edge}] (87.center) to (98);
		\draw [style={green_edge}] (98) to (85.center);
		\draw [style={red_edge}, in=-135, out=90] (90.center) to (95.center);
		\draw [style={red_edge}, in=90, out=-45] (95.center) to (88.center);
		\draw [style=blue edge] (107) to (108.center);
		\draw [style={red_edge}, in=135, out=-90, looseness=0.75] (101.center) to (108.center);
		\draw [style={red_edge}, in=30, out=-90, looseness=0.75] (103.center) to (108.center);
		\draw [style=blue edge, in=165, out=-90, looseness=0.75] (99.center) to (108.center);
		\draw [style=blue edge, in=15, out=-90] (105.center) to (108.center);
		\draw [style={green_edge}] (14.center) to (109);
		\draw [style={red_edge}, bend left] (57.center) to (82.center);
		\draw [style={red_edge}, bend left=15] (59.center) to (82.center);
		\draw [style={red_edge}, bend left] (82.center) to (62.center);
		\draw [style=blue edge, bend left=15] (82.center) to (61.center);
		\draw [style=blue edge, bend left] (58.center) to (82.center);
		\draw [style={green_edge}] (110) to (60.center);
		\draw [style=blue edge, in=105, out=-90, looseness=0.75] (125) to (126.center);
		\draw [style={red_edge}, in=135, out=-90, looseness=0.75] (119.center) to (126.center);
		\draw [style={red_edge}, in=90, out=-90] (121.center) to (126.center);
		\draw [style=blue edge, in=60, out=-90, looseness=0.75] (123.center) to (126.center);
		\draw [style={red_edge}, in=-90, out=15] (126.center) to (0.center);
		\draw [style={thick_purple}, in=90, out=-90, looseness=0.75] (108.center) to (84.center);
		\draw [style={thick_purple}] (84.center) to (83.center);
		\draw [style={thick_purple}, in=-90, out=75, looseness=0.75] (84.center) to (126.center);
		\draw [style={thick_purple}, in=90, out=-60, looseness=0.75] (83.center) to (95.center);
		\draw [style=blue edge] (92.center) to (96);
		\draw [style=blue edge, in=120, out=-30, looseness=0.75] (95.center) to (98);
		\draw [style={green_edge}] (93.center) to (96);
		\draw [style={green_edge}] (96) to (91.center);
		\draw [style=blue edge] (107) to (102.center);
		\draw [style={green_edge}] (100.center) to (107);
		\draw [style={green_edge}] (107) to (104.center);
		\draw [style=blue edge] (125) to (120.center);
		\draw [style={green_edge}, in=150, out=-90] (118.center) to (125);
		\draw [style={green_edge}] (125) to (122.center);
	\end{pgfonlayer}
\end{tikzpicture}
 - 
  \tikzsetnextfilename{Q_thick}
  \tikzstyle{every picture}=[tikzfig]
  \begin{tikzpicture}
	\begin{pgfonlayer}{nodelayer}
		\node [style=none] (0) at (-3.25, -3.25) {};
		\node [style=none] (14) at (3.75, -3.25) {};
		\node [style=none] (57) at (3.25, 3.25) {};
		\node [style=none] (58) at (2.75, 3.25) {};
		\node [style=none] (59) at (2.25, 3.25) {};
		\node [style=none] (60) at (1.75, 3.25) {};
		\node [style=none] (61) at (1.25, 3.25) {};
		\node [style=none] (62) at (0.75, 3.25) {};
		\node [style=none] (82) at (1.75, 1.5) {};
		\node [style=none] (83) at (0, 0.5) {};
		\node [style=none] (84) at (0, -0.25) {};
		\node [style=none] (85) at (-3.75, 3.25) {};
		\node [style=none] (86) at (-3.25, 3.25) {};
		\node [style=none] (87) at (-2.75, 3.25) {};
		\node [style=none] (88) at (-2.25, 3.25) {};
		\node [style=none] (89) at (-1.75, 3.25) {};
		\node [style=none] (90) at (-1.25, 3.25) {};
		\node [style=none] (91) at (-0.75, 3.25) {};
		\node [style=none] (92) at (-0.25, 3.25) {};
		\node [style=none] (93) at (0.25, 3.25) {};
		\node [style=none] (95) at (-1.75, 1.5) {};
		\node [style={thick_dot_ochre}] (96) at (-0.25, 2.75) {};
		\node [style={thick_dot_ochre}] (98) at (-3.25, 2.75) {};
		\node [style=none] (99) at (3.25, -3.25) {};
		\node [style=none] (100) at (2.75, -3.25) {};
		\node [style=none] (101) at (2.25, -3.25) {};
		\node [style=none] (102) at (1.75, -3.25) {};
		\node [style=none] (103) at (1.25, -3.25) {};
		\node [style=none] (104) at (0.75, -3.25) {};
		\node [style=none] (105) at (0.25, -3.25) {};
		\node [style={thick_dot_ochre}] (107) at (1.75, -2.5) {};
		\node [style=none] (108) at (1.75, -1.25) {};
		\node [style={dot_green}] (109) at (3.75, -2.25) {};
		\node [style={dot_green}] (110) at (1.75, 2.5) {};
		\node [style=none] (118) at (-0.25, -3.25) {};
		\node [style=none] (119) at (-0.75, -3.25) {};
		\node [style=none] (120) at (-1.25, -3.25) {};
		\node [style=none] (121) at (-2.25, -3.25) {};
		\node [style=none] (122) at (-1.75, -3.25) {};
		\node [style=none] (123) at (-2.75, -3.25) {};
		\node [style={thick_dot_ochre}] (125) at (-1.5, -2.5) {};
		\node [style=none] (126) at (-1.75, -1.25) {};
	\end{pgfonlayer}
	\begin{pgfonlayer}{edgelayer}
		\draw [style={thick_purple}, in=45, out=-90, looseness=0.75] (82.center) to (83.center);
		\draw [style={thick_purple}] (84.center) to (83.center);
		\draw [style=blue edge] (86.center) to (98);
		\draw [style=blue edge, in=30, out=-90, looseness=0.75] (96) to (95.center);
		\draw [style=blue edge] (95.center) to (89.center);
		\draw [style={green_edge}] (87.center) to (98);
		\draw [style={green_edge}] (98) to (85.center);
		\draw [style={red_edge}, in=45, out=-90] (90.center) to (95.center);
		\draw [style={red_edge}, in=-90, out=135] (95.center) to (88.center);
		\draw [style=blue edge] (107) to (108.center);
		\draw [style={red_edge}, in=-45, out=90, looseness=0.75] (101.center) to (108.center);
		\draw [style={red_edge}, in=-150, out=90, looseness=0.75] (103.center) to (108.center);
		\draw [style=blue edge, in=-15, out=90, looseness=0.75] (99.center) to (108.center);
		\draw [style=blue edge, in=-165, out=90] (105.center) to (108.center);
		\draw [style={green_edge}] (14.center) to (109);
		\draw [style={red_edge}, bend left] (57.center) to (82.center);
		\draw [style={red_edge}, bend left=15] (59.center) to (82.center);
		\draw [style={red_edge}, bend left] (82.center) to (62.center);
		\draw [style=blue edge, bend left=15] (82.center) to (61.center);
		\draw [style=blue edge, bend left] (58.center) to (82.center);
		\draw [style={green_edge}] (110) to (60.center);
		\draw [style=blue edge, in=-75, out=90] (125) to (126.center);
		\draw [style={red_edge}, in=-45, out=90, looseness=0.75] (119.center) to (126.center);
		\draw [style={red_edge}, in=-90, out=90] (121.center) to (126.center);
		\draw [style=blue edge, in=-120, out=90, looseness=0.75] (123.center) to (126.center);
		\draw [style={red_edge}, in=90, out=-165] (126.center) to (0.center);
		\draw [style={thick_purple}, in=-90, out=90, looseness=0.75] (108.center) to (84.center);
		\draw [style={thick_purple}] (84.center) to (83.center);
		\draw [style={thick_purple}, in=90, out=-105, looseness=0.75] (84.center) to (126.center);
		\draw [style={thick_purple}, in=-90, out=120, looseness=0.75] (83.center) to (95.center);
		\draw [style=blue edge] (92.center) to (96);
		\draw [style=blue edge, in=-75, out=150, looseness=0.75] (95.center) to (98);
		\draw [style={green_edge}] (93.center) to (96);
		\draw [style={green_edge}] (96) to (91.center);
		\draw [style=blue edge] (107) to (102.center);
		\draw [style={green_edge}] (100.center) to (107);
		\draw [style={green_edge}] (107) to (104.center);
		\draw [style=blue edge] (125) to (120.center);
		\draw [style={green_edge}, in=-30, out=90] (118.center) to (125);
		\draw [style={green_edge}] (125) to (122.center);
	\end{pgfonlayer}
\end{tikzpicture}
\right).
        \end{align*}

        There is one last simplification we can perform.
        We can use \eqref{eq:half_JW_reln} to combine $\rho (K)$ and $G$ into a single thick diagram and likewise $K$ and $H$.
        This gives us our final diagrammatic description of the $3$-color relation for type $H_3$.

        \begin{theorem}
        For any balanced realization $\fr{h}$ over $H_3$, we have the following equality in $\gBS (\fr{h}, H_3)$,
        \begin{equation*}
                \resizebox{0.95\hsize}{!}{$

  \tikzsetnextfilename{h3_left}
  \tikzstyle{every picture}=[tikzfig]
  \begin{tikzpicture}
	\begin{pgfonlayer}{nodelayer}
		\node [style=none] (0) at (2.25, -3.25) {};
		\node [style=none] (1) at (1.75, -3.25) {};
		\node [style=none] (2) at (1.25, -3.25) {};
		\node [style=none] (3) at (0.75, -3.25) {};
		\node [style=none] (4) at (0.25, -3.25) {};
		\node [style=none] (5) at (-0.25, -3.25) {};
		\node [style=none] (6) at (-0.75, -3.25) {};
		\node [style=none] (7) at (-1.25, -3.25) {};
		\node [style=none] (8) at (-1.75, -3.25) {};
		\node [style=none] (9) at (2.75, -3.25) {};
		\node [style=none] (10) at (3.25, -3.25) {};
		\node [style=none] (11) at (-2.25, -3.25) {};
		\node [style=none] (12) at (-2.75, -3.25) {};
		\node [style=none] (13) at (-3.25, -3.25) {};
		\node [style=none] (14) at (-3.75, -3.25) {};
		\node [style=none] (15) at (2.25, 3.25) {};
		\node [style=none] (16) at (1.75, 3.25) {};
		\node [style=none] (17) at (1.25, 3.25) {};
		\node [style=none] (18) at (0.75, 3.25) {};
		\node [style=none] (19) at (0.25, 3.25) {};
		\node [style=none] (20) at (-0.25, 3.25) {};
		\node [style=none] (21) at (-0.75, 3.25) {};
		\node [style=none] (22) at (-1.25, 3.25) {};
		\node [style=none] (23) at (-1.75, 3.25) {};
		\node [style=none] (24) at (2.75, 3.25) {};
		\node [style=none] (25) at (3.25, 3.25) {};
		\node [style=none] (26) at (-2.25, 3.25) {};
		\node [style=none] (27) at (-2.75, 3.25) {};
		\node [style=none] (28) at (-3.25, 3.25) {};
		\node [style=none] (29) at (-3.75, 3.25) {};
		\node [style=none] (30) at (2.75, -2.75) {};
		\node [style=none] (31) at (1.25, -2.25) {};
		\node [style=none] (32) at (-0.25, -2.75) {};
		\node [style=none] (33) at (-1.25, -1.75) {};
		\node [style=none] (34) at (0.25, -1.25) {};
		\node [style=none] (35) at (-2.25, -1.25) {};
		\node [style=none] (36) at (1.25, -0.75) {};
		\node [style=none] (37) at (-1.25, -0.75) {};
		\node [style=none] (38) at (0.25, -0.25) {};
		\node [style=none] (39) at (-2.25, 0.25) {};
		\node [style=none] (40) at (-0.75, 0.75) {};
		\node [style=none] (41) at (1, 1) {};
		\node [style=none] (42) at (2.75, 0.5) {};
		\node [style=none] (43) at (-0.25, 2.25) {};
		\node [style=none] (44) at (1.25, 2.75) {};
		\node [style=none] (45) at (-1.5, 2.5) {};
	\end{pgfonlayer}
	\begin{pgfonlayer}{edgelayer}
		\draw [style={red_edge}] (31.center) to (3.center);
		\draw [style={red_edge}] (31.center) to (1.center);
		\draw [style=blue edge] (31.center) to (2.center);
		\draw [style=blue edge] (31.center) to (32.center);
		\draw [style=blue edge] (32.center) to (5.center);
		\draw [style=blue edge] (31.center) to (30.center);
		\draw [style=blue edge] (30.center) to (9.center);
		\draw [style={green_edge}] (32.center) to (4.center);
		\draw [style={green_edge}] (32.center) to (6.center);
		\draw [style={green_edge}] (30.center) to (0.center);
		\draw [style={green_edge}] (30.center) to (10.center);
		\draw [style={red_edge}] (33.center) to (31.center);
		\draw [style={red_edge}] (33.center) to (7.center);
		\draw [style=blue edge] (33.center) to (32.center);
		\draw [style=blue edge] (33.center) to (8.center);
		\draw [style=blue edge] (33.center) to (34.center);
		\draw [style=blue edge] (34.center) to (31.center);
		\draw [style={green_edge}] (34.center) to (32.center);
		\draw [style={green_edge}] (35.center) to (11.center);
		\draw [style={red_edge}] (33.center) to (12.center);
		\draw [style=blue edge] (35.center) to (33.center);
		\draw [style=blue edge] (13.center) to (35.center);
		\draw [style={green_edge}] (34.center) to (36.center);
		\draw [style=blue edge] (31.center) to (36.center);
		\draw [style={green_edge}] (36.center) to (30.center);
		\draw [style={green_edge}] (37.center) to (35.center);
		\draw [style={green_edge}] (37.center) to (34.center);
		\draw [style=blue edge] (37.center) to (33.center);
		\draw [style=blue edge] (37.center) to (38.center);
		\draw [style=blue edge] (38.center) to (34.center);
		\draw [style=blue edge] (38.center) to (36.center);
		\draw [style={red_edge}] (38.center) to (31.center);
		\draw [style={red_edge}] (38.center) to (33.center);
		\draw [style={red_edge}] (39.center) to (38.center);
		\draw [style={red_edge}] (39.center) to (14.center);
		\draw [style={green_edge}] (35.center) to (29.center);
		\draw [style=blue edge] (39.center) to (28.center);
		\draw [style=blue edge] (39.center) to (35.center);
		\draw [style=blue edge] (39.center) to (37.center);
		\draw [style={red_edge}] (39.center) to (33.center);
		\draw [style=blue edge] (39.center) to (40.center);
		\draw [style=blue edge] (41.center) to (42.center);
		\draw [style={green_edge}] (40.center) to (41.center);
		\draw [style={red_edge}] (43.center) to (42.center);
		\draw [style=blue edge] (43.center) to (44.center);
		\draw [style=blue edge] (43.center) to (45.center);
		\draw [style={green_edge}] (40.center) to (37.center);
		\draw [style={green_edge}] (45.center) to (40.center);
		\draw [style={green_edge}] (45.center) to (27.center);
		\draw [style={green_edge}] (45.center) to (21.center);
		\draw [style=blue edge] (23.center) to (45.center);
		\draw [style=blue edge] (45.center) to (39.center);
		\draw [style={red_edge}] (26.center) to (39.center);
		\draw [style={red_edge}] (43.center) to (22.center);
		\draw [style=blue edge] (44.center) to (42.center);
		\draw [style=blue edge] (43.center) to (20.center);
		\draw [style={green_edge}] (44.center) to (16.center);
		\draw [style={green_edge}] (44.center) to (19.center);
		\draw [style=blue edge] (17.center) to (44.center);
		\draw [style={red_edge}] (43.center) to (18.center);
		\draw [style=blue edge] (43.center) to (41.center);
		\draw [style=blue edge] (41.center) to (38.center);
		\draw [style={green_edge}] (41.center) to (44.center);
		\draw [style={green_edge}] (41.center) to (36.center);
		\draw [style={red_edge}] (39.center) to (43.center);
		\draw [style={red_edge}] (42.center) to (15.center);
		\draw [style=blue edge] (42.center) to (24.center);
		\draw [style={red_edge}] (42.center) to (25.center);
		\draw [style={red_edge}] (42.center) to (38.center);
		\draw [style=blue edge] (42.center) to (36.center);
		\draw [style=blue edge] (42.center) to (30.center);
		\draw [style={red_edge}] (42.center) to (31.center);
		\draw [style=blue edge] (40.center) to (38.center);
		\draw [style=blue edge] (43.center) to (40.center);
		\draw [style={red_edge}] (43.center) to (38.center);
	\end{pgfonlayer}
\end{tikzpicture}
 - 
  \tikzsetnextfilename{h3_right}
  \tikzstyle{every picture}=[tikzfig]
  \begin{tikzpicture}
	\begin{pgfonlayer}{nodelayer}
		\node [style=none] (0) at (-2, 3.25) {};
		\node [style=none] (1) at (-1.5, 3.25) {};
		\node [style=none] (2) at (-1, 3.25) {};
		\node [style=none] (3) at (-0.5, 3.25) {};
		\node [style=none] (4) at (0, 3.25) {};
		\node [style=none] (5) at (0.5, 3.25) {};
		\node [style=none] (6) at (1, 3.25) {};
		\node [style=none] (7) at (1.5, 3.25) {};
		\node [style=none] (8) at (2, 3.25) {};
		\node [style=none] (9) at (-2.5, 3.25) {};
		\node [style=none] (10) at (-3, 3.25) {};
		\node [style=none] (11) at (2.5, 3.25) {};
		\node [style=none] (12) at (3, 3.25) {};
		\node [style=none] (13) at (3.5, 3.25) {};
		\node [style=none] (14) at (4, 3.25) {};
		\node [style=none] (15) at (-2, -3.25) {};
		\node [style=none] (16) at (-1.5, -3.25) {};
		\node [style=none] (17) at (-1, -3.25) {};
		\node [style=none] (18) at (-0.5, -3.25) {};
		\node [style=none] (19) at (0, -3.25) {};
		\node [style=none] (20) at (0.5, -3.25) {};
		\node [style=none] (21) at (1, -3.25) {};
		\node [style=none] (22) at (1.5, -3.25) {};
		\node [style=none] (23) at (2, -3.25) {};
		\node [style=none] (24) at (-2.5, -3.25) {};
		\node [style=none] (25) at (-3, -3.25) {};
		\node [style=none] (26) at (2.5, -3.25) {};
		\node [style=none] (27) at (3, -3.25) {};
		\node [style=none] (28) at (3.5, -3.25) {};
		\node [style=none] (29) at (4, -3.25) {};
		\node [style=none] (30) at (-2.5, 2.75) {};
		\node [style=none] (31) at (-1, 2.25) {};
		\node [style=none] (32) at (0.5, 2.75) {};
		\node [style=none] (33) at (1.5, 1.75) {};
		\node [style=none] (34) at (0, 1.25) {};
		\node [style=none] (35) at (2.5, 1.25) {};
		\node [style=none] (36) at (-1, 0.75) {};
		\node [style=none] (37) at (1.5, 0.75) {};
		\node [style=none] (38) at (0, 0.25) {};
		\node [style=none] (39) at (2.5, -0.25) {};
		\node [style=none] (40) at (1, -0.75) {};
		\node [style=none] (41) at (-0.75, -1) {};
		\node [style=none] (42) at (-2.5, -0.5) {};
		\node [style=none] (43) at (0.5, -2.25) {};
		\node [style=none] (44) at (-1, -2.75) {};
		\node [style=none] (45) at (1.75, -2.5) {};
	\end{pgfonlayer}
	\begin{pgfonlayer}{edgelayer}
		\draw [style={red_edge}] (31.center) to (3.center);
		\draw [style={red_edge}] (31.center) to (1.center);
		\draw [style=blue edge] (31.center) to (2.center);
		\draw [style=blue edge] (31.center) to (32.center);
		\draw [style=blue edge] (32.center) to (5.center);
		\draw [style=blue edge] (31.center) to (30.center);
		\draw [style=blue edge] (30.center) to (9.center);
		\draw [style={green_edge}] (32.center) to (4.center);
		\draw [style={green_edge}] (32.center) to (6.center);
		\draw [style={green_edge}] (30.center) to (0.center);
		\draw [style={green_edge}] (30.center) to (10.center);
		\draw [style={red_edge}] (33.center) to (31.center);
		\draw [style={red_edge}] (33.center) to (7.center);
		\draw [style=blue edge] (33.center) to (32.center);
		\draw [style=blue edge] (33.center) to (8.center);
		\draw [style=blue edge] (33.center) to (34.center);
		\draw [style=blue edge] (34.center) to (31.center);
		\draw [style={green_edge}] (34.center) to (32.center);
		\draw [style={green_edge}] (35.center) to (11.center);
		\draw [style={red_edge}] (33.center) to (12.center);
		\draw [style=blue edge] (35.center) to (33.center);
		\draw [style=blue edge] (13.center) to (35.center);
		\draw [style={green_edge}] (34.center) to (36.center);
		\draw [style=blue edge] (31.center) to (36.center);
		\draw [style={green_edge}] (36.center) to (30.center);
		\draw [style={green_edge}] (37.center) to (35.center);
		\draw [style={green_edge}] (37.center) to (34.center);
		\draw [style=blue edge] (37.center) to (33.center);
		\draw [style=blue edge] (37.center) to (38.center);
		\draw [style=blue edge] (38.center) to (34.center);
		\draw [style=blue edge] (38.center) to (36.center);
		\draw [style={red_edge}] (38.center) to (31.center);
		\draw [style={red_edge}] (38.center) to (33.center);
		\draw [style={red_edge}] (39.center) to (38.center);
		\draw [style={red_edge}] (39.center) to (14.center);
		\draw [style={green_edge}] (35.center) to (29.center);
		\draw [style=blue edge] (39.center) to (28.center);
		\draw [style=blue edge] (39.center) to (35.center);
		\draw [style=blue edge] (39.center) to (37.center);
		\draw [style={red_edge}] (39.center) to (33.center);
		\draw [style=blue edge] (39.center) to (40.center);
		\draw [style=blue edge] (41.center) to (42.center);
		\draw [style={green_edge}] (40.center) to (41.center);
		\draw [style={red_edge}] (43.center) to (42.center);
		\draw [style=blue edge] (43.center) to (44.center);
		\draw [style=blue edge] (43.center) to (45.center);
		\draw [style={green_edge}] (40.center) to (37.center);
		\draw [style={green_edge}] (45.center) to (40.center);
		\draw [style={green_edge}] (45.center) to (27.center);
		\draw [style={green_edge}] (45.center) to (21.center);
		\draw [style=blue edge] (23.center) to (45.center);
		\draw [style=blue edge] (45.center) to (39.center);
		\draw [style={red_edge}] (26.center) to (39.center);
		\draw [style={red_edge}] (43.center) to (22.center);
		\draw [style=blue edge] (44.center) to (42.center);
		\draw [style=blue edge] (43.center) to (20.center);
		\draw [style={green_edge}] (44.center) to (16.center);
		\draw [style={green_edge}] (44.center) to (19.center);
		\draw [style=blue edge] (17.center) to (44.center);
		\draw [style={red_edge}] (43.center) to (18.center);
		\draw [style=blue edge] (43.center) to (41.center);
		\draw [style=blue edge] (41.center) to (38.center);
		\draw [style={green_edge}] (41.center) to (44.center);
		\draw [style={green_edge}] (41.center) to (36.center);
		\draw [style={red_edge}] (39.center) to (43.center);
		\draw [style={red_edge}] (42.center) to (15.center);
		\draw [style=blue edge] (42.center) to (24.center);
		\draw [style={red_edge}] (42.center) to (25.center);
		\draw [style={red_edge}] (42.center) to (38.center);
		\draw [style=blue edge] (42.center) to (36.center);
		\draw [style=blue edge] (42.center) to (30.center);
		\draw [style={red_edge}] (42.center) to (31.center);
		\draw [style=blue edge] (40.center) to (38.center);
		\draw [style=blue edge] (43.center) to (40.center);
		\draw [style={red_edge}] (43.center) to (38.center);
	\end{pgfonlayer}
\end{tikzpicture}
 = 2 \left( 
  \tikzsetnextfilename{K_rot_G}
  \tikzstyle{every picture}=[tikzfig]
  \begin{tikzpicture}
	\begin{pgfonlayer}{nodelayer}
		\node [style=none] (0) at (-3.25, -3.25) {};
		\node [style=none] (1) at (-2.75, -3.25) {};
		\node [style=none] (2) at (-2.25, -3.25) {};
		\node [style=none] (3) at (-1.75, -3.25) {};
		\node [style=none] (4) at (-1.25, -3.25) {};
		\node [style=none] (5) at (-0.75, -3.25) {};
		\node [style=none] (6) at (-0.25, -3.25) {};
		\node [style=none] (7) at (0.25, -3.25) {};
		\node [style=none] (8) at (0.75, -3.25) {};
		\node [style=none] (9) at (1.25, -3.25) {};
		\node [style=none] (10) at (1.75, -3.25) {};
		\node [style=none] (11) at (2.25, -3.25) {};
		\node [style=none] (12) at (2.75, -3.25) {};
		\node [style=none] (13) at (3.25, -3.25) {};
		\node [style=none] (14) at (3.75, -3.25) {};
		\node [style=none] (47) at (1.75, -1.5) {};
		\node [style={thick_dot_ochre}] (50) at (3.25, -2.75) {};
		\node [style={thick_dot_ochre}] (52) at (0.25, -2.75) {};
		\node [style=none] (53) at (-2, -1.5) {};
		\node [style=none] (57) at (3.25, 3.25) {};
		\node [style=none] (58) at (2.75, 3.25) {};
		\node [style=none] (59) at (2.25, 3.25) {};
		\node [style=none] (60) at (1.75, 3.25) {};
		\node [style=none] (61) at (1.25, 3.25) {};
		\node [style=none] (62) at (0.75, 3.25) {};
		\node [style=none] (63) at (0.25, 3.25) {};
		\node [style=none] (64) at (-0.25, 3.25) {};
		\node [style=none] (65) at (-0.75, 3.25) {};
		\node [style=none] (66) at (-1.25, 3.25) {};
		\node [style=none] (67) at (-1.75, 3.25) {};
		\node [style=none] (68) at (-2.25, 3.25) {};
		\node [style=none] (69) at (-2.75, 3.25) {};
		\node [style=none] (70) at (-3.25, 3.25) {};
		\node [style=none] (71) at (-3.75, 3.25) {};
		\node [style={thick_dot_ochre}] (78) at (-1.75, 2.5) {};
		\node [style=none] (79) at (-1.75, 1.25) {};
		\node [style={thick_dot_ochre}] (81) at (1.25, 2.5) {};
		\node [style=none] (82) at (1.75, 1.25) {};
		\node [style=none] (83) at (0, 0.25) {};
		\node [style=none] (84) at (0, -0.5) {};
		\node [style={thick_dot_ochre}] (85) at (-2.5, -2.25) {};
	\end{pgfonlayer}
	\begin{pgfonlayer}{edgelayer}
		\draw [style=blue edge] (7.center) to (52);
		\draw [style=blue edge, in=-30, out=105, looseness=0.75] (50) to (47.center);
		\draw [style=blue edge] (47.center) to (10.center);
		\draw [style={green_edge}] (8.center) to (52);
		\draw [style={green_edge}] (52) to (6.center);
		\draw [style={red_edge}, in=-45, out=90] (11.center) to (47.center);
		\draw [style={red_edge}, in=90, out=-135] (47.center) to (9.center);
		\draw [style={red_edge}, in=-45, out=90] (5.center) to (53.center);
		\draw [style={red_edge}] (2.center) to (53.center);
		\draw [style=blue edge, in=-75, out=90] (4.center) to (53.center);
		\draw [style={red_edge}, in=90, out=-165] (53.center) to (0.center);
		\draw [style=blue edge] (78) to (79.center);
		\draw [style={red_edge}, in=45, out=-90, looseness=0.75] (66.center) to (79.center);
		\draw [style={red_edge}, in=150, out=-90, looseness=0.75] (68.center) to (79.center);
		\draw [style=blue edge, in=15, out=-90, looseness=0.75] (64.center) to (79.center);
		\draw [style=blue edge, in=165, out=-90] (70.center) to (79.center);
		\draw [style={red_edge}, in=15, out=-90] (57.center) to (82.center);
		\draw [style={red_edge}, in=60, out=-90, looseness=0.75] (59.center) to (82.center);
		\draw [style=blue edge, in=45, out=-90] (58.center) to (82.center);
		\draw [style=blue edge, in=270, out=120] (82.center) to (81);
		\draw [style={red_edge}, in=165, out=-90, looseness=1.25] (62.center) to (82.center);
		\draw [style={thick_purple}, in=45, out=-90, looseness=0.75] (82.center) to (83.center);
		\draw [style={thick_purple}, in=-90, out=120, looseness=0.75] (83.center) to (79.center);
		\draw [style={thick_purple}] (84.center) to (83.center);
		\draw [style={green_edge}, in=90, out=-45] (85) to (3.center);
		\draw [style={green_edge}, in=-90, out=135, looseness=0.50] (85) to (71.center);
		\draw [style=blue edge, in=90, out=-120, looseness=1.25] (85) to (1.center);
		\draw [style=blue edge] (85) to (53.center);
		\draw [style=blue edge] (13.center) to (50);
		\draw [style=blue edge, in=60, out=-150, looseness=0.75] (47.center) to (52);
		\draw [style={green_edge}] (14.center) to (50);
		\draw [style={green_edge}] (50) to (12.center);
		\draw [style={thick_purple}, in=-90, out=90, looseness=0.75] (47.center) to (84.center);
		\draw [style={thick_purple}] (84.center) to (83.center);
		\draw [style={thick_purple}, in=90, out=-105, looseness=0.75] (84.center) to (53.center);
		\draw [style=blue edge] (78) to (67.center);
		\draw [style={green_edge}] (65.center) to (78);
		\draw [style={green_edge}] (78) to (69.center);
		\draw [style={green_edge}] (81) to (63.center);
		\draw [style={green_edge}] (81) to (60.center);
		\draw [style=blue edge] (81) to (61.center);
	\end{pgfonlayer}
\end{tikzpicture}
 - 
  \tikzsetnextfilename{K_H}
  \tikzstyle{every picture}=[tikzfig]
  \begin{tikzpicture}
	\begin{pgfonlayer}{nodelayer}
		\node [style=none] (0) at (3.25, 3.25) {};
		\node [style=none] (1) at (2.75, 3.25) {};
		\node [style=none] (2) at (2.25, 3.25) {};
		\node [style=none] (3) at (1.75, 3.25) {};
		\node [style=none] (4) at (1.25, 3.25) {};
		\node [style=none] (5) at (0.75, 3.25) {};
		\node [style=none] (6) at (0.25, 3.25) {};
		\node [style=none] (7) at (-0.25, 3.25) {};
		\node [style=none] (8) at (-0.75, 3.25) {};
		\node [style=none] (9) at (-1.25, 3.25) {};
		\node [style=none] (10) at (-1.75, 3.25) {};
		\node [style=none] (11) at (-2.25, 3.25) {};
		\node [style=none] (12) at (-2.75, 3.25) {};
		\node [style=none] (13) at (-3.25, 3.25) {};
		\node [style=none] (14) at (-3.75, 3.25) {};
		\node [style=none] (47) at (-1.75, 1.5) {};
		\node [style={thick_dot_ochre}] (50) at (-3.25, 2.75) {};
		\node [style={thick_dot_ochre}] (52) at (-0.25, 2.75) {};
		\node [style=none] (53) at (2, 1.5) {};
		\node [style=none] (57) at (-3.25, -3.25) {};
		\node [style=none] (58) at (-2.75, -3.25) {};
		\node [style=none] (59) at (-2.25, -3.25) {};
		\node [style=none] (60) at (-1.75, -3.25) {};
		\node [style=none] (61) at (-1.25, -3.25) {};
		\node [style=none] (62) at (-0.75, -3.25) {};
		\node [style=none] (63) at (-0.25, -3.25) {};
		\node [style=none] (64) at (0.25, -3.25) {};
		\node [style=none] (65) at (0.75, -3.25) {};
		\node [style=none] (66) at (1.25, -3.25) {};
		\node [style=none] (67) at (1.75, -3.25) {};
		\node [style=none] (68) at (2.25, -3.25) {};
		\node [style=none] (69) at (2.75, -3.25) {};
		\node [style=none] (70) at (3.25, -3.25) {};
		\node [style=none] (71) at (3.75, -3.25) {};
		\node [style={thick_dot_ochre}] (78) at (1.75, -2.5) {};
		\node [style=none] (79) at (1.75, -1.25) {};
		\node [style={thick_dot_ochre}] (81) at (-1.25, -2.5) {};
		\node [style=none] (82) at (-1.75, -1.25) {};
		\node [style=none] (83) at (0, -0.25) {};
		\node [style=none] (84) at (0, 0.5) {};
		\node [style={thick_dot_ochre}] (85) at (2.5, 2.25) {};
	\end{pgfonlayer}
	\begin{pgfonlayer}{edgelayer}
		\draw [style=blue edge] (7.center) to (52);
		\draw [style=blue edge, in=120, out=-75, looseness=0.75] (50) to (47.center);
		\draw [style=blue edge] (47.center) to (10.center);
		\draw [style={green_edge}] (8.center) to (52);
		\draw [style={green_edge}] (52) to (6.center);
		\draw [style={red_edge}, in=135, out=-90] (11.center) to (47.center);
		\draw [style={red_edge}, in=-90, out=45] (47.center) to (9.center);
		\draw [style={red_edge}, in=135, out=-90] (5.center) to (53.center);
		\draw [style={red_edge}] (2.center) to (53.center);
		\draw [style=blue edge, in=105, out=-90] (4.center) to (53.center);
		\draw [style={red_edge}, in=-90, out=15] (53.center) to (0.center);
		\draw [style=blue edge] (78) to (79.center);
		\draw [style={red_edge}, in=-135, out=90, looseness=0.75] (66.center) to (79.center);
		\draw [style={red_edge}, in=-30, out=90, looseness=0.75] (68.center) to (79.center);
		\draw [style=blue edge, in=-165, out=90, looseness=0.75] (64.center) to (79.center);
		\draw [style=blue edge, in=-15, out=90] (70.center) to (79.center);
		\draw [style={red_edge}, in=-165, out=90] (57.center) to (82.center);
		\draw [style={red_edge}, in=-120, out=90, looseness=0.75] (59.center) to (82.center);
		\draw [style=blue edge, in=-135, out=90] (58.center) to (82.center);
		\draw [style=blue edge] (82.center) to (81);
		\draw [style={red_edge}, in=-15, out=90, looseness=1.25] (62.center) to (82.center);
		\draw [style={thick_purple}, in=-135, out=90, looseness=0.75] (82.center) to (83.center);
		\draw [style={thick_purple}, in=90, out=-60, looseness=0.75] (83.center) to (79.center);
		\draw [style={thick_purple}] (84.center) to (83.center);
		\draw [style={green_edge}, in=-90, out=135] (85) to (3.center);
		\draw [style={green_edge}, in=90, out=-45, looseness=0.50] (85) to (71.center);
		\draw [style=blue edge, in=-90, out=60, looseness=1.25] (85) to (1.center);
		\draw [style=blue edge] (85) to (53.center);
		\draw [style=blue edge] (13.center) to (50);
		\draw [style=blue edge, in=-120, out=45, looseness=0.75] (47.center) to (52);
		\draw [style={green_edge}] (14.center) to (50);
		\draw [style={green_edge}] (50) to (12.center);
		\draw [style={thick_purple}, in=90, out=-90, looseness=0.75] (47.center) to (84.center);
		\draw [style={thick_purple}] (84.center) to (83.center);
		\draw [style={thick_purple}, in=-90, out=75, looseness=0.75] (84.center) to (53.center);
		\draw [style=blue edge] (78) to (67.center);
		\draw [style={green_edge}] (65.center) to (78);
		\draw [style={green_edge}] (78) to (69.center);
		\draw [style={green_edge}] (81) to (63.center);
		\draw [style={green_edge}] (81) to (60.center);
		\draw [style=blue edge] (81) to (61.center);
	\end{pgfonlayer}
\end{tikzpicture}
 \right).
                $}
        \end{equation*}
        \end{theorem}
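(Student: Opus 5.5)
The plan is to assemble the already-established pieces into the final identity, and then use the realization-functoriality results to pass from the golden realization to an arbitrary balanced one.

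\textbf{Step 1: the golden realization.} I first work over $\fr{h}_{\Q(\phi)}$. By Algorithm~\ref{algo:coeff_algo_interpolate}, checked with Proposition~\ref{prop:eq_testing_interpolate}, the morphism $\varphi$ of \eqref{eq:h3_sides} equals the 125-term double-leaf expansion of Figure~\ref{fig:coeffs}. The thick manipulations in the preceding discussion rewrite this expansion. The $ststsu$-block, Figure~\ref{fig:easy_block}, gives $-2K$. The $ststs$-block, Figure~\ref{fig:harder_block}, gives $2(G-H)$. The remaining 79 terms give $2\rho(K)$. Together these yield
\[\varphi = 2(\rho(K)-K) + 2(G-H).\]

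\textbf{Step 2: regroup into two thick diagrams.} I will pair $\rho(K)$ with $G$ and $K$ with $H$. In each pair the two diagrams agree except in a local region on one side. In one diagram that region contains a pair of $t$-dots. In the other it contains a $u$-strand passing through with $u$-dots. By \eqref{eq:half_JW_reln}, the sum of these two local pictures is exactly the teal thick dot, that is, the composite of a $J$-merge and a $J$-split with dots. So $\rho(K)+G$ and $K+H$ each collapse to the single thick diagrams shown on the right-hand side. Only the sign bookkeeping needs care: $\rho(K)-K+G-H = (\rho(K)+G)-(K+H)$.

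\textbf{Step 3: lift and specialize to any balanced realization.} Every coefficient in Figure~\ref{fig:coeffs} lies in $R_{\fr{h}_{\mathcal{Z}}}$. All diagrams involved are defined over the universal integral model $\fr{h}_{\mathcal{Z}}$ of Lemma~\ref{lem:universal_H3_realization}, and Lemma~\ref{lem:gen_bs_ass_satisfied_for_H3} makes the thick pieces well defined there. Hence the difference of the two sides of the identity is $\BS_f(\psi)$ for some morphism $\psi$ over $\mathcal{Z}$. Its base change $\BS_g(\psi)$ to $\Q(\phi)$ vanishes by Steps 1–2. By Proposition~\ref{prop:rational_reduction}, the identity then holds for every balanced realization.

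\textbf{Main obstacle.} The delicate point is extending Proposition~\ref{prop:rational_reduction} and the functors $\BS_f$ to the generalized (thick) bimodules in $\gBS$. I would avoid this by expanding each thick diagram back into ordinary Elias--Williamson morphisms between Bott--Samelson objects. Lemma~\ref{lem:gen_bs_ass_satisfied_for_H3}, with the explicit $\partial_I$, $\partial_J$ preimages, gives integral formulas for the thick merges and splits. The identity then becomes one between Abe--Bott--Samelson morphisms $B_{\ux}\to B_{\uy}$ with coefficients in $\mathcal{Z}$, to which the earlier results apply directly.
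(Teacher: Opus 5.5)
Your proposal is correct and follows the paper's argument: the golden-realization expansion, the identification of the blocks with $-2K$, $2(G-H)$ and $2\rho(K)$, the regrouping $(\rho(K)+G)-(K+H)$ via \eqref{eq:half_JW_reln}, and the reduction through the universal integral model of Lemma~\ref{lem:universal_H3_realization} with Proposition~\ref{prop:rational_reduction}.

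The one real difference is the order of the steps. The paper first lifts the ordinary double-leaf identities, which only involve morphisms $B_{\ux}\to B_{\uy}$, to $\fr{h}_{\mathcal{Z}}$, and then does the thick rewriting directly in $\gBS(\fr{h},H_3)$; Lemma~\ref{lem:gen_bs_ass_satisfied_for_H3} permits this for every balanced $\fr{h}$, so the obstacle you flag (extending $\BS_f$ to thick bimodules) never arises. Also, you have swapped the local pictures in \eqref{eq:half_JW_reln}: they are a $t$-strand passing through with $u$-dots versus a $u$-strand passing through with $t$-dots.
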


        \appendix
        \section{Thick Soergel Calculus}\label{apdx:thick_calculus}

        We review the theory of generalized Abe--Bott--Samelson bimodules and thick Soergel calculus. 
        More detailed treatments of the relevant material can be found in \cite{EliThick, Elib, AbeThick1, AbeThick2}.
        This material is somewhat tangential to the discussion in the main body of the paper.
        We do not claim any new ideas in the theory of thick Soergel calculus, we simply find it a convenient way to write some of the morphisms appearing in the 
        $3$-color relation. In particular, we neither claim nor give a full diagrammatic presentation of generalized Abe--Bott--Samelson bimodules.
        Nonetheless, we will write thick Elias--Williamson diagrams and interpret them as morphisms between generalized Abe--Bott--Samelson bimodules.

        \subsection{Generalized Abe--Bott--Samelson bimodules}

        The basic idea of thick Soergel calculus is to add in objects corresponding to the indecomposable Soergel bimodules indexed by longest words of parabolic subgroups.
        This produces the category of \emph{generalized Abe--Bott--Samelson bimodules}.
        We give a somewhat ad-hoc treatment of the theory, since to the author's knowledge, there is no complete diagrammatic presentation of generalized Abe--Bott--Samelson bimodules. 

        For $w \in W$, define $\partial_w = \partial_{s_1} \partial_{s_2} \ldots \partial_{s_k}$ for any reduced expression $\uw = (s_1, \ldots, s_k)$ of $w$.
        The balancedness assumption on $\fr{h}$ ensures that the braid relations hold for the Demazure operators, so this definition is independent of the reduced expression.

        Let $I \subset S$ be finitary, that is, $W_I$ is finite. Write $w_I$ for the longest element of $W_I$, and set $\ell_I = \ell (w_I)$.
         To ease the notation, we will write $R^I = R^{W_I}$ and $\partial_I = \partial_{w_I}$. Observe that $\partial_I : R \to \R^I$ is $R^I$ linear of degree $-2 \ell_I$.
        Indeed, $\partial_s \partial_I = 0$ for every $s \in I$, so the image is $W_I$-invariant.
         To ensure a well-behaved theory, we make the following assumption.

        \begin{assumption}\label{ass:gen_bs}
        For every finitary $I \subset S$:
        \begin{enumerate}
                \item the homomorphism $W_I \to \Aut (\fr{h}^*)$ is injective;
                \item there exists a homogeneous $q_I \in R$ of degree $2\ell_I$ such that $\partial_{I} (q_I) = 1$.
        \end{enumerate}
        \end{assumption}
        
        Assumption~\ref{ass:gen_bs} (2) is known as \emph{parabolic Demazure surjectivity}.
        By \cite[Proposition 2.12]{AbeThick2}, the collection $\{\delta_w (q_I)\}_{w \in W_I}$ forms an a graded $R^I$-module basis of $R$. Moreover, by \cite[Proposition 2.14]{AbeThick2}, the pairing
        \[R \times R \to R^I, \qquad (f,g) \mapsto \partial_I (fg)\]
        is a perfect. As a consequence, $R^I \subset R$ is a graded Frobenius extension with trace $\partial_I$.

        For $I \subset S$ finitary, consider the graded $R$-bimodule,
        \[B_I \coloneq R \otimes_{R^I} R (\ell_I).\]
        We wish to upgrade this bimodule to an object of $\sfC (\fr{h}, W)$.
        To do this, there is a canonical isomorphism of $Q$-bimodules,
        \[B_I \otimes_R Q \stackrel{\sim}{\to} \bigoplus_{w \in W_I} Q_w (\ell_I), \qquad f \otimes g \otimes 1 \mapsto (f w(g))_{w \in W_I}.\]
        This is the localization isomorphism of \cite[Lemma 2.20]{AbeThick1}.
        This canonical isomorphism defines the localization data for $B_I$ and makes it an object of $\sfC (\fr{h}, W)$.

        \begin{definition}
                The category of \emph{generalized Abe--Bott--Samelson bimodules} is the smallest full subcategory $\gBS (\fr{h}, W)$ of $\sfC (\fr{h}, W)$ 
                containing the monoidal unit $R$ and the objects $B_I$ for finitary $I \subset S$, and is closed under the monoidal product $\otimes_R$ and the shift functor $(1)$.
        \end{definition}

        There is a perfect right $R$-bilinear pairing on $B_I$ given by
        \begin{equation}\label{eq:I_pairing}
                \langle f \otimes g, f' \otimes g' \rangle_I \coloneq \partial_I (ff') gg'.
        \end{equation}
        The same argument from \S\ref{ss:BS} shows that $B_I$ identifies with $\DD (B_I)$. As a result, duality preserves the category of generalized Abe--Bott--Samelson bimodules.

        For a reduced expression $\uw = (s_1, \ldots, s_k)$ of $w_I$, we define
        \[\iota_{\uw} : B_I \to B_{\uw}, \qquad f \otimes g \mapsto f \otimes 1 \otimes \ldots \otimes 1 \otimes g.\]
        This map is well-defined since $R^I \subset R^{s_i}$ for every $i=1,\ldots, k$. It is then easy to check that it indeed defines a morphism of generalized Abe--Bott--Samelson bimodules.
        We also define 
        \[ p_{\uw} \coloneq \DD (\iota_{\uw}) : B_{\uw} \to B_I.\]
        Both $\iota_{\uw}$ and $p_{\uw}$ are of degree $0$.

        \begin{lemma}\label{lem:gen_proj}
                Let $\uw$ be a reduced expression for $w_I$. Then $p_{\uw} \iota_{\uw} = \id_{B_I}$. As a consequence, $e_{\uw} \coloneq \iota_{\uw} p_{\uw}$ is a self-adjoint
                idempotent with image isomorphic to $B_I$.
        \end{lemma}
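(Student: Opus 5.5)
We need to prove $p_{\uw}\iota_{\uw}=\id_{B_I}$. By definition $p_{\uw}=\DD(\iota_{\uw})$ is the adjoint of $\iota_{\uw}$ with respect to the pairings \eqref{eq:I_pairing} and \eqref{eq:pairing}. So for $b,b'\in B_I$ we have $\langle p_{\uw}\iota_{\uw}(b),b'\rangle_I=\langle \iota_{\uw}(b),\iota_{\uw}(b')\rangle_{\uw}$. Since the pairing on $B_I$ is perfect, the plan is to show
\[\langle \iota_{\uw}(b),\iota_{\uw}(b')\rangle_{\uw}=\langle b,b'\rangle_I.\]
Both sides are right $R$-linear in each variable, and after moving elements of $R^I$ across they are also compatible with the left action. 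It therefore suffices to check the identity on $b=f\otimes 1$ and $b'=f'\otimes 1$ with $f,f'\in R$.

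For these elements, $\iota_{\uw}(f\otimes 1)=f\otimes 1\otimes\cdots\otimes 1$, and the formula \eqref{eq:pairing} gives
\[\langle \iota_{\uw}(f\otimes1),\iota_{\uw}(f'\otimes1)\rangle_{\uw}=\partial_{s_k}(\cdots\partial_{s_1}(ff')\cdots).\]
This equals $\partial_{w'}(ff')$, where $w'$ is the element with reduced expression $(s_k,\ldots,s_1)$, namely $w_I^{-1}=w_I$. By the independence of $\partial_w$ from the choice of rex (this is where balancedness enters), the result is $\partial_I(ff')=\langle f\otimes1,f'\otimes1\rangle_I$. Since the pairing on $B_I$ is perfect and right $R$-bilinear, this proves $p_{\uw}\iota_{\uw}=\id$.

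The main point of care is the convention for $\DD$ on morphisms. We need that $\DD(\varphi)$ is characterised by $\langle \DD(\varphi)(y),x\rangle=\langle y,\varphi(x)\rangle$ under the identifications $B\cong\DD(B)$, and that the identification of $B_I$ with $\DD(B_I)$ uses \eqref{eq:I_pairing} in the same way \S\ref{ss:BS} uses \eqref{eq:pairing}. We also need the order of the tensor factors in the pairing, and possible twisting of the left action, to match.

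Granting the identity, the consequences are formal. We have $e_{\uw}^2=\iota_{\uw}(p_{\uw}\iota_{\uw})p_{\uw}=e_{\uw}$. Next, $\DD(e_{\uw})=\DD(p_{\uw})\DD(\iota_{\uw})=\iota_{\uw}p_{\uw}=e_{\uw}$, using $\DD\circ\DD\cong\id$. Finally, $\iota_{\uw}$ is split injective with left inverse $p_{\uw}$, so the image of $e_{\uw}$ equals the image of $\iota_{\uw}$, which is isomorphic to $B_I$.
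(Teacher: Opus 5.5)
Your proposal is correct and is essentially the paper's proof. You show that $\iota_{\uw}$ is an isometry for the pairings \eqref{eq:pairing} and \eqref{eq:I_pairing} by identifying $\partial_{s_k}\cdots\partial_{s_1}$ with $\partial_I$ via $w_I^{-1}=w_I$ and rex-independence, then conclude by adjointness and perfectness of the pairing on $B_I$; the only cosmetic differences are that you reduce to elements $f\otimes 1$ by right $R$-linearity (the paper computes directly on $f\otimes g$) and that you spell out the idempotence, self-adjointness and image statements that the paper leaves as immediate.
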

        \begin{proof}
                We can compute,
                \begin{align*}
                        \langle \iota_{\uw} (f \otimes g) , \iota_{\uw} (f' \otimes g') \rangle_{\uw} &= (\partial_{s_k} \ldots \partial_{s_1}) (ff') gg' \\
                        &= \partial_I (ff') gg' \\
                        &= \langle f \otimes g, f' \otimes g' \rangle_I.
                \end{align*}
                The pairing on the left is the canonical perfect pairing on $B_{\uw}$ given by \eqref{eq:pairing}. The second equality uses the fact that $w_I^{-1} = w_I$ and the well-definedness of $\partial_I$.
                Adjointness and nondegeneracy now give that $p_{\uw} \iota_{\uw} = \id_{B_I}$ as desired. The idempotence and self-adjointness of $e_{\uw}$ immediately follow. 
        \end{proof}

        As a consequence of Lemma~\ref{lem:gen_proj}, we see that $B_I$ is a direct summand of an ordinary Bott--Samelson bimodule.
        Every $R$-bimodule endomorphism of $B_I$ is determined by the image of $1 \otimes 1$, since this element generates $B_I$ as an $R$-bimodule. Its degree $-\ell_I$ component is $\k (1 \otimes 1$), so
        \[\Hom_{\sf C(\fr{h}, W)}^0 (B_I, B_I) = \k \id_{B_I}.\]
        In particular, $B_I$ is indecomposable, and in fact, is the indecomposable Soergel bimodules corresponding to $w_I$.

        Choose homogeneous dual $R^I$-bases $\{x_a\}$ and $\{y_a\}$ of $R$ under the trace such that $\partial_I (x_a y_b) = \delta_{a,b}$.
        The existence of such dual bases is guaranteed by parabolic Demazure surjectivity. We then have that
        \begin{equation}\label{eq:dual_basis_identity_1}
                f = \sum_a x_a \partial_I (y_a f) = \sum_a \partial_I (f x_a) y_a
        \end{equation}
        for any $f \in R$. 
        The tensor $\Delta_I = \sum_a x_a \otimes y_a \in B_I$ is invariant under the choice of dual bases. Namely, under the isomorphism
        \[R \otimes_{R^I} R \stackrel{\sim}{\to} \textnormal{End}_{R^I} (R), \qquad f\otimes g \mapsto (\varphi \mapsto f \partial_I (g\varphi)),\]
        $\Delta_I$ corresponds to the identity endomorphism. The same description also gives that 
        \begin{equation}\label{eq:dual_basis_identity_2}
                (f \otimes 1) \Delta_I = \Delta_I (1 \otimes f)
        \end{equation}
        for $f \in R$.
        Under the identity $B_I \otimes_R B_I = R \otimes_{R^I} R \otimes_{R^I} R (2\ell_I)$, we can make $B_I$ into a graded Frobenius algebra as follows,
        \begin{align*}
                \nu^I &: B_I \to B_I \otimes_R B_I , & f \otimes g &\mapsto f \otimes 1 \otimes g, \\
                \epsilon^I &: B_I \to R, & f \otimes g &\mapsto fg, \\
                \mu^I &: B_I \otimes_R B_I \to B_I , & f \otimes g \otimes h &\mapsto \partial_I (g) f \otimes h, \\
                \eta^I &: R \to B_I , & 1 &\mapsto \Delta_I,
        \end{align*}
        The maps $\epsilon^I$ and $\eta^I$ have degree $\ell_I$, and $\mu^I$ and $\nu^I$ have degree $-\ell_I$.
        It is easy to check from \eqref{eq:dual_basis_identity_1} and \eqref{eq:dual_basis_identity_2} that these make $B_I$ into a graded Frobenius algebra. 
        Moreover, the same equations imply that duality interchanges $\epsilon^I$ with $\eta^I$ and $\mu^I$ with $\nu^I$. 
        When $I = \{s\}$, these recover the usual 1-color generating morphisms (see \S\ref{ss:gen_mors}).

        Let $s \in I$. Using the identifications, 
        \[ B_I \otimes_R B_s = R \otimes_{R^I} R \otimes_{R^s} R (\ell_I + 1) \qquad\text{and}\qquad B_s \otimes_R B_I = R \otimes_{R^s} R \otimes_{R^I} R (\ell_I + 1),\] 
        we can define the right and left splitting and absorption maps,
         \begin{align*}
                \nu^{I,s} &: B_I \to B_I \otimes_R B_s , & f \otimes g &\mapsto f \otimes 1 \otimes g, \\
                \mu^{I,s} &: B_I \otimes_R B_s \to B_I , & f \otimes g \otimes h &\mapsto  f \otimes \partial_s (g) h, \\
                 \nu^{s,I} &: B_I \to B_s \otimes_R B_I , & f \otimes g &\mapsto f \otimes 1 \otimes g, \\
                \mu^{s,I} &: B_s \otimes_R B_I \to B_I , & f \otimes g \otimes h &\mapsto \partial_s (g) f \otimes h.
        \end{align*}
        These maps have degree $-1$. Their well-definedness follows from $R^I \subset R^s$ and the $R^s$-linearity of $\partial_s$. 
        It is also not hard to check from similar arguments as before that these preserve localization data, and so are morphisms in $\sfC (\fr{h}, W)$.
        Again, one has that duality interchanges the corresponding maps.

        \subsection{Thick Soergel calculus}

        Let $I \subset S$ be finitary, and fix a reduced expression $\uw$ of $w_I$. 
        We will give a diagrammatic meaning to $B_I$ and the morphisms $p_{\uw}$ and $\iota_{\uw}$.
        To do so, draw $\id_{B_I}$ as a thick strand ``colored'' by $I$. 
        The morphisms $p_{\uw}$ and $\iota_{\uw}$ are drawn by fans:
        \[
  \tikzsetnextfilename{gen_projs}
  \tikzstyle{every picture}=[tikzfig]
  \begin{tikzpicture}
	\begin{pgfonlayer}{nodelayer}
		\node [style=none] (0) at (-5, -1) {};
		\node [style=none] (1) at (-2, -1) {};
		\node [style=none] (2) at (-5, 2) {};
		\node [style=none] (3) at (-2, 2) {};
		\node [style=none] (4) at (-3.5, 0.5) {};
		\node [style=none] (5) at (-5, -1) {};
		\node [style=none] (6) at (-4.5, -1) {};
		\node [style=none] (7) at (-2, -1) {};
		\node [style=none] (8) at (-3.5, 2) {};
		\node [style=none] (9) at (-3, -0.75) {$\ldots$};
		\node [style=none] (10) at (2, 2) {};
		\node [style=none] (11) at (5, 2) {};
		\node [style=none] (12) at (2, -1) {};
		\node [style=none] (13) at (5, -1) {};
		\node [style=none] (14) at (3.5, 0.5) {};
		\node [style=none] (15) at (2, 2) {};
		\node [style=none] (16) at (2.5, 2) {};
		\node [style=none] (17) at (5, 2) {};
		\node [style=none] (18) at (3.5, -1) {};
		\node [style=none] (19) at (4, 1.75) {$\ldots$};
		\node [style=none] (20) at (-3.5, -1.75) {$p_{\uw} : B_{\uw} \to B_{I}$};
		\node [style=none] (21) at (3.5, -1.75) {$\iota_{\uw} : B_I \to B_{\uw} $};
		\node [style=none] (23) at (-4, -1) {};
		\node [style=none] (24) at (3, 2) {};
	\end{pgfonlayer}
	\begin{pgfonlayer}{edgelayer}
		\draw [style=row] (5.center) to (4.center);
		\draw [style=row] (6.center) to (4.center);
		\draw [style=row] (7.center) to (4.center);
		\draw [style={thick_purple}] (4.center) to (8.center);
		\draw [style=row] (15.center) to (14.center);
		\draw [style=row] (16.center) to (14.center);
		\draw [style=row] (17.center) to (14.center);
		\draw [style={thick_purple}] (14.center) to (18.center);
		\draw [style=row] (23.center) to (4.center);
		\draw [style=row] (24.center) to (14.center);
	\end{pgfonlayer}
\end{tikzpicture}
.\]
        The thin strands in the fan carry the colors of $\uw$, we are just using black here to avoid specifying colors.
        
        We can draw the Frobenius structure maps for $B_I$ the same as one does in any monoidal category:
        \[
  \tikzsetnextfilename{frob_thick}
  \tikzstyle{every picture}=[tikzfig]
  \begin{tikzpicture}
	\begin{pgfonlayer}{nodelayer}
		\node [style=none] (0) at (-4, -1) {};
		\node [style=none] (1) at (-2, -1) {};
		\node [style=none] (2) at (-3, 1) {};
		\node [style=none] (3) at (0, 1) {};
		\node [style={thick_violet_dot}] (4) at (0, 0) {};
		\node [style=none] (5) at (2, 1) {};
		\node [style=none] (6) at (3, 0) {};
		\node [style=none] (7) at (3, -1) {};
		\node [style=none] (8) at (4, 1) {};
		\node [style={thick_violet_dot}] (10) at (6, 0) {};
		\node [style=none] (12) at (6, -1) {};
		\node [style=none] (13) at (-3, 0) {};
		\node [style=none] (14) at (-3, -1.75) {$\mu^I$};
		\node [style=none] (15) at (0, -1.75) {$\eta^I$};
		\node [style=none] (16) at (3, -1.75) {$\nu^I$};
		\node [style=none] (17) at (6, -1.75) {$\epsilon^I$};
	\end{pgfonlayer}
	\begin{pgfonlayer}{edgelayer}
		\draw [style={thick_purple}] (0.center) to (13.center);
		\draw [style={thick_purple}] (13.center) to (1.center);
		\draw [style={thick_purple}] (13.center) to (2.center);
		\draw [style={thick_purple}] (3.center) to (4);
		\draw [style={thick_purple}] (5.center) to (6.center);
		\draw [style={thick_purple}] (6.center) to (8.center);
		\draw [style={thick_purple}] (6.center) to (7.center);
		\draw [style={thick_purple}] (12.center) to (10);
	\end{pgfonlayer}
\end{tikzpicture}
.\]

        For $s \in I$, we draw the absorption and splitting maps by replacing the corresponding thick strand by a thin strand colored by $s$:
        \[
  \tikzsetnextfilename{abs_split}
  \tikzstyle{every picture}=[tikzfig]
  \begin{tikzpicture}
	\begin{pgfonlayer}{nodelayer}
		\node [style=none] (0) at (-4, -1) {};
		\node [style=none] (1) at (-2, -1) {};
		\node [style=none] (2) at (-3, 1) {};
		\node [style=none] (5) at (-1, 1) {};
		\node [style=none] (6) at (0, 0) {};
		\node [style=none] (7) at (0, -1) {};
		\node [style=none] (8) at (1, 1) {};
		\node [style=none] (13) at (-3, 0) {};
		\node [style=none] (14) at (-3, -1.75) {$\mu^{I,s}$};
		\node [style=none] (16) at (0, -1.75) {$\nu^{I,s}$};
		\node [style=none] (17) at (2, -1) {};
		\node [style=none] (18) at (4, -1) {};
		\node [style=none] (19) at (3, 1) {};
		\node [style=none] (20) at (3, 0) {};
		\node [style=none] (21) at (3, -1.75) {$\mu^{s,I}$};
		\node [style=none] (22) at (5, 1) {};
		\node [style=none] (23) at (6, 0) {};
		\node [style=none] (24) at (6, -1) {};
		\node [style=none] (25) at (7, 1) {};
		\node [style=none] (26) at (6, -1.75) {$\nu^{s,I}$};
	\end{pgfonlayer}
	\begin{pgfonlayer}{edgelayer}
		\draw [style={thick_purple}] (0.center) to (13.center);
		\draw [style={red_edge}] (13.center) to (1.center);
		\draw [style={thick_purple}] (13.center) to (2.center);
		\draw [style={thick_purple}] (5.center) to (6.center);
		\draw [style={red_edge}] (6.center) to (8.center);
		\draw [style={thick_purple}] (6.center) to (7.center);
		\draw [style={red_edge}] (17.center) to (20.center);
		\draw [style={thick_purple}] (20.center) to (18.center);
		\draw [style={thick_purple}] (20.center) to (19.center);
		\draw [style={red_edge}] (22.center) to (23.center);
		\draw [style={thick_purple}] (23.center) to (25.center);
		\draw [style={thick_purple}] (23.center) to (24.center);
	\end{pgfonlayer}
\end{tikzpicture}
.\]
        The red strand here is for the color of $s$.

        \begin{lemma}\label{lem:thick_relns}
                We have the following equalities
                \begin{equation}\label{eq:thick_relns}
                
  \tikzsetnextfilename{thick_relns}
  \tikzstyle{every picture}=[tikzfig]
  \begin{tikzpicture}
	\begin{pgfonlayer}{nodelayer}
		\node [style=none] (2) at (-2.75, 4.5) {};
		\node [style=none] (4) at (-1.75, 3.5) {};
		\node [style=none] (8) at (-1.75, 4.5) {};
		\node [style=none] (12) at (-2.75, 0.5) {};
		\node [style=none] (13) at (-0.75, 0.5) {};
		\node [style=none] (14) at (-1.75, 1.5) {};
		\node [style=none] (18) at (-1.75, 0.5) {};
		\node [style=none] (25) at (-0.75, 4.5) {};
		\node [style=none] (34) at (1.25, 4.5) {};
		\node [style=none] (39) at (1.25, 4.5) {};
		\node [style=none] (42) at (1.25, 2.5) {};
		\node [style=none] (43) at (1.25, 0.5) {};
		\node [style=none] (46) at (2, 4.5) {};
		\node [style=none] (47) at (-1.75, 2.5) {$\ldots$};
		\node [style=none] (48) at (-1.25, 2.5) {};
		\node [style=none] (49) at (0.25, 2.5) {$=$};
		\node [style=none] (50) at (-0.75, -0.75) {};
		\node [style=none] (51) at (-2.75, -4.75) {};
		\node [style=none] (52) at (-2.75, -2.75) {};
		\node [style=none] (53) at (-3.75, -0.75) {};
		\node [style=none] (54) at (-2.25, -1.75) {};
		\node [style=none] (55) at (-3.25, -0.75) {};
		\node [style=none] (56) at (-2.75, -0.75) {};
		\node [style=none] (57) at (-2, -1) {$\ldots$};
		\node [style=none] (59) at (3.5, -0.75) {};
		\node [style=none] (60) at (2.5, -4.75) {};
		\node [style=none] (62) at (1.25, -0.75) {};
		\node [style=none] (64) at (1.75, -0.75) {};
		\node [style=none] (65) at (2.25, -0.75) {};
		\node [style=none] (66) at (2.75, -1) {$\ldots$};
		\node [style=none] (68) at (1.5, -1.5) {};
		\node [style=none] (69) at (1.75, -2.25) {};
		\node [style=none] (71) at (2.25, -4) {};
		\node [style=none] (72) at (0.25, -2.75) {$=$};
		\node [style=none] (75) at (-8.25, 3.5) {};
		\node [style=none] (76) at (-8.25, 4.5) {};
		\node [style=none] (79) at (-8.25, 1.5) {};
		\node [style=none] (80) at (-8.25, 0.5) {};
		\node [style=none] (82) at (-5.25, 4.5) {};
		\node [style=none] (83) at (-5.25, 4.5) {};
		\node [style=none] (84) at (-5.25, 2.5) {};
		\node [style=none] (85) at (-5.25, 0.5) {};
		\node [style=none] (87) at (-8, 2.5) {$\ldots$};
		\node [style=none] (89) at (-6.25, 2.5) {$=$};
		\node [style=none] (92) at (5.5, 3.5) {};
		\node [style=none] (93) at (5.5, 4.5) {};
		\node [style=none] (96) at (5.5, 1.5) {};
		\node [style=none] (97) at (5.5, 0.5) {};
		\node [style=none] (98) at (6.5, 0.5) {};
		\node [style=none] (99) at (8.5, 4.5) {};
		\node [style=none] (100) at (8.5, 4.5) {};
		\node [style=none] (101) at (8.5, 2.5) {};
		\node [style=none] (102) at (8.5, 0.5) {};
		\node [style=none] (103) at (9.25, 0.5) {};
		\node [style=none] (104) at (5.5, 2.5) {$\ldots$};
		\node [style=none] (105) at (6, 2.5) {};
		\node [style=none] (106) at (7.5, 2.5) {$=$};
		\node [style=none] (107) at (8.75, -4.75) {};
		\node [style=none] (108) at (6.75, -0.75) {};
		\node [style=none] (109) at (6.75, -2.75) {};
		\node [style=none] (110) at (5.75, -4.75) {};
		\node [style=none] (111) at (7.25, -3.75) {};
		\node [style=none] (112) at (6.25, -4.75) {};
		\node [style=none] (113) at (6.75, -4.75) {};
		\node [style=none] (114) at (7.5, -4.5) {$\ldots$};
		\node [style=none] (115) at (13, -4.75) {};
		\node [style=none] (116) at (12, -0.75) {};
		\node [style=none] (117) at (10.75, -4.75) {};
		\node [style=none] (118) at (11.25, -4.75) {};
		\node [style=none] (119) at (11.75, -4.75) {};
		\node [style=none] (120) at (12.25, -4.5) {$\ldots$};
		\node [style=none] (121) at (11, -4) {};
		\node [style=none] (122) at (11.25, -3.25) {};
		\node [style=none] (123) at (11.75, -1.5) {};
		\node [style=none] (124) at (9.75, -2.75) {$=$};
		\node [style=none] (125) at (-7.25, 0.5) {};
		\node [style=none] (126) at (-7.25, 4.5) {};
		\node [style=none] (127) at (-9.25, 4.5) {};
		\node [style=none] (128) at (-9.25, 0.5) {};
		\node [style=none] (129) at (2, 0.5) {};
		\node [style=none] (130) at (4.5, 4.5) {};
		\node [style=none] (131) at (4.5, 0.5) {};
		\node [style=none] (132) at (6.5, 4.5) {};
		\node [style=none] (133) at (-11.25, -0.75) {};
		\node [style=none] (134) at (-11.25, -0.75) {};
		\node [style=none] (135) at (-11.25, -2.75) {};
		\node [style=none] (136) at (-11.25, -4.75) {};
		\node [style={small_dot_s}] (137) at (-10.5, -1.5) {};
		\node [style=none] (138) at (-10.5, -4.75) {};
		\node [style=none] (139) at (-8.5, -0.75) {};
		\node [style=none] (140) at (-8.5, -0.75) {};
		\node [style=none] (141) at (-8.5, -2.75) {};
		\node [style=none] (142) at (-8.5, -4.75) {};
		\node [style=none] (143) at (-9.5, -2.75) {$=$};
		\node [style=none] (144) at (-6.5, -0.75) {};
		\node [style=none] (145) at (-6.5, -0.75) {};
		\node [style=none] (146) at (-6.5, -2.75) {};
		\node [style=none] (147) at (-6.5, -4.75) {};
		\node [style={small_dot_s}] (148) at (-5.75, -4) {};
		\node [style=none] (149) at (-5.75, -4.75) {};
		\node [style=none] (150) at (-7.5, -2.75) {$=$};
	\end{pgfonlayer}
	\begin{pgfonlayer}{edgelayer}
		\draw [style={thick_purple}] (4.center) to (8.center);
		\draw [style={thick_purple}] (14.center) to (18.center);
		\draw [style={red_edge}] (42.center) to (46.center);
		\draw [style=row, bend right=75, looseness=1.75] (4.center) to (14.center);
		\draw [style=row, bend right=60, looseness=1.50] (4.center) to (14.center);
		\draw [style=row, bend right=45, looseness=1.25] (4.center) to (14.center);
		\draw [style={red_edge}] (48.center) to (25.center);
		\draw [style={thick_purple}] (53.center) to (52.center);
		\draw [style={thick_purple}] (52.center) to (51.center);
		\draw [style={thick_purple}] (52.center) to (54.center);
		\draw [style=row] (55.center) to (54.center);
		\draw [style=row] (54.center) to (56.center);
		\draw [style=row] (54.center) to (50.center);
		\draw [style=row] (68.center) to (64.center);
		\draw [style=row] (69.center) to (65.center);
		\draw [style=row] (71.center) to (59.center);
		\draw [style={thick_purple}] (62.center) to (60.center);
		\draw [style={thick_purple}] (39.center) to (43.center);
		\draw [style={thick_purple}] (75.center) to (76.center);
		\draw [style={thick_purple}] (79.center) to (80.center);
		\draw [style=row, bend right=75, looseness=1.75] (75.center) to (79.center);
		\draw [style=row, bend right=60, looseness=1.50] (75.center) to (79.center);
		\draw [style=row, bend right=45, looseness=1.25] (75.center) to (79.center);
		\draw [style=row, bend right=75, looseness=1.75] (79.center) to (75.center);
		\draw [style={thick_purple}] (83.center) to (85.center);
		\draw [style={red_edge}] (14.center) to (48.center);
		\draw [style={red_edge}] (48.center) to (4.center);
		\draw [style={thick_purple}] (92.center) to (93.center);
		\draw [style={thick_purple}] (96.center) to (97.center);
		\draw [style={red_edge}] (101.center) to (103.center);
		\draw [style=row, bend right=75, looseness=1.75] (92.center) to (96.center);
		\draw [style=row, bend right=60, looseness=1.50] (92.center) to (96.center);
		\draw [style=row, bend right=45, looseness=1.25] (92.center) to (96.center);
		\draw [style={red_edge}] (105.center) to (98.center);
		\draw [style={thick_purple}] (100.center) to (102.center);
		\draw [style={red_edge}] (96.center) to (105.center);
		\draw [style={red_edge}] (105.center) to (92.center);
		\draw [style={thick_purple}] (110.center) to (109.center);
		\draw [style={thick_purple}] (109.center) to (108.center);
		\draw [style={thick_purple}] (109.center) to (111.center);
		\draw [style=row] (112.center) to (111.center);
		\draw [style=row] (111.center) to (113.center);
		\draw [style=row] (111.center) to (107.center);
		\draw [style=row] (121.center) to (118.center);
		\draw [style=row] (122.center) to (119.center);
		\draw [style=row] (123.center) to (115.center);
		\draw [style={thick_purple}] (117.center) to (116.center);
		\draw [style={red_edge}] (135.center) to (137);
		\draw [style={thick_purple}] (134.center) to (136.center);
		\draw [style={thick_purple}] (140.center) to (142.center);
		\draw [style={red_edge}] (146.center) to (148);
		\draw [style={thick_purple}] (145.center) to (147.center);
	\end{pgfonlayer}
\end{tikzpicture}

                \end{equation}
        \end{lemma}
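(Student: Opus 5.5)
The plan is to check each equality in \eqref{eq:thick_relns} directly on underlying bimodules. This is enough because a morphism in $\sfC (\fr{h}, W)$ is, in particular, a morphism of $R$-bimodules, and two morphisms in $\sfC (\fr{h}, W)$ coincide as soon as their underlying bimodule maps coincide. Both sides of each relation are built from $\iota_{\uw}$, $p_{\uw}$, the one-color maps of \S\ref{ss:gen_mors} and the maps $\nu^{I,s}, \mu^{I,s}, \nu^{s,I}, \mu^{s,I}$ of the appendix. The only morphism without a closed formula is $p_{\uw} = \DD (\iota_{\uw})$. I will remove it using the perfect pairings \eqref{eq:pairing} and \eqref{eq:I_pairing}: by definition of $\DD$,
\[ \langle p_{\uw} (x), y \rangle_I = \langle x, \iota_{\uw} (y) \rangle_{\uw}. \]
So a relation with a fan $p_{\uw}$ on top holds if and only if it holds after pairing against an arbitrary $y \in B_I$. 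That converts it into a statement involving only $\iota_{\uw}$ and explicit tensor formulas.

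\emph{Relations without thin-strand interaction.}
\begin{itemize}
\item The first relation ($p_{\uw} \iota_{\uw} = \id_{B_I}$) is exactly Lemma~\ref{lem:gen_proj}.
\item The dot relations in the bottom row (a thin $s$-strand ending in a dot and absorbed into the thick strand equals $\id_{B_I}$) are degree-$0$ endomorphisms of $B_I$. Since $\Hom^0 (B_I,B_I) = \k\,\id_{B_I}$ and $B_I$ is generated by $1 \otimes 1$, it suffices to evaluate on $1 \otimes 1$. For example,
\[ \mu^{I,s} (\id \otimes \eta^s) (1 \otimes 1) = \mu^{I,s}\bigl( \delta_s \otimes 1 \otimes 1 - 1 \otimes 1 \otimes s(\delta_s) \bigr) = \partial_s (\delta_s) (1 \otimes 1) = 1 \otimes 1, \]
where $\delta_s$ passes through the $R^I$-tensor only when it is already on the correct side. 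The mirror version with $\mu^{s,I}$ is analogous.
\end{itemize}

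\emph{Relations where a thin strand of the fan splits off or merges.} Here the claim is that a split or merge on one strand of $\uw$, sandwiched between the two fans, equals $\nu^{I,s}$ or $\mu^{I,s}$.
\begin{itemize}
\item First I reduce to the case where the affected strand is the last (respectively first) strand of $\uw$. For this I choose $\uw$ ending (respectively beginning) in $s$, which is possible since $w_I s < w_I$. The general position follows by the independence of the fan from the chosen rex, which I would justify from $\partial_I$ being well defined, exactly as in Lemma~\ref{lem:gen_proj}.
\item For the splitting relation I compute $(\id_{B_{\uw}} \otimes \nu^s)\, \iota_{\uw} (f \otimes g) = f \otimes 1 \otimes \cdots \otimes 1 \otimes 1 \otimes g$ and apply $p_{\uw} \otimes \id_{B_s}$.
\item To evaluate that, I pair against $\iota_{\uw}(y) \otimes b$ and use that $\langle\,,\rangle_{\uw s} = \partial_s \circ \langle\,,\rangle_{\uw}$. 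This reduces the identity to $\partial_{s_k} \cdots \partial_{s_1} = \partial_I$ together with the description \eqref{eq:pairing}.
\item Comparing with $\langle \nu^{I,s} (f \otimes g), y \otimes b \rangle$ gives equality, and nondegeneracy of the pairing finishes the argument.
\item The merge relation follows by applying $\DD$, which interchanges $\nu^{I,s}$ and $\mu^{I,s}$ and interchanges $\iota_{\uw}$ and $p_{\uw}$.
\end{itemize}

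\emph{Fan-associativity relations.} These say that a thick strand splitting into a thick strand and a partial fan equals a fan; they are handled the same way. One side is an explicit map of the form $f \otimes g \mapsto f \otimes 1 \otimes \cdots \otimes 1 \otimes g$, evaluated on $1 \otimes 1$ or on generators. The other side contains one $p$, which is removed by pairing and then evaluated using \eqref{eq:dual_basis_identity_1}.

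The main obstacle I expect is the bookkeeping in this pairing step when $p_{\uw}$ appears. One must show that the iterated Demazure operators in \eqref{eq:pairing}, applied to a tensor with a single nontrivial middle factor, collapse to $\partial_I$ composed with $\partial_s$ in the right order. The first thing I would try is the rex choice ending in $s$, so that this reduces to the single identity $\partial_{w_I s}\, \partial_s = \partial_I$.
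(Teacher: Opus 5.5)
Your strategy is sound: verify each relation on underlying bimodules, and eliminate $p_{\uw}$ using the perfect pairings. However, it does by hand several steps the paper gets formally.

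\textbf{Split and merge relations.} In the figure the split (and the merge) always happens on the outermost, rightmost strand of the fan, so one simply takes $\uw=\ux s$. No reduction from ``general position'' is needed, and no independence of the fan from the chosen rex. The paper then notes that $(\id_{B_{\ux}}\otimes\nu^s)\iota_{\uw}$ and $(\iota_{\uw}\otimes\id_{B_s})\nu^{I,s}$ both send $f\otimes g$ to $f\otimes 1\otimes\cdots\otimes 1\otimes g$. It composes with $p_{\uw}\otimes\id_{B_s}$ and uses $p_{\uw}\iota_{\uw}=\id_{B_I}$, so no pairing is ever computed.

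Your pairing route also works, using the tensor-product pairing on $B_I\otimes_R B_s$. Both sides pair with $(y_0\otimes y_1)\otimes(b_0\otimes b_1)$ to give $\partial_s\bigl(\partial_I(fy_0)\,y_1b_0\bigr)\,gb_1$, and this uses only $\partial_{s_k}\cdots\partial_{s_1}=\partial_I$ as in Lemma~\ref{lem:gen_proj}. So the bookkeeping obstacle you anticipate, and the identity $\partial_{w_Is}\partial_s=\partial_I$, never arise.

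\textbf{Dot and fan relations.} The paper deduces the dot relations from the split and merge relations together with the one-color unit and counit relations. For example, $(\id_{B_I}\otimes\epsilon^s)\nu^{I,s}=p_{\uw}\bigl(\id_{B_{\ux}}\otimes(\id_{B_s}\otimes\epsilon^s)\nu^s\bigr)\iota_{\uw}=p_{\uw}\iota_{\uw}=\id_{B_I}$; you instead evaluate directly on $1\otimes 1$, which is also fine. The paper proves the fifth relation by an induction showing both sides are $f\otimes g\mapsto f\otimes 1\otimes\cdots\otimes 1\otimes g$. It obtains the merge relation and the sixth relation by applying $\DD$, exactly as you do for the merge.

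\textbf{Details to fix.}
\begin{itemize}
\item In your dot computation, $(1\otimes 1)\otimes\eta^s(1)\in R\otimes_{R^I}R\otimes_{R^s}R$ equals $1\otimes\delta_s\otimes 1-1\otimes 1\otimes s(\delta_s)$. Since $\delta_s\notin R^I$, it cannot be moved into the first factor. The element you wrote is in fact killed by $\mu^{I,s}$, because $\partial_s(1)=0$. With the correct element, $\mu^{I,s}$ gives $1\otimes\partial_s(\delta_s)=1\otimes 1$.
\item The second dot relation in the figure is not a mirror version involving $\mu^{s,I}$. Its dot lies above the junction, so it reads $(\id_{B_I}\otimes\epsilon^s)\nu^{I,s}=\id_{B_I}$. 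This is immediate from $f\otimes 1\otimes g\mapsto f\otimes g$.
\item In the fifth relation neither side contains $p_{\uw}$: the left side is $(\id_{B_I}\otimes\iota_{\uw})\nu^I$ and the right side is an iterate of the maps $\nu^{I,s}$. So no pairing or dual-basis identity is needed there. Your step of removing $p$ by pairing is really just the duality that turns the fifth relation into the sixth.
\end{itemize}
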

        \begin{proof}
                The first relation means that $p_{\uw} \iota_{\uw} = \id_{B_I}$ which is proved in Lemma~\ref{lem:gen_proj}.

                For the second relation, we write $\uw = \ux s$ for some $s$ corresponding to the red strand. The relation can be translated algebraically to mean
                \begin{equation}\label{eq:thick_relns_1}
                        (p_{\uw} \otimes \id_{B_s}) (\id_{B_{\ux}} \otimes \nu^s) \iota_{\uw} = \nu^{I, s}.
                \end{equation}
                It is easy to check from the definitions that
                \[(\id_{B_{\ux}} \otimes \nu^s) \iota_{\uw} = (\iota_{\uw} \otimes \id_{B_s}) \nu^{I, s}.\]
                We can then compose with $p_{\uw} \otimes \id_{B_s}$ and use the first relation to deduce,
                \[ (p_{\uw} \otimes \id_{B_s}) (\id_{B_{\ux}} \otimes \nu^s) \iota_{\uw} = (p_{\uw} \otimes \id_{B_s}) (\iota_{\uw} \otimes \id_{B_s}) \nu^{I, s} = \nu^{I,s}. \]

                The third relation follows from the second by duality.

                The fourth relation is an easy consequence of the first three.

                For the fifth relation, we first define a map $A_{\uw} : B_I \to B_I \otimes_R B_{\uw}$ as the map on the right-hand side. 
                In more detail, we can define this map recursively. Set $A_{\emptyset} = \id_{B_I}$, and then
                \[A_{\ux s} = (A_{\ux} \otimes \id_{B_s}) \nu^{I, s} .\]
                By induction, we have that 
                \[A_{\uw} (f \otimes g) = f \otimes 1 \otimes \ldots \otimes 1 \otimes g = (\id_{B_I} \otimes \iota_{\uw}) \nu^I (f \otimes g).\]
                Note that $(\id_{B_I} \otimes \iota_{\uw}) \nu^I$ is the left-hand side of the relation. 

                Finally, the sixth relation follows from the fifth by duality.
        \end{proof}

        For $I = \{s,t\}$ for $s \neq t$, the three first relation and the split/absorption identities in Lemma~\ref{lem:thick_relns} are the relations drawn in \cite[(6.17)--(6.18)]{Elib}.

        The last relation we use is specific to the $2$-color setting. 

        \begin{lemma}\label{lem:thick_jw}
                Let $I = \{s,t\}$, with $s \neq t$ and $m = m_{st} < \infty$.
                We have the following equality,
                 \begin{equation}\label{eq:thick_jw}
                
  \tikzsetnextfilename{thick_jw}
  \tikzstyle{every picture}=[tikzfig]
  \begin{tikzpicture}
	\begin{pgfonlayer}{nodelayer}
		\node [style=none] (73) at (-9, 2) {};
		\node [style=none] (74) at (-5.75, 2) {};
		\node [style=none] (77) at (-9, -2) {};
		\node [style=none] (78) at (-5.75, -2) {};
		\node [style=none] (89) at (-5.25, 0) {$=$};
		\node [style=none] (90) at (-8.25, -2) {};
		\node [style=none] (91) at (-7.75, -2) {};
		\node [style=none] (92) at (-6.25, -2) {};
		\node [style=none] (93) at (-8.25, 2) {};
		\node [style=none] (94) at (-7.75, 2) {};
		\node [style=none] (95) at (-6.25, 2) {};
		\node [style=none] (96) at (-7.25, -0.75) {};
		\node [style=none] (97) at (-7.25, 0.75) {};
		\node [style=none] (98) at (-7, 1.75) {$\ldots$};
		\node [style=none] (99) at (-7, -1.75) {$\ldots$};
		\node [style=none] (116) at (-4.75, 2) {};
		\node [style=none] (117) at (-1.5, 2) {};
		\node [style=none] (118) at (-4.75, -2) {};
		\node [style=none] (119) at (-1.5, -2) {};
		\node [style=none] (120) at (-4, -2) {};
		\node [style=none] (121) at (-3.5, -2) {};
		\node [style=none] (122) at (-2, -2) {};
		\node [style=none] (123) at (-4, 2) {};
		\node [style=none] (124) at (-3.5, 2) {};
		\node [style=none] (125) at (-2, 2) {};
		\node [style=none] (126) at (-3, -0.75) {};
		\node [style=none] (127) at (-3, 0.75) {};
		\node [style=none] (128) at (-2.75, 1.75) {$\ldots$};
		\node [style=none] (129) at (-2.75, -1.75) {$\ldots$};
		\node [style=none] (130) at (-3, 0) {$\ldots$};
		\node [style=none] (131) at (1.5, 2) {};
		\node [style=none] (132) at (4.75, 2) {};
		\node [style=none] (133) at (1.5, -2) {};
		\node [style=none] (134) at (4.75, -2) {};
		\node [style=none] (135) at (5.25, 0) {$=$};
		\node [style=none] (136) at (2.25, -2) {};
		\node [style=none] (137) at (2.75, -2) {};
		\node [style=none] (138) at (4.25, -2) {};
		\node [style=none] (139) at (2.25, 2) {};
		\node [style=none] (140) at (2.75, 2) {};
		\node [style=none] (141) at (4.25, 2) {};
		\node [style=none] (142) at (3.25, -0.75) {};
		\node [style=none] (143) at (3.25, 0.75) {};
		\node [style=none] (144) at (3.5, 1.75) {$\ldots$};
		\node [style=none] (145) at (3.5, -1.75) {$\ldots$};
		\node [style=none] (160) at (5.75, 2) {};
		\node [style=none] (161) at (9, 2) {};
		\node [style=none] (162) at (5.75, -2) {};
		\node [style=none] (163) at (9, -2) {};
		\node [style=none] (164) at (6.5, -2) {};
		\node [style=none] (165) at (7, -2) {};
		\node [style=none] (166) at (8.5, -2) {};
		\node [style=none] (167) at (6.5, 2) {};
		\node [style=none] (168) at (7, 2) {};
		\node [style=none] (169) at (8.5, 2) {};
		\node [style=none] (170) at (7.5, -0.75) {};
		\node [style=none] (171) at (7.5, 0.75) {};
		\node [style=none] (172) at (7.75, 1.75) {$\ldots$};
		\node [style=none] (173) at (7.75, -1.75) {$\ldots$};
		\node [style=none] (174) at (7.5, 0) {$\ldots$};
		\node [style=none] (175) at (-5.25, 2.75) {$m$ odd};
		\node [style=none] (176) at (5.25, 2.75) {$m$ even};
	\end{pgfonlayer}
	\begin{pgfonlayer}{edgelayer}
		\draw [style={thick_purple}] (97.center) to (96.center);
		\draw [style={red_edge}] (93.center) to (97.center);
		\draw [style={red_edge}] (90.center) to (96.center);
		\draw [style=blue edge] (94.center) to (97.center);
		\draw [style=blue edge] (96.center) to (91.center);
		\draw [style={red_edge}] (96.center) to (92.center);
		\draw [style={red_edge}] (97.center) to (95.center);
		\draw [style={red_edge}] (123.center) to (127.center);
		\draw [style={red_edge}] (120.center) to (126.center);
		\draw [style=blue edge] (124.center) to (127.center);
		\draw [style=blue edge] (126.center) to (121.center);
		\draw [style={red_edge}] (126.center) to (122.center);
		\draw [style={red_edge}] (127.center) to (125.center);
		\draw [style={red_edge}, bend left=60, looseness=1.50] (126.center) to (127.center);
		\draw [style=blue edge, bend left=60, looseness=2.00] (126.center) to (127.center);
		\draw [style=blue edge, bend right=60, looseness=1.75] (126.center) to (127.center);
		\draw [style={thick_purple}] (143.center) to (142.center);
		\draw [style={red_edge}] (139.center) to (143.center);
		\draw [style={red_edge}] (136.center) to (142.center);
		\draw [style=blue edge] (140.center) to (143.center);
		\draw [style=blue edge] (142.center) to (137.center);
		\draw [style=blue edge] (142.center) to (138.center);
		\draw [style=blue edge] (143.center) to (141.center);
		\draw [style={red_edge}] (167.center) to (171.center);
		\draw [style={red_edge}] (164.center) to (170.center);
		\draw [style=blue edge] (168.center) to (171.center);
		\draw [style=blue edge] (170.center) to (165.center);
		\draw [style=blue edge] (170.center) to (166.center);
		\draw [style=blue edge] (171.center) to (169.center);
		\draw [style={red_edge}, bend left=60, looseness=1.50] (170.center) to (171.center);
		\draw [style=blue edge, bend left=60, looseness=2.00] (170.center) to (171.center);
		\draw [style={red_edge}, bend right=60, looseness=1.75] (170.center) to (171.center);
	\end{pgfonlayer}
\end{tikzpicture}
.
        \end{equation}
        \end{lemma}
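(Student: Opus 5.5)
The plan is to read both sides of \eqref{eq:thick_jw} as degree-$0$ endomorphisms of $B_{{}_s\underline{m}}$. The left side is $\iota_{{}_s\underline{m}} \circ p_{{}_s\underline{m}}$: a fan into the thick $I$-strand followed by a fan back out. The right side is the composite $\beta_{ts}\circ\beta_{st}$ of two $2m$-valent vertices. The parity of $m$ only affects which colors appear at the right-hand ends of ${}_s\underline{m}$ and ${}_t\underline{m}$, so one argument should cover both pictures. The key step is to factor a single $2m$-valent vertex through $B_I$:
\[\beta_{st} = \iota_{{}_t\underline{m}} \circ p_{{}_s\underline{m}}, \qquad \beta_{ts} = \iota_{{}_s\underline{m}} \circ p_{{}_t\underline{m}}.\]
Granting this, Lemma~\ref{lem:gen_proj} applied to the reduced expression ${}_t\underline{m}$ of $w_I$ gives
\[\beta_{ts}\beta_{st} = \iota_{{}_s\underline{m}}\,(p_{{}_t\underline{m}}\iota_{{}_t\underline{m}})\,p_{{}_s\underline{m}} = \iota_{{}_s\underline{m}} p_{{}_s\underline{m}},\]
which is the claimed equality.

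To prove the factorization I will use the uniqueness statement from \S\ref{ss:gen_mors}. The space of degree-$0$ morphisms $B_{{}_s\underline{m}} \to B_{{}_t\underline{m}}$ is free of rank one over $\k$, and $\beta_{st}$ is characterized by $\beta_{st}(u_{{}_s\underline{m}}) = u_{{}_t\underline{m}}$. The composite $\iota_{{}_t\underline{m}} p_{{}_s\underline{m}}$ has degree $0$, since both factors do, and it is a morphism in $\sfC(\fr{h},W)$. So it suffices to check that it sends $1$-tensors to $1$-tensors. By definition $\iota_{{}_t\underline{m}}(1\otimes 1) = u_{{}_t\underline{m}}$, so everything reduces to the identity
\[p_{{}_s\underline{m}}(u_{{}_s\underline{m}}) = 1 \otimes 1 .\]

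I expect this identity to be the main step, because $p$ is defined only indirectly, as $\DD(\iota)$. I would verify it with the pairings: $p_{\uw}$ is the adjoint of $\iota_{\uw}$ with respect to \eqref{eq:pairing} and \eqref{eq:I_pairing}, so for all $f,g\in R$,
\[\langle p_{\uw}(u_{\uw}), f\otimes g\rangle_I = \langle u_{\uw}, f\otimes 1\otimes\cdots\otimes 1\otimes g\rangle_{\uw} = \partial_{s_k}\cdots\partial_{s_1}(f)\, g = \partial_I(f)\,g,\]
using that the Demazure operators satisfy the braid relations (balancedness) and that $w_I = w_I^{-1}$, exactly as in the proof of Lemma~\ref{lem:gen_proj}. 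The element $1\otimes 1$ satisfies $\langle 1\otimes1, f\otimes g\rangle_I = \partial_I(f)g$ as well, so perfectness of $\langle\,,\rangle_I$ forces $p_{\uw}(u_{\uw}) = 1\otimes 1$.

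The same argument with $s$ and $t$ swapped gives the formula for $\beta_{ts}$. Alternatively, apply $\DD$ and use $\DD(\beta_{st})=\beta_{ts}$ together with $\DD(\iota_{\uw}) = p_{\uw}$. One care point is that the duality isomorphisms $\DD\circ\DD\cong\id$ must be compatible with $1$-tensors.
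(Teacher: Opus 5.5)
Your proposal is correct and follows the paper's proof. You factor $\beta_{st} = \iota_{{}_t\underline{m}}\, p_{{}_s\underline{m}}$ using the rank-one uniqueness of degree-$0$ morphisms and the $1$-tensor criterion, and then conclude with $p_{{}_t\underline{m}}\iota_{{}_t\underline{m}} = \id_{B_I}$ from Lemma~\ref{lem:gen_proj}. Your pairing computation showing $p_{{}_s\underline{m}}(u_{{}_s\underline{m}}) = 1\otimes 1$ fills in the step the paper dismisses as ``easy to see from the definitions.''
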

        \begin{proof}
                The relation can be found in \cite[(6.17)]{Elib}. 
                It is equivalent to the algebraic relation,
                \[\iota_{{}_s \underline{m}} p_{{}_s \underline{m}} = \beta_{ts} \beta_{st}.\]
                We will prove this by showing that
                \begin{equation}\label{eq:thick_jw_1}
                        \beta_{st} =  \iota_{{}_t \underline{m}} p_{{}_s \underline{m}}. 
                \end{equation}
                To see why this is sufficient, we can apply Lemma~\ref{lem:gen_proj} (1) to get
                \[ \iota_{{}_s \underline{m}} p_{{}_s \underline{m}}  =  \iota_{{}_s \underline{m}} p_{{}_t \underline{m}} \iota_{{}_t \underline{m}} p_{{}_s \underline{m}}  \stackrel{\eqref{eq:thick_jw_1}}{=} \beta_{ts} \beta_{st} .  \]
        
                It remains to prove \eqref{eq:thick_jw_1}. By Theorem~\ref{thm:dll_basis}, the degree-0 morphism space between $B_{{}_s \underline{m}}$ and $B_{{}_t \underline{m}}$ is a free rank-1 $\k$-module.
                The morphism $\beta_{st}$ is uniquely characterized in taking the $1$-tensor to the $1$-tensor. 
                It is easy to see from the definitions that $\iota_{{}_s \underline{m}} p_{{}_t \underline{m}}$ also takes the 1-tensor to the 1-tensor. Therefore, \eqref{eq:thick_jw_1} holds.
        \end{proof}

\end{document}